\theoremstyle{plain}
\newtheorem{theorem}{Theorem}[section]
\newtheorem{corollary}[theorem]{Corollary}
\newtheorem{lemma}[theorem]{Lemma}
\newtheorem{proposition}[theorem]{Proposition}
\newtheorem{remark}[theorem]{Remark}
\numberwithin{equation}{section}
\newcommand{\RNum}[1]{\uppercase\expandafter{\romannumeral #1\relax}}
\title[Stability of composite waves for the Navier--Stokes--Poisson system]{Asymptotic stability of composite waves of shock profile and rarefaction for the Navier--Stokes--Poisson system}
\author[Wanyong Shim]{Wanyong Shim}
\address{(Wanyong Shim) Department of Mathematical Sciences, Korea Advanced Institute of Science and Technology, Daejeon, 34141, Korea}
\email{wanyong.shim@gmail.com}
\date{\today}
\subjclass{35Q35; 35B35; 35B40}
\thanks{\textbf{Acknowledgement. }This work was supported by Samsung Science and Technology Foundation under Project Number SSTF-BA2102-01. The author thanks Professor Moon-Jin Kang for suggesting this problem.}
\begin{document}

\begin{abstract}
We study the stability of composite waves consisting of a shock profile and a rarefaction wave for the one-dimensional isothermal Navier--Stokes--Poisson (NSP) system, which describes the ion dynamics in a collision-dominated plasma. More precisely, we prove that if the initial data are sufficiently close in the $H^2$ norm to the Riemann data corresponding to a solution consisting of a shock and a rarefaction wave of the associated quasi-neutral Euler system, then the solution to the Cauchy problem for the NSP system converges, up to a dynamical shift, to a superposition of the corresponding shock profile and the rarefaction wave as time tends to infinity. Our proof is based on the method of $a$-contraction with shifts, which has recently been applied to the Navier--Stokes equations to establish the asymptotic stability of composite waves. To adapt this method to the NSP system, we employ a modulated relative functional introduced in our previous work on the stability of single shock profiles. \\

\noindent{\it Keywords}:
Asymptotic behavior; Stability; Navier--Stokes--Poisson system; Shock; Rarefaction; The method of $a$-contraction with shifts
\end{abstract}

\maketitle

\tableofcontents

\section{Introduction}

\subsection{The Navier--Stokes--Poisson system}

We consider the one-dimensional compressible Navier--Stokes--Poisson (NSP) system, which serves as a model for the dynamics of ions in an isothermal plasma in the collision-dominated regime \cite{GGKS}. In Lagrangian mass coordinates, the NSP system is written as
\begin{subequations} \label{NSP}
\begin{align}
& \label{NSP11} v_t - u_x = 0,\\
& \label{NSP22} u_t + p(v)_x   =  \left( \frac{\nu u_x}{v} \right)_x - \frac{\phi_x}{v}, \\
& \label{NSP33} - \lambda^2 \left( \frac{\phi_x}{v} \right)_x = 1 - ve^{\phi}
\end{align}
\end{subequations}
for $t>0$ and $x \in \mathbb{R}$. Here $v=\tfrac{1}{n}$ is the specific volume for $n>0$, the density of ions, and $\phi$ is the electric potential. The function $p(v)$ denotes the pressure given by $p(v) = K v^{-1}$. The constants $K>0$, $\nu>0$ and $\lambda>0$ represent the absolute temperature, viscosity coefficient and Debye length, respectively. For simplicity, we normalize by setting $K=1$, $\nu = 1$, and $\lambda = 1$, as all the results in this paper hold for any positive constants $K$, $\nu$, and $\lambda$. In the Poisson equation \eqref{NSP33}, we have assumed that the electron density $n_e$ is determined by the Boltzmann relation, $n_e = e^\phi$, which is justified by the physical observation that electrons reach the equilibrium state much faster than ions for varying potential in a plasma \cite{Ch}.

This paper is concerned with the large-time behavior of solutions to the NSP system. In particular, we study the asymptotic stability of composite waves consisting of a shock profile and a rarefaction wave. For this purpose, we consider the associated Cauchy problem for \eqref{NSP} with the initial data:
\begin{equation} \label{ic}
(v,u)(0,x) = (v_0,u_0)(x), \quad \lim_{x \rightarrow \pm \infty} (v_0,u_0)(x) = (v_\pm, u_\pm)
\end{equation}
for constant end states $(v_\pm, u_\pm)$ to be specified below. The far-field behavior of $\phi$ is prescribed as
\begin{equation} \label{qnc}
\lim_{x \rightarrow \pm \infty} \phi(t,x) = \phi_\pm,
\end{equation}
where the constants $\phi_\pm$ are given by the quasi-neutral condition \( \phi_\pm = -\ln{v_\pm} \).

We note that, as in the Navier--Stokes equations, the time-asymptotic behavior of \eqref{NSP}–\eqref{ic} is expected to be governed by the solution of a Riemann problem for the associated hyperbolic system. Here it corresponds to the quasi-neutral Euler system:
\begin{subequations} \label{Euler}
\begin{align}
& \label{Euler1} v_t - u_x = 0,\\
& \label{Euler2} u_t + \tilde{p}(v)_x  = 0, \\
& \label{Euler3} \phi = - \ln{v},
\end{align}
\end{subequations}
where the modified pressure $\tilde{p}$ is defined by
\begin{equation} \label{p(v)}
\tilde{p}(v) := p(v) + \frac{1}{v}.
\end{equation}
This system can be obtained by taking the formal limits $\lambda \to 0$ and $\nu \to 0$ in \eqref{NSP}.

\subsection{Composite waves of shock profile and rarefaction wave} \label{Sec_1.2}

To determine the far-field states $(v_\pm,u_\pm)$ in \eqref{ic}, we introduce the Riemann problem given by the quasi-neutral Euler system \eqref{Euler} with the initial data
\begin{equation} \label{Eic}
(v,u)(0,x) = \begin{cases}
(v_-,u_-), & x<0, \\
(v_+,u_+), & x>0.
\end{cases}
\end{equation}
In this paper, we consider the end states $(v_\pm,u_\pm)$ such that the Riemann problem for \eqref{Euler} yields a superposition of a rarefaction wave and a shock. Without loss of generality, we focus on the case where the Riemann solution consists of a $1$-rarefaction and a $2$-shock. In what follows, we describe this by recalling the theory of Riemann problems for the Euler system \eqref{Euler1}--\eqref{Euler2} with \eqref{Eic}; see \cite{Smoller}.

The Euler system \eqref{Euler1}--\eqref{Euler2} can be rewritten as
\begin{equation*}
\begin{pmatrix}
v \\ u
\end{pmatrix}_t + \begin{pmatrix}
0 & -1 \\
\tilde{p}'(v) & 0
\end{pmatrix} \begin{pmatrix}
v \\ u
\end{pmatrix}_x = 0,
\end{equation*}
which is strictly hyperbolic, meaning that the coefficient matrix has real and distinct eigenvalues
\begin{equation} \label{eig}
\lambda_1 = - \sqrt{- \tilde{p}'(v)} < 0, \quad \lambda_2 = \sqrt{-\tilde{p}'(v)} > 0.
\end{equation}
Then, for any left-end state $(v_-,u_-)$, the $1$-rarefaction curve $R_1$ and $2$-shock curve $S_2$ are defined by
\begin{equation*}
\begin{split}
R_1(v_-,u_-) & := \bigg\{ (v,u): v > v_-, \ u = u_- - \int_{v_-}^{v} \lambda_1(s) \, ds  \bigg\}, \\
S_2(v_-,u_-) & := \left\{ (v,u): v > v_-, \ u = u_- - \sqrt{(v-v_-)\left(\tilde{p}(v_-) - \tilde{p}(v) \right)}  \right\}.
\end{split}
\end{equation*}
To ensure that the Riemann solution of \eqref{Euler1}--\eqref{Euler2} with \eqref{Eic} consists of a $1$-rarefaction wave and a $2$-shock, we restrict the right-end state $(v_+,u_+)$ to lie in the region between the two curves $R_1$ and $S_2$. Specifically, for given $(v_-,u_-) \in \mathbb{R}_+ \times \mathbb{R}$ and $\delta_0 \in \mathbb{R}_+$, we define a subset of the $(v,u)$-plane as
\begin{equation*}
\begin{split}
\Gamma_{\delta_0}(v_-,u_-) := \bigg\{ & (v,u)\in \mathbb{R}_+\times \mathbb{R}: 
\ v-v_-\in (0,\delta_0), \\
&\ -\sqrt{(v-v_-)\left(\tilde{p}(v_-) - \tilde{p}(v) \right)} 
   < u-u_- 
   < - \int_{v_-}^v \lambda_1(s) \, ds 
\bigg\},
\end{split}
\end{equation*}
where $\mathbb{R}_+ := (0, +\infty)$. The state \((v_+, u_+)\) is chosen to satisfy
\begin{equation} \label{Gammain}
(v_+,u_+) \in \Gamma_{\delta_0}(v_-,u_-)
\end{equation}
for sufficiently small $\delta_0$, noting that this implies $|u_+-u_-| \sim |v_+-v_-| < \delta_0$.

For far-field data satisfying \eqref{Gammain}, the associated Riemann solution of \eqref{Euler} and its elementary wave components are defined as follows. There exists a unique intermediate state $(v_m,u_m)$ such that $(v_m,u_m) \in R_1(v_-,u_-)$ and $(v_+,u_+) \in S_2(v_m,u_m)$. The solution $(v,u,\phi)$ of the Riemann problem \eqref{Euler}--\eqref{Eic} is then given by a superposition of a \textit{$1$-rarefaction} wave and a \textit{$2$-shock} wave:
\begin{equation} \label{Eshra}
(v,u,\phi)(t,x) = (v^r,u^r,\phi^r)(t,x) + (v^s,u^s,\phi^s)(t,x) - (v_m,u_m,\phi_m).
\end{equation}
The $1$-rarefaction wave $(v^r,u^r,\phi^r)$ is defined as a self-similar solution determined by the eigenvalue $\lambda_1$ and the Riemann invariant $\textstyle z_1(v,u) = u + \int^v \lambda_1(s) \, ds$:
\begin{equation} \label{def_ra}
\lambda_1 \left( v^r(t,x) \right) = \begin{cases}
\lambda_1 (v_-), & x < \lambda_1(v_-) t, \\
\tfrac{x}{t}, & \lambda_1(v_-)t \leq x \leq \lambda_1(v_m)t, \\
\lambda_1(v_m), & x > \lambda_1(v_m) t,
\end{cases}
\end{equation}
together with
\begin{equation} \label{def_ra1}
z_1 \left( v^r(t,x), u^r(t,x) \right) = z_1 (v_-,u_-) = z_1 (v_m,u_m).
\end{equation}
The $2$-shock wave $(v^s,u^s,\phi^s)$ is defined by
\begin{equation} \label{sh_def}
(v^s,u^s)(t,x) = \begin{cases}
(v_m,u_m), & x < \sigma t, \\
(v_+,u_+), & x > \sigma t,
\end{cases}
\end{equation}
where the shock speed $\sigma$ is given by the Rankine--Hugoniot jump condition
\begin{equation} \label{RH}
\begin{split}
& - \sigma (v_+ - v_m) - (u_+ - u_m ) = 0, \\
& - \sigma (u_+ - u_m) + \left( \tilde{p}(v_+) - \tilde{p}(v_m) \right) = 0.
\end{split}
\end{equation}
Note that the electric potentials $\phi^r$, $\phi^s$, and $\phi_m$ are subsequently determined from $v^r$, $v^s$, and $v_m$, respectively, via the quasi-neutral relation \eqref{Euler3}.

We now construct an asymptotic profile (up to a spatial shift) for the NSP system that corresponds to the Riemann solution \eqref{Eshra} of the quasi-neutral Euler system \eqref{Euler}. To this end, we introduce a viscous-electrostatic counterpart of the $2$-shock wave $(v^s,u^s,\phi^s)$. Consider the Cauchy problem for the NSP system \eqref{NSP} with the initial data
\begin{equation*}
(v,u)(0,x) = (v_0,u_0)(x)
\end{equation*}
with
\begin{equation*}
\lim_{x \to - \infty}{(v_0,u_0)(x)} = (v_m,u_m), \quad \lim_{x \to +\infty}{(v_0,u_0)(x)} = (v_+,u_+).
\end{equation*}
This problem admits a smooth traveling wave $(\bar{v}^S,\bar{u}^S,\bar{\phi}^S)(x-\sigma t)$, called the $2$-shock profile, defined by the governing ODEs:
\begin{equation} \label{SHODE}
\begin{split}
& -\sigma \left( \bar{v}^S \right)' - \left( \bar{u}^S \right)' =0, \\
& - \sigma \left( \bar{u}^S \right)' + p(\bar{v}^S)' = \bigg( \frac{\left(\bar{u}^S\right)'}{\bar{v}^S} \bigg)' -  \frac{\left(\bar{\phi}^S\right)'}{\bar{v}^S}, \\
& - \bigg( \frac{\left(\bar{\phi}^S \right)'}{\bar{v}^S} \bigg)' = 1 - \bar{v}^S e^{\bar{\phi}^S},
\end{split}
\end{equation}
where $'$ denotes $\textstyle \frac{d}{d\xi}$ with $\xi=x-\sigma t$. The existence and properties of this profile will be presented in Section~\ref{Sec_2.2}. Our goal is to establish the asymptotic stability of the following composite wave, consisting of the $1$-rarefaction wave of \eqref{Euler} and the $2$-shock profile of \eqref{NSP}:
\begin{equation} \label{composite}
(v^r,u^r,\phi^r)\big(\frac{x}{t}\big) + (\bar{v}^S,\bar{u}^S,\bar{\phi}^S)(x-\sigma t) - (v_m,u_m,\phi_m).
\end{equation}

\subsection{Literature review}

Before stating our main result, we briefly review related works on the large-time behavior of solutions to viscous compressible fluid models with prescribed constant far-field states. We begin by recalling several notable results on the stability of viscous shock profiles, rarefaction waves, and their superpositions for the one-dimensional Navier--Stokes (NS) equations.

For shock profiles, the problem was first studied by Matsumura and Nishihara \cite{MN}, who proved time-asymptotic stability under perturbations with zero integral, referred to as the \emph{zero mass} condition, by using the anti-derivative method (see also \cite{Go, KM}). This restriction was later removed by Liu \cite{L}, Liu and Zeng \cite{LZ}, and Szepessy and Xin \cite{SX} through pointwise estimate methods. Howard and Zumbrun \cite{HZ} and Mascia and Zumbrun \cite{MZ, MZ1} established linear and nonlinear stability using pointwise semigroup techniques. Kang and Vasseur \cite{KV3} obtained $L^2$ stability by using the $a$-contraction method with shifts (see also \cite{KV4, KV5, KVW2, KVW3}). The results in \cite{KV3, MZ1} do not require the zero mass condition.

For rarefaction waves, Matsumura and Nishihara proved time-asymptotic stability for the compressible and isentropic NS equations in \cite{MN1, MN2}, using direct energy methods. Such direct energy methods are incompatible with the anti-derivative method used for viscous shocks. Due to this incompatibility, the asymptotic stability of composite waves consisting of a viscous shock and a rarefaction wave was first established only recently by Kang, Vasseur, and Wang \cite{KVW2}. Their analysis handled the different types of waves within a relative entropy framework, employing the theory of $a$-contraction with shifts. See also the notable work \cite{HKK} for the case of a composition of two viscous shocks.

We now turn to the one-dimensional NSP system. Duan, Liu, and Zhang \cite{DLZ} proved asymptotic stability of shock profiles under the zero mass constraint, and Duan and Liu also established the stability of rarefaction waves in \cite{DL}. In addition, Kang, Kwon, and the present author \cite{KKSh} proved the stability of shock profiles up to a dynamical shift using the $a$-contraction method. Building upon these works, we establish here the asymptotic stability of composite waves consisting of a shock profile and a rarefaction wave for the NSP system. For further results, we refer to \cite{LMY, Zh} for shocks in different settings, and to \cite{ZZZ} for rarefactions under large initial perturbations.

\subsection{Main result}

For completeness, we recall the definition of the set \( \Gamma_{\delta_0}(v_-, u_-) \), which appears in the main theorem:
\begin{equation} \label{Gamma}
\begin{split}
\Gamma_{\delta_0}(v_-,u_-) = \bigg\{ & (v,u)\in \mathbb{R}_+\times \mathbb{R}: 
v-v_-\in (0,\delta_0), \\
& \ -\sqrt{(v-v_-)\left(\tilde{p}(v_-) - \tilde{p}(v) \right)} 
   < u-u_- 
   < \int_{v_-}^v \sqrt{-\tilde{p}'(s)} \, ds 
\bigg\}.
\end{split}
\end{equation}
The main result is stated below.

\begin{theorem} \label{Main}
Given any left-end state $(v_-,u_-) \in \mathbb{R}_+ \times \mathbb{R}$, there exist positive constants $\delta_0$ and $\varepsilon_0$ such that the following statement holds.

For any $(v_+,u_+) \in \Gamma_{\delta_0}(v_-,u_-)$, let $(v_m, u_m, \phi_m)$ be the unique state such that
\begin{equation*}
(v_m, u_m) \in R_1(v_-, u_-), \quad (v_+, u_+) \in S_2(v_m, u_m), \quad \phi_m = - \ln{v_m}.
\end{equation*}
Let \( \textstyle (v^r, u^r, \phi^r)(\frac{x}{t}) \) denote the $1$-rarefaction wave of \eqref{Euler} with the end states $(v_-,u_-,- \ln{v_-})$ and $(v_m,u_m,\phi_m)$, and \( (\bar{v}^S, \bar{u}^S, \bar{\phi}^S)(x - \sigma t) \) the $2$-shock profile of \eqref{SHODE} with the end states $(v_m,u_m,\phi_m)$ and $(v_+,u_+,-\ln{v_+})$. Suppose that the initial data $(v_0,u_0)$ satisfies
\begin{equation*}
\sum_{\pm}{\lVert (v_0-v_\pm,u_0-u_\pm) \rVert_{L^2(\mathbb{R}_\pm)}} + \lVert (v_{0x},u_{0x}) \rVert_{H^1(\mathbb{R})} < \varepsilon_0,
\end{equation*}
where $\mathbb{R}_- := -\mathbb{R}_+ = (-\infty,0)$. Then the Cauchy problem \eqref{NSP}--\eqref{qnc} admits a unique global-in-time solution $(v,u,\phi)$ satisfying
\begin{equation*}
\begin{split}
& v(t,x) - \left( v^r \big( \frac{x}{t} \big) + \bar{v}^S(x-\sigma t - X(t)) - v_m \right) \in C (0,\infty;H^2(\mathbb{R})), \\
& u(t,x) - \left( u^r \big( \frac{x}{t} \big) + \bar{u}^S(x-\sigma t - X(t)) - u_m \right) \in C (0,\infty;H^2(\mathbb{R})), \\
& \phi(t,x) - \left( \phi^r \big( \frac{x}{t} \big) + \bar{\phi}^S(x-\sigma t - X(t)) - \phi_m \right) \in C (0,\infty;H^3(\mathbb{R})) \\
\end{split}
\end{equation*}
for some shift function $X(t)$. Moreover, the solution $(v,u,\phi)(t,x)$ and shift $X(t)$ satisfy
\begin{equation} \label{asympt}
\begin{split}
\lim_{t \to +\infty} \, \Big\lVert (v,u,\phi) (t,\cdot) - \Big( & v^r\big( \frac{\cdot}{t} \big) + \bar{v}^S ( \cdot - \sigma t - X(t) ) - v_m, \\
& u^r\big( \frac{\cdot}{t} \big) + \bar{u}^S ( \cdot - \sigma t - X(t) ) - u_m, \\
& \phi^r\big( \frac{\cdot}{t} \big) + \bar{\phi}^S ( \cdot - \sigma t - X(t) ) - \phi_m \Big) \Big\rVert_{L^\infty(\mathbb{R})} = 0
\end{split}
\end{equation}
and
\begin{equation} \label{limX}
\lim_{t \to +\infty}{|\dot{X}(t)|} = 0.
\end{equation}
\end{theorem}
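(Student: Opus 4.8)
The plan is to follow the $a$-contraction with shifts framework as developed for the Navier–Stokes equations in \cite{KVW2}, but with the relative entropy replaced by the modulated relative functional from \cite{KKSh} that is adapted to the Poisson coupling. First I would reformulate the problem in a single moving frame: subtract the composite profile \eqref{composite} (with the unknown shift $X(t)$ inserted in the shock part) from $(v,u,\phi)$ and write the perturbation equations. Because the rarefaction wave $(v^r,u^r,\phi^r)$ is only Lipschitz, I would replace it by a smooth approximate rarefaction $(\bar v^r,\bar u^r,\bar \phi^r)$ solving the Euler system with smoothed Riemann data, recording the standard decay estimates (e.g. $\|(\bar v^r,\bar u^r)_x\|_{L^p}$, $\|(\bar v^r,\bar u^r)_{xx}\|_{L^2}$, exponential decay away from the rarefaction fan) so that the difference between the true and approximate rarefaction is negligible as $t\to\infty$. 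The electric potential $\bar\phi^r$ attached to $\bar v^r$ via $\phi=-\ln v$ needs to be checked against the Poisson equation \eqref{NSP33}: it will not satisfy it exactly, but the residual is of order $\lambda^2=1$ times derivatives of $\bar v^r$, hence integrable and decaying, and will be absorbed into the error terms.

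The second step is local existence and the a priori estimate. Local-in-time existence in the stated regularity class follows from standard parabolic/elliptic theory for \eqref{NSP} (the Poisson equation being uniquely solvable for $\phi$ given $v$ near the quasi-neutral profile, as established in \cite{KKSh}); the global existence then reduces to a uniform-in-time bound on the perturbation in $H^2$. For this I would introduce the modulated relative functional — a weighted combination of the relative entropy $\eta(U\,|\,\bar U)$ for the hyperbolic part, the relative potential energy controlling $\phi-\bar\phi$ through the Poisson structure (integrating \eqref{NSP33} against $\phi-\bar\phi$ yields coercive control of $(\phi-\bar\phi)_x$ and of $ve^\phi$ minus its profile analogue), with a weight function $a(\xi)$ that is constant far from the shock layer and has a small, suitably signed jump across it. Differentiating $\int a(x-\sigma t - X(t))\,\eta\,dx$ in time and choosing the shift $\dot X(t)$ to cancel the bad (non-sign-definite, linear-in-perturbation) boundary-type term produces the hyperbolic dissipation — the negative quadratic term from the $a$-weight at the shock — plus the diffusive dissipation $\int \frac{|u_x - \bar u_x|^2}{v}$ coming from the viscosity, plus the Poisson dissipation. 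The interaction terms between the rarefaction and the shock profile, and the errors from replacing the true rarefaction by the smooth one, are controlled using the exponential localization of the shock profile against the algebraic decay of the rarefaction derivatives, exactly as in \cite{KVW2}; the extra terms involving $\phi$ are handled by the relative-functional estimates from \cite{KKSh}.

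The third step upgrades the $L^2$-type (relative functional) bound to the full $H^2$ estimate by differentiating \eqref{NSP11}–\eqref{NSP22} once and twice in $x$ and running energy estimates, using the already-established lower-order dissipation to close; the elliptic regularity for \eqref{NSP33} gains one derivative on $\phi$, which is why $\phi-\bar\phi^r-\bar\phi^S+\phi_m$ lands in $H^3$ while $(v,u)$ perturbations are in $H^2$. Combining local existence with this uniform bound gives the global solution. Finally, for the asymptotics \eqref{asympt} I would show the dissipation terms are integrable in time, deduce via a Barbalat-type argument that $\|(v-\cdot\,,u-\cdot)_x\|_{L^2}\to 0$ and that the relative functional tends to zero, hence $\|(v,u)(t)-\text{composite}\|_{L^\infty}\to 0$; the potential convergence follows from elliptic regularity, and the smooth rarefaction converges to the genuine one in $L^\infty$ by the classical estimates. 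The shift limit \eqref{limX} comes from the formula for $\dot X(t)$: it is bounded by the perturbation size at the shock layer, which is square-integrable in time, so $\dot X\to 0$ along a sequence, and uniform continuity of $\dot X$ (from the equations) promotes this to a full limit. I expect the main obstacle to be the estimate of the cross terms coupling the rarefaction wave, the shock profile, and the Poisson-induced terms in the time-derivative of the modulated functional — in particular, ensuring that the non-local contribution of $\phi-\bar\phi$ (which depends on $v$ through a nonlinear elliptic equation, unlike the purely local structure in the NS case) does not destroy the delicate sign of the leading-order term that the $a$-weight and the shift are designed to exploit.
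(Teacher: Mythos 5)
Your proposal follows essentially the same strategy as the paper (smooth approximate rarefaction via the Burgers equation, $a$-contraction with shifts using the modulated relative functional of \cite{KKSh}, exploitation of shock localization against rarefaction decay, higher-order energy estimates, and a Barbalat-style argument for the asymptotics), and the concern you flag at the end --- that the nonlocal Poisson contribution might destroy the sign structure that the weight and shift are designed to exploit --- is indeed the delicate point. However, you omit the one technical device the paper singles out as making that concern tractable: the change to the effective velocity $h := u - (\ln v)_\xi$. The paper works with $(v,h,\phi)$ rather than $(v,u,\phi)$ precisely because, in this variable, the troublesome first-derivative estimate on $v - \bar v$ (which for NSP requires a ``delicate analysis,'' cf.\ \cite{KKSh,DLZ}) gets absorbed into the \emph{zeroth-order} relative-functional estimate, where the $a$-weight and the Poincar\'e-type inequality at the shock layer can be brought to bear; the functional \eqref{relentropy} is built on $|h-\bar h|^2$, not $|u-\bar u|^2$, and includes an explicit cross term $-\tfrac{(v-\bar v)(\phi-\bar\phi)_{\xi\xi}}{\bar v^2}$ that you would need to discover when trying to make ``relative potential energy'' coercive. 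The paper then recovers the $L^2$ estimate on $u-\bar u$ separately (Lemma \ref{vu}) and runs the higher-order $\tilde u_\xi,\tilde u_{\xi\xi},\tilde v_{\xi\xi}$ energy estimates in the original variables. Without the $h$-variable, your step 2 will produce a first-derivative term in $v-\bar v$ that the naive relative entropy $\tfrac{|u-\bar u|^2}{2}+Q(v|\bar v)$ cannot absorb, and closing the estimate becomes substantially harder.

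A smaller point: for \eqref{limX} your argument (square-integrability of $\dot X$ in time, hence convergence along a subsequence, upgraded to a full limit by uniform continuity) is more roundabout than necessary and requires a uniform-continuity step you would still have to justify. The paper instead uses the pointwise bound $|\dot X(t)| \leq C\,\|\tilde v(t,\cdot)\|_{L^\infty}$ coming directly from the defining ODE \eqref{XODE} and the localization $|\bar v^S_\xi|, |\bar u^S_\xi|, |\bar v^S_{\xi\xi}| \lesssim \delta_S^2 e^{-\theta\delta_S|\xi|}$, and then shows $\|\tilde v(t,\cdot)\|_{L^\infty}\to 0$ via Gagliardo--Nirenberg interpolation and the $W^{1,1}(\mathbb{R}_+)$ bound on $t\mapsto\|\tilde v_\xi(t,\cdot)\|_{L^2}^2$; this makes the limit of $\dot X$ an immediate corollary of the $L^\infty$ convergence rather than a separate argument.
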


\begin{remark}
From \eqref{limX}, we see that the shift function $X(t)$ grows at most sublinearly as $t\to +\infty$, i.e.,
\begin{equation*}
\lim_{t \to +\infty}{\frac{X(t)}{t}} =0.
\end{equation*}
This implies that the shifted shock profile $(\bar{v}^S,\bar{u}^S,\bar{\phi}^S)(x-\sigma t - X(t))$ approaches the original profile $(\bar{v}^S,\bar{u}^S,\bar{\phi}^S)\allowbreak (x-\sigma t)$ as \( t \to +\infty \), in the sense that the shift becomes asymptotically negligible.
\end{remark}

\vspace{1em}
\textit{Plan of the paper}. In Section~\ref{Sec_2}, we provide some preliminaries, including the construction of an approximate composite wave and the basic properties of its components. Section~\ref{Sec_3} states the a priori estimate for perturbations around the approximate composite wave and briefly outlines the proof of Theorem~\ref{Main}, which is given in the Appendices. The proof of the a priori estimate is provided in the subsequent sections. In Section~\ref{Sec_4}, we establish an energy estimate using the method of $a$-contraction with shifts. Sections~\ref{Sec_5} and \ref{Sec_6} are devoted to closing the estimate by deriving elliptic and higher-order estimates. The Appendices contain basic elliptic estimates related to the Poisson equation, the deferred proofs of Lemmas~\ref{uxi}, \ref{uxixi}, and \ref{vxixi}, as well as the complete proof of Theorem~\ref{Main}.


\section{Preliminaries} \label{Sec_2}

In this section, we construct an approximate composite wave consisting of a smooth approximation of the $1$-rarefaction wave and the $2$-shock profile shifted by a function $X(t)$. We also reformulate the NSP system into divergence form in the moving frame $(t, x - \sigma t)$.

\subsection{Smooth approximation of rarefaction wave} \label{Sec_2.1}

Following the approach in \cite{DL, MN1}, we construct a smooth approximation to the $1$-rarefaction wave using the solution $w(t,x)$ to the inviscid Burgers equation:
\begin{subequations} \label{Bur}
\begin{align}
& \label{eqB} w_t + ww_x = 0, \\
& \label{init_B} w(0,x) = w_0(x) := \frac{w_m+w_-}{2} + \frac{w_m-w_-}{2} \tanh{x},
\end{align}
\end{subequations}
where $w_- := \lambda_1(v_-)$ and $w_m := \lambda_1(v_m)$ with $\lambda_1(v) = - \sqrt{-\tilde{p}'(v)}$. We define the approximation $(\bar{v}^R,\bar{u}^R,\bar{\phi}^R)$ to the $1$-rarefaction $(v^r,u^r,\phi^r)$, defined in Section~\ref{Sec_1.2}, as
\begin{equation} \label{def_ara}
\begin{split}
& \bar{v}^R(t,x) = \lambda_1^{-1} \big( w(1+t,x) \big), \\
& z_1 (\bar{v}^R,\bar{u}^R) (t,x) = z_1(v_-,u_-) = z_1(v_m,u_m), \\
& \bar{\phi}^R = - \ln{\bar{v}^R} \quad \text{(by the quasi-neutral condition \eqref{Euler3})},
\end{split}
\end{equation}
where $w(t,x)$ is the smooth solution to \eqref{Bur} and $\textstyle z_1(v,u)=u+\int^v \lambda_1(s) \, ds$. This approximate rarefaction wave satisfies the quasi-neutral Euler system:
\begin{equation} \label{eq:Ra}
\begin{split}
& \bar{v}^R_t - \bar{u}^R_x = 0, \\
& \bar{u}^R_t + \tilde{p}(\bar{v}^R)_x = 0, \\
& \bar{\phi}^R = - \ln{\bar{v}^R},
\end{split}
\end{equation}
and
\begin{equation*}
\lim_{x \to - \infty}{(\bar{v}^R,\bar{u}^R)(0,x)} = (v_-,u_-), \quad \lim_{x \to + \infty}{(\bar{v}^R,\bar{u}^R)(0,x)} = (v_m,u_m).
\end{equation*}

The lemma below follows from Lemmas~2.1 and 2.2 in \cite{MN1}.

\begin{lemma} \label{rarefaction}
Let $\delta_R$ denote the rarefaction wave strength as $\delta_R := \lvert v_m - v_- \rvert \sim \lvert u_m - u_- \rvert$. The approximate smooth solution $(\bar{v}^R,\bar{u}^R)$ given by \eqref{def_ara} and \eqref{Bur} satisfies the following properties:
\begin{enumerate}
\item $\bar{u}^R_x = (\bar{v}^R)^{-1} w_x > 0 $, $\bar{v}^R_x = \bar{v}^R \bar{u}^R_x >0$, and $\bar{\phi}^R_x = - (\bar{v}^R)^{-1} \bar{v}^R_x <0$ for all $x\in\mathbb{R}$ and $t \geq 0$.
\item For any $p \in [1,+\infty]$, there exists a constant $C>0$ such that for all $t \geq 0$,
\begin{equation*}
\begin{split}
& \lVert \partial_x (\bar{v}^R, \bar{u}^R) \rVert_{L^p(\mathbb{R})} \leq C \min{\left\{ \delta_R, \delta_R^{1/p} (1+t)^{-1+1/p} \right\} }, \\
& \lVert \partial_x^2 (\bar{v}^R,\bar{u}^R) \rVert_{L^p(\mathbb{R})} \leq C \min{\left\{ \delta_R, (1+t)^{-1} \right\}}, \\
& \lVert \partial_x^3 (\bar{v}^R,\bar{u}^R) \rVert_{L^p(\mathbb{R})} \leq C \min{\left\{ \delta_R, (1+t)^{-1} \right\}}, \\
& \lvert \bar{u}^R_{xx} \rvert \leq C \lvert \bar{u}^R_x \rvert, \quad x \in \mathbb{R}.
\end{split}
\end{equation*}
\item For $x \geq \lambda_1(v_m)(1+t)$, $t \geq 0$, it holds that
\begin{equation*}
\begin{split}
& \lvert (\bar{v}^R,\bar{u}^R)(t,x) - (v_m,u_m) \rvert \leq C \delta_R e^{-2 \lvert x - \lambda_1(v_m) (1+t) \rvert }, \\
& \lvert (\bar{v}^R_x,\bar{u}^R_x)(t,x) \rvert \leq C \delta_R e^{-2 \lvert x -  \lambda_1(v_m) (1+t) \rvert }.
\end{split}
\end{equation*}
\item For $x \leq \lambda_1(v_-)t$, $t \geq 0$, it holds that
\begin{equation*}
\begin{split}
& \lvert (\bar{v}^R,\bar{u}^R)(t,x) - (v_-,u_-) \rvert \leq C \delta_R e^{-2 \lvert x - \lambda_1(v_-)t \rvert}, \\
& \lvert (\bar{v}^R_x,\bar{u}^R_x)(t,x) \rvert \leq C \delta_R e^{-2 \lvert x- \lambda_1(v_-) t \rvert}.
\end{split}
\end{equation*}
\item $\lim_{t \to \infty} \sup_{x \in \mathbb{R}} \big\lvert (\bar{v}^R,\bar{u}^R)(t,x) - (v^r,u^r)(x/t) \big\rvert = 0$.
\end{enumerate}
\end{lemma}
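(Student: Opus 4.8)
\emph{Strategy.} My plan is to reduce the statement to standard properties of the inviscid Burgers flow \eqref{Bur} and then transport those properties through the explicit change of unknown $v=\lambda_1^{-1}(w)$. The whole argument rests on the observation that, since $(v_-,u_-)$ is fixed and $\delta_0$ is chosen small, the values $w_-=\lambda_1(v_-)$ and $w_m=\lambda_1(v_m)$ lie in a fixed compact subinterval $I\Subset(-\infty,0)$, so that $\lambda_1$, $\lambda_1^{-1}$, and all of their derivatives are bounded above and (away from zero) below on $I$ by constants depending only on $(v_-,u_-)$. This is the only place where smallness of $\delta_0$ is used here.

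\emph{Step 1: the Burgers solution.} Because $\lambda_1(v)=-\sqrt{-\tilde p'(v)}$ is strictly increasing in $v$, the relation $v_m>v_-$ gives $w_-<w_m$, so the datum $w_0$ in \eqref{init_B} is smooth, strictly increasing, and valued in $[w_-,w_m]$. Solving \eqref{eqB} by characteristics, one has $w\bigl(t,x_0+w_0(x_0)t\bigr)=w_0(x_0)$ and $w_x=w_0'(x_0)/\bigl(1+t\,w_0'(x_0)\bigr)>0$; since $w_0'>0$, no singularity forms and $w$ is globally smooth with $w_-<w<w_m$ and $w_x>0$. From these formulas together with $\lVert w_0'\rVert_{L^1}\sim|w_m-w_-|\sim\delta_R$ I would extract, exactly as in Lemmas~2.1--2.2 of \cite{MN1}: the decay bounds $\lVert w_x(1+t)\rVert_{L^p}\le C\min\{\delta_R,\delta_R^{1/p}(1+t)^{-1+1/p}\}$ and $\lVert\partial_x^k w(1+t)\rVert_{L^p}\le C\min\{\delta_R,(1+t)^{-1}\}$ for $k=2,3$; the pointwise bound $|w_{xx}|\le C|w_x|$; exponential localisation of $w-w_m$, $w-w_-$, and $w_x$ outside the fan $\{w_-(1+t)\le x\le w_m(1+t)\}$; and the convergence $\sup_x|w(1+t,x)-w^r(x/t)|\to0$ as $t\to\infty$, where $w^r$ is the exact Burgers rarefaction fan.

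\emph{Step 2: transport to $(\bar v^R,\bar u^R,\bar\phi^R)$.} By \eqref{def_ara}, $\bar v^R=\lambda_1^{-1}(w(1+t,\cdot))$, hence $\bar v^R_x=(\lambda_1^{-1})'(w)\,w_x>0$ since $(\lambda_1^{-1})'>0$; differentiating $z_1(\bar v^R,\bar u^R)=\mathrm{const}$ in $x$ gives $\bar u^R_x=-\lambda_1(\bar v^R)\bar v^R_x>0$, and $\bar\phi^R=-\ln\bar v^R$ gives $\bar\phi^R_x=-\bar v^R_x/\bar v^R<0$; the precise algebraic identities in (1) then come from inserting the explicit form of $\tilde p$. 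For (2): by the Leibniz/Fa\`a di Bruno rules $\partial_x^k\bar v^R$ is a finite sum of terms $c(w)\prod_j\partial_x^{m_j}w$ with $\sum_j m_j=k$ and $|c(w)|\le C$ on $I$, so Step~1 and boundedness of $w_x$ yield the stated $L^p$ bounds for $\partial_x^k\bar v^R$ ($k=1,2,3$), and then for $\partial_x^k\bar u^R$ via $\bar u^R_x=-\lambda_1(\bar v^R)\bar v^R_x$. The inequality $|\bar u^R_{xx}|\le C|\bar u^R_x|$ follows since $\bar u^R_{xx}$ is a sum of terms bounded by $C\bigl(w_x^2+|w_{xx}|\bigr)\le C|w_x|\le C|\bar u^R_x|$, using $|w_{xx}|\le C|w_x|$, boundedness of $w_x$, and $|\bar u^R_x|\sim w_x$. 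For (3)--(4): the exponential decay of $w-w_m$, $w-w_-$, and $w_x$ outside the fan passes through the Lipschitz map $\lambda_1^{-1}$ and the relation for $\bar u^R_x$; the fan edges $x=w_m(1+t)$ and $x=w_-(1+t)$ are precisely $x=\lambda_1(v_m)(1+t)$ and $x=\lambda_1(v_-)(1+t)$, the latter differing from $x=\lambda_1(v_-)t$ by $O(1)$, which is absorbed into the exponential factor. For (5): apply the continuous map $\lambda_1^{-1}$ to the uniform convergence $\sup_x|w(1+t,x)-w^r(x/t)|\to0$ (recalling $\lambda_1(v^r)=w^r$), and deduce the $\bar u^R$ statement from the $\bar v^R$ one since $z_1$ is constant along both curves.

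\emph{Main difficulty.} I do not anticipate a genuine obstacle: the only real analysis is the characteristic estimate for \eqref{Bur}, which is quoted from \cite{MN1}. The remaining care is bookkeeping — verifying that every prefactor generated by the change of variables is bounded uniformly in $\delta_R$ (this is exactly what forcing $w$ into the fixed compact set $I$ buys us), and keeping track of the $1+t$ versus $t$ normalisation in the exponentially small regions of (3)--(4) and in the convergence statement (5).
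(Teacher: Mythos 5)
Your proposal is correct and follows essentially the same route as the paper: the paper simply asserts that Lemma~\ref{rarefaction} ``follows from Lemmas~2.1 and 2.2 in \cite{MN1},'' and your outline reconstructs exactly that argument — establish the $L^p$, pointwise, and exponential-localisation bounds for the Burgers solution $w$ of \eqref{Bur} by characteristics, then push them through the smooth change of unknown $\bar v^R=\lambda_1^{-1}(w)$, $\bar u^R_x=-\lambda_1(\bar v^R)\bar v^R_x$, $\bar\phi^R=-\ln\bar v^R$, using that $w$ ranges over a fixed compact subinterval of $(-\infty,0)$ so all the conversion Jacobians are bounded. (Your version $\bar u^R_x=-\lambda_1(\bar v^R)\bar v^R_x$ is the structurally correct identity; the exact numerical prefactors written in item (1) of the Lemma come from substituting $\tilde p(v)=2/v$, which is a cosmetic point that does not affect the estimates in (2)--(5).)
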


\subsection{Shock profile} \label{Sec_2.2}

We now turn to the $2$-shock profile which is defined by the ODE system \eqref{SHODE} together with the far-field condition
\begin{equation} \label{ffc}
\lim_{\xi \rightarrow - \infty} (\bar{v}^S,\bar{u}^S,\bar{\phi}^S) (\xi) = (v_m,u_m,\phi_m), \quad \lim_{\xi \rightarrow + \infty} (\bar{v}^S,\bar{u}^S,\bar{\phi}^S) (\xi) = (v_+,u_+,\phi_+).
\end{equation}
Note that $(v_+,u_+)$ lies on the $2$-shock curve $S_2(v_m,u_m)$ if and only if the end states satisfy the Rankine--Hugoniot condition \eqref{RH} and the Lax entropy condition
\begin{equation} \label{Laxc2}
\lambda_2(v_+) < \sigma < \lambda_2(v_m).
\end{equation}
The existence and uniqueness of the $2$-shock profile are stated in the following lemma.
\begin{lemma} [\cite{DLZ}, Theorem~1.1] \label{Prop.1.1}
For any $(v_m,u_m )\in \mathbb{R}_+ \times \mathbb{R}$, there exists a constant $\delta_1$ such that, if the end state $(v_+,u_+) \in S_2(v_m,u_m)$ satisfies
\begin{equation*}
\lvert u_+ - u_m \rvert \sim \lvert v_+ - v_m \rvert =: \delta_S < \delta_1,
\end{equation*}
then \eqref{SHODE} admits a unique (up to a shift) solution $(\bar{v}^S,\bar{u}^S,\bar{\phi}^S)(\xi)$ satisfying
\begin{equation} \label{vup'}
\sigma \bar{v}^S_\xi = - \bar{u}^S_\xi > 0, \quad \underline{C} \bar{u}^S_\xi \leq \bar{\phi}^S_\xi \leq \overline{C} \bar{u}^S_\xi
\end{equation}
for some positive constants $\underline{C}, \overline{C}$. Moreover, the unique solution satisfying $ \textstyle \bar{v}^S(0) = \frac{v_m + v_+}{2}$ verifies the derivative bounds
\begin{equation} \label{shderiv}
\begin{cases}
\displaystyle \bigg\lvert \frac{d^k}{d\xi^k} (\bar{v}^S-v_+, \bar{u}^S - u_+, \bar{\phi}^S - \phi_+ ) \bigg\rvert \leq C_k \delta_S^{k+1} e^{-\theta \delta_S \lvert \xi \rvert}, & \xi > 0 \\
\displaystyle \bigg\lvert \frac{d^k}{d\xi^k} (\bar{v}^S - v_m, \bar{u}^S - u_m, \bar{\phi}^S - \phi_m ) \bigg\rvert \leq C_k \delta_S^{k+1} e^{-\theta \delta_S \lvert \xi \rvert}, & \xi < 0
\end{cases}
\end{equation}
for $ k \in \mathbb{N} \cup \{0\}$, where $C_k>0$ and $\theta>0$ are generic constants.
\end{lemma}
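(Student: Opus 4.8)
The plan is to reduce the traveling-wave system \eqref{SHODE} to a scalar profile equation for $\bar v^S$, construct its connecting orbit, and then restore the Poisson coupling perturbatively in the weak-shock strength $\delta_S$. Integrating the first equation of \eqref{SHODE} once and matching the left end state in \eqref{ffc} gives $\bar u^S(\xi)=u_m-\sigma(\bar v^S(\xi)-v_m)$, and consistency with the right end state is exactly the first Rankine--Hugoniot relation in \eqref{RH}. Substituting $\bar u^S_\xi=-\sigma\bar v^S_\xi$ into the second equation of \eqref{SHODE} and integrating once more from $\xi=-\infty$ (all derivatives vanishing there) yields
\begin{equation*}
-\sigma\,\frac{\bar v^S_\xi}{\bar v^S}
 =\sigma^2(\bar v^S-v_m)+p(\bar v^S)-p(v_m)
 +\int_{-\infty}^{\xi}\frac{\bar\phi^S_\eta}{\bar v^S}\,d\eta ,
\end{equation*}
while the third equation of \eqref{SHODE} is the Poisson equation with far-field values $\bar\phi^S(\pm\infty)=-\ln v_\pm$. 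Freezing $\bar\phi^S$ at its quasi-neutral value $-\ln\bar v^S$ makes $\bar\phi^S_\eta/\bar v^S=(1/\bar v^S)_\eta$, so the integral collapses to $1/\bar v^S-1/v_m$ and the right-hand side becomes $F(\bar v^S):=\sigma^2(\bar v^S-v_m)+\tilde p(\bar v^S)-\tilde p(v_m)$, with $\tilde p$ as in \eqref{p(v)}. Thus the ``limiting'' profile is governed by the scalar ODE $-\sigma\,\bar v^S_\xi/\bar v^S=F(\bar v^S)$, and $\bar\phi^S+\ln\bar v^S$ should turn out to be a small nonlocal correction.

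I would next analyze this scalar ODE. Combining the two relations in \eqref{RH} gives $F(v_m)=F(v_+)=0$; strict convexity of $\tilde p$ makes $F$ strictly convex, hence $F<0$ on $(v_m,v_+)$ (where $v_m<v_+$ since $(v_+,u_+)\in S_2(v_m,u_m)$). Moreover $F'=\sigma^2+\tilde p'=\sigma^2-\lambda_2^2$ with $\lambda_2$ as in \eqref{eig}, so the Lax condition \eqref{Laxc2} gives $F'(v_m)<0<F'(v_+)$. Therefore $\bar v^S_\xi=-\tfrac1\sigma\bar v^S F(\bar v^S)>0$ on $(v_m,v_+)$, and the scalar ODE has a unique (up to translation) strictly increasing heteroclinic from $v_m$ at $\xi=-\infty$ to $v_+$ at $\xi=+\infty$; the normalization $\bar v^S(0)=\tfrac{v_m+v_+}{2}$ fixes the translation. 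Linearization at the endpoints shows $v_m$ is a source and $v_+$ a sink with rates $\tfrac{v_m}{\sigma}(\lambda_2(v_m)^2-\sigma^2)$ and $\tfrac{v_+}{\sigma}(\sigma^2-\lambda_2(v_+)^2)$, both comparable to $\delta_S$ because (by the mean-value theorem) $\sigma=\lambda_2(v^\ast)$ for some $v^\ast\in(v_m,v_+)$ while $\lambda_2(v_+)$ and $\lambda_2(v_m)$ differ by $O(\delta_S)$. Expanding $F(\bar v^S)=F'(v_\pm)(\bar v^S-v_\pm)+O((\bar v^S-v_\pm)^2)$ near each end and differentiating the ODE repeatedly gives $\bigl|\tfrac{d^k}{d\xi^k}\bar v^S\bigr|\lesssim\delta_S^{k+1}e^{-\theta\delta_S|\xi|}$, each extra derivative costing a factor $\sim\delta_S$; this is \eqref{shderiv} for the limiting profile, and $\bar u^S=u_m-\sigma(\bar v^S-v_m)$ inherits the same bounds with $\bar u^S_\xi=-\sigma\bar v^S_\xi<0$, i.e.\ $\sigma\bar v^S_\xi=-\bar u^S_\xi>0$.

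It then remains to promote this heteroclinic to a genuine solution of \eqref{SHODE} for $\delta_S$ small (depending on $(v_m,u_m)$). Two equivalent routes are available. First, one may solve the Poisson equation for $\bar\phi^S$ as a functional $\bar\phi^S=\Phi[\bar v^S]$ on the admissible class of monotone profiles near the scalar one: by the elliptic estimates for the Poisson equation developed in the appendices of this paper, $\Phi$ is well-defined, smooth in $\bar v^S$, and satisfies $\Phi[\bar v^S]+\ln\bar v^S=O(\delta_S^2)$ together with the corresponding weighted derivative bounds, so that back-substitution produces a nonlocal scalar equation $-\sigma\bar v^S_\xi/\bar v^S=F(\bar v^S)+\mathcal R[\bar v^S]$ with $\mathcal R$ small, solved by a contraction in an exponentially weighted space around the limiting heteroclinic. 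Equivalently, one may view \eqref{SHODE} as a first-order system in $\bigl(\bar v^S,\bar v^S_\xi/\bar v^S,\bar\phi^S,\bar\phi^S_\xi/\bar v^S\bigr)\in\mathbb R^4$, whose equilibria form the curve $\{(v,0,-\ln v,0):v>0\}$ of quasi-neutral states; transverse to this curve the linearization has a uniformly hyperbolic ``fast'' pair (bounded away from the imaginary axis, coming from the Poisson block) together with a ``slow'' eigenvalue of size $O(\delta_S)$ that changes sign between $v_m$ and $v_+$, and invariant-manifold theory (geometric singular perturbation / center-manifold reduction, with $\delta_S$ the small parameter) then yields a two-dimensional attracting manifold containing a neighborhood of the arc of equilibria from $v_m$ to $v_+$, on which $\bar\phi^S$ is slaved to $\bar v^S$ with $\bar\phi^S=-\ln\bar v^S+O(\delta_S^2)$ and the reduced flow is the scalar profile ODE up to an exponentially small perturbation. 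Either way one obtains the unique solution $(\bar v^S,\bar u^S,\bar\phi^S)$ of \eqref{SHODE} with $\bar v^S_\xi>0$ and $\bar\phi^S_\xi=-\bar v^S_\xi/\bar v^S+O(\delta_S^2)=\bar u^S_\xi/(\sigma\bar v^S)+O(\delta_S^2)$; since $1/(\sigma\bar v^S)$ is bounded between positive constants and the correction is of lower order, $\bar\phi^S_\xi$ has the same sign as $\bar u^S_\xi$ and is comparable to it, which gives the remaining bound in \eqref{vup'}, while \eqref{shderiv} for $\bar v^S,\bar u^S,\bar\phi^S$ follows from the limiting-profile bounds together with the slaving (elliptic) estimates.

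I expect the main obstacle to be this last step --- making the Poisson coupling quantitative and uniform in $\delta_S$. Concretely, one must establish the normal hyperbolicity of the equilibrium curve (equivalently, that $\Phi[\cdot]$ is well-defined, smooth, and $O(\delta_S^2)$-close to $-\ln(\cdot)$ together with its derivatives over the admissible class of profiles), carry out the invariant-manifold reduction or the weighted contraction, and then track how the $O(1)$ fast decay and the $O(\delta_S)$ slow decay combine so that the resulting profile decays at exactly the rate $\theta\delta_S$ claimed in \eqref{shderiv}. The other ingredients --- the elimination of $\bar u^S$, the sign analysis of $F$ via \eqref{RH} and \eqref{Laxc2}, the scalar phase-plane argument, and the derivative bootstrap --- are routine.
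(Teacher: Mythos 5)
The paper does not prove this lemma: it is cited verbatim from [DLZ, Theorem~1.1], so there is no in-paper argument against which to compare. Judged on its own, your sketch captures the right skeleton. Eliminating $\bar u^S$ by $\bar u^S = u_m - \sigma(\bar v^S - v_m)$, integrating the momentum equation once to get a (nonlocal) scalar profile equation, recognizing the quasi-neutral profile ODE $-\sigma\bar v^S_\xi/\bar v^S = F(\bar v^S)$ as the leading-order reduction, and reading off sign/monotonicity from the strict convexity of $\tilde p$ together with the Rankine--Hugoniot condition \eqref{RH} and the Lax condition \eqref{Laxc2} are all correct and are the standard first steps. The eigenvalue computation $\pm\frac{v_\pm}{\sigma}|F'(v_\pm)|\sim\delta_S$ and the resulting $\delta_S^{k+1}$-scaling of derivatives with $e^{-\theta\delta_S|\xi|}$ decay are likewise right for the reduced scalar ODE. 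You have also correctly identified that the genuine content of the lemma is the perturbative restoration of the Poisson coupling, and both of your proposed routes (a weighted contraction for $\bar\phi^S = \Phi[\bar v^S]$ or a slow--fast invariant-manifold reduction in which the Poisson block is uniformly hyperbolic and the viscous block is $O(\delta_S)$-slow) are sensible and standard for this kind of problem.

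There is, however, a quantitative gap in the one place where you do commit to a number. You assert $\bar\phi^S + \ln\bar v^S = O(\delta_S^2)$ and then $\bar\phi^S_\xi = -\bar v^S_\xi/\bar v^S + O(\delta_S^2)$, and from this try to conclude that $\bar\phi^S_\xi$ is \emph{comparable} to $\bar u^S_\xi$. But the leading term $-\bar v^S_\xi/\bar v^S = \bar u^S_\xi/(\sigma\bar v^S)$ is itself of size $O(\delta_S^2)$, so an $O(\delta_S^2)$ correction is not of lower order and the sign and two-sided bound in \eqref{vup'} do not follow from what you wrote. To close this, you need (and can get) a better estimate: writing $\bar\phi^S = -\ln\bar v^S + r$ and linearizing the Poisson relation $1 - \bar v^S e^{\bar\phi^S} = -r + O(r^2)$ against the elliptic operator $(\cdot_\xi/\bar v^S)_\xi$, the forcing is $((\ln\bar v^S)_\xi/\bar v^S)_\xi = O(\delta_S^3)$, so $r = O(\delta_S^3)$; moreover $r$ is slaved to the slowly varying $\bar v^S$, so each $\xi$-derivative costs an extra $\delta_S$ and $r_\xi = O(\delta_S^4)$. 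Only with this sharper count is the correction genuinely subordinate to $\bar u^S_\xi$, giving \eqref{vup'}, and the same bookkeeping is what produces the uniform $\delta_S^{k+1}e^{-\theta\delta_S|\xi|}$ bound in \eqref{shderiv} for $\bar\phi^S$ rather than merely for the reduced profile. So the approach is sound and matches what one would expect [DLZ] to do in spirit, but as written your argument for the $\bar\phi^S$ part of \eqref{vup'} does not close; the fix is a sharper (and slow-scale-aware) elliptic estimate for the Boltzmann deviation, not a change of strategy.
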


\subsection{Construction of approximate composite wave}

\subsubsection{Reformulation in a moving frame}

We begin by rewriting the NSP system \eqref{NSP} in divergence form using the Poisson equation \eqref{NSP33}. This reformulation is crucial in establishing a priori estimates, in particular the zeroth-order estimate in Section~\ref{Sec_4} and Section~\ref{Sec_6.1}. From \eqref{NSP33}, we derive the following identity:
\begin{equation} \label{trf}
\frac{\phi_x}{v} = \bigg[ \frac{1}{v} + \frac{\lambda^2}{v} \left( \frac{\phi_x}{v} \right)_x - \frac{\lambda^2}{2} \left( \frac{\phi_x}{v} \right)^2 \bigg]_x.
\end{equation}
Substituting this into \eqref{NSP22}, we rewrite the system \eqref{NSP} in the moving coordinates $(t,\xi)$, with $\xi = x -\sigma t$, as
\begin{subequations} \label{NSP''}
\begin{align}
& \label{NSP1''} v_t - \sigma v_\xi - u_\xi = 0, \\
& \label{NSP2''}u_t - \sigma u_\xi + \tilde{p}(v)_\xi = \left( \frac{u_\xi}{v} \right)_\xi + \Phi(v,\phi)_\xi, \\
& \label{NSP3''} - \left( \frac{\phi_\xi}{v} \right)_\xi = 1 - v e^\phi.
\end{align}
\end{subequations}
where $\tilde{p}$ is as in \eqref{p(v)} and the electric force $\Phi$ arising from non-neutral plasma density is given by
\begin{equation} \label{Phi}
\Phi (v,\phi) := \frac{1}{2} \left( \frac{\phi_\xi}{v} \right)^2 - \frac{1}{v}  \left( \frac{\phi_\xi}{v} \right)_\xi.
\end{equation}

\subsubsection{Approximate composite wave}

As Lemma~\ref{Prop.1.1} indicates, the shock profile is translation invariant as a solution to \eqref{SHODE} with \eqref{ffc}. Motivated by this invariance, we consider a translated shock profile $(\bar{v}^S,\bar{u}^S,\bar{\phi}^S)(\xi-X(t))$, where the shift function $X(t)$ will be determined later. For notational convenience, we write
\begin{equation*}
(\bar{v}^S,\bar{u}^S,\bar{\phi}^S) = (\bar{v}^S,\bar{u}^S,\bar{\phi}^S)(\xi-X(t))
\end{equation*}
throughout the rest of the paper. The approximate composite wave $(\bar{v},\bar{u},\bar{\phi})$ is then constructed by superposing the approximate rarefaction wave $\left( \bar{v}^R, \bar{u}^R, \bar{\phi}^R \right)$ and the shifted shock profile $ \left( \bar{v}^S, \bar{u}^S, \bar{\phi}^S \right)$ in the moving frame:
\begin{equation} \label{ACW}
\begin{split}
(\bar{v},\bar{u},\bar{\phi})(t,\xi) & := ( \bar{v}^R, \bar{u}^R, \bar{\phi}^R ) (t, \xi + \sigma t)  + ( \bar{v}^S, \bar{u}^S, \bar{\phi}^S ) (\xi - X(t)) - (v_m,u_m,\phi_m).
\end{split}
\end{equation}
We refer to this profile as \textit{approximate composite wave} since the rarefaction component has been replaced by its smooth approximation.

We conclude this section by presenting the governing equations in $(t,\xi)$ for the approximate rarefaction wave, the shifted shock profile, and the approximate composite wave. By \eqref{eq:Ra}, we have
\begin{subequations} \label{eq:ra}
\begin{align}
& \label{ra1} \bar{v}^R_t - \sigma \bar{v}^R_\xi - \bar{u}^R_\xi = 0, \\
& \label{ra2} \bar{u}^R_t - \sigma \bar{u}^R_\xi + \tilde{p}(\bar{v}^R)_\xi = 0, \\
& \label{ra3} \bar{\phi}^R = - \ln{\bar{v}^R}.
\end{align}
\end{subequations}
Similarly, by \eqref{SHODE}, the shifted shock profile $(\bar{v}^S, \bar{u}^S, \bar{\phi}^S)$ satisfies
\begin{subequations} \label{eq:sh'}
\begin{align}
& \label{sh1'} \bar{v}^S_t - \sigma \bar{v}^S_\xi - \bar{u}^S_\xi + \dot{X}(t) \bar{v}^S_\xi = 0, \\
& \label{sh2'} \bar{u}^S_t - \sigma \bar{u}^S_\xi + \tilde{p}(\bar{v}^S)_\xi + \dot{X}(t) \bar{u}^S_\xi = \bigg( \frac{\bar{u}^S_\xi}{\bar{v}^S} \bigg)_\xi + \Phi(\bar{v}^S,\bar{\phi}^S)_\xi, \\
& \label{sh3'} - \bigg( \frac{\bar{\phi}^S_\xi}{\bar{v}^S} \bigg)_\xi = 1 - \bar{v}^S e^{\bar{\phi}^S}.
\end{align}
\end{subequations}
The approximate composite wave $(\bar{v}, \bar{u}, \bar{\phi})$, defined in \eqref{ACW}, then satisfies
\begin{subequations}
\begin{align}
& \label{CW1} \bar{v}_t - \sigma \bar{v}_\xi - \bar{u}_\xi + \dot{X}(t) \bar{v}^S_\xi = 0, \\
& \label{CW2} \bar{u}_t - \sigma \bar{u}_\xi + \tilde{p}(\bar{v})_\xi + \dot{X}(t) \bar{u}^S_\xi = \left( \frac{\bar{u}_\xi}{\bar{v}} \right)_\xi + \Phi(\bar{v},\bar{\phi})_\xi + F_1 + F_2 + F_3.
\end{align}
\end{subequations}
Here, the correction terms $F_1$, $F_2$, and $F_3$ arise from the structural differences between the equations \eqref{eq:ra} and \eqref{eq:sh'}. They are given by
\begin{equation} \label{F1F2_def}
\begin{split}
F_1 & := \bigg( \frac{\bar{u}^S_\xi}{\bar{v}^S} \bigg)_\xi -  \left( \frac{\bar{u}_\xi}{\bar{v}} \right)_\xi, \quad 
F_2 := \tilde{p}(\bar{v})_\xi - \tilde{p}(\bar{v}^R)_\xi - \tilde{p}(\bar{v}^S)_\xi,
\end{split}
\end{equation}
and
\begin{equation} \label{F3_def}
F_3 := \Phi(\bar{v}^S, \bar{\phi}^S)_\xi - \Phi(\bar{v}, \bar{\phi})_\xi.
\end{equation}

\section{A priori estimate and proof of Theorem~\ref{Main}} \label{Sec_3}

In this section, we state an a priori estimate for $H^2$-perturbations around the approximate composite wave and outline the main ideas of its proof. We then use this estimate to prove Theorem~\ref{Main}.

\subsection{Existence of local-in-time solution}

We first present the local existence of solutions to the NSP system \eqref{NSP}.

\begin{proposition} \label{LocalE}
Let $\underline{v}$, $\underline{u}$, and $\underline{\phi}$ be smooth monotone functions such that
\begin{equation*}
\underline{v}(\xi) = v_\pm, \quad \underline{u}(\xi) = u_\pm, \quad \underline{\phi}=\phi_\pm \quad \text{for } \pm \xi \geq 1.
\end{equation*}
For any constants $M_0$, $M_1$, $\underline{\kappa}_0$, $\overline{\kappa}_0$, $\underline{\kappa}_1$, and $\overline{\kappa}_1$ with
\begin{equation*}
0 < M_0 < M_1 \quad \text{and} \quad 0 < \underline{\kappa}_1 < \underline{\kappa}_0 < \overline{\kappa}_0 < \overline{\kappa}_1,
\end{equation*}
there exists a finite time $T_0>0$ such that if the initial data $(v_0,u_0)$ satisfies
\begin{equation*}
\lVert v_0 - \underline{v} \rVert_{H^2(\mathbb{R})} + \lVert u_0-\underline{u} \rVert_{H^2(\mathbb{R})} \leq M_0 \quad \text{and} \quad \underline{\kappa}_0 \leq v_0(x) \leq \overline{\kappa}_0 \quad \text{for all } \xi \in \mathbb{R},
\end{equation*}
the Cauchy problem \eqref{NSP''}--\eqref{ic}, with \eqref{qnc}, admits a unique solution $(v,u,\phi)$ on $[0,T_0]$ satisfying
\begin{equation*}
\begin{split}
&v-\underline{v} \in C (0,T_0;H^2(\mathbb{R})), \\
&u-\underline{u} \in C (0,T_0;H^2(\mathbb{R})) \cap L^2(0,T_0;H^3(\mathbb{R})), \\
&\phi-\underline{\phi} \in C(0,T_0;H^3(\mathbb{R})),
\end{split}
\end{equation*}
with
\begin{equation*}
\lVert (v-\underline{v},u-\underline{u}) \rVert_{L^\infty(0,T_0;H^2(\mathbb{R}))} + \lVert \phi-\underline{\phi} \rVert_{L^\infty(0,T_0;H^3(\mathbb{R}))} \leq M_1
\end{equation*}
and
\begin{equation*}
\underline{\kappa}_1 \leq v(t,\xi) \leq \overline{\kappa}_1 \quad \text{for all } (t,\xi) \in [0,T_0]\times \mathbb{R}.
\end{equation*}
\end{proposition}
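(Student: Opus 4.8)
The plan is to decouple the elliptic Poisson equation \eqref{NSP3''} from the hyperbolic--parabolic part \eqref{NSP1''}--\eqref{NSP2''}, run a standard linearized iteration for the resulting $(v,u)$ system, establish uniform bounds in the strong norm together with contraction in a weaker norm on a short time interval, and recover the full coupled system in the limit. The first ingredient (these are the basic elliptic estimates deferred to the appendix) is a \emph{Poisson solution operator}: for every $v$ with $v - \underline v \in H^2(\mathbb{R})$ and $\underline\kappa_1 \le v \le \overline\kappa_1$ there is a unique $\phi$ with $\phi - \underline\phi \in H^3(\mathbb{R})$ solving $-(\phi_\xi/v)_\xi = 1 - v e^{\phi}$, the far-field values $\phi_\pm = -\ln v_\pm$ being compatible since $1 - v_\pm e^{\phi_\pm} = 0$. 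Existence and uniqueness follow from the strict monotonicity of $\phi \mapsto v e^{\phi}$, which renders the linearized operator uniformly elliptic and coercive on $H^1$; the identity $\phi_{\xi\xi} = v(v e^{\phi} - 1) + \phi_\xi v_\xi / v$ read off from the equation then bootstraps $\phi - \underline\phi$ from $H^1$ to $H^3$ with $\lVert \phi - \underline\phi \rVert_{H^3} \le C\bigl(\lVert v - \underline v\rVert_{H^2}\bigr)$. Moreover the map $\mathcal{P}\colon v \mapsto \phi[v]$ is Lipschitz on bounded subsets of $\{\,\underline\kappa_1 \le v \le \overline\kappa_1\,\}$, viewed, say, from $H^1$ into $H^2$; this quantitative dependence is what the contraction step requires.

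Next, inserting $\phi = \mathcal{P}(v)$ into \eqref{NSP2''} and using \eqref{NSP3''} in \eqref{Phi} to rewrite $\Phi(v,\phi) = \tfrac12 (\phi_\xi/v)^2 - e^{\phi} + 1/v$, so that the forcing $\Phi(v,\mathcal{P}(v))_\xi$ involves $\phi$ only through one $\xi$-derivative, the system becomes a one-dimensional compressible Navier--Stokes system in $(v,u)$ with a forcing term one order more regular than the parabolic structure requires. I would iterate from $(v^0,u^0)(t,\xi) := (v_0,u_0)(\xi)$: given $(v^n,u^n)$ with $\underline\kappa_1 \le v^n \le \overline\kappa_1$, set $\phi^n := \mathcal{P}(v^n)$ and define $u^{n+1}$ as the solution of the linear parabolic problem
\[
u^{n+1}_t - \sigma u^{n+1}_\xi - \Bigl( \tfrac{u^{n+1}_\xi}{v^n} \Bigr)_\xi = -\,\tilde p(v^n)_\xi + \Phi(v^n,\phi^n)_\xi, \qquad u^{n+1}(0,\cdot) = u_0,
\]
and then $v^{n+1}$ as the solution of the linear transport problem $v^{n+1}_t - \sigma v^{n+1}_\xi = u^{n+1}_\xi$, $v^{n+1}(0,\cdot) = v_0$. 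Standard linear parabolic theory (the coefficient $1/v^n$ is bounded above and below and lies in $C^1$) yields $u^{n+1} - \underline u \in C(0,T;H^2) \cap L^2(0,T;H^3)$ with $u^{n+1}_t \in L^2(0,T;H^1)$, and solving the transport equation along characteristics gives $v^{n+1} - \underline v \in C(0,T;H^2)$; the far-field constants are preserved since the data and the reference profiles already agree there.

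Energy estimates for the two linear problems then show that, for $M_1$ chosen large in terms of $M_0$ and the $\kappa$'s and $T_0$ chosen small, the set $\mathcal{X}_{T_0} := \{(v,u): \lVert v - \underline v\rVert_{L^\infty(0,T_0;H^2)} + \lVert u - \underline u \rVert_{L^\infty(0,T_0;H^2)} \le M_1,\ \underline\kappa_1 \le v \le \overline\kappa_1\}$ is invariant; the pointwise bound on $v^{n+1}$ comes from $|v^{n+1}(t,\xi) - v_0(\xi)| \le \int_0^t \lVert u^{n+1}_\xi(s,\cdot)\rVert_{L^\infty}\,ds \le C T_0 M_1$ together with $\underline\kappa_0 \le v_0 \le \overline\kappa_0$ and the strict inclusion $\underline\kappa_1 < \underline\kappa_0 < \overline\kappa_0 < \overline\kappa_1$. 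A further energy estimate on the differences $(v^{n+1}-v^n,\, u^{n+1}-u^n)$, shrinking $T_0$ again, shows the iteration is a contraction in the weaker norm $C(0,T_0;L^2) \times C(0,T_0;H^1)$, where the Lipschitz bound for $\mathcal{P}$ controls the difference of the forcing terms. Hence $(v^n,u^n)$ converges; by weak-$*$ compactness and interpolation the limit lies in $\mathcal{X}_{T_0}$ with the stated regularity, and passing to the limit in the equations, with $\phi = \mathcal{P}(v)$ recovered from the continuity of $\mathcal{P}$, gives a solution of \eqref{NSP''}--\eqref{ic}. Uniqueness follows from the same weaker-norm estimate applied to the difference of two solutions.

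The only genuinely new ingredient beyond the classical Navier--Stokes local theory is the elliptic coupling, so the bulk of the effort is the first step: constructing $\mathcal{P}$ on the correct function space, controlling the nonlinearity $v e^{\phi}$ and the $v$-dependent coefficient $1/v$ in $-(\phi_\xi/v)_\xi$ against the inhomogeneous far-field data, and — crucially — extracting the Lipschitz dependence of $\phi[v]$ on $v$ needed for the contraction. Once $\mathcal{P}$ is available with these bounds, the parabolic estimate for $u$, the transport estimate for $v$, and the fixed-point scheme are routine adaptations of the standard argument for one-dimensional compressible Navier--Stokes in Lagrangian coordinates.
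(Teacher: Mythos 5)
Your proposal is correct and matches what the paper has in mind: Proposition~\ref{LocalE} is stated in the paper without proof, with a reference to the ``standard iteration method'' of \cite{MN, Sol}, and your sketch is precisely the natural adaptation of that scheme to the NSP system. You correctly isolate the one genuinely new ingredient — the elliptic Poisson solver $v \mapsto \phi[v]$ with quantitative $H^2 \to H^3$ bounds and Lipschitz dependence in weaker norms, justified by the monotone nonlinearity $\phi \mapsto v e^{\phi}$ — and then the algebraic observation that, after substituting the Poisson equation, $\Phi(v,\phi)_\xi = -\phi_\xi/v + (1/v)_\xi$ so the forcing loses no derivatives relative to the parabolic structure, after which the transport/parabolic iteration, invariance of the ball via the pointwise time-integrated bound on $v$, and contraction in a weaker norm are the standard one-dimensional barotropic Navier--Stokes argument in Lagrangian coordinates.
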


The proof of the local existence follows from the standard iteration method (see \cite{MN, Sol}) and is omitted here for brevity.

\subsection{Construction of shift}

We define the shift function $X:\mathbb{R}_+ \to \mathbb{R}$ as the solution to the ODE:
\begin{equation} \label{XODE}
\begin{split}
\dot{X}(t) & = - \frac{M}{\delta_S} \bigg( \int_\mathbb{R} a^X(\xi) \partial_\xi \left( \bar{u}^S - \partial_\xi \left( \ln{\bar{v}^S} \right) \right) (\xi-X) \frac{\tilde{p}(v) - \tilde{p}(\bar{v})}{\sigma} \, d\xi \\
& \qquad \qquad - \int_\mathbb{R} a^X(\xi) \partial_\xi \tilde{p}\big(\bar{v}^S(\xi-X)\big)(v-\bar{v}) \, d\xi \bigg)
\end{split}
\end{equation}
with $X(0) = 0$, where the function $a^X$ is defined in \eqref{a} and $\textstyle M = \frac{5\sqrt{2}c_0}{8v_m^2}$ for some constant $c_0>0$. The existence of the shift $X(t)$ is ensured by the standard existence theorem for ODEs, as shown in \cite{KVW2} for the Navier--Stokes equations.

\begin{remark}
We note that the electric potential $\phi$ does not appear explicitly in the defining ODE \eqref{XODE} for $X(t)$. This reflects the fact that, in the method of $a$-contraction with shifts, $X(t)$ is essentially determined by the hyperbolic structure of the system; see the brief discussion given in Section~\ref{Sec_4.6.1}. In the NSP system, the contribution of the electric force term in the momentum equation \eqref{NSP22} to the hyperbolic part is already absorbed into the definition of the modified pressure $\tilde{p}$ through the term $\tfrac{1}{v}$ (see \eqref{trf} and \eqref{p(v)}). Consequently, $\phi$ does not appear explicitly in \eqref{XODE}.
\end{remark}

\subsection{A priori estimate}

The a priori estimate is stated as follows.

\begin{proposition} \label{Apriori}
Let $T>0$ be a positive constant. Suppose that $(v,u,\phi)$ is the solution to \eqref{NSP''} with \eqref{ic} on $[0,T]$, and that $(\bar{v},\bar{u},\bar{\phi})$ is the approximate composite wave defined in \eqref{ACW}, with the shift $X(t)$ given by \eqref{XODE}. Then there exist positive constants $\delta_0$ and $\varepsilon_1$ such that the following holds.

Assume that the solution $(v,u,\phi)$ satisfies
\begin{equation} \label{AssH2}
\sup_{0 \leq t \leq T}{ \big( \lVert (v-\bar{v},u-\bar{u})(t,\cdot) \rVert_{H^2(\mathbb{R})} + \lVert \phi-\bar{\phi}(t,\cdot) \rVert_{H^3(\mathbb{R})} \big) }  \leq \varepsilon_1
\end{equation}
and the amplitudes $\delta_R=|v_m-v_-|$ and $\delta_S=|v_+-v_m|$, defined in Lemmas~\ref{rarefaction} and \ref{Prop.1.1}, satisfy $\delta_R + \delta_S < \delta_0$. Then it holds that
\begin{equation} \label{apriori}
\begin{split}
& \lVert (v-\bar{v},u-\bar{u},\phi-\bar{\phi}) (t,\cdot) \rVert_{H^2(\mathbb{R})}^2 + \delta_S \int_0^t |\dot{X}(\tau)|^2 \, d\tau \\
& \qquad + \int_0^t  \left(  G_2 + G_3 + G^S + G^R + D + \| (v - \bar{v})_{\xi\xi} \|_{L^2(\mathbb{R})}^2 + \| (u - \bar{u})_\xi \|_{H^2(\mathbb{R})}^2\right)(\tau) \, d\tau \\
& \quad \leq C \lVert (v-\bar{v},u-\bar{u})(0,\cdot) \rVert_{H^2(\mathbb{R})}^2 + C \delta_R^{1/3}
\end{split}
\end{equation}
for all $t \in [0,T]$, with a constant $C>0$ independent of $T$, where
\begin{equation*}
\begin{array}{ll}
G_2 := \int_\mathbb{R} |(\phi - \bar{\phi})_{\xi\xi}|^2 \, d\xi, &
G_3 := \int_\mathbb{R} |(\phi - \bar{\phi})_{\xi\xi\xi}|^2 \, d\xi, \\
G^S := \int_\mathbb{R} \bar{v}^S_\xi \lvert \tilde{p}(v) - \tilde{p}(\bar{v}) \rvert^2 \, d\xi, &
G^R := \int_\mathbb{R} \bar{v}^R_\xi |v-\bar{v}|^2 \, d\xi, \\
D := \int_\mathbb{R} | \left( \tilde{p}(v) - \tilde{p}(\bar{v}) \right)_\xi |^2 \, d\xi.
\end{array}
\end{equation*}
\end{proposition}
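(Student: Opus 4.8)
The plan is to combine the method of $a$-contraction with shifts for the hyperbolic (zeroth-order) part with a hierarchy of energy and elliptic estimates for the higher-order parts, following the scheme of \cite{KVW2} for the Navier--Stokes equations but incorporating the modulated relative functional from \cite{KKSh} to handle the electric force $\Phi$. Throughout, I would work in the moving frame $(t,\xi)$ and write the perturbation as $(\psi_1,\psi_2,\psi_3) := (v-\bar v, u-\bar u, \phi-\bar\phi)$, deriving its evolution equations by subtracting \eqref{CW1}--\eqref{CW2} and \eqref{NSP3''} from \eqref{NSP1''}--\eqref{NSP3''}; this produces source terms involving $\dot X(t)\bar v^S_\xi$, $\dot X(t)\bar u^S_\xi$, the correction terms $F_1,F_2,F_3$ from \eqref{F1F2_def}--\eqref{F3_def}, and a relative-pressure term. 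The continuity argument then closes \eqref{AssH2}: assuming the a priori smallness, I would prove \eqref{apriori}, which (for $\varepsilon_1,\delta_0$ small) improves the bound and propagates it.

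\emph{Step 1 (zeroth-order / $a$-contraction estimate, Section~\ref{Sec_4}).} Introduce the weight $a^X(\xi)$ of \eqref{a} and the modulated relative functional adapted to NSP: instead of the bare relative entropy $\int a^X\,\eta(v,u\,|\,\bar v,\bar u)\,d\xi$, use the modification from \cite{KKSh} that absorbs the Poisson contribution via the identity \eqref{trf}, so that the electric force enters through $\tilde p$ rather than as a genuinely new term. Differentiating in time, the shift $X(t)$ is chosen precisely so that the bad linear-in-$\dot X$ term is cancelled — this is the content of the ODE \eqref{XODE}. The good terms one extracts are $G^S$ (from the shock profile's monotonicity $\bar v^S_\xi>0$ in \eqref{vup'}), $G^R$ (from the rarefaction's monotonicity $\bar v^R_\xi>0$ in Lemma~\ref{rarefaction}(1)), the parabolic dissipation $D$, and, via the modulation, the elliptic terms $G_2$; the diffusion term $(\frac{u_\xi}{v})_\xi$ supplies $D$ after the standard relative-entropy manipulation. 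The interaction terms between the rarefaction and the shock profile are controlled using the exponential localization in Lemma~\ref{rarefaction}(3)--(4) and \eqref{shderiv}, the decay rates in Lemma~\ref{rarefaction}(2), and $\delta_R+\delta_S<\delta_0$; these generate the harmless $C\delta_R^{1/3}$ on the right-hand side of \eqref{apriori}. The $\dot X$-dependent error terms are absorbed by the $\delta_S\int_0^t|\dot X|^2\,d\tau$ term, using the Cauchy--Schwarz bound $|\dot X|^2\lesssim \delta_S^{-2}(G^S+G^R)$-type estimates read off from \eqref{XODE}.

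\emph{Step 2 (elliptic estimates for $\psi_3$, Section~\ref{Sec_5}).} Subtract \eqref{sh3'} from \eqref{NSP3''} to get an elliptic equation for $\psi_3$ of the schematic form $-(\psi_{3\xi}/v)_\xi + (\text{positive})\psi_3 = (\text{lower order in }\psi_1,\psi_2)$ plus rarefaction-error terms; the basic solvability and gain-of-two-derivatives estimates are those collected in the Appendix on elliptic estimates for the Poisson equation. From this I would bound $\|\psi_3\|_{H^2}$ and, after differentiating, $G_3=\|\psi_{3\xi\xi\xi}\|_{L^2}^2$ in terms of $\|\psi_1\|_{H^1}$, $D$, $G_2$ and rarefaction terms, so that the elliptic quantities on the left of \eqref{apriori} are genuinely controlled rather than assumed.

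\emph{Step 3 (higher-order energy estimates, Section~\ref{Sec_6}).} Here one performs weighted $L^2$ estimates on $\partial_\xi$ and $\partial_\xi^2$ of the $(\psi_1,\psi_2)$-system. The first-order estimate, using \eqref{NSP2''} rewritten so that $(\frac{u_\xi}{v})_\xi$ is the leading term, yields $\|(u-\bar u)_\xi\|_{H^1}^2$ in the dissipation after using $D$ from Step 1 and absorbing $\Phi$-terms via Step 2. A delicate point — as in the Navier--Stokes case — is the $(v-\bar v)_{\xi\xi}$ control: $v$ satisfies only a transport equation, so one must use the structure of \eqref{NSP1''}--\eqref{NSP2''} (differentiate the momentum equation, use $u_t-\sigma u_\xi$ from the $v$-equation) to trade a $\xi$-derivative, producing the term $\|(v-\bar v)_{\xi\xi}\|_{L^2}^2$ on the left; this is precisely where the deferred Lemmas~\ref{uxi}, \ref{uxixi}, \ref{vxixi} enter, and I expect this to be \textbf{the main obstacle} because the top-order $v$-estimate has no self-dissipation and must borrow from the momentum dissipation and from $G_2,G_3$, all while keeping the rarefaction errors at the $\delta_R^{1/3}$ level and the $\dot X$ errors absorbable. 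Finally, summing Steps 1--3 with suitably small coefficients and using $\delta_R+\delta_S<\delta_0$, $\varepsilon_1\ll1$ to absorb all cubic and interaction remainders into the left-hand side gives \eqref{apriori}.
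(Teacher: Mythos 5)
Your overall architecture — zeroth-order $a$-contraction, elliptic estimates for $\phi$, then higher-order energy, all summed with small coefficients — matches the paper's, and you correctly identify the role of $X(t)$, the Poincar\'e step, and the wave-interaction bounds. But there is a genuine gap in Step~1 that would prevent the zeroth-order estimate from closing.

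You propose to run the $a$-contraction on the raw perturbation $(v-\bar v, u-\bar u)$ and claim that ``the diffusion term $(\frac{u_\xi}{v})_\xi$ supplies $D$.'' That is not what happens. In the $(v,u)$-variables, the relative-entropy identity yields dissipation in $\tilde u_\xi$ (something like $\int |\tilde u_\xi|^2/v\,d\xi$), \emph{not} the quantity $D=\int|(\tilde p(v)-\tilde p(\bar v))_\xi|^2\,d\xi$, which is effectively a $\tilde v_\xi$-dissipation. The Poincar\'e-type inequality that controls the dominant bad term in $\mathcal{R}_1$ requires exactly $D$ (plus $G^S$ and $G_1$), and there is no way to convert $\tilde u_\xi$-dissipation into $\tilde v_\xi$-dissipation at the zeroth order without a structural change. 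The paper's fix is to pass to the effective velocity $h:=u-(\ln v)_\xi$ (Section~\ref{Sec_3.5} and \eqref{NSPh}), which moves the viscosity into the mass equation as $(\ln v)_{\xi\xi}$, so the relative-entropy manipulation then genuinely produces $D$; your plan omits this reformulation entirely. As the paper itself notes, in the raw $(v,u)$-variables the first-derivative estimate for $v-\bar v$ is ``delicate,'' precisely because the $v$-equation carries no diffusion; the $h$-variable absorbs that difficulty into the zeroth-order step. Your Step~3 acknowledges this issue at second order but not at the zeroth/first order, which is where the argument would actually break.

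A secondary omission: in the $h$-formulation a cross term of the form $-\int a^X(\ln v-\ln\bar v)_{\xi\xi}\,\tilde\phi_{\xi\xi}/\bar v^2$ (the quantity $\tilde{\mathcal{P}}$) arises, and the paper devotes a whole lemma (Lemma~\ref{Quad}) to completing a square using the Poisson identity \eqref{v}, which is how the good terms comparable to $G_2$ and $G_3$ are actually produced at zeroth order. You attribute $G_2$ vaguely to ``the modulation'' and $G_3$ to a separate elliptic step; the latter is not wrong in spirit (the elliptic bound \eqref{phiH3} does give $G_3\lesssim D+\|\tilde v_{\xi\xi}\|_{L^2}^2$ plus errors), but you should realize that $G_2, G_3$ enter the zeroth-order balance as good terms and must be extracted there, not merely recovered after the fact. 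Finally, the modulated functional \eqref{relentropy} is not just $\tilde p$ in place of $p$; it contains the extra terms $-\tfrac{(v-\bar v)(\phi-\bar\phi)_{\xi\xi}}{\bar v^2}+\tfrac{e^{-\bar\phi}|(\phi-\bar\phi)_{\xi\xi}|^2}{2\bar v^3}+\tfrac{e^{-\bar\phi}|(\phi-\bar\phi)_\xi|^2}{2\bar v^2}$, without which the electric-force contributions do not cancel in the way your Step~1 assumes.
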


\subsection{Proof of Theorem~\ref{Main}}

Together with Propositions~\ref{LocalE}--\ref{Apriori} and Lemma~\ref{rarefaction}, we use a continuation argument to prove the global-in-time existence of solutions near the composite wave. The time-asymptotic behavior then follows from Proposition~\ref{Apriori} and Lemma~\ref{rarefaction}. Since the proof is standard and similar to those in \cite{HK, HKK, KKSh, KVW2}, we present it in the Appendices.

\subsection{Main ideas for the proof of Proposition~\ref{Apriori}} \label{Sec_3.5}

We begin by rewriting the NSP system using the effective velocity \( h := u - (\ln v)_\xi \), as considered in \cite{HKK, KVW2}. The introduction of this variable serves two purposes. First, it enables a consistent formulation with \cite{KVW2}, allowing us to directly refer to certain intermediate results without reproducing identical computations. Second, it allows us to separate the energy estimates into a main part and the remaining parts including elliptic and higher-order estimates. As observed in the stability analysis of single shock profiles in NSP (see \cite{KKSh}; see also \cite{DLZ}), the estimate for the first derivative of the perturbation $(v-\bar{v})$ requires a delicate analysis. However, when expressed in terms of $h$, this is incorporated into the main zeroth-order estimates; see Section~\ref{Quadratiz} for the treatment of the associated term.

Building on the formulation in terms of \( (v,h,\phi) \)-variables, we establish the energy estimates of zeroth order by employing the method of $a$-contraction with shifts. This method is used to control the main terms from the isothermal Navier--Stokes part of \eqref{NSPh}. However, to control the electric force term in \eqref{NSPh2} within the relative entropy framework, we apply the method to a modulated relative functional introduced in our previous work \cite{KKSh}. This functional, \eqref{relentropy}, is obtained by modulating the relative entropy of the Navier--Stokes equations.

After deriving the zeroth-order energy estimate in the $(v,h,\phi)$-variables, we obtain elliptic estimates for $(\phi - \bar{\phi})$, including time and space derivatives, which are required to close the estimate. These are combined with the $L^2$-estimate for $(u - \bar{u})$ to obtain the zeroth-order estimate in the $(v,u,\phi)$-variables. Finally, we complete the a priori estimate by performing higher-order estimates.

\section{\texorpdfstring{Energy estimates for the system of $(v,h)$-variables}{Energy estimates for the system of (v,h)-variables}} \label{Sec_4}

As mentioned in Section~\ref{Sec_3.5}, we introduce the effective velocity $h(t,x) := u - (\ln{v})_\xi$ so that the NSP system \eqref{NSP''} can be rewritten as
\begin{subequations} \label{NSPh}
\begin{align}
& \label{NSPh1} v_t - \sigma v_\xi - h_\xi = (\ln{v})_{\xi\xi}, \\
& \label{NSPh2} h_t - \sigma h_\xi + \tilde{p}(v)_\xi = \Phi(v,\phi)_\xi, \\
& \label{NSPh3} - \left( \frac{\phi_\xi}{v} \right)_\xi = 1 - ve^{\phi}.
\end{align}
\end{subequations}
For the shock profiles, we set
\begin{equation*}
\bar{h}^S := \bar{u}^S - (\ln{\bar{v}^S})_\xi,
\end{equation*}
and define
\begin{equation*}
\bar{h}(t,\xi) := \bar{u}^R (t,\xi + \sigma t) + \bar{h}^S (\xi-X(t)) - u_m.
\end{equation*}
Then the approximate composite wave $(\bar{v},\bar{h},\bar{\phi})$ satisfies
\begin{equation} \label{ACWh}
\begin{split}
& \bar{v}_t - \sigma \bar{v}_\xi - \bar{h}_\xi + \dot{X}(t) \left( \bar{v}^S \right)^{X}_\xi = (\ln{\bar{v}})_{\xi\xi} + F_4, \\
& \bar{h}_t - \sigma \bar{h}_\xi + \tilde{p}(\bar{v})_\xi + \dot{X}(t) \left( \bar{h}^S \right)^{X}_\xi = \Phi(\bar{v},\bar{\phi})_\xi + F_2 + F_3,
\end{split}
\end{equation}
where $F_2$ and $F_3$ are defined in \eqref{F1F2_def}--\eqref{F3_def}, and
\begin{equation} \label{F_4}
F_4 := \left( \ln{(\bar{v}^S)^{X}} - \ln{\bar{v}} \right)_{\xi\xi}.
\end{equation}

We now define the perturbations of the $(v,h,\phi)$-variables and of the physical velocity $u$ around the composite wave as
\begin{equation*}
(\tilde{v},\tilde{h},\tilde{\phi})(t,\xi) := (v,h,\phi)(t,\xi) - (\bar{v},\bar{h},\bar{\phi})(t,\xi)
\end{equation*}
and
\begin{equation} \label{pert_u}
\tilde{u}(t,\xi) := u(t,\xi) - \bar{u}(t,\xi).
\end{equation}
By \eqref{NSPh} and \eqref{ACWh}, the perturbations $\tilde{v}$ and $\tilde{h}$ then satisfy
\begin{subequations} \label{pert}
\begin{align}
& \label{pert1} \tilde{v}_t - \sigma \tilde{v}_\xi - \tilde{h}_\xi - \dot{X}(t) \bar{v}^S_\xi = \left( \ln{v} - \ln{\bar{v}} \right)_{\xi\xi} - F_4,\\
& \label{pert2} \tilde{h}_t - \sigma \tilde{h}_\xi + \left( \tilde{p}(v) - \tilde{p}(\bar{v}) \right)_\xi - \dot{X}(t) \bar{h}^S_\xi = \left( \Phi(v,\phi) - \Phi(\bar{v},\bar{\phi}) \right)_\xi - F_2 - F_3.
\end{align}
\end{subequations}
To derive an elliptic equation for $\tilde{\phi}$, we use the Poisson equations \eqref{NSPh3} and \eqref{sh3'}:
\begin{equation*}
\begin{split}
\tilde{v} & = v - \bar{v} = v - \bar{v}^R - \bar{v}^S + v_m \\
& = e^{-\phi} \bigg( \bigg( \frac{\phi_\xi}{v} \bigg)_\xi + 1 \bigg) - e^{-\bar{\phi}^S} \bigg( \bigg( \frac{\bar{\phi}^S_\xi}{\bar{v}^S} \bigg)_\xi + 1 \bigg) - \bar{v}^R + v_m.
\end{split}
\end{equation*}
Expanding this, we arrive at
\begin{equation} \label{v}
\frac{e^{-\bar{\phi}}\tilde{\phi}_{\xi\xi}}{\bar{v}} - e^{-\bar{\phi}}\tilde{\phi} = \tilde{v} - \mathcal{V},
\end{equation}
where $\mathcal{V} = \mathcal{V}^I + \mathcal{V}^L + \mathcal{V}^N$ with
\begin{equation} \label{mcV}
\begin{split}
\mathcal{V}^I & := e^{-\bar{\phi}} \bigg( \bigg( \frac{\bar{\phi}_\xi}{\bar{v}} \bigg)_\xi  + 1 \bigg) - e^{-\bar{\phi}^S} \bigg( \bigg( \frac{\bar{\phi}^S_\xi}{\bar{v}^S} \bigg)_\xi  + 1 \bigg) - \bar{v}^R + v_m, \\
\mathcal{V}^L & := e^{-\bar{\phi}} \left( - \frac{\bar{\phi}_{\xi\xi}}{\bar{v}} \tilde{\phi} - \frac{\bar{\phi}_{\xi\xi}}{\bar{v}^2}\tilde{v} + \frac{\bar{v}_\xi \bar{\phi}_\xi}{\bar{v}^2} \tilde{\phi} - \frac{\bar{\phi}_\xi}{\bar{v}^2}\tilde{v}_\xi - \frac{\bar{v}_\xi}{\bar{v}^2}\tilde{\phi}_\xi + \frac{2\bar{v}_\xi\bar{\phi}_\xi}{\bar{v}^3}\tilde{v} \right), \\
\mathcal{V}^N & := e^{-\bar{\phi}} \left( e^{-\tilde{\phi}} - 1 + \tilde{\phi} \right) \left( 1 + \frac{\phi_{\xi\xi}}{v} - \frac{v_\xi \phi_\xi}{v^2} \right) \\
& \quad +  e^{-\bar{\phi}} \left( -\frac{\tilde{v}\tilde{\phi}_\xi}{v\bar{v}} + \frac{\bar{\phi}_\xi \tilde{v}^2}{v\bar{v}^2} \right)_\xi - e^{-\bar{\phi}}\tilde{\phi} \left( \frac{\tilde{\phi}_\xi}{\bar{v}} - \frac{\bar{\phi}_\xi\tilde{v}}{\bar{v}^2}  -\frac{\tilde{v}\tilde{\phi}_\xi}{v\bar{v}} + \frac{\bar{\phi}_\xi \tilde{v}^2}{v\bar{v}^2} \right)_\xi.
\end{split}
\end{equation}
In \eqref{mcV}, $\mathcal{V}^I$ represents the wave interactions and rarefaction errors, while $\mathcal{V}^L$ and $\mathcal{V}^N$ correspond to the linear and nonlinear contributions, respectively.

The goal of this section is to establish the following lemma.

\begin{lemma} \label{lem_0th}
Under the assumptions of Proposition~\ref{Apriori}, there exists a constant $C>0$ such that
\begin{equation} \label{0th}
\begin{split}
& \lVert (\tilde{v},\tilde{h})(t,\cdot) \rVert_{L^2}^2 + \lVert \tilde{\phi}_\xi(t,\cdot) \rVert_{H^1}^2 \\
& \qquad + \int_0^t \left( \delta_S |\dot{X}|^2 + G_1 + G_2 + G_3 + G^S + G^R + D \right) d\tau \\
& \quad \leq C ( \lVert \tilde{v}_{0} \rVert_{H^2}^2 + \lVert \tilde{h}(0,\cdot) \rVert_{L^2}^2  ) \\
& \qquad + C (\sqrt{\delta_0} + \varepsilon_1) \int_0^t \int_\mathbb{R} \left( \tilde{v}_{\xi\xi}^2 + \tilde{u}_\xi^2 + \bar{v}_\xi \tilde{\phi}^2 + \tilde{\phi}_\xi^2 + \tilde{\phi}_{\xi t}^2 \right) \, d\xi d\tau + C \delta_R^{1/3}
\end{split}
\end{equation}
for all $t \in [0,T]$, where $G_2$, $G_3$, $G^S$, $G^R$, and $D$ are as defined in Proposition~\ref{Apriori}, and 
\begin{equation}
\begin{array}{lll}
G_1 := \dfrac{1}{\sqrt{\delta_S}} \int_\mathbb{R} \bar{v}^S_\xi \left\lvert \tilde{h} - \dfrac{\tilde{p}(v) - \tilde{p}(\bar{v})}{\sigma} \right\rvert^2 \, d\xi.
\end{array}
\end{equation}
\end{lemma}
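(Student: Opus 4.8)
\textbf{Proof strategy for Lemma~\ref{lem_0th}.}

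The plan is to run the method of $a$-contraction with shifts on the modulated relative functional from \cite{KKSh}, applied to the $(v,h,\phi)$-system \eqref{NSPh}, and to split the resulting time-derivative identity into a ``good'' dissipative part and a collection of error terms that are absorbed using the smallness parameters $\delta_0$ and $\varepsilon_1$, the rarefaction bounds of Lemma~\ref{rarefaction}, and the shock-profile bounds \eqref{vup'}--\eqref{shderiv}. First I would fix the weight $a = a^X$ (from \eqref{a}) built from the shifted shock profile — a small-amplitude perturbation of a constant, with $a_\xi \sim -\bar v^S_\xi$ — and consider the functional $\int_\mathbb{R} a^X\,\eta\big((v,h)\,\big|\,(\bar v,\bar h)\big)\,d\xi$ plus the electrostatic modulation from \eqref{relentropy}; here $\eta$ is the relative entropy of the isothermal Navier--Stokes part, so the purely Navier--Stokes contribution to $\frac{d}{dt}$ is exactly the one computed in \cite{KVW2} for the composite-wave problem, and I can quote that decomposition verbatim. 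The new feature relative to \cite{KVW2} is the electric-force term $\Phi(v,\phi)_\xi - \Phi(\bar v,\bar\phi)_\xi$ in \eqref{pert2}; this is where the modulated functional of \cite{KKSh} is needed, so that when one integrates by parts the $\Phi$-difference against the entropy variable, the leading contribution combines with the Poisson relation \eqref{v} to produce controllable terms rather than an uncontrolled one. Concretely, using \eqref{v} one rewrites $\tilde\phi$ and its derivatives in terms of $\tilde v$ up to the interaction/linear/nonlinear remainders $\mathcal V^I,\mathcal V^L,\mathcal V^N$, feeding the elliptic gains $G_1,\dots,G_3$ and the dissipation $D$.

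The key steps, in order, are: (i) write $\frac{d}{dt}$ of the modulated $a$-weighted functional and organize the output as $\dot X(t)\cdot(\text{the bracket in \eqref{XODE}}) + (\text{hyperbolic good terms}) + (\text{parabolic good terms}) + (\text{electrostatic good terms } G_1,G_2,G_3) + (\text{errors})$; (ii) choose $\dot X$ by \eqref{XODE} so that the $\dot X$-linear term is turned into $-\,c\,\delta_S|\dot X|^2$ plus a quadratic piece controlled by $G^S$ and the good terms, exactly as in \cite{KVW2,KKSh} — this is the step that forces the precise constant $M=\tfrac{5\sqrt2 c_0}{8v_m^2}$; (iii) the hyperbolic good terms furnish $G^S$ (from $a_\xi\sim-\bar v^S_\xi$ acting on the shock) and $G^R$ (from $\bar v^R_\xi>0$ acting on the rarefaction, using Lemma~\ref{rarefaction}(1)); (iv) the parabolic/effective-velocity structure of the $h$-formulation converts the first-derivative term $(v-\bar v)_\xi$ — the delicate term flagged in Section~\ref{Sec_3.5} — into part of the zeroth-order dissipation, as in Section~\ref{Quadratiz}; (v) for the electrostatic part, substitute \eqref{v} into the $\Phi$-difference, integrate by parts, and use the basic elliptic estimates of the Appendix (controlling $\|\tilde\phi_\xi\|_{H^1}$, and $G_2,G_3$, by $\|\tilde v\|$ plus $\mathcal V$-errors) to close the $\tilde\phi_\xi$-norm on the left and produce $G_1,G_2,G_3$ with good sign; (vi) estimate all error terms — the correction terms $F_2,F_3,F_4$ from \eqref{F1F2_def}--\eqref{F_4}, the wave-interaction pieces, and the commutator/quadratic remainders — by Young's inequality, putting a factor $\sqrt{\delta_0}+\varepsilon_1$ on the interior quadratic terms $\int(\tilde v_{\xi\xi}^2+\tilde u_\xi^2+\bar v_\xi\tilde\phi^2+\tilde\phi_\xi^2+\tilde\phi_{\xi t}^2)$ that will be absorbed later in Sections~\ref{Sec_5}--\ref{Sec_6}, and a factor $\delta_R^{1/3}$ on the genuinely inhomogeneous rarefaction/interaction errors (the $\delta_R^{1/3}$ rate coming from the $L^p$-decay bounds of Lemma~\ref{rarefaction}(2) combined with the exponential localization of the shock, estimated region-by-region as in \cite{KVW2}); (vii) integrate in time on $[0,t]$.

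The main obstacle will be step (v) together with the interaction estimates in (vi): unlike the Navier--Stokes case in \cite{KVW2}, the ``pressure'' and the electric force are entangled, and although the modification $\tilde p = p + \tfrac1v$ absorbs the neutral part, the genuinely non-neutral remainder $\Phi$ still couples $\tilde\phi$ (solving the nonlinear elliptic problem \eqref{v}) back into the hyperbolic estimate. One must show that this coupling is, after integration by parts and use of \eqref{v}, either of good sign or of size $O(\sqrt{\delta_0}+\varepsilon_1)$ times the interior quadratics, which requires carefully tracking the linear terms $\mathcal V^L$ (these carry $\bar\phi_\xi,\bar v_\xi$ factors, hence are small by $\delta_R+\delta_S$ on the shock/rarefaction supports) and the nonlinear terms $\mathcal V^N$ (controlled by $\varepsilon_1$ via \eqref{AssH2} and Sobolev embedding). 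The bookkeeping is delicate because the rarefaction part decays only polynomially while the shock part decays exponentially, so the wave-interaction integrals $\mathcal V^I$ must be estimated by splitting $\mathbb{R}$ according to whether one is near the rarefaction fan or near the shock layer, yielding the $\delta_R^{1/3}$ loss; this is the technical heart of the argument, and I expect it to occupy the bulk of the proof.
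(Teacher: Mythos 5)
Your outline matches the paper's proof of Lemma~\ref{lem_0th} essentially step for step: the identity for the $a^X$-weighted modulated relative functional (Lemma~\ref{Id}), the maximization in $\tilde h$ (Lemma~\ref{Max}), the quadratization of $\tilde v_\xi\tilde\phi_{\xi\xi\xi}$ via the Poisson relation \eqref{v} (Lemma~\ref{Quad}), the $\mathcal R_1+\mathcal R_2$ estimate imported from \cite{KVW2} (with the choice of $M$ forced so the Navier--Stokes computations carry over), the $\mathcal R_3$ estimate for the electrostatic and error terms, and the final time integration combined with the interaction/rarefaction bounds \eqref{inter}--\eqref{Raest}. One concrete slip to fix: you write $a^X_\xi\sim-\bar v^S_\xi$ (twice), but from \eqref{a}, since $\tilde p'<0$ and $\bar v^S_\xi>0$, one has $a^X_\xi=\frac{2\bar v^S_\xi}{\sqrt{\delta_S}(\bar v^S)^2}>0$, i.e. $a^X_\xi\sim+\bar v^S_\xi$; this sign is what makes $-\frac{\sigma}{2}\int a^X_\xi\tilde h^2$ and $-\sigma\int a^X_\xi Q(v|\bar v)$ in $\mathcal J^{\mathrm{good}}$ genuinely dissipative, and it would flip the roles of good and bad terms in the maximization if taken as you state it.
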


\subsection{The modulated relative functional and weight function}

We will prove Lemma~\ref{lem_0th} using the method of $a$-contraction with shifts. To apply this method to our problem, we employ the modulated relative functional $\eta(\cdot|\cdot)$ for the NSP system originally introduced in $(v,u,\phi)$-variables in \cite{KKSh}. Specifically, we define the relative functional $\eta(W|\bar{W})$ between a solution $W := (v,h,\phi)$ to \eqref{NSPh} and the approximate composite wave $\bar{W}:=(\bar{v},\bar{h},\bar{\phi})$ by
\begin{equation} \label{relentropy}
\begin{split}
\eta(W|\bar{W}) & := \frac{|h-\bar{h}|^2}{2} + Q(v|\bar{v}) - \frac{(v-\bar{v})(\phi-\bar{\phi})_{\xi\xi}}{\bar{v}^2} \\
& \quad + \frac{e^{-\bar{\phi}}|(\phi-\bar{\phi})_{\xi\xi}|^2}{2\bar{v}^3} + \frac{e^{-\bar{\phi}} |(\phi-\bar{\phi})_{\xi}|^2}{2\bar{v}^2}.
\end{split}
\end{equation}
This functional can be viewed as a modulation of the relative entropy 
\begin{equation} \label{NSeta}
\tilde{\eta}(W|\bar{W}) := \frac{|h-\bar{h}|^2}{2} + Q(v|\bar{v})
\end{equation}
associated with the isothermal Navier-Stokes equations in $(v,h)$-variables:
\begin{equation} \label{NSh}
\begin{split}
& v_t - \sigma v_\xi - h_\xi = (\ln{v})_{\xi\xi}, \\
& h_t - \sigma h_\xi + \tilde{p}(v)_\xi = 0.
\end{split}
\end{equation}

The following lemma establishes a key property of the modulated relative functional $\eta(W|\bar{W})$.

\begin{lemma} \label{entsim}
Under the assumptions of Proposition \ref{Apriori}, there exist constants $c>0$ and $C>0$ such that 
\begin{equation*}
\eta(W|\bar{W}) \geq c \left( \lvert h - \bar{h} \rvert^2 + \lvert v - \bar{v} \rvert^2 + \lvert ( \phi - \bar{\phi} )_{\xi\xi} \rvert^2 + \lvert ( \phi - \bar{\phi} )_\xi \rvert^2 \right)
\end{equation*}
and
\begin{equation*}
\eta(W|\bar{W}) \leq C \left( \lvert h - \bar{h} \rvert^2 + \lvert v - \bar{v} \rvert^2 + \lvert ( \phi - \bar{\phi} )_{\xi\xi} \rvert^2 + \lvert ( \phi - \bar{\phi} )_\xi \rvert^2 \right)
\end{equation*}
for all $t \in [0,T]$ and $\xi \in \mathbb{R}$.
\end{lemma}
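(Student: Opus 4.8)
The plan is to establish the two-sided bound for $\eta(W|\bar W)$ by treating it as a quadratic form in the perturbation vector $(\tilde h, \tilde v, \tilde\phi_{\xi\xi}, \tilde\phi_\xi)$ and showing this form is positive definite with bounds uniform in $(t,\xi)$. First I would recall that $Q(v|\bar v)$ denotes the relative potential (the quadratic-type remainder of $Q$, where $Q' = -\tilde p$ or the analogous convex entropy density), so that under the a priori smallness assumption \eqref{AssH2} together with $\delta_R+\delta_S<\delta_0$, the specific volume $v$ and the profile $\bar v$ stay in a fixed compact subset of $\mathbb{R}_+$ bounded away from $0$ and $\infty$ (this follows from Lemma~\ref{rarefaction}, Lemma~\ref{Prop.1.1}, the endpoint constraints, and a Sobolev embedding $H^2\hookrightarrow L^\infty$ for $\tilde v$). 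On that compact set one has the standard equivalence $Q(v|\bar v)\sim |v-\bar v|^2$, with constants depending only on the compact set; this handles the $|\tilde v|^2$ contribution. Likewise $\bar\phi$ ranges over a compact set (via $\bar\phi^R=-\ln\bar v^R$, $\bar\phi^S$ bounded by Lemma~\ref{Prop.1.1}), so $e^{-\bar\phi}$ and $1/\bar v^2$, $1/\bar v^3$ are bounded above and below by positive constants.

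Next I would isolate the genuinely coupled part of $\eta$: writing $a := e^{-\bar\phi}/\bar v^3>0$, the terms
\[
-\frac{(v-\bar v)(\phi-\bar\phi)_{\xi\xi}}{\bar v^2} + \frac{e^{-\bar\phi}|(\phi-\bar\phi)_{\xi\xi}|^2}{2\bar v^3}
\]
together with the already-controlled $Q(v|\bar v)$ form a quadratic expression in $(\tilde v,\tilde\phi_{\xi\xi})$ of the shape $\alpha \tilde v^2 - \beta \tilde v\,\tilde\phi_{\xi\xi} + \tfrac a2 \tilde\phi_{\xi\xi}^2$ with $\alpha\sim 1$ (from $Q$), $\beta = 1/\bar v^2$, $a\sim 1$. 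To show this is positive definite I would complete the square: $\alpha\tilde v^2 - \beta\tilde v\tilde\phi_{\xi\xi} + \tfrac a2\tilde\phi_{\xi\xi}^2 = \tfrac{a}{4}\tilde\phi_{\xi\xi}^2 + \big(\tfrac{a}{4}\tilde\phi_{\xi\xi}^2 - \beta\tilde v\tilde\phi_{\xi\xi} + \alpha\tilde v^2\big)$, and it suffices that the discriminant of the bracketed piece is negative, i.e. $\beta^2 < a\alpha$. This is precisely where the structure matters: $\beta^2 = 1/\bar v^4$ while $a\alpha \sim e^{-\bar\phi} Q''(\bar v)/\bar v^3$, and by the definition of $Q$ (with $Q'' = -\tilde p'(v) = p(v)/v + 1/v^2 \cdot(\text{something}) \ge$ a quantity $\gtrsim 1/\bar v^2 \cdot e^{-\bar\phi}$ using the quasi-neutral relation $\bar\phi=-\ln\bar v$, hence $e^{-\bar\phi}=1/\bar v$ on the relevant parts) one checks $a\alpha$ dominates $\beta^2$ by a fixed margin. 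The remaining terms $\tfrac12|\tilde h|^2$ and $\tfrac{e^{-\bar\phi}}{2\bar v^2}|\tilde\phi_\xi|^2$ are already manifestly comparable to $|\tilde h|^2$ and $|\tilde\phi_\xi|^2$ and decouple cleanly. Summing the four pieces gives the lower bound with a constant $c>0$; the upper bound is immediate since every coefficient is bounded above on the compact set and the cross term is absorbed by Young's inequality.

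The main obstacle I anticipate is verifying the discriminant inequality $\beta^2 < a\alpha$ with explicit, uniform constants — that is, confirming that the coefficient $e^{-\bar\phi}/(2\bar v^3)$ of $\tfrac12|\tilde\phi_{\xi\xi}|^2$ really is large enough relative to the cross-term coefficient $1/\bar v^2$, given the actual lower bound on $Q''$ coming from the modified pressure $\tilde p$ in \eqref{p(v)}. This requires pinning down precisely the normalization of $Q$ used in \eqref{NSeta} (presumably $Q'(v)=-\tilde p(v)$ up to an affine term, so $Q''(v)=-\tilde p'(v) = 1/v^2 + 1/v^2 = $ the expression from \eqref{eig}), and then a short computation on the compact range of $\bar v$. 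A secondary, more bookkeeping-type point is justifying that $v$ itself (not just $\bar v$) remains in the compact set: this uses $\|\tilde v\|_{L^\infty}\le C\|\tilde v\|_{H^1}\le C\varepsilon_1$ from \eqref{AssH2} together with shrinking $\varepsilon_1,\delta_0$. Once the compact-set confinement and the single scalar discriminant inequality are in hand, the rest is routine quadratic-form algebra and I would not belabor it.
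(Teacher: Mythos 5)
Your overall strategy matches the paper's: bound $Q(v|\bar v)$ below by a Taylor expansion, control $e^{-\bar\phi}/\bar v$ from below using the quasi-neutral structure and the smallness of wave amplitudes, and then check positive definiteness of the resulting quadratic form in $(\tilde v,\tilde\phi_{\xi\xi})$. However, there is a genuine arithmetic gap in how you check positive definiteness, and it is not cosmetic.

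First, the sign slip: with $\bar\phi^R=-\ln\bar v^R$ one has $e^{-\bar\phi}\approx\bar v$, not $1/\bar v$ as you wrote. Keeping track of this, the relevant coefficients are $\alpha\approx\tfrac{1}{\bar v^2}$ (from $Q(v|\bar v)\geq\tfrac{1}{\bar v^2}|\tilde v|^2-\tfrac{2}{3\bar v^3}\tilde v^3$, noting $Q''(v)=-\tilde p'(v)=2/v^2$), $\beta=\tfrac{1}{\bar v^2}$, $a=\tfrac{e^{-\bar\phi}}{\bar v^3}$. Your completion of the square sets aside $\tfrac a4\tilde\phi_{\xi\xi}^2$ as a spare positive term and demands the bracket $\tfrac a4\tilde\phi_{\xi\xi}^2-\beta\tilde v\tilde\phi_{\xi\xi}+\alpha\tilde v^2$ be positive definite, which forces $\beta^2<a\alpha$, i.e.
\begin{equation*}
\frac{1}{\bar v^4}<\frac{e^{-\bar\phi}}{\bar v^5},\qquad\text{equivalently}\qquad \frac{e^{-\bar\phi}}{\bar v}>1.
\end{equation*}
This is exactly the critical threshold, and in fact it can fail: setting $a:=\bar v^R-v_m\le 0$ and $b:=\bar v^S-v_m\ge 0$, one computes (modulo $O(\delta_S^2)$)
\begin{equation*}
\frac{e^{-\bar\phi}}{\bar v}=\frac{(v_m+a)(v_m+b)}{v_m\,(v_m+a+b)}=1+\frac{ab}{v_m^2+v_m(a+b)},
\end{equation*}
and since $ab\leq 0$, the ratio drops slightly \emph{below} $1$ wherever the rarefaction and shock profiles overlap. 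So your split gives a criterion that sits precisely on the boundary and tips the wrong way.

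The fix is simply to not throw away half the $\tilde\phi_{\xi\xi}^2$ coefficient. Test positive definiteness of the full $2\times 2$ form with matrix $\begin{pmatrix}\alpha&-\beta/2\\-\beta/2&a/2\end{pmatrix}$; the determinant condition is $2\alpha a>\beta^2$, which reduces to $\tfrac{e^{-\bar\phi}}{\bar v}>\tfrac12$. That is the inequality the paper actually invokes, and it holds with a comfortable margin once $\delta_R+\delta_S$ and $\varepsilon_1$ are small, since $\tfrac{e^{-\bar\phi}}{\bar v}$ is $1+O(\delta_R\delta_S+\delta_S^2)$. With this correction your argument goes through; the compact-set confinement of $v,\bar v,\bar\phi$, the equivalence $Q(v|\bar v)\sim|\tilde v|^2$, and the decoupling of the $|\tilde h|^2$ and $|\tilde\phi_\xi|^2$ pieces are all fine as you stated them.
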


\begin{proof}
Note that
\begin{equation*}
Q(v|\bar{v}) \geq \frac{1}{\bar{v}^2}|v-\bar{v}|^2 - \frac{2}{3\bar{v}^3}(v-\bar{v})^3,
\end{equation*}
and, by \eqref{ra3} and \eqref{sh3'},
\begin{equation*}
\begin{split}
\bigg\lvert \frac{e^{-\bar{\phi}}}{\bar{v}} \bigg\rvert & = \bigg\lvert \frac{\bar{v}^R e^{-\bar{\phi}^S}}{v_m\bar{v}} \bigg\rvert = \bigg\lvert \frac{\bar{v}^R}{v_m\bar{v}} \bigg( \bar{v}^S - e^{-(\bar{\phi}^S)^X} \bigg( \frac{ \bar{\phi}^S_\xi}{\bar{v}^S} \bigg)_\xi \bigg) \bigg\rvert \\
& \geq \frac{v_-}{v_m v_+} \left( v_m - C \delta_S^2 \right) \geq c
\end{split}
\end{equation*}
for some constant $c>0$, provided that $\delta_R + \delta_S $ is sufficiently small. These bounds are the key ingredients in the proof, which we omit for brevity; see the proof of Lemma~3.2 in \cite{KKSh}.
\end{proof}

Next, we define the weight function $a^X$ as
\begin{equation} \label{a}
a^X(\xi) := 1 + \frac{\tilde{p}(v_m) - \tilde{p}(\bar{v}^S(\xi -X(t)))}{\sqrt{\delta_S}},
\end{equation}
where $X(t)$ is the shift function defined by \eqref{XODE}. The weight $a^X$ and its derivative satisfy the bounds:
\begin{equation*}
1 < a^X(\xi) \leq 1 + \frac{2\sqrt{\delta_S}}{v_m^2} < \infty, \quad a^X_\xi(\xi) = \frac{2\bar{v}^S_\xi}{\sqrt{\delta_S}(\bar{v}^S)^2} > 0.
\end{equation*}

\subsection{Identity for the weighted relative functional}

In this and the subsequent subsections, we estimate the following quantity:
\begin{equation} \label{relquan}
\int_\mathbb{R} a^X(\xi) \eta(W(t,\xi)|\bar{W}(t,\xi)) \, d\xi,
\end{equation}
where the weight function $a^X(\xi)$ is as in \eqref{a}. We begin by writing the identity for \eqref{relquan}.

\begin{lemma} \label{Id}
Let $a^X$ be the weight function defined by \eqref{a}. Let $W=(v,h,\phi)$ be a solution to \eqref{NSPh} with \eqref{ic}, and $\bar{W}=(\bar{v},\bar{h},\bar{\phi})$ be the approximate composite wave defined in \eqref{ACW}, with the shift $X(t)$ given in \eqref{XODE}. Then, the following identity holds:
\begin{equation} \label{Idd}
\begin{split}
& \frac{d}{dt} \int_\mathbb{R} a^{X} \eta(W|\bar{W}) \, d\xi \\
& \quad = \dot{X}(t) Y(W) + \mathcal{J}^{\textup{bad}} (W) - \mathcal{J}^{\textup{good}}(W) + \tilde{\mathcal{P}}(W) + \sum_{j=1}^5 \mathcal{P}_{j}(W),
\end{split}
\end{equation}
where the terms $Y(W)$, $\mathcal{J}^{\textup{bad}}(W)$, and $\mathcal{J}^{\textup{good}}(W)$ are defined by
\begin{equation*}
\begin{split}
Y(W) & := - \int_\mathbb{R} a^{X}_\xi \eta (W|\bar{W}) \, d\xi +  \int_\mathbb{R} a^{X} \bigg( \bar{h}^S_\xi \tilde{h} - \tilde{p}'(\bar{v}) \bar{v}^S_\xi \tilde{v} - \frac{\bar{v}^S_\xi \tilde{\phi}_{\xi\xi}}{\bar{v}^2} \bigg) \, d\xi, \\
\mathcal{J}^{\textup{bad}}(W) & := \int_\mathbb{R} a^{X}_\xi \left( \tilde{p}(v) - \tilde{p}(\bar{v}) \right) \tilde{h} \, d\xi + \sigma \int_\mathbb{R} a^{X} \bar{v}^S_\xi \tilde{p}(v|\bar{v}) \, d\xi \\
& \quad - \int_\mathbb{R} a^{X}_\xi \frac{\tilde{p}(v) - \tilde{p}(\bar{v})}{\tilde{p}(v)} \left( \tilde{p}(v) - \tilde{p}(\bar{v}) \right) \, d\xi + \int_\mathbb{R} a^{X}_\xi \left( \tilde{p}(v) - \tilde{p}(\bar{v}) \right)^2 \frac{\tilde{p}(\bar{v})_\xi}{\tilde{p}(v)\tilde{p}(\bar{v})} \, d\xi \\
& \quad - \int_\mathbb{R} a^{X} \left( \tilde{p}(v) - \tilde{p}(\bar{v}) \right)_\xi \frac{\tilde{p}(\bar{v}) - \tilde{p}(v)}{\tilde{p}(v)\tilde{p}(\bar{v})} \tilde{p}(\bar{v})_\xi \, d\xi + \int_\mathbb{R} a^{X} \left( \tilde{p}(v) - \tilde{p}(\bar{v}) \right) F_4 \, d\xi \\
& \quad - \int_\mathbb{R} a^{X} \tilde{h} F_2 \, d\xi,
\end{split}
\end{equation*}
and
\begin{equation*}
\begin{split}
\mathcal{J}^{\textup{good}}(W) & := \frac{\sigma}{2} \int_\mathbb{R} a^{X}_\xi \tilde{h}^2 \, d \xi + \sigma \int_\mathbb{R} a^{X}_\xi Q(v|\bar{v}) \, d\xi  + \int_\mathbb{R} a^{X} \bar{u}^R_\xi \tilde{p}(v|\bar{v}) \, d\xi \\
& \quad + \int_\mathbb{R} \frac{a^{X}}{\tilde{p}(v)} \left( \tilde{p}(v) - \tilde{p}(\bar{v}) \right)_\xi^2 \, d\xi + \frac{\sigma}{2} \int_\mathbb{R} a^{X}_\xi \bigg( \frac{e^{-\bar{\phi}}\tilde{\phi}_{\xi\xi}^2}{\bar{v}^3} + \frac{e^{-\bar{\phi}}\tilde{\phi}_\xi^2}{\bar{v}^2} \bigg) \, d\xi,
\end{split}
\end{equation*}
respectively, and the terms $\tilde{\mathcal{P}}(W)$ and $\mathcal{P}_{j}(W)$ are as follows:
\begin{equation*}
\begin{split}
\tilde{\mathcal{P}}(W) & := - \int_\mathbb{R} a^{X} \left( \ln{v} - \ln{\bar{v}} \right)_{\xi\xi} \frac{\tilde{\phi}_{\xi\xi}}{\bar{v}^2} \, d\xi, \\
\mathcal{P}_1(W) & := - \int_\mathbb{R} a^{X} \tilde{h} F_3 \, d\xi, \\
\mathcal{P}_2(W) & := \int_\mathbb{R} a^{X}_\xi \frac{\tilde{h}\tilde{\phi}_{\xi\xi}}{\bar{v}^2} \, d\xi + \int_\mathbb{R} \left( a^{X}_\xi \tilde{h} + a^{X} \tilde{h}_\xi \right)  \bigg( \frac{1}{v^2} - \frac{1}{\bar{v}^2} \bigg) \left( \tilde{\phi}_{\xi\xi} + \bar{\phi}_{\xi\xi} \right)  \, d\xi \\
& \quad - \frac{1}{2} \int_\mathbb{R}  \left( a^{X}_\xi \tilde{h} + a^{X} \tilde{h}_\xi \right) \bigg[ \frac{\tilde{\phi}_\xi^2}{v^2} + \frac{2\bar{\phi}_\xi \tilde{\phi}_\xi}{v^2} + \bigg( \frac{1}{v^2} - \frac{1}{\bar{v}^2} \bigg) \bar{\phi}_\xi^2 \bigg] \, d\xi \\
& \quad - \int_\mathbb{R} \left( a^{X}_\xi \tilde{h} + a^{X} \tilde{h}_\xi \right) \bigg[ \frac{\tilde{v}_\xi \tilde{\phi}_\xi}{v^3} + \frac{\bar{v}_\xi \tilde{\phi}_\xi}{v^3} + \frac{\bar{\phi}_\xi \tilde{v}_\xi}{v^3} + \bigg( \frac{1}{v^3} - \frac{1}{\bar{v}^3} \bigg) \bar{v}_\xi \bar{\phi}_\xi \bigg] \, d\xi, \\
\mathcal{P}_3(W) & := - \sigma \int_\mathbb{R} a^{X} \frac{ \tilde{v}_\xi \tilde{\phi}_{\xi\xi}}{\bar{v}^2} \, d\xi, \\
\mathcal{P}_4(W) & := \int_\mathbb{R} a^{X}_\xi \frac{\tilde{v}\tilde{\phi}_{\xi t}}{\bar{v}^2} \, d\xi + \int_\mathbb{R} a^{X} \bigg( \frac{2\sigma \bar{v}^R_\xi \tilde{v} \tilde{\phi}_{\xi\xi}}{\bar{v}^3} - \frac{2 \bar{v}_\xi \tilde{v} \tilde{\phi}_{\xi t}}{\bar{v}^3}  \bigg) \, d\xi \\
& \quad - \dot{X}(t) \int_\mathbb{R} a^{X} \frac{2 \bar{v}^S_\xi \tilde{v} \tilde{\phi}_{\xi\xi}}{\bar{v}^3} \, d\xi + \int_\mathbb{R} a^{X} \frac{\tilde{\phi}_{\xi\xi}}{\bar{v}^2} F_4 \, d\xi,
\end{split}
\end{equation*}
and
\begin{equation*}
\begin{split}
\mathcal{P}_5(W) & :=  \int_\mathbb{R} a^{X} \bigg( \frac{e^{-\bar{\phi}}}{\bar{v}^3} \bigg)_t \frac{\tilde{\phi}_{\xi\xi}^2}{2} \, d\xi + \int_\mathbb{R} a^{X} \bigg( \frac{e^{-\bar{\phi}}}{\bar{v}^2} \bigg)_t \frac{\tilde{\phi}_\xi^2}{2} \, d\xi - \int_\mathbb{R} a^{X}_\xi \frac{e^{-\bar{\phi}} \tilde{\phi}_{\xi\xi} \tilde{\phi}_{\xi t}}{\bar{v}^3} \, d\xi \\
& \quad - \int_\mathbb{R} a^{X} \bigg( \frac{e^{-\bar{\phi}}}{\bar{v}^3} \bigg)_\xi \tilde{\phi}_{\xi\xi} \tilde{\phi}_{\xi t} \, d\xi  + \int_\mathbb{R} a^{X} \bigg( \frac{e^{-\bar{\phi}}}{\bar{v}} \bigg)_\xi \frac{ \tilde{\phi}_{\xi\xi} \tilde{\phi}_{\xi t}}{\bar{v}^2} \, d\xi \\
& \quad + \int_\mathbb{R} a^{X} \frac{e^{-\bar{\phi}}\bar{\phi}_\xi \tilde{\phi} \tilde{\phi}_{\xi t}}{\bar{v}^2} \, d\xi + \int_\mathbb{R} a^{X} \frac{\tilde{\phi}_{\xi t}}{\bar{v}^2} \mathcal{V}_\xi \, d\xi.
\end{split}
\end{equation*}
\end{lemma}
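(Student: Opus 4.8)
The plan is to establish \eqref{Idd} by a direct differentiation of the weighted relative functional, substituting the perturbation equations \eqref{pert1}--\eqref{pert2} and the elliptic relation \eqref{v} (together with its $\xi$- and $t$-derivatives), integrating by parts in $\xi$, and then sorting every resulting term into the five prescribed groups. The computation runs parallel to the single-shock case in \cite{KKSh}, enlarged to accommodate the rarefaction component and the interaction errors $F_2,F_3,F_4$ as in the Navier--Stokes treatment \cite{KVW2}. First I would write $\frac{d}{dt}\int a^X\eta = \int a^X_t\,\eta\,d\xi + \int a^X\,\partial_t\eta\,d\xi$; since $a^X$ depends on $\xi$ only through $\xi-X(t)$, we have $a^X_t=-\dot X(t)a^X_\xi$, which immediately produces the term $-\int a^X_\xi\eta(W|\bar W)\,d\xi$ appearing in $Y(W)$. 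It then remains to compute $\int a^X\,\partial_t\eta(W|\bar W)\,d\xi$ term by term on the five summands of \eqref{relentropy}.

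For the Navier--Stokes part $\tfrac{\tilde h^2}{2}+Q(v|\bar v)$, I would multiply \eqref{pert2} by $a^X\tilde h$ and use the identity $\partial_t Q(v|\bar v)=(\tilde p(\bar v)-\tilde p(v))v_t+\tilde p'(\bar v)\bar v_t\,\tilde v$ together with \eqref{pert1}. Integration by parts converts the convective fluxes into the quadratic forms weighted by $a^X_\xi$ (yielding $\mathcal{J}^{\textup{good}}$ and part of $\mathcal{J}^{\textup{bad}}$), the viscous term $(\ln v-\ln\bar v)_{\xi\xi}$ into the dissipation $\int\frac{a^X}{\tilde p(v)}((\tilde p(v)-\tilde p(\bar v))_\xi)^2\,d\xi$ together with the cross contribution $\tilde{\mathcal P}(W)$, the rarefaction-background term into $\int a^X\bar u^R_\xi\tilde p(v|\bar v)\,d\xi$, the $\dot X$-terms into the remaining part of $Y(W)$, and the forcings $F_2,F_4$ into the corresponding entries of $\mathcal{J}^{\textup{bad}}$. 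The electric flux $(\Phi(v,\phi)-\Phi(\bar v,\bar\phi))_\xi$, after one integration by parts and expansion via \eqref{Phi}, is exactly the combination collected in $\mathcal P_2(W)$, while the forcing $F_3$ gives $\mathcal P_1(W)$.

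Next, for the cross term $-\tfrac{\tilde v\tilde\phi_{\xi\xi}}{\bar v^2}$ I would differentiate in time, insert \eqref{pert1} for $\tilde v_t$, and for the $\tilde v\,\tilde\phi_{\xi\xi t}$ piece integrate by parts in $\xi$ so that only $\tilde\phi_{\xi t}$ survives; this produces $\mathcal P_3(W)$, the first entry of $\mathcal P_4(W)$, the $\bar v^R_\xi$- and $F_4$-entries of $\mathcal P_4(W)$, the $\dot X$-entry of $\mathcal P_4(W)$ and the term $-\int a^X\bar v^S_\xi\tilde\phi_{\xi\xi}/\bar v^2\,d\xi$ in $Y(W)$. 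Finally, differentiating the two quadratic $\phi$-terms $\tfrac{e^{-\bar\phi}\tilde\phi_{\xi\xi}^2}{2\bar v^3}+\tfrac{e^{-\bar\phi}\tilde\phi_\xi^2}{2\bar v^2}$ generates $\tilde\phi_{\xi\xi t}$ and $\tilde\phi_{\xi t}$; integrating by parts to shed the extra $\xi$-derivative and using the $\xi$- and $t$-derivatives of \eqref{v} (which is where $\mathcal V_\xi$ enters, and which eliminates the stray $\tilde\phi_{\xi\xi\xi}$) yields precisely $\mathcal P_5(W)$ together with the $\sigma$-weighted entry $\frac{\sigma}{2}\int a^X_\xi(\frac{e^{-\bar\phi}\tilde\phi_{\xi\xi}^2}{\bar v^3}+\frac{e^{-\bar\phi}\tilde\phi_\xi^2}{\bar v^2})\,d\xi$ of $\mathcal{J}^{\textup{good}}$. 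Collecting every term according to its structure --- $\dot X$-proportional into $\dot X(t)Y(W)$, coercive/$a^X_\xi$-weighted into $\mathcal{J}^{\textup{good}}$, borderline hyperbolic into $\mathcal{J}^{\textup{bad}}$, viscous--Poisson coupling into $\tilde{\mathcal P}(W)$, and electric-force remainders into $\mathcal P_1,\dots,\mathcal P_5$ --- gives \eqref{Idd}.

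I expect the main obstacle to be the bookkeeping rather than any isolated estimate: the quadratic $\phi$-terms in $\eta$ and the cross term $-\tilde v\tilde\phi_{\xi\xi}/\bar v^2$ are engineered so that their time derivatives partially cancel, and tracking this cancellation while repeatedly eliminating $\tilde\phi_{\xi\xi t}$ and $\tilde\phi_{\xi\xi\xi}$ through the Poisson equation --- so that only the milder quantity $\tilde\phi_{\xi t}$ (to be absorbed later by elliptic estimates) remains --- is the delicate step. Everything else runs in parallel with \cite{KKSh}, the only genuinely new inputs being the rarefaction terms $\bar u^R_\xi,\bar v^R_\xi$ and the interaction errors $F_2,F_3,F_4$, whose placement follows \cite{KVW2}.
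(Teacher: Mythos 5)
Your proposal is correct and uses the same ingredients as the paper's proof: the decomposition $a^X_t = -\dot X a^X_\xi$, the relative-entropy identity for the Navier--Stokes part adapted from \cite{KVW2}, integration by parts of the electric-force flux, substitution of \eqref{pert1} to eliminate $\tilde h_\xi$ in favor of $\tilde v_t$, and the $\xi$- and $t$-derivatives of the elliptic relation \eqref{v} to trade $\tilde\phi_{\xi\xi\xi}$ and $\tilde\phi_{\xi\xi t}$ for milder quantities. The only difference is one of direction: you differentiate the full modulated functional $\eta$ and verify the identity, while the paper instead starts from the Navier--Stokes relative entropy for $\tfrac{\tilde h^2}{2}+Q(v|\bar v)$, computes the $\Phi$-forcing contribution, and lets the cross term $-\tilde v\tilde\phi_{\xi\xi}/\bar v^2$ and the quadratic $\tilde\phi$-terms \emph{emerge} as exact time derivatives from \eqref{Phi3} and from expanding $\int a^X\tilde v_\xi\tilde\phi_{\xi t}/\bar v^2\,d\xi$, which are then moved to the left-hand side to assemble $\eta$. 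The two routes are algebraically equivalent. Two small imprecisions in your writeup --- (i) $\tilde{\mathcal P}$ does not come from the viscous term inside the Navier--Stokes relative-entropy block (which only yields the dissipation $\mathcal D$) but from substituting \eqref{pert1} into the $\tilde h_\xi\tilde\phi_{\xi\xi}$ piece generated by the electric flux, and (ii) the electric flux after one integration by parts is $\int a^X\tilde h_\xi\tilde\phi_{\xi\xi}/\bar v^2\,d\xi+\mathcal P_2$, not $\mathcal P_2$ alone (the extra term is precisely what cancels against the $\tilde h_\xi$-contribution from the cross term) --- would be resolved automatically once you carry out the bookkeeping, and you correctly flag that bookkeeping as the real content of the lemma.
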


\begin{proof}[Proof of Lemma~\ref{Id}]

First, we compute the evolution of the relative entropy for the first two equations of \eqref{NSPh}, i.e., for the NS system \eqref{NSh} with forcing term $\Phi(v,\phi)_\xi$. For that, we apply the relative entropy method to the system of the two equations:
\begin{equation}
\partial_t U + \partial_\xi A(U) = \begin{pmatrix}
(\ln{v})_{\xi\xi} \\ \partial_{\xi} \Phi(v,\phi)
\end{pmatrix}, \quad U := \begin{pmatrix}
v \\ h
\end{pmatrix}, \quad A(U) := \begin{pmatrix}
- \sigma v - h \\ -\sigma h + \tilde{p}(v)
\end{pmatrix}.
\end{equation}
Thus, we obtain the following identity for the relative entropy $\tilde{\eta}(\cdot|\cdot)$ of the above quantity $U$ and $\textstyle \bar{U} := \begin{pmatrix} \bar{v} \\ \bar{h} \end{pmatrix}$:
\begin{equation*}
\begin{split}
\frac{d}{dt} \int_\mathbb{R} a^{X} \tilde{\eta}(U|\bar{U}) \, d\xi & =  - \dot{X}(t) \int_\mathbb{R} a_\xi^{X} \tilde{\eta}(U|\bar{U}) \, d\xi \\
& \quad + \int_\mathbb{R} a^{X} \left[ \left( \nabla \tilde{\eta}(U) - \nabla \tilde{\eta}(\bar{U}) \right)\partial_t U - \nabla^2 \tilde{\eta}(\bar{U}) (U-\bar{U}) \partial_t \bar{U} \right] \, d\xi \\
& \quad + \int_\mathbb{R} a^{X} \tilde{h} \left( \Phi(v,\phi) - \Phi(\bar{v},\bar{\phi}) \right)_\xi \, d\xi.
\end{split}
\end{equation*}
This yields, by the definition \eqref{NSeta} of $\tilde{\eta}$, 
\begin{equation} \label{NSId}
\begin{split}
& \frac{d}{dt} \int_\mathbb{R} a^X \Big( \frac{\tilde{h}^2}{2} + Q(v|\bar{v}) \Big) \, d\xi \\
& \quad = \dot{X}(t) \left( - \int_\mathbb{R} a^{X}_\xi \Big( \frac{\tilde{h}^2}{2} + Q(v|\bar{v}) \Big)  \, d\xi + \int_\mathbb{R} a^{X} \left( \bar{h}^S_\xi \tilde{h} - \tilde{p}'(\bar{v}) \bar{v}^S_\xi \tilde{v} \right) \, d\xi  \right) \\
& \qquad + \int_\mathbb{R} a^{X}_\xi \left( \tilde{p}(v) - \tilde{p}(\bar{v}) \right) \tilde{h} \, d\xi + \sigma \int_\mathbb{R} a^{X} \bar{v}^S_\xi \tilde{p}(v|\bar{v}) \, d\xi \\
& \qquad - \int_\mathbb{R} a^{X}_\xi \frac{\tilde{p}(v) - \tilde{p}(\bar{v})}{\tilde{p}(v)} \left( \tilde{p}(v) - \tilde{p}(\bar{v}) \right)_\xi \, d\xi  + \int_\mathbb{R} a^{X}_\xi \left( \tilde{p}(v) - \tilde{p}(\bar{v}) \right)^2 \frac{\tilde{p}(\bar{v})_\xi}{\tilde{p}(v)\tilde{p}(\bar{v})} \, d\xi \\
& \qquad - \int_\mathbb{R} a^{X} \left( \tilde{p}(v) - \tilde{p}(\bar{v}) \right)_\xi \frac{\tilde{p}(\bar{v}) - \tilde{p}(v)}{\tilde{p}(v)\tilde{p}(\bar{v})} \tilde{p}(\bar{v})_\xi \, d\xi - \frac{\sigma}{2} \int_\mathbb{R} a^{X}_\xi \tilde{h}^2 \, d \xi  \\
& \qquad - \sigma \int_\mathbb{R} a^{X}_\xi Q(v|\bar{v}) \, d\xi  - \int_\mathbb{R} a^{X} \bar{u}^R_\xi \tilde{p}(v|\bar{v}) \, d\xi - \int_\mathbb{R} \frac{a^X}{\tilde{p}(v)} \left( \tilde{p}(v) - \tilde{p}(\bar{v}) \right)_\xi^2 \, d\xi \\
& \qquad  + \int_\mathbb{R} a^{X} \left( \tilde{p}(v) - \tilde{p}(\bar{v}) \right) F_4 \, d\xi - \int_\mathbb{R} a^{X} \tilde{h} F_2 \, d\xi + \mathcal{P}_1 \\
& \qquad + \int_\mathbb{R} a^{X} \tilde{h} \left( \Phi(v,\phi) - \Phi(\bar{v},\bar{\phi}) \right)_\xi \, d\xi,
\end{split}
\end{equation}
where $\mathcal{P}_1$ is as in this Lemma. A detailed derivation of \eqref{NSId} can be found in the proof of Lemma~4.3 in \cite{KVW2}. It therefore remains to compute the last term.

Recalling the definition \eqref{Phi} of $\Phi$, one can obtain by integration by parts
\begin{equation} \label{Phi1}
\begin{split}
\int_\mathbb{R} a^{X} \tilde{h} \left( \Phi(v,\phi) - \Phi(\bar{v},\bar{\phi}) \right)_\xi \, d\xi & = - \frac{1}{2} \int_\mathbb{R} \left( a^{X} \tilde{h} \right)_\xi \bigg[ \left( \frac{\phi_\xi}{v} \right)^2 - \left( \frac{\bar{\phi}_\xi}{\bar{v}} \right)^2 \bigg] \, d\xi \\
& \quad + \int_\mathbb{R} \left( a^{X} \tilde{h} \right)_\xi \bigg[ \frac{1}{v} \left( \frac{\phi_\xi}{v} \right)_\xi - \frac{1}{\bar{v}} \left( \frac{\bar{\phi}_\xi}{\bar{v}} \right)_\xi \bigg] \, d\xi.
\end{split}
\end{equation}
The first and second terms on the right-hand side of \eqref{Phi1} are expanded as
\begin{equation*}
\begin{split}
& - \frac{1}{2} \int_\mathbb{R} \left( a^{X} \tilde{h} \right)_\xi \bigg[ \bigg( \frac{\phi_\xi}{v} \bigg)^2 - \bigg( \frac{\bar{\phi}_\xi}{\bar{v}} \bigg)^2 \bigg] \, d\xi \\
& \quad = - \frac{1}{2} \int_\mathbb{R}  \left( a^{X}_\xi \tilde{h} + a^{X} \tilde{h}_\xi \right) \bigg[ \frac{\tilde{\phi}_\xi^2}{v^2} + \frac{2\bar{\phi}_\xi \tilde{\phi}_\xi}{v^2} + \bigg( \frac{1}{v^2} - \frac{1}{\bar{v}^2} \bigg) \bar{\phi}_\xi^2 \bigg] \, d\xi
\end{split}
\end{equation*}
and
\begin{equation*}
\begin{split}
& \int_\mathbb{R} \left( a^{X} \tilde{h} \right)_\xi \bigg[ \frac{1}{v} \bigg( \frac{\phi_\xi}{v} \bigg)_\xi - \frac{1}{\bar{v}} \bigg( \frac{\bar{\phi}_\xi}{\bar{v}} \bigg)_\xi \bigg] \, d\xi \\
& \quad = \int_\mathbb{R} \left( a^{X}_\xi \tilde{h} + a^{X} \tilde{h}_\xi \right) \bigg[ \frac{\tilde{\phi}_{\xi\xi}}{\bar{v}^2} + \bigg( \frac{1}{v^2} - \frac{1}{\bar{v}^2} \bigg) \left( \tilde{\phi}_{\xi\xi} + \bar{\phi}_{\xi\xi} \right) \bigg] \, d\xi \\
& \qquad - \int_\mathbb{R} \left( a^{X}_\xi \tilde{h} + a^{X} \tilde{h}_\xi \right) \bigg[ \frac{\tilde{v}_\xi \tilde{\phi}_\xi}{v^3} + \frac{\bar{v}_\xi \tilde{\phi}_\xi}{v^3} + \frac{\bar{\phi}_\xi \tilde{v}_\xi}{v^3} + \bigg( \frac{1}{v^3} - \frac{1}{\bar{v}^3} \bigg) \bar{v}_\xi \bar{\phi}_\xi \bigg] \, d\xi,
\end{split}
\end{equation*}
respectively. Thus, we can rewrite \eqref{Phi1} as
\begin{equation} \label{Phi2}
\int_\mathbb{R} a^{X} \tilde{h} \left( \Phi(v,\phi) - \Phi(\bar{v},\bar{\phi}) \right)_\xi \, d\xi = \int_\mathbb{R} a^{X} \frac{ \tilde{h}_\xi \tilde{\phi}_{\xi\xi}}{\bar{v}^2} \, d\xi + \mathcal{P}_2.
\end{equation}
For the first term on the right-hand side of \eqref{Phi2}, we use \eqref{pert1} and \eqref{ra1} to obtain
\begin{equation} \label{Phi3}
\begin{split}
\int_\mathbb{R} a^{X} \frac{\tilde{h}_\xi \tilde{\phi}_{\xi\xi}}{\bar{v}^2} \, d\xi & = \int_\mathbb{R} a^{X} \frac{\tilde{v}_t \tilde{\phi}_{\xi\xi}}{\bar{v}^2} \, d\xi - \sigma \int_\mathbb{R} a^{X} \frac{\tilde{v}_\xi \tilde{\phi}_{\xi\xi}}{\bar{v}^2} \, d\xi - \int_\mathbb{R} a^{X} \left( \ln{v} - \ln{\bar{v}} \right)_{\xi\xi} \frac{\tilde{\phi}_{\xi\xi}}{\bar{v}^2} \, d\xi \\
& \quad + \int_\mathbb{R} a^{X} \frac{\tilde{\phi}_{\xi\xi}}{\bar{v}^2} F_4 \, d\xi - \dot{X}(t) \int_\mathbb{R} a^{X} \frac{ \bar{v}^S_\xi \tilde{\phi}_{\xi\xi}}{\bar{v}^2} \, d\xi \\
& = \frac{d}{dt} \int_\mathbb{R} a^{X} \frac{\tilde{v} \tilde{\phi}_{\xi\xi}}{\bar{v}^2} \, d\xi + \int_\mathbb{R} a^{X} \frac{\tilde{v}_\xi \tilde{\phi}_{\xi t}}{\bar{v}^2} \, d\xi + \tilde{\mathcal{P}} + \mathcal{P}_3 + \mathcal{P}_4 \\
& \quad + \dot{X}(t) \left( \int_\mathbb{R} a^{X}_\xi \frac{\tilde{v}\tilde{\phi}_{\xi\xi}}{\bar{v}^2} \, d\xi - \int_\mathbb{R} a^{X} \frac{ \bar{v}^S_\xi \tilde{\phi}_{\xi\xi}}{\bar{v}^2} \, d\xi \right).
\end{split}
\end{equation}
The last term on the right-hand side of the last equality in \eqref{Phi3} is part of $\dot{X}(t)Y(W)$. Next, in order to rewrite the second term, we use the identity 
\begin{equation*}
\tilde{v}_\xi = \bigg( \frac{e^{-\bar{\phi}}\tilde{\phi}_{\xi\xi}}{\bar{v}} - e^{-\bar{\phi}}\tilde{\phi} \bigg)_\xi + \mathcal{V}_\xi,
\end{equation*}
which is obtained by differentiating \eqref{v}, where $\mathcal{V}$ is defined by \eqref{mcV}. Using this, and applying integration by parts, we have
\begin{equation*}
\begin{split}
\int_\mathbb{R} a^{X} \frac{\tilde{v}_\xi \tilde{\phi}_{\xi t}}{ \bar{v}^2} \, d\xi & = \int_\mathbb{R} a^{X} \frac{e^{-\bar{\phi}}\tilde{\phi}_{\xi\xi\xi}\tilde{\phi}_{\xi t}}{\bar{v}^3} \, d\xi + \int_\mathbb{R} a^{X} \bigg( \frac{e^{-\bar{\phi}}}{\bar{v}} \bigg)_\xi \frac{ \tilde{\phi}_{\xi\xi} \tilde{\phi}_{\xi t}}{\bar{v}^2} \, d\xi \\
& \quad - \int_\mathbb{R} a^{X} \frac{e^{-\bar{\phi}} \tilde{\phi}_\xi \tilde{\phi}_{\xi t}}{\bar{v}^2} \, d\xi + \int_\mathbb{R} a^{X} \frac{e^{-\bar{\phi}}\bar{\phi}_\xi \tilde{\phi} \tilde{\phi}_{\xi t}}{\bar{v}^2} \, d\xi + \int_\mathbb{R} a^{X} \frac{\tilde{\phi}_{\xi t}}{\bar{v}^2} \mathcal{V}_\xi \, d\xi \\
& = - \frac{d}{dt} \bigg( \int_\mathbb{R} a^{X} \frac{e^{-\bar{\phi}}\tilde{\phi}_{\xi\xi}^2}{2\bar{v}^3} \, d\xi + \int_\mathbb{R} a^{X} \frac{e^{-\bar{\phi}}\tilde{\phi}_\xi^2}{2\bar{v}^2} \, d\xi \bigg) - \frac{\sigma}{2} \int_\mathbb{R} a^{X}_\xi \bigg( \frac{e^{-\bar{\phi}}\tilde{\phi}_{\xi\xi}^2}{\bar{v}^3} + \frac{e^{-\bar{\phi}}\tilde{\phi}_\xi^2}{\bar{v}^2} \bigg) \, d\xi \\
& \quad - \dot{X}(t) \int_\mathbb{R} a^{X}_\xi \bigg( \frac{e^{-\bar{\phi}}\tilde{\phi}_{\xi\xi}^2}{2\bar{v}^3} + \frac{e^{-\bar{\phi}}\tilde{\phi}_\xi^2}{2\bar{v}^2} \bigg) \, d\xi + \mathcal{P}_5.
\end{split}
\end{equation*}
The terms
\begin{equation*}
- \frac{\sigma}{2} \int_\mathbb{R} a^{X}_\xi \bigg( \frac{e^{-\bar{\phi}}\tilde{\phi}_{\xi\xi}^2}{\bar{v}^3} + \frac{e^{-\bar{\phi}}\tilde{\phi}_\xi^2}{\bar{v}^2} \bigg) \, d\xi, \quad - \dot{X}(t) \int_\mathbb{R} a^{X}_\xi \bigg( \frac{e^{-\bar{\phi}}\tilde{\phi}_{\xi\xi}^2}{2\bar{v}^3} + \frac{e^{-\bar{\phi}}\tilde{\phi}_\xi^2}{2\bar{v}^2} \bigg) \, d\xi
\end{equation*}
are absorbed in $\mathcal{J}^{\textup{good}}(W)$ and $X(t)Y(W)$, respectively. Collecting all the terms obtained above, we have the desired identity for the relative functional.
\end{proof}

\subsection{\texorpdfstring{Maximization of $\mathcal{J}^{\textup{bad}}$ with respect to $h-\bar{h}$}{Maximization of the bad term J with respect to (h-h̄)}}

We rewrite $\mathcal{J}^{\textup{bad}}$ into the maximized representation to control the primary bad term
\begin{equation*}
\int_\mathbb{R} a^X_\xi \left( \tilde{p}(v) - \tilde{p}(\bar{v}) \right) \tilde{h} \, d\xi.
\end{equation*}
The next lemma states an identity resulting from the decomposition of $\mathcal{J}^{\textup{bad}}(W)$ and $Y(W)$ in \eqref{Id}, following the approach in Sections~4.4--4.5 of \cite{KVW2}. The proof is omitted to avoid repeating the computations therein.

\begin{lemma} [\cite{KVW2}] \label{Max}
The following identity holds:
\begin{equation} \label{max}
\begin{split}
\frac{d}{dt} \int_\mathbb{R} a^{X} \eta(W|\bar{W}) \, d\xi & =  \dot{X}(t) \sum_{j=1}^9 Y_j(W)  + \sum_{j=1}^5 \mathcal{B}_j(W) + \sum_{j=1}^2 S_j(W) - \sum_{j=1}^4 \mathcal{G}_j(W) \\
& \quad - \mathcal{G}^R(W) - \mathcal{D}(W) + \tilde{\mathcal{P}}(W) + \sum_{j=1}^5 \mathcal{P}_j(W),
\end{split}
\end{equation}
where the terms $\tilde{\mathcal{P}}$ and $\mathcal{P}_j$ are as in Lemma~\ref{Id}, and $Y_j,\mathcal{B}_j,S_j,\mathcal{G}_j,\mathcal{G}^R, \mathcal{D}$ are defined by
\begin{equation*}
\begin{split}
Y_1(W) & := \frac{1}{\sigma} \int_\mathbb{R} a^{X} \bar{h}^S_\xi \left( \tilde{p}(v) - \tilde{p}(\bar{v}) \right) \, d\xi, \quad Y_2(W) := - \int_\mathbb{R} a^{X} \tilde{p}'(\bar{v}^S) \bar{v}^S_\xi \tilde{v} \, d\xi, \\
Y_3(W) & := \int_\mathbb{R} a^{X} \bar{h}^S_\xi \left( \tilde{h} - \frac{\tilde{p}(v) - \tilde{p}(\bar{v})}{\sigma} \right) \, d\xi, \quad Y_4(W) := - \int_\mathbb{R} a^{X} \left( \tilde{p}'(\bar{v}) - \tilde{p}'(\bar{v}^S) \right) \bar{v}^S_\xi \tilde{v} \, d\xi, \\
Y_5(W) & := - \frac{1}{2} \int_\mathbb{R} a^{X}_\xi \left( \tilde{h} - \frac{\tilde{p}(v)- \tilde{p}(\bar{v})}{\sigma} \right) \left( \tilde{h} + \frac{\tilde{p}(v) - \tilde{p}(\bar{v})}{\sigma} \right) \, d\xi, \\
Y_6(W) & := - \int_\mathbb{R} a^{X}_\xi Q(v|\bar{v}) \, d\xi - \frac{1}{2\sigma^2} \int_\mathbb{R} a^{X}_\xi \left( \tilde{p}(v) - \tilde{p}(\bar{v}) \right)^2 \, d\xi, \\
Y_7(W) & := - \int_\mathbb{R} a^{X}_\xi \frac{\tilde{v} \tilde{\phi}_{\xi\xi}}{\bar{v}^2} \, d\xi, \quad Y_8 (W) := \frac{1}{2} \int_\mathbb{R} a^{X}_\xi \bigg( \frac{e^{-\bar{\phi}}\tilde{\phi}_{\xi\xi}^2}{2\bar{v}^3} + \frac{e^{-\bar{\phi}}\tilde{\phi}_{\xi}^2}{2\bar{v}^2} \bigg) \, d\xi,  \\
Y_9(W) & := \int_\mathbb{R} a^{X} \frac{\bar{v}^S_\xi \tilde{\phi}_{\xi\xi}}{\bar{v}^2} \, d\xi,
\end{split}
\end{equation*}
\begin{equation*}
\begin{split}
\mathcal{B}_1(W) &:= \frac{1}{2\sigma} \int_\mathbb{R} a^{X}_\xi \lvert \tilde{p}(v) - \tilde{p}(\bar{v}) \rvert^2 \, d\xi, \quad \mathcal{B}_2(W) := \sigma \int_\mathbb{R} a^{X} \bar{v}^S_\xi \tilde{p}(v|\bar{v}) \, d\xi, \\
\mathcal{B}_3(W) &:= - \int_\mathbb{R} a^{X}_\xi \frac{\tilde{p}(v) - \tilde{p}(\bar{v}}{\tilde{p}(v)} \partial_\xi \left( \tilde{p}(v) - \tilde{p}(\bar{v}) \right) \, d\xi, \\
\mathcal{B}_4(W) &:= \int_\mathbb{R} a^{X}_\xi \left( \tilde{p}(v) - \tilde{p}(\bar{v}) \right)^2 \frac{\partial_\xi \tilde{p}(\bar{v})}{ \tilde{p}(v) \tilde{p}(\bar{v})} \, d\xi, \\
\mathcal{B}_5(W) &:= - \int_\mathbb{R} a^{X} \partial_\xi \left( \tilde{p}(v) - \tilde{p}(\bar{v}) \right) \frac{\tilde{p}(\bar{v}) - \tilde{p}(v)}{\tilde{p}(v) \tilde{p}(\bar{v})} \partial_\xi \tilde{p}(\bar{v}) \, d\xi, \\
S_1(W) &:= \int_\mathbb{R} a^{X} \left( \tilde{p}(v) - \tilde{p}(\bar{v}) \right) \left( \ln{\bar{v}^S} - \ln{\bar{v}} \right)_{\xi\xi} \, d\xi, \\
S_2(W) &:= - \int_\mathbb{R} a^{X} \tilde{h} \left( \tilde{p}(\bar{v}) - \tilde{p}(\bar{v}^R) - \tilde{p}(\bar{v}^S) \right)_\xi \, d\xi,
\end{split}
\end{equation*}
and
\begin{equation*}
\begin{array}{l l}
\mathcal{G}_1(W) := \frac{\sigma}{2} \int_\mathbb{R} a^{X}_\xi \left\lvert \tilde{h} - \frac{\tilde{p}(v) - \tilde{p}(\bar{v})}{\sigma} \right\rvert^2 \, d\xi, & \mathcal{G}_2(W) := \sigma \int_\mathbb{R} a^{X}_\xi Q(v|\bar{v}) \, d\xi, \\
\mathcal{G}_3(W) := \frac{\sigma}{2} \int_\mathbb{R} a^{X}_\xi \frac{e^{-\bar{\phi}}\tilde{\phi}_{\xi}^2}{\bar{v}^2} \, d\xi, & \mathcal{G}_4(W) := \frac{\sigma}{2} \int_\mathbb{R} a^{X}_\xi \frac{e^{-\bar{\phi}}\tilde{\phi}_{\xi\xi}^2}{\bar{v}^3} \, d\xi, \\
\mathcal{G}^R(W) := \int_\mathbb{R} a^{X} \bar{u}^R_\xi \tilde{p}(v|\bar{v}) \, d\xi, & \mathcal{D}(W) := \int_\mathbb{R} a^{X} \frac{\left( \tilde{p}(v) -\tilde{p}(\bar{v}) \right)_\xi^2}{\tilde{p}(v)} \, d\xi.
\end{array}
\end{equation*}
\end{lemma}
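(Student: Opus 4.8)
The plan is to start from the identity \eqref{Idd} of Lemma~\ref{Id} and merely reorganize its right-hand side; the terms $\tilde{\mathcal{P}}(W)$ and $\mathcal{P}_j(W)$ are carried over verbatim, so the whole task is to rewrite $\dot{X}(t)Y(W)+\mathcal{J}^{\textup{bad}}(W)-\mathcal{J}^{\textup{good}}(W)$ in the asserted form. The heart of the argument is the ``maximization with respect to $h-\bar{h}$'', that is, a completion of the square in the variable $\tilde{h}-\tfrac{\tilde{p}(v)-\tilde{p}(\bar{v})}{\sigma}$ (the same combination that appears in the good term $G_1$ of Lemma~\ref{lem_0th}). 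Concretely, I would combine the primary bad term $\int_\mathbb{R} a^X_\xi(\tilde{p}(v)-\tilde{p}(\bar{v}))\tilde{h}\,d\xi$ from $\mathcal{J}^{\textup{bad}}$ with the term $-\tfrac{\sigma}{2}\int_\mathbb{R} a^X_\xi\tilde{h}^2\,d\xi$ produced by $-\mathcal{J}^{\textup{good}}$, using the pointwise identity $(\tilde{p}(v)-\tilde{p}(\bar{v}))\tilde{h}-\tfrac{\sigma}{2}\tilde{h}^2=-\tfrac{\sigma}{2}\big(\tilde{h}-\tfrac{\tilde{p}(v)-\tilde{p}(\bar{v})}{\sigma}\big)^2+\tfrac{1}{2\sigma}(\tilde{p}(v)-\tilde{p}(\bar{v}))^2$; this yields exactly $-\mathcal{G}_1(W)+\mathcal{B}_1(W)$, and it is precisely here that the sharp constant $\tfrac{1}{2\sigma}$ is fixed.

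Next I would decompose $Y(W)$. Expanding $-\int_\mathbb{R} a^X_\xi\eta(W|\bar{W})\,d\xi$ by \eqref{relentropy} separates the Navier--Stokes part ($\tilde{h}^2$ and $Q(v|\bar{v})$) from the electrostatic modulation part ($\tilde{\phi}_\xi^2$, $\tilde{\phi}_{\xi\xi}^2$, and the cross term $\tilde{v}\tilde{\phi}_{\xi\xi}/\bar{v}^2$). For the Navier--Stokes part I would: use $\tilde{h}^2=\big(\tilde{h}-\tfrac{\tilde{p}(v)-\tilde{p}(\bar{v})}{\sigma}\big)\big(\tilde{h}+\tfrac{\tilde{p}(v)-\tilde{p}(\bar{v})}{\sigma}\big)+\tfrac{(\tilde{p}(v)-\tilde{p}(\bar{v}))^2}{\sigma^2}$ to convert the $\dot{X}$-weighted $\tilde{h}^2$-term into $\dot{X}Y_5$ plus a $(\tilde{p}(v)-\tilde{p}(\bar{v}))^2$-piece which, together with $-\dot{X}\int a^X_\xi Q(v|\bar{v})\,d\xi$, gives $\dot{X}Y_6$; split $\int a^X\bar{h}^S_\xi\tilde{h}\,d\xi=\int a^X\bar{h}^S_\xi\big(\tilde{h}-\tfrac{\tilde{p}(v)-\tilde{p}(\bar{v})}{\sigma}\big)d\xi+\tfrac{1}{\sigma}\int a^X\bar{h}^S_\xi(\tilde{p}(v)-\tilde{p}(\bar{v}))\,d\xi$ into $Y_3+Y_1$; and split $\tilde{p}'(\bar{v})=\tilde{p}'(\bar{v}^S)+(\tilde{p}'(\bar{v})-\tilde{p}'(\bar{v}^S))$ in the term $-\int a^X\tilde{p}'(\bar{v})\bar{v}^S_\xi\tilde{v}\,d\xi$ into $Y_2+Y_4$. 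The purpose of the last two splittings is that $Y_1(W)+Y_2(W)$ is exactly $-\tfrac{\delta_S}{M}\dot{X}(t)$ by the definition \eqref{XODE} of the shift (since $\bar{h}^S=\bar{u}^S-(\ln\bar{v}^S)_\xi$), so that $\dot{X}(Y_1+Y_2)=-\tfrac{\delta_S}{M}|\dot{X}|^2\le 0$ is the damping the shift produces. Finally, the remaining pieces of $Y(W)$ — the cross term, the $\tilde{\phi}_\xi^2,\tilde{\phi}_{\xi\xi}^2$ contributions, and the profile term $-\int a^X\bar{v}^S_\xi\tilde{\phi}_{\xi\xi}/\bar{v}^2\,d\xi$ — are collected, after the sign bookkeeping, into $Y_7,Y_8,Y_9$. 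These are the genuinely NSP-specific terms absent from \cite{KVW2}, but all of them carry the factor $\dot{X}(t)$ and so are harmless in the subsequent estimate.

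The remaining terms are then obtained by relabeling: $\mathcal{B}_2,\dots,\mathcal{B}_5$ are read off directly from $\mathcal{J}^{\textup{bad}}$, $\mathcal{G}_2,\mathcal{G}^R,\mathcal{D}$ from $\mathcal{J}^{\textup{good}}$, and $\mathcal{G}_3,\mathcal{G}_4$ are simply the $a^X_\xi$-weighted $\tilde{\phi}$-quadratic part of $\mathcal{J}^{\textup{good}}$, which is manifestly nonnegative since $\sigma,a^X_\xi,e^{-\bar{\phi}}>0$; the source terms $S_1(W),S_2(W)$ come from the two forcing terms $\int a^X(\tilde{p}(v)-\tilde{p}(\bar{v}))F_4\,d\xi$ and $-\int a^X\tilde{h}F_2\,d\xi$ of $\mathcal{J}^{\textup{bad}}$, using that ``$\bar{v}^S$'' denotes the shifted profile so that $F_4=(\ln\bar{v}^S-\ln\bar{v})_{\xi\xi}$ and $F_2=(\tilde{p}(\bar{v})-\tilde{p}(\bar{v}^R)-\tilde{p}(\bar{v}^S))_\xi$ is an exact $\xi$-derivative, which licenses the integration by parts hidden in $S_2$. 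I expect the \textbf{main obstacle} to be the sheer volume of sign- and coefficient-bookkeeping rather than any conceptual difficulty: in particular, the exact constant $\tfrac{1}{2\sigma}$ in $\mathcal{B}_1$ must be tracked faithfully, because the later Poincaré-type comparison of $\mathcal{B}_1+\mathcal{B}_2$ with $\mathcal{G}_1+\mathcal{G}_2$ is sensitive to it — this is the delicate point of the Kang--Vasseur $a$-contraction method. That part transfers from \cite[Sections~4.4--4.5]{KVW2} essentially unchanged precisely because every new electrostatic term has been routed into the $\dot{X}$-multiplied block $Y_7$--$Y_9$ or the manifestly good block $\mathcal{G}_3,\mathcal{G}_4$, so the reorganization of the Navier--Stokes part is not disturbed.
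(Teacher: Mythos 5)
Your proposal is correct and follows essentially the same approach the paper (implicitly) uses: the paper omits the proof of this lemma and points to Sections 4.4--4.5 of \cite{KVW2}, and your re-derivation — completion of the square in $\tilde h - \tfrac{\tilde p(v)-\tilde p(\bar v)}{\sigma}$ to extract $-\mathcal{G}_1+\mathcal{B}_1$, the decomposition of $Y(W)$ (using $Y_5+Y_6 = -\int a^X_\xi \tilde\eta$, the splittings $Y_1+Y_3$, $Y_2+Y_4$, and the NSP-specific pieces $Y_7,Y_8,Y_9$), the recognition that $Y_1+Y_2$ is exactly $-\tfrac{\delta_S}{M}\dot X$ by \eqref{XODE} since $\bar h^S_\xi = \partial_\xi(\bar u^S - \partial_\xi\ln\bar v^S)$, and the direct relabeling of the remaining $\mathcal{B}_j$, $\mathcal{G}_j$, $\mathcal{G}^R$, $\mathcal{D}$, $S_j$ from $\mathcal J^{\textup{bad}}$ and $\mathcal J^{\textup{good}}$ — is precisely that computation. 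One small correction: there is no integration by parts ``hidden in $S_2$''; comparing the definitions, $S_2(W) = -\int a^X \tilde h\, F_2\, d\xi$ is literally the last term of $\mathcal J^{\textup{bad}}$ with $F_2$ written out, a pure relabeling with no change of form, and likewise $S_1(W) = \int a^X(\tilde p(v)-\tilde p(\bar v))F_4\,d\xi$.
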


\subsection{\texorpdfstring{Quadratization of $\partial_\xi(v-\bar{v}) \partial_\xi^3(\phi-\bar{\phi})$}{Quadratization of (v-v̄)'(ϕ - ϕ̄)'''}} \label{Quadratiz}

To control the term $\tilde{\mathcal{P}}$, we make use of the Poisson structure, which enables us to derive a quadratic form involving $\tilde{v}_\xi$ and $\tilde{\phi}_{\xi\xi\xi}$. This generates additional good terms for $\tilde{\phi}_{\xi\xi}$ and $\tilde{\phi}_{\xi\xi\xi}$.

\begin{lemma} \label{Quad}
Under the assumptions of Proposition~\ref{Apriori}, the following inequality holds:
\begin{equation} \label{quad}
\begin{split}
\tilde{\mathcal{P}}(W) - \mathcal{D}(W) \leq - \mathcal{G}_5(W) - \mathcal{G}_6(W) - \tilde{\mathcal{D}}(W) + \sum_{j=1}^5 \tilde{\mathcal{P}}_j(W)
\end{split}
\end{equation}
where
\begin{equation*}
\begin{split}
\mathcal{G}_5(W) := \int_\mathbb{R} a^X \frac{e^{-\bar{\phi}}\tilde{\phi}_{\xi\xi}^2}{v\bar{v}^2} \, d\xi, \quad \mathcal{G}_6(W) := c_0 \int_\mathbb{R} a^X \frac{\tilde{\phi}_{\xi\xi\xi}^2}{v\bar{v}^2} \, d\xi, \quad \tilde{\mathcal{D}}(W) := \frac{c_0}{2}\mathcal{D}(W),
\end{split}
\end{equation*}
and
\begin{equation*}
\begin{split}
\tilde{\mathcal{P}}_1(W) & :=  (c_0-2) \int_\mathbb{R} a^X \bigg( - \frac{\tilde{v}^2 + 2\bar{v} \tilde{v}}{v^3\bar{v}^2} \left( \tilde{v}_\xi^2 + 2\bar{v}_\xi \tilde{v}_\xi \right) + \frac{2(\tilde{v}^2 + 2\bar{v}\tilde{v})\bar{v}_\xi^2 \tilde{v}}{v^3\bar{v}^3} + \frac{\bar{v}_\xi^2\tilde{v}^2}{v\bar{v}^4} \bigg) \, d\xi, \\
\tilde{\mathcal{P}}_2(W) & := \int_\mathbb{R} a^X_\xi \bigg( \frac{\tilde{v}_\xi \tilde{\phi}_{\xi\xi}}{v\bar{v}^2} - \frac{\bar{v}_\xi \tilde{v}\tilde{\phi}_{\xi\xi}}{v\bar{v}^3} + \frac{e^{-\bar{\phi}}\tilde{\phi}_\xi\tilde{\phi}_{\xi\xi}}{v\bar{v}^2} \bigg) \, d\xi, \\
\tilde{\mathcal{P}}_3(W) & := \int_\mathbb{R} a^X \bigg( - \frac{\bar{v}_\xi \tilde{v}\tilde{\phi}_{\xi\xi\xi}}{v\bar{v}^3} + \frac{e^{-\bar{\phi}}\bar{\phi}_\xi \tilde{\phi} \tilde{\phi}_{\xi\xi\xi}}{v\bar{v}^2} \bigg) \, d\xi, \\
\tilde{\mathcal{P}}_4(W) & := - \int_\mathbb{R} a^X \bigg( \frac{2\bar{v}_\xi \tilde{v}_\xi \tilde{\phi}_{\xi\xi}}{v\bar{v}^3} - \frac{2\bar{v}_\xi^2 \tilde{v} \tilde{\phi}_{\xi\xi}}{v\bar{v}^4} \bigg) \, d\xi +  \int_\mathbb{R} a^X \bigg( \frac{e^{-\bar{\phi}}}{\bar{v}^2} \bigg)_\xi \frac{\tilde{\phi}_{\xi}\tilde{\phi}_{\xi\xi}}{v} \, d\xi \\
& \quad + \int_\mathbb{R} a^X \bigg( \frac{e^{-\bar{\phi}}}{\bar{v}} \bigg)_\xi \frac{\tilde{\phi}_{\xi\xi}\tilde{\phi}_{\xi\xi\xi}}{v\bar{v}^2} \, d\xi - \int_\mathbb{R} a^X \frac{(\tilde{v}_\xi + \bar{v}_\xi)e^{-\bar{\phi}}\tilde{\phi}_\xi\tilde{\phi}_{\xi\xi}}{v^2\bar{v}^2} \, d\xi, \\
\tilde{\mathcal{P}}_5(W) & := \int_\mathbb{R} a^X \frac{\tilde{\phi}_{\xi\xi\xi}}{v\bar{v}^2} \mathcal{V}_\xi \, d\xi
\end{split}
\end{equation*}
with a positive constant $c_0>0$.

\end{lemma}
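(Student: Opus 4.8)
\emph{Proof sketch.} The plan is to turn the indefinite term $\tilde{\mathcal{P}}(W)=-\int a^X(\ln v-\ln\bar v)_{\xi\xi}\,\bar v^{-2}\tilde\phi_{\xi\xi}\,d\xi$, which after one integration by parts is essentially a cross term of the type $\partial_\xi(v-\bar v)\,\partial_\xi^3(\phi-\bar\phi)$, into definite quadratic forms by feeding in the elliptic identity \eqref{v} and its $\xi$-derivative. Concretely, I would first integrate by parts in $\tilde{\mathcal{P}}$ to move one derivative onto $a^X\bar v^{-2}\tilde\phi_{\xi\xi}$, then substitute the exact identities $(\ln v-\ln\bar v)_\xi=\frac{\tilde v_\xi}{v}-\frac{\bar v_\xi\tilde v}{v\bar v}$ and $\partial_\xi(a^X\bar v^{-2})=a^X_\xi\bar v^{-2}-2a^X\bar v_\xi\bar v^{-3}$. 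Every resulting term carrying the localized small factor $a^X_\xi$ or $\bar v_\xi$ is set aside into $\tilde{\mathcal{P}}_2,\tilde{\mathcal{P}}_3,\tilde{\mathcal{P}}_4$, leaving only the essential term $M:=\int a^X(v\bar v^2)^{-1}\tilde v_\xi\tilde\phi_{\xi\xi\xi}\,d\xi$.

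Next I would differentiate \eqref{v} to get $\tilde v_\xi=\frac{e^{-\bar\phi}}{\bar v}\tilde\phi_{\xi\xi\xi}+\big(\frac{e^{-\bar\phi}}{\bar v}\big)_\xi\tilde\phi_{\xi\xi}-(e^{-\bar\phi})_\xi\tilde\phi-e^{-\bar\phi}\tilde\phi_\xi+\mathcal{V}_\xi$ and insert it into $M$. The leading piece produces the positive integral $\int a^X e^{-\bar\phi}(v\bar v^3)^{-1}\tilde\phi_{\xi\xi\xi}^2\,d\xi$; the piece coming from $-e^{-\bar\phi}\tilde\phi_\xi$, after a further integration by parts using $\tilde\phi_\xi\tilde\phi_{\xi\xi\xi}=\partial_\xi(\tilde\phi_\xi\tilde\phi_{\xi\xi})-\tilde\phi_{\xi\xi}^2$, produces exactly $+\mathcal{G}_5$ together with cross terms that join $\tilde{\mathcal{P}}_2$ and $\tilde{\mathcal{P}}_4$; the $\mathcal{V}_\xi$ piece is precisely $\tilde{\mathcal{P}}_5$; and the two remaining pieces, carrying the small factors $\big(\frac{e^{-\bar\phi}}{\bar v}\big)_\xi$ and $(e^{-\bar\phi})_\xi=-\bar\phi_\xi e^{-\bar\phi}$, land in $\tilde{\mathcal{P}}_4$ and $\tilde{\mathcal{P}}_3$ respectively.

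It remains to balance these positive quadratics against $-\mathcal{D}$. Using $\tilde p(v)-\tilde p(\bar v)=-2\tilde v/(v\bar v)$, differentiating, and again invoking the differentiated form of \eqref{v}, I would expand $(\tilde p(v)-\tilde p(\bar v))_\xi$ and split $-\mathcal{D}=-\tilde{\mathcal{D}}-(1-\tfrac{c_0}{2})\mathcal{D}$, keeping $\tilde{\mathcal{D}}=\tfrac{c_0}{2}\mathcal{D}$ untouched. Expanding the remaining portion, its $\tilde\phi_{\xi\xi\xi}^2$ contribution equals $-(1-\tfrac{c_0}{2})$ times (up to factors of $e^{-\bar\phi}/\bar v$) twice the one produced by $M$, a $\tilde\phi_\xi\tilde\phi_{\xi\xi\xi}$ cross term integrates by parts into a negative $\tilde\phi_{\xi\xi}^2$ integral, and the residue consists of terms purely quadratic in $\tilde v,\tilde v_\xi$ (these form $\tilde{\mathcal{P}}_1$, whose prefactor $c_0-2$ is inherited from the weight $-2(1-\tfrac{c_0}{2})$ multiplying the leading $\tilde v_\xi^2$-part of $\mathcal{D}$) together with profile-derivative cross terms absorbed into $\tilde{\mathcal{P}}_4$. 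The crux, and the main obstacle, is that the $\tilde\phi_{\xi\xi\xi}^2$ coefficients coming from $\tilde{\mathcal{P}}$ and from this expanded part of $-\mathcal{D}$ do not cancel but add up to a pointwise multiple $(c_0-1+O(\delta_R+\delta_S))$ of the $M$-coefficient; to make this $\le$ the integrand of $-\mathcal{G}_6$ one is forced to invoke the near-quasineutral bound $e^{-\bar\phi}/\bar v=1+O(\delta_R+\delta_S)$ (which follows from the Poisson structure of the composite profile, as in the proof of Lemma~\ref{entsim}) together with the choice $c_0<\tfrac12$ and $\delta_0$ sufficiently small; the analogous comparison for the $\tilde\phi_{\xi\xi}^2$ integrals yields $-\mathcal{G}_5$ with room to spare, any surplus negativity simply being discarded. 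Collecting all residual terms produces $\tilde{\mathcal{P}}_1,\dots,\tilde{\mathcal{P}}_5$, and the smallness and localization of $\bar v_\xi,\bar\phi_\xi,a^X_\xi,\big(\frac{e^{-\bar\phi}}{\bar v}\big)_\xi$ from Lemmas~\ref{rarefaction} and \ref{Prop.1.1} is what makes these genuine error terms in the sections that follow.
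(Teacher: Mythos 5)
Your proposal matches the paper's argument exactly for the first half: integrating by parts in $\tilde{\mathcal{P}}$ to expose the cross term $M=\int a^X(v\bar v^2)^{-1}\tilde v_\xi\tilde\phi_{\xi\xi\xi}\,d\xi$, inserting the differentiated form of \eqref{v} into $M$, and peeling off the positive integral $\int a^X e^{-\bar\phi}(v\bar v^3)^{-1}\tilde\phi_{\xi\xi\xi}^2\,d\xi$ together with $\mathcal{G}_5$ via $\tilde\phi_\xi\tilde\phi_{\xi\xi\xi}=\partial_\xi(\tilde\phi_\xi\tilde\phi_{\xi\xi})-\tilde\phi_{\xi\xi}^2$. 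This is precisely \eqref{Prel} and \eqref{vxi_ide}.

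You then diverge. The paper does \emph{not} substitute the Poisson identity a second time into $\mathcal{D}$. Instead, after writing $\mathcal{D}=2\int a^X\tilde v_\xi^2/(v\bar v^2)\,d\xi + R_1$, it forms the quantity
\[
2\int a^X\frac{\tilde v_\xi^2}{v\bar v^2}\,d\xi - 2\int a^X\frac{\tilde v_\xi\tilde\phi_{\xi\xi\xi}}{v\bar v^2}\,d\xi + \int a^X\frac{e^{-\bar\phi}\tilde\phi_{\xi\xi\xi}^2}{v\bar v^3}\,d\xi ,
\]
treats $(\tilde v_\xi,\tilde\phi_{\xi\xi\xi})$ as two independent variables, and invokes pointwise positive-definiteness of the symbol $\begin{pmatrix}2 & -1\\-1 & e^{-\bar\phi}/\bar v\end{pmatrix}$, which needs only the soft lower bound $e^{-\bar\phi}/\bar v \geq c > 1/2$; this simultaneously produces $\mathcal{G}_6$ and the $\tilde v_\xi^2$ part of $\tilde{\mathcal{D}}$ with the smallest-eigenvalue constant $c_0$. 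You instead split $\mathcal{D}=\tilde{\mathcal{D}}+(1-c_0/2)\mathcal{D}$, push the Poisson identity into $(1-c_0/2)\mathcal{D}$ to eliminate $\tilde v_\xi$, and compare $\tilde\phi_{\xi\xi\xi}^2$ coefficients pointwise under the sharper approximation $e^{-\bar\phi}/\bar v = 1 + O(\delta_0)$ (which indeed holds for the composite profile, as you note).

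Both routes close, but with different trade-offs. Your way needs the stronger quantitative quasi-neutrality estimate rather than just a positive lower bound, and it is arithmetically heavier: expanding $\tilde v_\xi^2$ through the Poisson identity produces a host of extra cross terms and in particular a $\mathcal{V}_\xi^2$ contribution, which through $\mathcal{V}^L_\xi$ and $\mathcal{V}^N_\xi$ in \eqref{mcV} reintroduces $\tilde v_{\xi\xi}$ and $\tilde\phi_{\xi\xi\xi}$ — these carry small $O(\delta_0+\varepsilon_1)$ coefficients and can be absorbed, but they do not appear in the paper's argument at all. The error terms you would collect therefore do not coincide literally with $\tilde{\mathcal{P}}_1,\dots,\tilde{\mathcal{P}}_5$ (e.g.\ your analogue of $\tilde{\mathcal{P}}_5$ would also carry a $\mathcal{V}_\xi^2$ piece and a differently weighted $\tilde\phi_{\xi\xi\xi}\mathcal{V}_\xi$ piece), and the admissible range of $c_0$ is $c_0<\tfrac12$ rather than the eigenvalue threshold $\tfrac{3-\sqrt5}{2}\approx 0.38$. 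Neither discrepancy is fatal, since the lemma only asserts existence of some positive $c_0$ and the $\tilde{\mathcal{P}}_j$ serve merely as a catalogue of controllable remainders; but the paper's quadratic-form formulation is the cleaner and more robust version of the same completing-the-square idea.
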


\begin{proof}
We use the definition of $\tilde{p}(\cdot)$ to rewrite $\mathcal{D}$ as 
\begin{equation} \label{Drel}
\begin{split}
\mathcal{D}(W) & = \int_\mathbb{R} a^X \frac{\left( \tilde{p}(v) - \tilde{p}(\bar{v}) \right)_\xi^2}{\tilde{p}(v)} \, d\xi = 2 \int_\mathbb{R} a^X \bigg( \frac{v_\xi^2}{v^3} - \frac{2\bar{v}_\xi v_\xi}{v\bar{v}^2} + \frac{\bar{v}_\xi^2 v}{\bar{v}^4} \bigg) \, d\xi \\
& = 2 \int_\mathbb{R} a^X \frac{\tilde{v}_\xi^2}{v\bar{v}^2} \, d\xi + 2 \int_\mathbb{R} a^X \bigg[ \bigg( \frac{1}{v^3} - \frac{1}{v\bar{v}^2} \bigg) v_\xi^2 + \bigg( \frac{1}{\bar{v}^3} - \frac{1}{v\bar{v}^2} \bigg) \bar{v}_\xi^2 + \frac{\bar{v}_\xi^2\tilde{v}}{\bar{v}^4} \bigg] \, d\xi \\
& = 2 \int_\mathbb{R} a^X \frac{\tilde{v}_\xi^2}{v\bar{v}^2} \, d\xi + 2 \int_\mathbb{R} a^X \bigg[ - \frac{\tilde{v}^2 + 2\bar{v} \tilde{v}}{v^3\bar{v}^2} \left( \tilde{v}_\xi^2 + 2\bar{v}_\xi \tilde{v}_\xi \right) + \frac{2(\tilde{v}^2 + 2\bar{v}\tilde{v})\bar{v}_\xi^2 \tilde{v}}{v^3\bar{v}^3} + \frac{\bar{v}_\xi^2\tilde{v}^2}{v\bar{v}^4} \bigg] \, d\xi.
\end{split}
\end{equation}
The term $\tilde{\mathcal{P}}$ is expanded as
\begin{equation} \label{Prel}
\begin{split}
\tilde{\mathcal{P}}(W) & = - \int_\mathbb{R} a^X \left(\frac{\tilde{v}_\xi}{v} - \frac{\bar{v}_\xi \tilde{v}}{v\bar{v}} \right)_\xi \frac{\tilde{\phi}_{\xi\xi}}{\bar{v}^2} \, d\xi \\
& = \int_\mathbb{R} a^X \frac{\tilde{v}_\xi\tilde{\phi}_{\xi\xi\xi}}{v\bar{v}^2} \, d\xi - \int_\mathbb{R} a^X \frac{\bar{v}_\xi \tilde{v}\tilde{\phi}_{\xi\xi\xi}}{v\bar{v}^3} \, d\xi + \int_\mathbb{R} \bigg( a^X_\xi \frac{\tilde{\phi}_{\xi\xi}}{\bar{v}^2} -  a^X \frac{2\bar{v}_\xi \tilde{\phi}_{\xi\xi}}{\bar{v}^3} \bigg)  \left(\frac{\tilde{v}_\xi}{v} - \frac{\bar{v}_\xi \tilde{v}}{v\bar{v}} \right) \, d\xi.
\end{split}
\end{equation}
To rewrite the first term in the second line, we use the identity \eqref{v}. Differentiating \eqref{v} with respect to $\xi$, we have
\begin{equation} \label{v_xi}
\tilde{v}_\xi = \bigg( \frac{e^{-\bar{\phi}}}{\bar{v}} \bigg)_\xi \tilde{\phi}_{\xi\xi} + \frac{e^{-\bar{\phi}} \tilde{\phi}_{\xi\xi\xi}}{\bar{v}} + e^{-\bar{\phi}}\bar{\phi}_\xi \tilde{\phi} - e^{-\bar{\phi}}\tilde{\phi}_\xi + \mathcal{V}_\xi,
\end{equation}
where $\mathcal{V}$ is as in \eqref{mcV}. Multiplying this by $\textstyle \frac{a^X \tilde{\phi}_{\xi\xi\xi}}{v\bar{v}^2}$ and integrating over $\mathbb{R}$, we obtain after rearrangement
\begin{equation} \label{vxi_ide}
\begin{split}
& \int_\mathbb{R} a^X \frac{\tilde{v}_\xi \tilde{\phi}_{\xi\xi\xi}}{v\bar{v}^2} \, d\xi - \int_\mathbb{R} a^X \frac{e^{-\bar{\phi}}\tilde{\phi}_{\xi\xi\xi}^2}{v\bar{v}^3} \, d\xi - \int_\mathbb{R} a^X \frac{e^{-\bar{\phi}}\tilde{\phi}_{\xi\xi}^2}{v\bar{v}^2} \, d\xi \\
& \quad = \int_\mathbb{R} a^X_\xi \frac{e^{-\bar{\phi}} \tilde{\phi}_\xi \tilde{\phi}_{\xi\xi}}{v\bar{v}^2} \, d\xi +  \int_\mathbb{R} a^X \bigg( \frac{e^{-\bar{\phi}}}{\bar{v}^2} \bigg)_\xi \frac{\tilde{\phi}_{\xi}\tilde{\phi}_{\xi\xi}}{v} \, d\xi -  \int_\mathbb{R} a^X \frac{(\tilde{v}_\xi + \bar{v}_\xi)e^{-\bar{\phi}}\tilde{\phi}_\xi\tilde{\phi}_{\xi\xi}}{v^2\bar{v}^2} \, d\xi \\
& \qquad + \int_\mathbb{R} a^X \bigg( \frac{e^{-\bar{\phi}}}{\bar{v}} \bigg)_\xi \frac{\tilde{\phi}_{\xi\xi}\tilde{\phi}_{\xi\xi\xi}}{v\bar{v}^2} \, d\xi + \int_\mathbb{R} a^X \frac{e^{-\bar{\phi}}\bar{\phi}_\xi \tilde{\phi} \tilde{\phi}_{\xi\xi\xi}}{v\bar{v}^2} \, d\xi + \int_\mathbb{R} a^X \frac{\tilde{\phi}_{\xi\xi\xi}}{v\bar{v}^2} \mathcal{V}_\xi \, d\xi.
\end{split}
\end{equation}
By \eqref{Drel}, \eqref{Prel}, and \eqref{vxi_ide}, we have
\begin{equation} \label{DtP}
\begin{split}
& \mathcal{D}(W) - \tilde{\mathcal{P}}(W) - \left( 2 \int_\mathbb{R} a^X \frac{\tilde{v}_\xi^2}{v\bar{v}^2} \, d\xi - \int_\mathbb{R} a^X \frac{2\tilde{v}_\xi\tilde{\phi}_{\xi\xi\xi}}{v\bar{v}^2} \, d\xi + \int_\mathbb{R} a^X \frac{e^{-\bar{\phi}}\tilde{\phi}_{\xi\xi\xi}^2}{v\bar{v}^3} \, d\xi + \int_\mathbb{R} a^X \frac{e^{-\bar{\phi}}\tilde{\phi}_{\xi\xi}^2}{v\bar{v}^2} \, d\xi \right) \\
& \quad = 2 \int_\mathbb{R} a^X \bigg( - \frac{\tilde{v}^2 + 2\bar{v} \tilde{v}}{v^3\bar{v}^2} \left( \tilde{v}_\xi^2 + 2\bar{v}_\xi \tilde{v}_\xi \right) + \frac{2(\tilde{v}^2 + 2\bar{v}\tilde{v})\bar{v}_\xi^2 \tilde{v}}{v^3\bar{v}^3} + \frac{\bar{v}_\xi^2\tilde{v}^2}{v\bar{v}^4} \bigg) \, d\xi \\
& \qquad + \int_\mathbb{R} a^X \frac{\bar{v}_\xi \tilde{v}\tilde{\phi}_{\xi\xi\xi}}{v\bar{v}^3} \, d\xi - \int_\mathbb{R} \bigg( a^X_\xi \frac{\tilde{\phi}_{\xi\xi}}{\bar{v}^2} -  a^X \frac{2\bar{v}_\xi \tilde{\phi}_{\xi\xi}}{\bar{v}^3} \bigg)  \left(\frac{\tilde{v}_\xi}{v} - \frac{\bar{v}_\xi \tilde{v}}{v\bar{v}} \right) \, d\xi \\
& \qquad - \int_\mathbb{R} a^X_\xi \frac{e^{-\bar{\phi}} \tilde{\phi}_\xi \tilde{\phi}_{\xi\xi}}{v\bar{v}^2} \, d\xi -  \int_\mathbb{R} a^X \bigg( \frac{e^{-\bar{\phi}}}{\bar{v}^2} \bigg)_\xi \frac{\tilde{\phi}_{\xi}\tilde{\phi}_{\xi\xi}}{v} \, d\xi +  \int_\mathbb{R} a^X \frac{(\tilde{v}_\xi + \bar{v}_\xi)e^{-\bar{\phi}}\tilde{\phi}_\xi\tilde{\phi}_{\xi\xi}}{v^2\bar{v}^2} \, d\xi \\
& \qquad - \int_\mathbb{R} a^X \bigg( \frac{e^{-\bar{\phi}}}{\bar{v}} \bigg)_\xi \frac{\tilde{\phi}_{\xi\xi}\tilde{\phi}_{\xi\xi\xi}}{v\bar{v}^2} \, d\xi - \int_\mathbb{R} a^X \frac{e^{-\bar{\phi}}\bar{\phi}_\xi \tilde{\phi} \tilde{\phi}_{\xi\xi\xi}}{v\bar{v}^2} \, d\xi - \int_\mathbb{R} a^X \frac{\tilde{\phi}_{\xi\xi\xi}}{v\bar{v}^2} \mathcal{V}_\xi \, d\xi.
\end{split}
\end{equation}
Since, as in the proof of Lemma~\ref{Id},
\begin{equation*}
\left| \frac{e^{-\bar{\phi}}}{\bar{v}} \right| \geq \frac{v_-}{v_+} \left( 1 - C \delta_S^2 \right) \geq c
\end{equation*}
for sufficiently small $\delta_S$ and some constant $c>0$, there exists a positive constant $c_0>0$ such that
\begin{equation} \label{Dtpineq}
2 \int_\mathbb{R} a^X \frac{\tilde{v}_\xi^2}{v\bar{v}^2} \, d\xi - \int_\mathbb{R} a^X \frac{2\tilde{v}_\xi\tilde{\phi}_{\xi\xi\xi}}{v\bar{v}^2} \, d\xi + \int_\mathbb{R} a^X \frac{e^{-\bar{\phi}}\tilde{\phi}_{\xi\xi\xi}^2}{v\bar{v}^3} \, d\xi \geq c_0 \left( \int_\mathbb{R} a^X \frac{\tilde{v}_\xi^2}{v\bar{v}^2} \, d\xi + \int_\mathbb{R} a^X \frac{\tilde{\phi}_{\xi\xi\xi}^2}{v\bar{v}^2} \, d\xi \right).
\end{equation}
Finally, combining \eqref{Drel}, \eqref{DtP}, and \eqref{Dtpineq}, we obtain the desired inequality.
\end{proof}

\subsection{Preliminary estimates}

Before proceeding to the main estimates, we present several preliminary estimates that will be used throughout the rest of the paper.

\begin{lemma}
The shock profile $(\bar{v}^S,\bar{u}^S,\bar{\phi}^S)$ and the approximate rarefaction wave $(\bar{v}^R,\bar{u}^R,\bar{\phi}^R)$ satisfy
\begin{equation} \label{shderiv1}
\left| \frac{d^k \bar{v}^S}{d\xi^k} \right|, \left| \frac{d^k \bar{u}^S}{d\xi^k}  \right|, \left| \frac{d^k \bar{\phi}^S}{d\xi^k}  \right| \leq C \bar{v}^S_\xi \leq C \delta_S^2
\end{equation}
and
\begin{equation} \label{raderiv}
\left| \frac{\partial^k \bar{v}^R}{\partial \xi^k} \right|, \left| \frac{\partial^k \bar{u}^R}{\partial \xi^k} \right|, \left| \frac{\partial^k \bar{\phi}^R}{\partial \xi^k} \right| \leq C \bar{v}^R_\xi \leq C \delta_R,
\end{equation}
where $k=1,2,3$ and $C>0$ is a generic constant.
\end{lemma}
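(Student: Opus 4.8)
The plan is to deduce \eqref{shderiv1} and \eqref{raderiv} from the structural information already recorded in Lemma~\ref{Prop.1.1} and Lemma~\ref{rarefaction}, together with a short induction on $k$ using the profile equations \eqref{SHODE} and the Burgers equation \eqref{Bur}.

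For the shock profile, the case $k=1$ is immediate: the first equation of \eqref{SHODE} gives $\bar u^S_\xi=-\sigma\bar v^S_\xi$, so $|\bar u^S_\xi|=\sigma\bar v^S_\xi$; \eqref{vup'} gives $|\bar\phi^S_\xi|\le C|\bar u^S_\xi|=C\sigma\bar v^S_\xi$; and the upper bound $\bar v^S_\xi\le C\delta_S^2$ is the $k=1$ instance of \eqref{shderiv}. For $k=2$, I would differentiate the momentum equation of \eqref{SHODE}, use $\bar u^S_\xi=-\sigma\bar v^S_\xi$ to eliminate $\bar u^S$, and solve for $\bar v^S_{\xi\xi}$, which yields
\[
\bar v^S_{\xi\xi}=-\frac{\bar v^S}{\sigma}\Big(\big(\sigma^2+p'(\bar v^S)\big)\bar v^S_\xi+\frac{\bar\phi^S_\xi}{\bar v^S}\Big)+\frac{(\bar v^S_\xi)^2}{\bar v^S}.
\]
Each term on the right is bounded by $C\bar v^S_\xi$ using the $k=1$ bounds, the uniform positivity $\bar v^S\ge c>0$ (which follows from \eqref{shderiv} with $k=0$), and $\bar v^S_\xi\le C\delta_S^2$; then $\bar u^S_{\xi\xi}=-\sigma\bar v^S_{\xi\xi}$ inherits the bound.

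The remaining cases — $k=2$ for $\bar\phi^S$, and $k=3$ for all three components — come down to the single estimate $|\bar\phi^S+\ln\bar v^S|\le C\,\bar v^S_\xi$, equivalently $|1-\bar v^S e^{\bar\phi^S}|\le C\bar v^S_\xi$: once this is known, the Poisson equation in \eqref{SHODE} gives $\bar\phi^S_{\xi\xi}=\frac{\bar\phi^S_\xi\bar v^S_\xi}{\bar v^S}+\bar v^S\big(\bar v^S e^{\bar\phi^S}-1\big)$, hence $|\bar\phi^S_{\xi\xi}|\le C\bar v^S_\xi$, and one further differentiation of the momentum and Poisson equations closes the $k=3$ step. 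I expect this to be the main obstacle. It cannot be obtained from the $k=1$ bounds by a naive ODE bootstrap: that bootstrap is degenerate at leading order in $\delta_S$ (the differentiated momentum and Poisson relations become dependent at leading order), so it does not pin down $\bar v^S_{\xi\xi\xi}$ and $\bar\phi^S_{\xi\xi}$. Instead one must use sharp asymptotics of the profile beyond what is stated in \eqref{shderiv}, namely that $\bar v^S_\xi$ decays exactly at rate $e^{-\theta\delta_S|\xi|}$ (not faster) as $\xi\to\pm\infty$, and that the non-neutrality $\bar\phi^S+\ln\bar v^S$ is one order smaller, of size $O(\delta_S^2 e^{-\theta\delta_S|\xi|})$. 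Both facts follow from the phase-plane construction of the profile in \cite{DLZ} (and are implicit in \cite{KKSh}); alternatively, linearizing \eqref{SHODE} at the rest states $(v_m,\phi_m)$ and $(v_+,\phi_+)$ shows that the small eigenvalue $\theta\delta_S$ is negligible against the $O(1)$ coefficient in the linearized Poisson equation, which produces exactly the required cancellation in $\bar\phi^S+\ln\bar v^S$.

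For the rarefaction wave the bounds are more elementary. I would start from the identities in Lemma~\ref{rarefaction}(1), namely $\bar u^R_x=(\bar v^R)^{-1}w_x$, $\bar v^R_x=\bar v^R\bar u^R_x$, and $\bar\phi^R_x=-(\bar v^R)^{-1}\bar v^R_x$, which give the $k=1$ bounds at once since $w_x>0$ and $\bar v^R$ is bounded above and below. For $k=2,3$, differentiating these relations expresses $\partial_x^k(\bar v^R,\bar u^R,\bar\phi^R)$ as a polynomial in $\bar v^R$, $(\bar v^R)^{-1}$ and $\partial_x^j w$ with $j\le k$; combining the elementary bounds $|\partial_x^j w|\le C w_x$ for $j=1,2,3$ (checked at $t=0$ from the explicit $\tanh$ datum in \eqref{init_B} and propagated along the characteristics of \eqref{eqB}, as in \cite{MN1}) with $|\bar u^R_{xx}|\le C|\bar u^R_x|$ and the $L^\infty$ bounds of Lemma~\ref{rarefaction}(2) gives $|\partial_x^k(\bar v^R,\bar u^R,\bar\phi^R)|\le C\bar v^R_x$, while $\bar v^R_x\le C\delta_R$ is part of Lemma~\ref{rarefaction}(2). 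Since $\partial_\xi$ of a function of $\xi+\sigma t$ equals $\partial_x$ of that function, the $\xi$-derivative form in \eqref{raderiv} follows.
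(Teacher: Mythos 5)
Your proposal is correct in substance, but it takes a considerably more explicit route than the paper, which disposes of the lemma in two lines: the shock bound \eqref{shderiv1} for all $k\in\mathbb{N}$ is cited from Lemma~4.6 of \cite{KKSh}, and \eqref{raderiv} is attributed directly to Lemma~\ref{rarefaction}. What you have written is, in effect, a reconstruction of what those references must contain.

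Your analysis of the shock case is sound as far as it goes. The $k=1$ bounds do follow immediately from $\bar u^S_\xi=-\sigma\bar v^S_\xi$, from \eqref{vup'}, and from \eqref{shderiv} with $k=1$. The algebra you give for $\bar v^S_{\xi\xi}$ from the momentum ODE is correct, and since $\sigma^2+p'(\bar v^S)=O(1)$ near quasi-neutrality (with $\tilde p(v)=2/v$, $\sigma^2\approx 2/v_m^2$ while $-p'\approx 1/v_m^2$) and $|\bar\phi^S_\xi|\le C\bar v^S_\xi$, this closes $k=2$ for $\bar v^S,\bar u^S$ without any sharp asymptotics. More importantly, you correctly isolate the genuine difficulty: $\bar\phi^S_{\xi\xi}=\bar v^S(\bar v^S e^{\bar\phi^S}-1)+\bar\phi^S_\xi\bar v^S_\xi/\bar v^S$ requires a pointwise non-neutrality bound $|1-\bar v^S e^{\bar\phi^S}|\le C\bar v^S_\xi$, which cannot be extracted from \eqref{vup'} or the one-sided decay estimates \eqref{shderiv}. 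Linearizing about the quasi-neutral state makes $-\ln\bar v^S$ and $\bar\phi^S$ agree to $O(\delta_S)$, but what is needed is the pointwise exponential comparison, which requires both the lower bound on $\bar v^S_\xi$ (exact decay rate, not just an upper bound) and the higher-order smallness of the non-neutrality — exactly the phase-plane facts you attribute to \cite{DLZ,KKSh}. In deferring that single input to the cited references you are in the same position the paper is; you are just more explicit about what is being used.

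Your argument for \eqref{raderiv} is complete and, strictly speaking, fills a small gap that the paper glosses over. Lemma~\ref{rarefaction} only records the pointwise comparison $|\bar u^R_{xx}|\le C|\bar u^R_x|$ and $L^p$ bounds on higher derivatives; it does not literally state $|\bar u^R_{xxx}|\le C\bar u^R_x$, so ``directly follows'' is an overstatement. Your route via the Burgers solution $w$ — checking $|\partial_x^j w_0|\le C w_0'$ for $j\le 3$ on the $\tanh$ datum, then propagating along characteristics using $w_x(t,x)=w_0'(x_0)/(1+t\,w_0'(x_0))$ and its $\xi$-derivatives, with $w_0'\ge 0$ keeping the denominator bounded below — is the right computation, and then the algebraic identities $\bar u^R_x=(\bar v^R)^{-1}w_x$, $\bar v^R_x=\bar v^R\bar u^R_x$, $\bar\phi^R_x=-(\bar v^R)^{-1}\bar v^R_x$ transfer the bound to all three components for $k=1,2,3$. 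The upper bound $\bar v^R_x\le C\delta_R$ is item (2) of Lemma~\ref{rarefaction} at $p=\infty$, and the $\xi$-versus-$x$ derivative identification is trivial since the moving frame is a pure translation. So this half of your proof is self-contained, whereas the paper implicitly relies on the reader's familiarity with \cite{MN1}.
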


\begin{proof}
As shown in \cite[Lemma~4.6]{KKSh}, the shock profile $(\bar{v}^S, \bar{u}^S, \bar{\phi}^S)$ satisfies the bound \eqref{shderiv1} for $k \in \mathbb{N}$, and \eqref{raderiv} is directly follows from Lemma~\ref{rarefaction}.
\end{proof}

\begin{lemma} \label{waveinter}
Under the assumptions of Proposition~\ref{Apriori}, there exists a constant $C>0$ such that
\begin{equation} \label{inter'}
\begin{split}
& \lVert (\bar{v}^S-v_m)(\bar{v}^R-v_m) \rVert_{L^2(\mathbb{R})} + \lVert \bar{v}^S_\xi (\bar{v}^R - v_m) \rVert_{L^2(\mathbb{R})} + \lVert \bar{v}^R_\xi \bar{v}^S_\xi \rVert_{L^2(\mathbb{R})} \leq C \delta_R \delta_S^{3/2} e^{-C \delta_S t}, \\
& \lVert \bar{v}^R_\xi (\bar{v}^S - v_m) \rVert_{L^2(\mathbb{R})} \leq C \delta_R \delta_S e^{-C \delta_S t}.
\end{split}
\end{equation}
In addition, it holds that
\begin{equation} \label{Raest'}
\begin{split}
\lVert \bar{v}^R_{\xi\xi} \rVert_{L^p}, \lVert \bar{v}^R_{\xi\xi\xi} \rVert_{L^p} & \leq \begin{cases}
\delta_R & \text{if } 1+t \leq \delta_R^{-1} \\
\frac{1}{1+t} & \text{if } 1+t \geq \delta_R^{-1}
\end{cases} \quad \text{for } p=1,2, \\
\lVert \bar{v}^R_\xi \rVert_{L^4} & \leq \begin{cases}
\delta_R & \text{if } 1+t \leq \delta_R^{-1} \\
\delta_R^{1/4} \frac{1}{(1+t)^{3/4}} & \text{if } 1+t \geq \delta_R^{-1}
\end{cases}
\end{split}
\end{equation}
for all $t\geq 0$.
\end{lemma}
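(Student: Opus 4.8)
The lemma has two parts that I would treat separately. The decay bounds \eqref{Raest'} follow at once from Lemma~\ref{rarefaction}(2). Writing $\bar v^R$ in the moving frame as $\bar v^R(t,\xi+\sigma t)$, one has $\partial_\xi^k\bar v^R=(\partial_x^k\bar v^R)(t,\xi+\sigma t)$, so translation invariance of $L^p$ norms gives $\|\bar v^R_{\xi\xi}\|_{L^p}=\|\partial_x^2\bar v^R\|_{L^p}\le C\min\{\delta_R,(1+t)^{-1}\}$, and likewise for $\bar v^R_{\xi\xi\xi}$ and for $\|\bar v^R_\xi\|_{L^4}\le C\min\{\delta_R,\delta_R^{1/4}(1+t)^{-3/4}\}$; comparing the two branches of each minimum across the threshold $1+t=\delta_R^{-1}$ reproduces exactly the piecewise expressions in \eqref{Raest'}.

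For the interaction estimates \eqref{inter'} I would use a separation-of-waves argument. In the moving frame the approximate rarefaction $\bar v^R(t,\cdot+\sigma t)$ is concentrated, up to exponential tails, in the window whose right endpoint is $\xi_\ast(t):=\lambda_1(v_m)(1+t)-\sigma t$, which drifts to $-\infty$ at a positive linear rate since $\lambda_1(v_m)<0<\sigma$; by Lemma~\ref{rarefaction}(3), $\bar v^R-v_m$ and $\bar v^R_\xi$ are exponentially small at an $O(1)$ rate for $\xi\ge\xi_\ast(t)$. The shock profile $\bar v^S(\cdot-X(t))$, together with its derivative, is localized near $\xi=X(t)$ and decays at the slow rate $\theta\delta_S$ away from it (Lemma~\ref{Prop.1.1}, \eqref{shderiv}, \eqref{shderiv1}), with $\bar v^S_\xi$ of amplitude $O(\delta_S^2)$ and $\bar v^S-v_\pm$ of amplitude $O(\delta_S)$; moreover $X(t)$ is negligible compared with $\xi_\ast(t)$, because under \eqref{AssH2} the right-hand side of \eqref{XODE} is $O(\varepsilon_1)$ — by Hölder's inequality, using that the localized shock-profile derivatives appearing in \eqref{XODE} have $L^1$-norm $\sim\delta_S$ and that $|v-\bar v|,|\tilde p(v)-\tilde p(\bar v)|\le C\varepsilon_1$ — so $|X(t)|\le C\varepsilon_1 t$ and hence $X(t)-\xi_\ast(t)\ge c(1+t)$ for $\varepsilon_1$ small. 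For each product in \eqref{inter'} I would split $\int_\mathbb{R}=\int_{\xi<m(t)}+\int_{\xi>m(t)}$ at the crossover point $m(t):=\tfrac12(\xi_\ast(t)+X(t))$: on $\{\xi>m(t)\}$ the rarefaction factor contributes $\lesssim\delta_R e^{-c(\xi-\xi_\ast(t))}$ with $\xi-\xi_\ast(t)\ge m(t)-\xi_\ast(t)\gtrsim t$, giving a piece bounded by $\delta_R\cdot(\text{amplitude of the shock factor})\cdot e^{-ct}$; on $\{\xi<m(t)\}$ the shock factor contributes $\lesssim(\text{amplitude})\,e^{-\theta\delta_S(X(t)-\xi)}$ with $X(t)-\xi\ge X(t)-m(t)\gtrsim t$, and after squaring and integrating in $\xi$ the slow exponential of width $\sim\delta_S^{-1}$ produces an extra factor $\delta_S^{-1/2}$ in the $L^2$ norm; for $\bar v^R_\xi\bar v^S_\xi$ one uses the $L^2$ bound on $\bar v^R_\xi$ from \eqref{Raest'} rather than $L^\infty\times L^2$. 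Adding the two pieces yields bounds of the form $C\delta_R\delta_S^{p}e^{-C\delta_S t}$, where the power $p$ is dictated by the amplitude of the relevant localized shock factor (so that, e.g., the $\delta_S^2$ amplitude of $\bar v^S_\xi$ produces the $\delta_S^{3/2}$ appearing for the quantities involving $\bar v^S_\xi$).

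For the last estimate, $\|\bar v^R_\xi(\bar v^S-v_m)\|_{L^2}\le C\delta_R\delta_S e^{-C\delta_S t}$, the new point is that $\bar v^S-v_m$ decays only as $\xi\to-\infty$; as $\xi\to+\infty$ it tends to the nonzero constant $v_+-v_m$ of size $\delta_S$. On $\{\xi>m(t)\}$ I would therefore write $\bar v^S-v_m=(\bar v^S-v_+)+(v_+-v_m)$, the first term being exponentially small and the second a constant of size $\delta_S$, and pair both with the decaying factor $\bar v^R_\xi$; on $\{\xi<m(t)\}$ the estimate proceeds as before. The main obstacle throughout is the bookkeeping of the two competing exponential rates — the $O(1)$ spatial separation rate of the wave cores and the slow $O(\delta_S)$ decay rate of the shock layer — so that their product genuinely yields the advertised time factor $e^{-C\delta_S t}$, together with verifying that the shift $X(t)$ stays well inside the separation gap, which is where the smallness of $\varepsilon_1$ in \eqref{AssH2} and the structure of \eqref{XODE} are used. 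The remaining steps are routine integrations of products of exponentials, and the argument parallels \cite[Lemma~4.6]{KKSh} and the corresponding lemmas in \cite{MN1,KVW2}.
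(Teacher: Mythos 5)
The paper's own ``proof'' of this lemma is a one-line citation to \cite[Lemma~4.2]{KVW2}, so you are supplying details the paper does not give. Your overall strategy — read \eqref{Raest'} off Lemma~\ref{rarefaction}(2), and prove \eqref{inter'} by a separation-of-waves argument built on the linear-in-$t$ gap between the rarefaction transition region (whose right edge $\xi_\ast(t)=\lambda_1(v_m)(1+t)-\sigma t$ drifts leftward at $O(1)$ speed) and the shock centre $X(t)$ (which moves at most at speed $O(\varepsilon_1)$ by \eqref{Xbd}) — is exactly the right one, and your control of $X(t)$ via \eqref{AssH2} and \eqref{XODE} is correctly identified as the one nontrivial point to check.

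There are, however, two power-counting issues in your treatment of \eqref{inter'}. First, for $\lVert\bar v^R_\xi(\bar v^S-v_m)\rVert_{L^2}$ on $\{\xi<m(t)\}$ you say the estimate ``proceeds as before.'' But ``as before'' is the pairing $\lVert\bar v^R_\xi\rVert_{L^\infty}\lVert(\bar v^S-v_m)\mathbf 1_{\xi<m}\rVert_{L^2}\lesssim\delta_R\cdot\delta_S^{1/2}e^{-c\delta_S t}$, which misses the target $\delta_R\delta_S\,e^{-C\delta_S t}$ by a factor $\delta_S^{1/2}$. You must flip the pairing here: use $\lVert\bar v^R_\xi\rVert_{L^2}\leq C\delta_R$ (available from Lemma~\ref{rarefaction}(2)) together with $\lVert(\bar v^S-v_m)\mathbf 1_{\xi<m}\rVert_{L^\infty}\lesssim\delta_S e^{-\theta\delta_S(X-m)}$. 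Ironically, this is precisely the flip you introduce for $\bar v^R_\xi\bar v^S_\xi$, where it is in fact not needed (both pairings give $\leq\delta_R\delta_S^{3/2}$ there); it is for $\bar v^R_\xi(\bar v^S-v_m)$ that it is essential, because $\bar v^S-v_m$ has amplitude $\delta_S$ rather than $\delta_S^2$, and $\bar v^R_\xi$ — unlike $\bar v^R-v_m$ — genuinely lies in $L^2(\mathbb R)$ with norm $\lesssim\delta_R$.

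Second, your general rule ``the power $p$ is dictated by the amplitude of the localized shock factor'' does not reproduce the stated bound for the first term. For $(\bar v^S-v_m)(\bar v^R-v_m)$ the shock factor has amplitude $\delta_S$ and the $L^2$-integration of its slow tail costs $\delta_S^{-1/2}$, so your split gives $\lVert\bar v^R-v_m\rVert_{L^\infty}\lVert(\bar v^S-v_m)\mathbf 1_{\xi<m}\rVert_{L^2}\lesssim\delta_R\cdot\delta_S^{1/2}e^{-c\delta_S t}$, and there is no way to improve to $\delta_R\delta_S^{3/2}$: the $L^2/L^\infty$ flip used for the other terms is unavailable because $\bar v^R-v_m\to v_--v_m\neq 0$ as $\xi\to-\infty$, so $\lVert\bar v^R-v_m\rVert_{L^2}=\infty$. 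Indeed, already at $t=0$ one has $\lVert(\bar v^S-v_m)(\bar v^R-v_m)\rVert_{L^2}\gtrsim\delta_R\delta_S^{1/2}$ from the region $\xi\lesssim-\delta_S^{-1}$, which is strictly larger than $\delta_R\delta_S^{3/2}$. In other words, the lemma as stated groups this term with the $\delta_R\delta_S^{3/2}$ bound, but the sharp order (which is also what the analogous computation in KVW2 would give) is $\delta_R\delta_S^{1/2}e^{-C\delta_S t}$; your heuristic is actually correct, it is the displayed exponent on $\delta_S$ that cannot be achieved. You should flag this discrepancy rather than assert the stated bound; the downstream use in the paper is harmless either way, since only the squared time-integrated quantity $\int_0^\infty\lVert(\bar v^S-v_m)(\bar v^R-v_m)\rVert_{L^2}^2\,d\tau$ enters the elliptic and energy estimates, and that integral is $\lesssim\delta_R^2$ with either exponent.
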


\begin{proof}
With the bounds from Lemma~\ref{Prop.1.1} and Lemma~\ref{rarefaction}, the estimates in \eqref{inter'} follow from the proof in \cite[Lemma~4.2]{KVW2}. In addition, \eqref{Raest} holds by Lemma~\ref{rarefaction}.
\end{proof}

As a consequence of Lemma~\ref{waveinter}, we obtain the following time-integrated bounds.

\begin{corollary}
Under the assumptions of Proposition~\ref{Apriori}, there exists a constant $C>0$ such that
\begin{equation} \label{inter}
\begin{split}
& \int_0^\infty \left( \lVert \bar{v}^S_\xi (\bar{v}^R-v_m)\rVert_{L^2} + \lVert \bar{v}^R_\xi (\bar{v}^S-v_m)\rVert_{L^2} + \lVert \bar{v}^R_\xi \bar{v}^S_\xi \rVert_{L^2} \right) \, d\tau \leq C \delta_R, \\
& \int_0^\infty \lVert (\bar{v}^S-v_m)(\bar{v}^R-v_m) \rVert_{L^2} \, d\tau \leq C \delta_R,
\end{split}
\end{equation}
and
\begin{equation} \label{Raest}
\begin{array}{ll}
\int_0^\infty \left( \lVert \bar{v}^R_{\xi\xi} \rVert_{L^1}^{4/3} + \lVert \bar{v}^R_{\xi\xi\xi} \rVert_{L^1}^{4/3} \right) \, d\tau \leq C \delta_R^{1/3}, & \int_0^\infty \left( \lVert \bar{v}^R_{\xi\xi} \rVert_{L^2}^2 + \lVert \bar{v}^R_{\xi\xi\xi} \rVert_{L^2}^2 \right) \, d\tau \leq C \delta_R, \\
\int_0^\infty \lVert \bar{v}^R_\xi \rVert_{L^4}^2 \, d\tau \leq C \delta_R, & \int_0^\infty \lVert \bar{v}^R_\xi \rVert_{L^4}^4 \, d\tau \leq C \delta_R^3.
\end{array}
\end{equation}
\end{corollary}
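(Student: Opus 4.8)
The plan is to obtain \eqref{inter}--\eqref{Raest} directly by integrating in the time variable the pointwise-in-$t$ bounds already established in Lemma~\ref{waveinter}. No new structural input is needed; the only points requiring a little care are the exponent bookkeeping in the time splitting and the smallness of $\delta_S$.

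\textbf{The interaction estimates \eqref{inter}.} By \eqref{inter'}, each of $\lVert \bar{v}^S_\xi (\bar{v}^R-v_m)\rVert_{L^2}$, $\lVert \bar{v}^R_\xi \bar{v}^S_\xi\rVert_{L^2}$ and $\lVert (\bar{v}^S-v_m)(\bar{v}^R-v_m)\rVert_{L^2}$ is dominated by $C\delta_R\delta_S^{3/2}e^{-C\delta_S\tau}$, while $\lVert \bar{v}^R_\xi(\bar{v}^S-v_m)\rVert_{L^2}\le C\delta_R\delta_S e^{-C\delta_S\tau}$. Since $\int_0^\infty e^{-C\delta_S\tau}\,d\tau=(C\delta_S)^{-1}$, integrating in $\tau$ yields contributions of size $C\delta_R\delta_S^{1/2}$ and $C\delta_R$, respectively; using $\delta_S<\delta_0<1$ these are all $\le C\delta_R$, which is \eqref{inter}.

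\textbf{The rarefaction estimates \eqref{Raest}.} Here I would use the bounds \eqref{Raest'} and split every time integral at $1+\tau=\delta_R^{-1}$. For the first term, $\lVert\bar{v}^R_{\xi\xi}\rVert_{L^1}^{4/3}\le C\delta_R^{4/3}$ on $[0,\delta_R^{-1}-1]$ and $\le C(1+\tau)^{-4/3}$ on $[\delta_R^{-1}-1,\infty)$; the first piece integrates to $C\delta_R^{4/3}\cdot\delta_R^{-1}=C\delta_R^{1/3}$ and the second to $C\int_{\delta_R^{-1}}^\infty s^{-4/3}\,ds=C\delta_R^{1/3}$, whence $\int_0^\infty\lVert\bar{v}^R_{\xi\xi}\rVert_{L^1}^{4/3}\,d\tau\le C\delta_R^{1/3}$, and likewise for $\bar{v}^R_{\xi\xi\xi}$. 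For the $L^2$ bounds, $\lVert\bar{v}^R_{\xi\xi}\rVert_{L^2}^2\le C\min\{\delta_R^2,(1+\tau)^{-2}\}$, so the split gives $C\delta_R^2\cdot\delta_R^{-1}+C\int_{\delta_R^{-1}}^\infty s^{-2}\,ds\le C\delta_R$, and the same estimate applies to $\bar{v}^R_{\xi\xi\xi}$. For the last two, $\lVert\bar{v}^R_\xi\rVert_{L^4}^2\le C\min\{\delta_R^2,\delta_R^{1/2}(1+\tau)^{-3/2}\}$ integrates to $C\delta_R^2\cdot\delta_R^{-1}+C\delta_R^{1/2}\int_{\delta_R^{-1}}^\infty s^{-3/2}\,ds\le C\delta_R$, while $\lVert\bar{v}^R_\xi\rVert_{L^4}^4\le C\min\{\delta_R^4,\delta_R(1+\tau)^{-3}\}$ integrates to $C\delta_R^4\cdot\delta_R^{-1}+C\delta_R\int_{\delta_R^{-1}}^\infty s^{-3}\,ds\le C\delta_R^3$. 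This proves \eqref{Raest}.

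\textbf{Main obstacle.} There is essentially none; this is a bookkeeping corollary. The only thing worth flagging is that the exponent $4/3$ in the first bound of \eqref{Raest} is the unique choice at which both pieces of the split produce exactly $\delta_R^{1/3}$: the constant piece needs an exponent $>1$ to beat the length $\delta_R^{-1}$ of the first interval, and the decaying piece needs an exponent $>1$ for integrability at $+\infty$. This $\delta_R^{1/3}$ is the weakest power appearing in these estimates, and it is precisely the loss that is carried through to the right-hand side of the a priori estimate \eqref{apriori}.
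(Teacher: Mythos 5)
Your proof is correct and is exactly what the paper has in mind (the paper states the corollary without proof, just noting it is a consequence of Lemma~\ref{waveinter}): the interaction bounds in \eqref{inter} come from integrating the exponential decay $e^{-C\delta_S\tau}$ in \eqref{inter'}, and the rarefaction bounds in \eqref{Raest} come from splitting the time integral at $1+\tau = \delta_R^{-1}$ and using both branches of \eqref{Raest'}. Your arithmetic on each branch checks out, and the observation that the exponent $4/3$ is precisely where the near-field and far-field pieces balance is a nice sanity check on the source of the $\delta_R^{1/3}$ loss in \eqref{apriori}.
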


\begin{lemma}
Under the assumptions of Proposition~\ref{Apriori}, there exists a constant $C>0$ such that
\begin{subequations}
\begin{align}
&\label{hL2}\int_\mathbb{R} \bar{v}^S_\xi \tilde{h}^2 \, d\xi \leq C \left( G_1 + G^S \right), \\
&\label{vL2} \int_\mathbb{R} \bar{v}_\xi \tilde{v}^2 \, d\xi \leq C \left( G^S + G^R \right), \\
&\label{vxiL2}\int_\mathbb{R} \tilde{v}_\xi^2 \, d\xi \leq C \left( G^S + G^R + D \right),
\end{align}
\end{subequations}
and
\begin{equation} \label{hxiL2}
\begin{split}
\int_\mathbb{R} \tilde{h}_\xi^2 \, d\xi & \leq C \left( G^S + G^R + D \right) + C \int_\mathbb{R} \left( \tilde{v}_{\xi\xi}^2 + \tilde{u}_\xi^2 \right)  \, d\xi  \\
& \quad + C \left( \lVert \bar{v}^S_\xi(\bar{v}^R-v_m) \rVert_{L^2}^2 + \lVert \bar{v}^S_\xi\bar{v}^R_\xi \rVert_{L^2}^2 + \lVert \bar{v}^R_{\xi\xi} \rVert_{L^2}^2 \right)
\end{split}
\end{equation}
for all $t \in [0,T]$, where $G_1$, $G^S$, $G^R$, and $D$ are as in Lemma~\ref{lem_0th}.
\end{lemma}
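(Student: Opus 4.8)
The plan is to derive all four bounds from the pointwise algebraic relations among the perturbations, using: the identity $\tilde p(v)=2/v$ (so that, on the fixed compact subset of $(0,\infty)$ to which the a~priori bound \eqref{AssH2} confines $v$, $\tilde p'$ is bounded above and below away from $0$ and $\tilde p$ is smooth); \eqref{AssH2} itself; and the wave bounds \eqref{shderiv1}, \eqref{raderiv}. For \eqref{hL2}, I would write $\tilde h=\big(\tilde h-\tfrac{\tilde p(v)-\tilde p(\bar v)}{\sigma}\big)+\tfrac{\tilde p(v)-\tilde p(\bar v)}{\sigma}$, so that $\bar v^S_\xi\tilde h^2\le 2\bar v^S_\xi\big(\tilde h-\tfrac{\tilde p(v)-\tilde p(\bar v)}{\sigma}\big)^2+\tfrac{2}{\sigma^2}\bar v^S_\xi(\tilde p(v)-\tilde p(\bar v))^2$; integrating and recalling the definitions of $G_1$ and $G^S$ (with $\delta_S$ bounded) gives the claim. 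For \eqref{vL2}, split $\bar v_\xi=\bar v^R_\xi+\bar v^S_\xi$: the $\bar v^R_\xi$ part is exactly $G^R$, and for the $\bar v^S_\xi$ part I use $|\tilde v|\le C|\tilde p(v)-\tilde p(\bar v)|$ to get $\int\bar v^S_\xi\tilde v^2\le CG^S$. For \eqref{vxiL2}, differentiating $\tilde p(v)-\tilde p(\bar v)=\tilde p'(v)\tilde v_\xi+(\tilde p'(v)-\tilde p'(\bar v))\bar v_\xi$ and using $|\tilde p'(v)-\tilde p'(\bar v)|\le C|\tilde v|$ together with $\tilde p'(v)$ bounded away from $0$ yields $|\tilde v_\xi|\le C|(\tilde p(v)-\tilde p(\bar v))_\xi|+C|\tilde v||\bar v_\xi|$; since $0<\bar v_\xi\le C$ and in fact $\bar v_\xi^2\le C\bar v_\xi$, one has $\int\tilde v^2\bar v_\xi^2\le C\int\tilde v^2\bar v_\xi\le C(G^R+G^S)$ by \eqref{vL2}, which with the definition of $D$ gives \eqref{vxiL2}.

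The main work is \eqref{hxiL2}. Since $\bar h=\bar u-(\ln\bar v^S)_\xi$ (not $\bar u-(\ln\bar v)_\xi$), setting $g:=(\ln v)_\xi-(\ln\bar v)_\xi$ and $r:=(\ln\bar v)_\xi-(\ln\bar v^S)_\xi$ gives $\tilde h_\xi=\tilde u_\xi-g_\xi-r_\xi$, hence $\tilde h_\xi^2\le C(\tilde u_\xi^2+g_\xi^2+r_\xi^2)$. One has $g=\tfrac{\tilde v_\xi}{v}-\tfrac{\bar v_\xi\tilde v}{v\bar v}$, so $g_\xi^2$ is dominated by $C\tilde v_{\xi\xi}^2$ plus terms $\tilde v_\xi^2$ and $\tilde v^2$ multiplied by functions of $v_\xi,\bar v_\xi,\bar v_{\xi\xi}$ that are uniformly bounded via \eqref{AssH2} and \eqref{shderiv1}; the $\tilde v_\xi^2$ part is absorbed by \eqref{vxiL2}, and the $\tilde v^2$ parts by \eqref{vL2} and $|\bar v^R_{\xi\xi}|\le C|\bar v^R_\xi|$, with the sole exception of $\tilde v^2(\bar v^R_{\xi\xi})^2$, which is left as $C\|\tilde v\|_{L^\infty}^2\|\bar v^R_{\xi\xi}\|_{L^2}^2\le C\|\bar v^R_{\xi\xi}\|_{L^2}^2$. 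For $r$, using $\bar v-\bar v^S=\bar v^R-v_m$ one computes $r=\tfrac{\bar v^R_\xi}{\bar v}-\tfrac{\bar v^S_\xi(\bar v^R-v_m)}{\bar v\bar v^S}$; differentiating and invoking \eqref{shderiv1}, \eqref{raderiv}, and $|\bar v^R_{\xi\xi}|\le C|\bar v^R_\xi|$, $|\bar v^R_\xi|\le C\delta_R$ (the last two to dominate the higher-order contributions quadratic in $\bar v^R_\xi$), one obtains $\|r_\xi\|_{L^2}^2\le C(\|\bar v^R_{\xi\xi}\|_{L^2}^2+\|\bar v^S_\xi(\bar v^R-v_m)\|_{L^2}^2+\|\bar v^S_\xi\bar v^R_\xi\|_{L^2}^2)$. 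Adding the three contributions gives \eqref{hxiL2}.

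I expect the main obstacle to be the bookkeeping in \eqref{hxiL2}: one must check that \emph{every} coefficient that the two chain-rule expansions attach to $\tilde v$, $\tilde v_\xi$, or $v_\xi$ is uniformly bounded (so that \eqref{vL2} and \eqref{vxiL2} apply), and---crucially---that the only terms remaining with non-small coefficients are exactly $\tilde v_{\xi\xi}^2$ and $\tilde u_\xi^2$, the two quantities deliberately retained on the right-hand side and controlled later in the higher-order estimates. The $r$-piece needs extra care since it couples the rarefaction and shock profiles; the point is that each of its factors carries either a derivative of $\bar v^S$ (hence a factor $O(\delta_S^2)$ with exponential localization by \eqref{shderiv1}) or a derivative of $\bar v^R$ (hence $O(\delta_R)$ by \eqref{raderiv}), so that the estimate closes with precisely the three interaction norms in the statement, which are in turn time-integrable by \eqref{inter} and \eqref{Raest}.
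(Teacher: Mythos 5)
Your proposal is correct and takes essentially the same route as the paper: split $\tilde h$ around $\tfrac{\tilde p(v)-\tilde p(\bar v)}{\sigma}$ for \eqref{hL2}, split $\bar v_\xi=\bar v^R_\xi+\bar v^S_\xi$ and use $|\tilde v|\sim|\tilde p(v)-\tilde p(\bar v)|$ for \eqref{vL2}, use the chain rule on $\tilde p(v)-\tilde p(\bar v)$ for \eqref{vxiL2}, and differentiate the expression $\tilde u-(\ln v-\ln\bar v^S)_\xi$ for \eqref{hxiL2}. The only stylistic difference is that for \eqref{hxiL2} you split $(\ln v-\ln\bar v^S)_\xi$ into the perturbation part $g=(\ln v-\ln\bar v)_\xi$ and the pure-profile interaction part $r=(\ln\bar v-\ln\bar v^S)_\xi$ before differentiating, whereas the paper differentiates the single expression $\tfrac{\tilde v_\xi+\bar v^R_\xi}{v}-\tfrac{\bar v^S_\xi(\tilde v+\bar v^R-v_m)}{v\bar v^S}$ directly; the resulting terms are the same, your organization merely makes it more transparent which pieces are absorbed into $G^S,G^R,D,\tilde v_{\xi\xi}^2$ and which produce the three interaction norms. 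Your pointwise derivation of \eqref{vxiL2} also avoids the small-coefficient absorption argument the paper uses, which is a mild simplification but not a different route.
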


\begin{proof}
Using the inequality $|a|^2 \leq 2 |a-b|^2 + 2|b|^2$, we estimate
\begin{equation*}
\begin{split}
\int_\mathbb{R} \bar{v}^S_\xi \tilde{h}^2 \, d\xi & \leq C \left( \int_\mathbb{R} \bar{v}^S_\xi \bigg| \tilde{h} - \frac{\tilde{p}(v)-\tilde{p}(\bar{v})}{\sigma} \bigg|^2 \, d\xi + \frac{1}{\sigma^2} \int_\mathbb{R} \bar{v}^S_\xi |\tilde{p}(v)-\tilde{p}(\bar{v})|^2 \, d\xi \right) \leq C \left( \sqrt{\delta_S} G^S + G_1 \right). 
\end{split}
\end{equation*}
The bound \eqref{vL2} follows from the definition of $\tilde{p}(\cdot)$:
\begin{equation*}
\begin{split}
\int_\mathbb{R} \bar{v}_\xi \tilde{v}^2 \, d\xi & \leq C \int_\mathbb{R} \bar{v}^S_\xi |\tilde{p}(v)-\tilde{p}(\bar{v})|^2 \, d\xi + C\int_\mathbb{R}  \bar{v}^R_\xi |\tilde{v}|^2 \, d\xi = C (G^S + G^R).
\end{split}
\end{equation*}
To obtain \eqref{vxiL2}, we write
\begin{equation*}
\begin{split}
\left( \tilde{p}(v)-\tilde{p}(\bar{v}) \right)_\xi^2 & = 4  \bigg( \frac{\tilde{v}_\xi^2}{v^4} - \frac{2\bar{v}_\xi\tilde{v}_\xi (\tilde{v}^2 + 2\bar{v}\tilde{v})}{v^4\bar{v}^2} + \frac{\bar{v}_\xi (\tilde{v}^4 + 4\bar{v}\tilde{v}^3 + 4\bar{v}^2\tilde{v}^2)}{v^4\bar{v}^4} \bigg).
\end{split}
\end{equation*}
Using this and applying Young's inequality, we obtain
\begin{equation*}
\begin{split}
\int_\mathbb{R} \tilde{v}_\xi^2 \, d\xi & \leq C \int_\mathbb{R} \left( \tilde{p}(v)-\tilde{p}(\bar{v}) \right)_\xi^2 \, d\xi + C \int_\mathbb{R} \left(|\bar{v}_\xi||\tilde{v}_\xi||\tilde{v}| + |\bar{v}_\xi||\tilde{v}|^2 \right) \, d\xi \\
& \leq C \int_\mathbb{R} \left( \tilde{p}(v)-\tilde{p}(\bar{v}) \right)_\xi^2 \, d\xi + C \int_\mathbb{R} |\bar{v}_\xi| \left( |\tilde{v}_\xi|^2 + |\tilde{v}|^2 \right) \, d\xi + C \int_\mathbb{R} \bar{v}_\xi \tilde{v}^2 \, d\xi \\
& \leq C \int_\mathbb{R} \left( \tilde{p}(v)-\tilde{p}(\bar{v}) \right)_\xi^2 \, d\xi + C (\delta_S^2 + \delta_R) \int_\mathbb{R}  \tilde{v}_\xi^2 \, d\xi + C \int_\mathbb{R} \bar{v}_\xi \tilde{v}^2 \, d\xi.
\end{split}
\end{equation*}
Thus, there exists $C>0$ such that
\begin{equation*}
\int_\mathbb{R} \tilde{v}_\xi^2 \, d\xi \leq C \left( G^S + G^R + D \right)
\end{equation*}
for sufficiently small $\delta_S + \delta_R$.

We recall the definition of $\tilde{h}$:
\begin{equation*}
\tilde{h} = \tilde{u} - \left( \ln{v} - \ln{\bar{v}^S} \right)_\xi = \tilde{u} +  \frac{\tilde{v}_\xi + \bar{v}^R_\xi}{v} - \frac{\bar{v}^S_\xi \left( \tilde{v} + \bar{v}^R -v_m \right)}{v\bar{v}^S}.
\end{equation*}
Differentiating with respect to $\xi$, we have
\begin{equation*}
\begin{split}
\tilde{h}_\xi & = \tilde{u}_\xi + \frac{\tilde{v}_{\xi\xi} + \bar{v}^R_{\xi\xi}}{v} - \frac{(\tilde{v}_\xi + \bar{v}^R_\xi + \bar{v}^S_\xi)(\tilde{v}_\xi + \bar{v}^R_\xi)}{v^2} - \frac{\bar{v}^S_{\xi\xi}(\tilde{v} + \bar{v}^R-v_m)}{v\bar{v}^S} - \frac{\bar{v}^S_\xi \left( \tilde{v}_\xi + \bar{v}^R_\xi \right)}{v\bar{v}^S}  \\
& \quad + \frac{(\bar{v}^S_\xi)^2 \left( \tilde{v} + \bar{v}^R - v_m \right) }{v(\bar{v}^S)^2} + \frac{\bar{v}^S_\xi (\tilde{v}+\bar{v}^R_\xi+\bar{v}^S_\xi)(\tilde{v} + \bar{v}^R-v_m)}{v^2\bar{v}^S} \\
& \leq C \left( | \tilde{u}_\xi | + |\tilde{v}_\xi| + |\tilde{v}_{\xi\xi}| + |\bar{v}_\xi||\tilde{v}| + |\bar{v}^S_\xi(\bar{v}^R-v_m)| + |\bar{v}^S_\xi\bar{v}^R_\xi| + |\bar{v}^R_{\xi\xi}| \right),
\end{split}
\end{equation*}
where in the inequality we used $|\bar{v}^S_{\xi\xi}| \leq C|\bar{v}^S_\xi|$. Squaring both sides and integrating over $\mathbb{R}$, we obtain
\begin{equation*}
\begin{split}
\int_\mathbb{R} \tilde{h}_\xi^2 \, d\xi & \leq C  \int_\mathbb{R} \left( \bar{v}_\xi \tilde{v}^2 + \tilde{v}_\xi^2 + \tilde{v}_{\xi\xi}^2 + \tilde{u}_\xi^2  \right) \, d\xi  \\
& \quad + C \left( \lVert \bar{v}^S_\xi(\bar{v}^R-v_m) \rVert_{L^2}^2 + \lVert \bar{v}^S_\xi\bar{v}^R_\xi \rVert_{L^2}^2 + \lVert \bar{v}^R_{\xi\xi} \rVert_{L^2}^2 \right),
\end{split}
\end{equation*}
which completes the proof.
\end{proof}

\begin{lemma} \label{lem_inf}
Under the assumptions of Proposition~\ref{Apriori}, there exists a constant $C>0$ such that
\begin{equation} \label{infty}
\lVert \tilde{h}(t,\cdot) \rVert_{L^\infty} + \lVert (\tilde{v},\tilde{u})(t,\cdot) \rVert_{W^{1,\infty}} + \lVert \tilde{\phi}(t,\cdot) \rVert_{W^{2,\infty}} \leq C \left( \delta_R + \varepsilon_1 \right)
\end{equation}
and
\begin{equation} \label{Xbd}
\lvert \dot{X}(t) \rvert \leq C \lVert \tilde{v} \rVert_{L^\infty} \leq C \varepsilon_1
\end{equation}
for all $t \in [0,T]$, where $C>0$ is a generic constant.
\end{lemma}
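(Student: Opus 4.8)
The plan is to obtain every bound in \eqref{infty} directly from the a priori assumption \eqref{AssH2} via the one-dimensional Sobolev embeddings $H^2(\mathbb{R})\hookrightarrow W^{1,\infty}(\mathbb{R})$ and $H^3(\mathbb{R})\hookrightarrow W^{2,\infty}(\mathbb{R})$, and then to feed these into the defining ODE \eqref{XODE} for the shift. First, since $\|(v-\bar{v},u-\bar{u})(t,\cdot)\|_{H^2}\le\varepsilon_1$ and $\|\phi-\bar{\phi}(t,\cdot)\|_{H^3}\le\varepsilon_1$, these embeddings give $\|(\tilde{v},\tilde{u})(t,\cdot)\|_{W^{1,\infty}}+\|\tilde{\phi}(t,\cdot)\|_{W^{2,\infty}}\le C\varepsilon_1$. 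In particular $v=\bar{v}+\tilde{v}$ stays uniformly bounded above and below away from zero (since $\bar{v}$ ranges over the compact interval determined by $v_-$, $v_m$, $v_+$ and $\varepsilon_1$ is small), and likewise $\bar{v}$ and $\bar{v}^S$ are bounded below; these facts are used freely below.

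Next, for $\tilde{h}$ I would use the pointwise identity already recorded in the derivation of \eqref{hxiL2},
\[
\tilde{h}=\tilde{u}+\frac{\tilde{v}_\xi+\bar{v}^R_\xi}{v}-\frac{\bar{v}^S_\xi\,(\tilde{v}+\bar{v}^R-v_m)}{v\,\bar{v}^S},
\]
and estimate each summand in $L^\infty$: the terms $\tilde{u}$ and $\tilde{v}_\xi$ are $\le C\varepsilon_1$ by the previous step, Lemma~\ref{rarefaction} gives $\|\bar{v}^R_\xi\|_{L^\infty}+\|\bar{v}^R-v_m\|_{L^\infty}\le C\delta_R$, and \eqref{shderiv1} gives $\|\bar{v}^S_\xi\|_{L^\infty}\le C\delta_S^2$; combining these with the lower bounds on $v$ and $\bar{v}^S$ yields $\|\tilde{h}\|_{L^\infty}\le C(\varepsilon_1+\delta_R)$, which is the remaining term of \eqref{infty}.

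Finally, I would insert the $L^\infty$ bound on $\tilde{v}$ into \eqref{XODE}. The weight $a^X$ is bounded, and by \eqref{shderiv1} both $|\partial_\xi(\bar{u}^S-\partial_\xi\ln\bar{v}^S)|$ and $|\partial_\xi\tilde{p}(\bar{v}^S)|=|\tilde{p}'(\bar{v}^S)|\,\bar{v}^S_\xi$ are bounded by $C\bar{v}^S_\xi$; moreover $|\tilde{p}(v)-\tilde{p}(\bar{v})|\le C|\tilde{v}|$ by the mean value theorem together with the uniform bounds on $v,\bar{v}$. Hence the integrand in \eqref{XODE} is pointwise bounded by $C\,\bar{v}^S_\xi\,\|\tilde{v}\|_{L^\infty}$, and since $\int_\mathbb{R}\bar{v}^S_\xi\,d\xi=v_+-v_m=\delta_S$, the singular prefactor $M/\delta_S$ is exactly absorbed, giving $|\dot{X}(t)|\le C\|\tilde{v}\|_{L^\infty}\le C\varepsilon_1$.

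The argument is essentially bookkeeping; the only points requiring a little care are keeping the lower bounds on $v$ and $\bar{v}^S$ in hand so that the fractions in the identity for $\tilde{h}$ and in the difference $\tilde{p}(v)-\tilde{p}(\bar{v})$ stay controlled, and, in the last step, combining the smallness $\|\bar{v}^S_\xi\|_{L^\infty}=O(\delta_S^2)$ with the mass $\int_\mathbb{R}\bar{v}^S_\xi\,d\xi=\delta_S$ so that the factor $M/\delta_S$ in \eqref{XODE} does not blow up.
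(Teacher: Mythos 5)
Your proof is correct and overall mirrors the paper's. The only variation is in bounding $\lVert \tilde{h}\rVert_{L^\infty}$: the paper first bounds $\lVert \tilde{h}\rVert_{H^1}$ (via the $L^2$ estimate \eqref{hxiL2} and the wave--interaction bounds of Lemma~\ref{waveinter}) and then applies the embedding $H^1\hookrightarrow L^\infty$, whereas you read off the $L^\infty$ bound directly from the pointwise identity for $\tilde{h}$, using the $L^\infty$ bounds on $\tilde{u}$, $\tilde{v}_\xi$ already supplied by Sobolev together with the pointwise bounds on $\bar{v}^R_\xi$, $\bar{v}^R-v_m$, $\bar{v}^S_\xi$ from Lemmas~\ref{rarefaction} and \ref{Prop.1.1}. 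Both routes yield $C(\delta_R+\varepsilon_1)$, and yours is slightly more direct since it bypasses the intermediate $L^2$ machinery, at the negligible cost of keeping the uniform lower bounds on $v$ and $\bar{v}^S$ explicitly in hand (which you do). The estimate for $\dot{X}(t)$ is handled in the same way as the paper: both bound the integrand in \eqref{XODE} by $C\bar{v}^S_\xi\lVert\tilde{v}\rVert_{L^\infty}$ using \eqref{shderiv1} and the mean value theorem, and cancel the prefactor $M/\delta_S$ against $\int_\mathbb{R}\bar{v}^S_\xi\,d\xi\sim\delta_S$.
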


\begin{proof}
A standard Sobolev inequality implies
\begin{equation*}
\begin{split}
& \lVert \tilde{h}(t,\cdot) \rVert_{L^\infty} + \lVert (\tilde{v},\tilde{u})(t,\cdot) \rVert_{W^{1,\infty}} + \lVert \tilde{\phi}(t,\cdot) \rVert_{W^{2,\infty}} \\
& \quad \leq C \left( \lVert \tilde{h}(t,\cdot) \rVert_{H^1} + \lVert (\tilde{v},\tilde{u})(t,\cdot) \rVert_{H^2} + \lVert \tilde{\phi}(t,\cdot) \rVert_{H^3} \right).
\end{split}
\end{equation*}
Here $\lVert (\tilde{v},\tilde{u}) \rVert_{H^2}$ and $\lVert \tilde{\phi} \rVert_{H^3}$ are bounded by $\varepsilon_1$ as assumed in \eqref{AssH2}. Since $\tilde{h} = \tilde{u} - \left( \ln{v} - \ln{\bar{v}^S} \right)_\xi$,
we have
\begin{equation*}
\begin{split}
\int_\mathbb{R} \tilde{h}^2 \, d\xi \leq C \int_\mathbb{R} \left( \tilde{v}^2 + \tilde{v}_\xi^2 + \tilde{u}^2 \right) \, d\xi + C \left( \|\bar{v}^R_\xi\|_{L^2}^2 + \|\bar{v}^S_\xi(\bar{v}^R-v_m)\|_{L^2}^2 \right)
\end{split}
\end{equation*}
and \eqref{hxiL2}. Thus, the bound \eqref{infty} follows from the assumption \eqref{AssH2} together with the bounds from Lemma~\ref{waveinter}.

To show \eqref{Xbd}, we estimate the ODE \eqref{XODE} using \eqref{shderiv1} as
\begin{equation*}
\begin{split}
|\dot{X}(t)| & \leq \frac{C}{\delta_S} \int_\mathbb{R} \left( |\bar{u}^S_\xi| + |\bar{v}^S_\xi|^2 + |\bar{v}^S_{\xi\xi}| + |\tilde{p}'(\bar{v}^S)| |\bar{v}^S_\xi| \right) |\tilde{v}| \, d\xi \\
& \leq \frac{C}{\delta_S} \lVert \tilde{v} (t,\cdot) \rVert_{L^\infty} \int_\mathbb{R} \left( |\bar{u}^S_\xi| + |\bar{v}^S_\xi| + |\bar{v}^S_{\xi\xi}| \right) \, d\xi \leq C \lVert \tilde{v}(t,\cdot) \rVert_{L^\infty}.
\end{split}
\end{equation*}
This concludes the proof.
\end{proof}

\begin{lemma} \label{V_est}
Under the assumptions of Proposition~\ref{Apriori}, there exists a constant $C>0$ such that
\begin{subequations}
\begin{align}
\begin{split}
\label{V}|\mathcal{V}| & \leq C \left( |\bar{v}^S_\xi (\bar{v}^R-v_m)| + |\bar{v}^R_\xi\bar{v}^S_\xi| + |\bar{v}^R_\xi|^2 + |\bar{v}^R_{\xi\xi}| + |\bar{v}^S-v_m| |\bar{v}^R-v_m| \right) \\
& \quad + C \left( \delta_S^2 + \delta_R + \varepsilon_1 \right) \left( |\tilde{v}| + |\tilde{v}_\xi| + |\tilde{\phi}| + |\tilde{\phi}_\xi| + |\tilde{\phi}_{\xi\xi}| \right),
\end{split} \\
\begin{split}
\label{Vxi}|\mathcal{V}_\xi| & \leq C \left( | \bar{v}^S_\xi (\bar{v}^R - v_m) | + |\bar{v}^R_\xi (\bar{v}^S - v_m) | + | \bar{v}^R_\xi \bar{v}^S_\xi | + |\bar{v}^R_\xi|^2 + |\bar{v}^R_{\xi\xi\xi}| \right) \\
& \quad + C |\bar{v}_\xi| \left( |\tilde{v}| + |\tilde{\phi}| \right)  + C ( \delta_S^2 + \delta_R + \varepsilon_1 ) \left( |\tilde{v}_\xi|  + |\tilde{v}_{\xi\xi}| + |\tilde{\phi}_\xi| + |\tilde{\phi}_{\xi\xi}| + |\tilde{\phi}_{\xi\xi\xi}| \right),
\end{split} \\
\begin{split}
\label{Vt}|\mathcal{V}_t| & \leq C \left( |\bar{v}^R_\xi(\bar{v}^S-v_m)| + |\bar{v}^R_\xi \bar{v}^S_\xi| + |\bar{v}^R_\xi|^2 + |\bar{v}^R_{\xi\xi\xi}| \right) + C |\dot{X}||\bar{v}^S_\xi| + C |\bar{v}_\xi| \left( |\tilde{v}| + |\tilde{\phi}| \right) \\
& \quad + C \left( \delta_S^2 + \delta_R + \varepsilon_1 \right) \left( |\tilde{v}_\xi| + |\tilde{v}_{\xi\xi}| + |\tilde{u}_\xi| + |\tilde{u}_{\xi\xi}| + |\tilde{\phi}_\xi| + |\tilde{\phi}_{\xi\xi}| + |\tilde{\phi}_t| + |\tilde{\phi}_{\xi t}| + |\tilde{\phi}_{\xi\xi t}| \right)
\end{split}
\end{align}
\end{subequations}
for all $t \in [0,T]$, where $\mathcal{V}$ is as defined in \eqref{v}--\eqref{mcV}.
\end{lemma}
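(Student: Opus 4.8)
The plan is to bound $\mathcal{V}$, $\mathcal{V}_\xi$, and $\mathcal{V}_t$ by estimating them term by term through the decomposition $\mathcal{V}=\mathcal{V}^I+\mathcal{V}^L+\mathcal{V}^N$ of \eqref{mcV}, using the profile bounds \eqref{shderiv1}--\eqref{raderiv}, the a priori $L^\infty$ bounds \eqref{infty}--\eqref{Xbd} from Lemma~\ref{lem_inf}, and the profile and perturbation equations. The organizing principle is that $\mathcal{V}^I$, which involves only the background waves, produces exactly the ``wave-interaction and rarefaction-error'' group in \eqref{V}; that $\mathcal{V}^L$ is linear in the perturbations with coefficients of size $O(\delta_R+\delta_S^2)$; and that $\mathcal{V}^N$ is (super)quadratic, so that one factor can be absorbed in $L^\infty$ to produce the prefactor $C(\delta_R+\varepsilon_1)$. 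For $\mathcal{V}^I$ I would first invoke the shock Poisson equation \eqref{sh3'} to replace $e^{-\bar{\phi}^S}\big((\bar{\phi}^S_\xi/\bar{v}^S)_\xi+1\big)$ by $\bar{v}^S$, which reduces $\mathcal{V}^I$ to $e^{-\bar{\phi}}\big((\bar{\phi}_\xi/\bar{v})_\xi+1\big)-\bar{v}$, and then subtract its two ``decoupled'' values: freezing the rarefaction at $(\bar v^R,\bar\phi^R)=(v_m,\phi_m)$ gives $0$ (again by \eqref{sh3'}), while freezing the shock at $(\bar v^S,\bar\phi^S)=(v_m,\phi_m)$ gives $\bar v^R(\bar\phi^R_\xi/\bar v^R)_\xi$, which is $O(|\bar v^R_{\xi\xi}|+|\bar v^R_\xi|^2)$ because $\bar\phi^R=-\ln\bar v^R$ by \eqref{eq:ra}. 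The remaining interaction piece is a finite sum of products, each carrying one shock-localized factor (from $\{\bar v^S-v_m,\bar v^S_\xi,\bar\phi^S_\xi,\dots\}$) and one rarefaction-localized factor (from $\{\bar v^R-v_m,\bar v^R_\xi,\dots\}$); using \eqref{shderiv1}, \eqref{raderiv} and, where the two exponential rates must be separated, the spatial separation of the waves recorded in Lemma~\ref{rarefaction}, each such product is controlled by one of $|\bar v^S_\xi(\bar v^R-v_m)|$, $|\bar v^R_\xi\bar v^S_\xi|$, $|\bar v^R_\xi|^2$, $|\bar v^S-v_m||\bar v^R-v_m|$, which gives the first group in \eqref{V}.

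The contributions of $\mathcal{V}^L$ and $\mathcal{V}^N$ are then straightforward. In $\mathcal{V}^L$ every summand is a product of one of $\{\bar\phi_{\xi\xi},\ \bar v_\xi\bar\phi_\xi,\ \bar v_\xi,\ \bar\phi_\xi\}$---each $O(\delta_R+\delta_S^2)$ by \eqref{shderiv1}--\eqref{raderiv}---with one of $\tilde v,\tilde v_\xi,\tilde\phi,\tilde\phi_\xi,\tilde\phi_{\xi\xi}$, which gives the second group in \eqref{V}. In $\mathcal{V}^N$ one uses $e^{-\tilde\phi}-1+\tilde\phi=O(\tilde\phi^2)$, the fact that $1+\phi_{\xi\xi}/v-v_\xi\phi_\xi/v^2=O(1)$ by the a priori assumptions, and the expansion of the two $(\cdot)_\xi$ brackets into products of two perturbation factors (possibly multiplied by $\bar v_\xi$ or $\bar\phi_\xi$); absorbing one perturbation factor in $L^\infty$ through \eqref{infty} yields the prefactor $C(\delta_R+\varepsilon_1)$ and leaves the other factor among $\{\tilde v,\tilde v_\xi,\tilde\phi,\tilde\phi_\xi,\tilde\phi_{\xi\xi}\}$ (no $\tilde v_{\xi\xi}$ or third-order term appears, since only $\tilde v$ and $\tilde\phi_\xi$ are differentiated once inside $\mathcal{V}^N$). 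Since $\delta_R+\varepsilon_1\le\delta_S^2+\delta_R+\varepsilon_1$, this completes \eqref{V}.

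For \eqref{Vxi} I would differentiate in $\xi$ and rerun the scheme: $\partial_\xi\mathcal{V}^I$ raises $|\bar v^R_{\xi\xi}|\to|\bar v^R_{\xi\xi\xi}|$ and produces $|\bar v^R_\xi(\bar v^S-v_m)|$ among the interaction terms; differentiating a coefficient of $\mathcal{V}^L$ keeps it $\le C|\bar v_\xi|$ (by \eqref{raderiv}, $|\partial_\xi^k\bar v^R|\le C\bar v^R_\xi$) and hence contributes $C|\bar v_\xi|(|\tilde v|+|\tilde\phi|)$, while differentiating a perturbation factor raises $\tilde v_\xi\to\tilde v_{\xi\xi}$ and $\tilde\phi_{\xi\xi}\to\tilde\phi_{\xi\xi\xi}$, matching the list in \eqref{Vxi}. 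For \eqref{Vt} the only genuinely new point, and the main obstacle, is converting time derivatives without creating derivatives absent from the claimed list: time derivatives of the background waves become $\xi$-derivatives via \eqref{eq:ra} and \eqref{eq:sh'}, which is where $C|\dot X||\bar v^S_\xi|$ enters; time derivatives of $\tilde\phi$ are kept and land in $\{\tilde\phi_t,\tilde\phi_{\xi t},\tilde\phi_{\xi\xi t}\}$; and $\tilde v_t$ is not differentiated blindly but replaced, using the continuity equations \eqref{NSP1''} and \eqref{CW1}, by the first-order identity $\tilde v_t=\sigma\tilde v_\xi+\tilde u_\xi+\dot X\bar v^S_\xi$, so that $\tilde v_t$ and $\tilde v_{\xi t}$ introduce only $\tilde v_\xi,\tilde u_\xi,\tilde v_{\xi\xi},\tilde u_{\xi\xi}$ together with a $\dot X\bar v^S_\xi$ term. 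Without this last reduction, differentiating the $(\cdot)_\xi$ terms of $\mathcal{V}^N$ in time would generate $\tilde v_{\xi\xi\xi}$, which does not appear in \eqref{Vt}; ensuring that only the listed derivatives survive, while keeping track of the $O(\delta_S^2)$ versus $O(\delta_R)$ sizes so that all small prefactors combine as stated, is the bulk of the (routine but lengthy) computation.
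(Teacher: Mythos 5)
Your proposal follows essentially the same route as the paper's proof: separate $\mathcal{V}=\mathcal{V}^I+\mathcal{V}^L+\mathcal{V}^N$, recognize that $\mathcal{V}^I$ contributes the wave-interaction/rarefaction-error group, that $\mathcal{V}^L$ carries a small $|\bar v_\xi|$-type coefficient (so a prefactor $O(\delta_S^2+\delta_R)$), and that in $\mathcal{V}^N$ one perturbation factor is absorbed in $L^\infty$ via Lemma~\ref{lem_inf}, giving the prefactor $O(\delta_R+\varepsilon_1)$; for $\mathcal{V}_t$, both you and the paper invoke the reduced continuity identity $\tilde v_t=\sigma\tilde v_\xi+\tilde u_\xi+\dot X\bar v^S_\xi$ to convert $\tilde v_t,\tilde v_{\xi t}$ into the listed spatial derivatives plus the $|\dot X||\bar v^S_\xi|$ term. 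For $\mathcal{V}^I$ your organization differs slightly: you first apply the shock Poisson equation \eqref{sh3'} to collapse $\mathcal{V}^I$ to $e^{-\bar\phi}\big((\bar\phi_\xi/\bar v)_\xi+1\big)-\bar v$ and then argue via a ``double-difference'' decomposition, whereas the paper substitutes $e^{-\bar\phi}=v_m^{-1}\bar v^R e^{-\bar\phi^S}$ and performs the algebra directly; both lead to the same interaction terms, so this is a cosmetic difference.

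Two small points worth fixing. First, the claim that without the continuity-equation reduction ``differentiating the $(\cdot)_\xi$ terms of $\mathcal{V}^N$ in time would generate $\tilde v_{\xi\xi\xi}$'' is not accurate: $\mathcal{V}^N$ contains at most $\tilde v_\xi$, so $\partial_t$ produces $\tilde v_{\xi t}$ (not a third spatial derivative), and the equation then converts this to $\sigma\tilde v_{\xi\xi}+\tilde u_{\xi\xi}+\dot X\bar v^S_{\xi\xi}$. The real reason the reduction is needed is simply that $\tilde v_t,\tilde v_{\xi t}$ do not appear in the stated list and must be eliminated. Second, for the pointwise bound on the interaction piece of $\mathcal{V}^I$ you do not need the spatial separation of the waves from Lemma~\ref{rarefaction}; the algebra together with \eqref{shderiv1}--\eqref{raderiv} already puts each interaction product into one of the listed forms. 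The separation estimates are only needed later (Lemma~\ref{waveinter} and its corollary) to integrate those products in time. Neither issue affects the validity of your argument.
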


\begin{proof}
Recall that $\mathcal{V} = \mathcal{V}^I + \mathcal{V}^L + \mathcal{V}^N$ with \eqref{mcV}, where $\mathcal{V}^I$ is given by
\begin{equation*}
\begin{split}
\mathcal{V}^I & = e^{-\bar{\phi}^S} \bigg[ v_m^{-1} \bar{v}^R \bigg( \frac{\bar{\phi}_{\xi\xi}}{\bar{v}} - \frac{\bar{v}_\xi \bar{\phi}_\xi}{\bar{v}^2} \bigg) - \bigg( \frac{\bar{\phi}^S_{\xi\xi}}{\bar{v}^S} - \frac{\bar{v}^S_\xi \bar{\phi}^S_\xi}{(\bar{v}^S)^2} \bigg) \bigg] + (e^{-(\bar{\phi}^S-\phi_m)} - 1) (\bar{v}^R - v_m).
\end{split}
\end{equation*}
To estimate the first term, we write
\begin{equation*}
\begin{split}
\frac{v_m^{-1} \bar{v}^R (\bar{\phi}^R_{\xi\xi} + \bar{\phi}^S_{\xi\xi}) }{\bar{v}} - \frac{\bar{\phi}^S_{\xi\xi}}{\bar{v}^S} & = \frac{ v_m^{-1}\bar{v}^R \bar{v}^S \bar{\phi}^R_{\xi\xi} - (\bar{v}^R - v_m) \bar{\phi}^S_{\xi\xi} + v_m^{-1} \bar{v}^S ( \bar{v}^R - v_m)\bar{\phi}^S_{\xi\xi} }{\bar{v}\bar{v}^S}
\end{split}
\end{equation*}
and
\begin{equation*}
\begin{split}
& \frac{v_m^{-1}\bar{v}^R (\bar{v}^R_\xi + \bar{v}^S_\xi)(\bar{\phi}^R_\xi + \bar{\phi}^S_\xi)}{\bar{v}^2} - \frac{\bar{v}^S_\xi \bar{\phi}^S_\xi}{(\bar{v}^S)^2} \\
& \quad = \frac{v_m^{-1}\bar{v}^R }{\bar{v}^2} ( \bar{v}^R_\xi \bar{\phi}^R_\xi + \bar{v}^R_\xi \bar{\phi}^S_\xi + \bar{v}^S_\xi \bar{\phi}^R_\xi )  + \frac{( \bar{v}^R - v_m) \bar{v}^S_\xi \bar{\phi}^S_\xi }{\bar{v}^2(\bar{v}^S)^2} \left( v_m^{-1} (\bar{v}^S)^2 - 2 \bar{v}^S - (\bar{v}^R - v_m) \right).
\end{split}
\end{equation*}
For the last term, we use the bounds
\begin{equation*}
| e^{-(\bar{\phi}^S - \phi_m)} - 1| \leq C |\bar{\phi}^S - \phi_m| \leq C |\bar{v}^S - v_m|
\end{equation*}
for sufficiently small amplitude $|\bar{\phi}^S - \phi_m| \sim \delta_S$. These together with \eqref{shderiv1} and \eqref{raderiv} yield
\begin{equation*}
\begin{split}
|\mathcal{V}^I| & \leq C \left( |\bar{v}^S_\xi (\bar{v}^R-v_m)| + |\bar{v}^R_\xi\bar{v}^S_\xi| + |\bar{v}^R_\xi|^2 + |\bar{v}^R_{\xi\xi}| + |\bar{v}^S-v_m| |\bar{v}^R-v_m| \right).
\end{split}
\end{equation*}
The linear part $\mathcal{V}^L$ and the nonlinear part $\mathcal{V}^N$, defined in \eqref{mcV}, can be bounded as
\begin{equation*}
\begin{split}
|\mathcal{V}^L| & \leq C |\bar{v}_\xi| \left( |\tilde{v}| + |\tilde{v}_\xi| + |\tilde{\phi}| + |\tilde{\phi}_\xi| \right)
\end{split}
\end{equation*}
and
\begin{equation} \label{Vnest}
\begin{split}
|\mathcal{V}^N| & \leq C \left( |\tilde{v}| + |\tilde{\phi}| + |\tilde{\phi}_\xi| \right) \left( |\bar{v}_\xi| |\tilde{v}| + |\tilde{v}_\xi| + |\tilde{\phi}_\xi| + |\tilde{\phi}_{\xi\xi}| \right) +  C \left( |\bar{v}_\xi| + |\tilde{\phi}| \right) |\tilde{\phi}|,
\end{split}
\end{equation}
respectively. Collecting these estimates, and applying the bounds \eqref{shderiv1}, \eqref{raderiv}, and \eqref{infty}, we have \eqref{V}.

To obtain \eqref{Vxi}, we first see $\mathcal{V}^I_\xi$:
\begin{equation*}
\begin{split}
\mathcal{V}^I_\xi & = (e^{-\bar{\phi}})_\xi \bigg( \frac{\bar{\phi}_\xi}{\bar{v}} \bigg)_\xi  - (e^{-\bar{\phi}^S})_\xi \bigg( \frac{\bar{\phi}^S_\xi}{\bar{v}^S} \bigg)_\xi + e^{-\bar{\phi}} \bigg( \frac{\bar{\phi}_\xi}{\bar{v}} \bigg)_{\xi\xi} - e^{-\bar{\phi}^S} \bigg( \frac{\bar{\phi}^S_\xi}{\bar{v}^S} \bigg)_{\xi\xi} + \left( e^{-\bar{\phi}} - e^{-\bar{\phi}^S} - \bar{v}^R \right)_\xi.
\end{split}
\end{equation*}
The first two terms can be rewritten as
\begin{equation*}
\begin{split}
& (e^{-\bar{\phi}})_\xi \bigg( \frac{\bar{\phi}_\xi}{\bar{v}} \bigg)_\xi  - (e^{-\bar{\phi}^S})_\xi \bigg( \frac{\bar{\phi}^S_\xi}{\bar{v}^S} \bigg)_\xi \\
& \quad = - e^{-\bar{\phi}^S} \bar{\phi}^S_\xi \bigg[ \bigg( \frac{\bar{v}^R \bar{\phi}_{\xi\xi}  }{v_m \bar{v}} - \frac{\bar{\phi}^S_{\xi\xi}}{\bar{v}^S} \bigg) - \bigg( \frac{\bar{v}^R \bar{v}_\xi \bar{\phi}_\xi }{v_m\bar{v}^2} - \frac{\bar{v}^S_\xi \bar{\phi}^S_\xi}{(\bar{v}^S)^2} \bigg) \bigg] - e^{-\bar{\phi}^S} v_m^{-1} \bar{v}^R \bar{\phi}^R_\xi  \bigg( \frac{\bar{\phi}_{\xi\xi}}{\bar{v}} - \frac{\bar{v}_\xi \bar{\phi}_\xi}{\bar{v}^2} \bigg) \\
& \quad = - e^{-\bar{\phi}^S} \bar{\phi}^S_\xi  \bigg( \frac{ v_m^{-1}\bar{v}^R \bar{v}^S \bar{\phi}^R_{\xi\xi} - (\bar{v}^R - v_m) \bar{\phi}^S_{\xi\xi} + v_m^{-1} \bar{v}^S ( \bar{v}^R - v_m)\bar{\phi}^S_{\xi\xi} }{\bar{v}\bar{v}^S} \bigg) \\
& \qquad + e^{-\bar{\phi}^S}\bar{\phi}^S_\xi \bigg( \frac{v_m^{-1}\bar{v}^R }{\bar{v}^2} ( \bar{v}^R_\xi \bar{\phi}^R_\xi + \bar{v}^R_\xi \bar{\phi}^S_\xi + \bar{v}^S_\xi \bar{\phi}^R_\xi ) + \frac{( \bar{v}^R - v_m) \bar{v}^S_\xi \bar{\phi}^S_\xi }{\bar{v}^2(\bar{v}^S)^2} \left( v_m^{-1} (\bar{v}^S)^2 - 2 \bar{v}^S - (\bar{v}^R - v_m) \right) \bigg) \\
& \qquad - e^{-\bar{\phi}^S} v_m^{-1}\bar{v}^R \bar{\phi}^R_\xi \bigg( \frac{\bar{\phi}^R_{\xi\xi}}{\bar{v}} - \frac{\bar{v}^R_\xi \bar{\phi}^R_\xi}{\bar{v}^2} \bigg) -  e^{-\bar{\phi}^S} v_m^{-1}\bar{v}^R \bar{\phi}^R_\xi \bigg( \frac{\bar{\phi}^S_{\xi\xi}}{\bar{v}} - \frac{\bar{v}^R_\xi \bar{\phi}^S_\xi + \bar{v}^S_\xi \bar{\phi}^R_\xi + \bar{v}^S_\xi \bar{\phi}^S_\xi}{\bar{v}^2} \bigg),
\end{split}
\end{equation*}
and so, by \eqref{shderiv1}--\eqref{raderiv}, we find
\begin{equation*}
\begin{split}
(e^{-\bar{\phi}})_\xi \bigg( \frac{\bar{\phi}_\xi}{\bar{v}} \bigg)_\xi  - (e^{-\bar{\phi}^S})_\xi \bigg( \frac{\bar{\phi}^S_\xi}{\bar{v}^S} \bigg)_\xi \leq C \left(  |\bar{v}^S_\xi \bar{v}^R_\xi| + |\bar{v}^S_\xi(\bar{v}^R-v_m)| + |\bar{v}^R_\xi|^2 \right).
\end{split}
\end{equation*}
In a similar manner, we estimate the next two terms
\begin{equation*}
\begin{split}
e^{-\bar{\phi}} \left( \frac{\bar{\phi}_\xi}{\bar{v}} \right)_{\xi\xi} - e^{-\bar{\phi}^S} \left( \frac{\bar{\phi}^S_\xi}{\bar{v}^S} \right)_{\xi\xi} \quad \leq C \left( |\bar{v}^R_{\xi\xi\xi}| + |\bar{v}^S_\xi(\bar{v}^R-v_m)| + |\bar{v}^R_\xi|^2 + |\bar{v}^R_\xi\bar{v}^S_\xi| \right). 
\end{split}
\end{equation*}
For the last term, we compute
\begin{equation*}
\begin{split}
\left( e^{-\bar{\phi}} - e^{-\bar{\phi}^S} - \bar{v}^R \right)_\xi &  = - e^{-\bar{\phi}^S} \bar{\phi}^S_\xi v_m^{-1} \left( \bar{v}^R - v_m \right) + \bar{v}^R_\xi \left( e^{- (\bar{\phi}^S - \phi_m)} - 1 \right) \\
& \leq C \left( |\bar{v}^S_\xi(\bar{v}^R - v_m)| + |\bar{v}^R_\xi(\bar{v}^S-v_m)| \right)
\end{split}
\end{equation*}
for sufficiently small $\delta_S$. Collecting the above estimates, we obtain
\begin{equation} \label{VIxi}
|\mathcal{V}^I_\xi| \leq C \left( | \bar{v}^S_\xi (\bar{v}^R - v_m) | + |\bar{v}^R_\xi (\bar{v}^S - v_m) | + | \bar{v}^R_\xi \bar{v}^S_\xi | + |\bar{v}^R_\xi|^2 + |\bar{v}^R_{\xi\xi\xi}| \right).
\end{equation}
Thanks to \eqref{shderiv1}--\eqref{raderiv}, the linear part $\mathcal{V}^L_\xi$ is estimated as
\begin{equation} \label{VLxi}
\begin{split}
|\mathcal{V}^L_\xi| & = \bigg| \bigg( - \frac{e^{-\bar{\phi}}\bar{\phi}_{\xi\xi}}{\bar{v}} \tilde{\phi} - \frac{e^{-\bar{\phi}}\bar{\phi}_{\xi\xi}}{\bar{v}^2}\tilde{v} + \frac{e^{-\bar{\phi}}\bar{v}_\xi \bar{\phi}_\xi}{\bar{v}^2} \tilde{\phi} - \frac{e^{-\bar{\phi}}\bar{\phi}_\xi}{\bar{v}^2}\tilde{v}_\xi - \frac{e^{-\bar{\phi}}\bar{v}_\xi}{\bar{v}^2}\tilde{\phi}_\xi + \frac{2e^{-\bar{\phi}}\bar{v}_\xi\bar{\phi}_\xi}{\bar{v}^3}\tilde{v} \bigg)_\xi \bigg| \\
& \leq C |\bar{v}_\xi| \left( |\tilde{\phi}| + |\tilde{\phi}_\xi| + |\tilde{\phi}_{\xi\xi}| + |\tilde{v}| + |\tilde{v}_\xi| + |\tilde{v}_{\xi\xi}| \right) \\
& \leq C |\bar{v}_\xi| \left( |\tilde{v}| + |\tilde{\phi}| \right) + C (\delta_S^2 + \delta_R) \left( |\tilde{v}_\xi| + |\tilde{v}_{\xi\xi}|  + |\tilde{\phi}_\xi| + |\tilde{\phi}_{\xi\xi}| \right).
\end{split}
\end{equation}
By \eqref{infty}, the nonlinear part $\mathcal{V}^N_\xi$ is bounded as
\begin{equation} \label{VNxi}
\begin{split}
|\mathcal{V}^N_\xi| & \leq C |\bar{v}_\xi| \left( |\tilde{v}| + |\tilde{\phi}| \right) + C \left( \lVert \tilde{v} \rVert_{W^{1,\infty}} + \lVert \tilde{\phi} \rVert_{W^{2,\infty}} \right) \left( |\tilde{v}_\xi| + |\tilde{v}_{\xi\xi}| + |\tilde{\phi}_\xi| + |\tilde{\phi}_{\xi\xi}| + |\tilde{\phi}_{\xi\xi\xi}| \right) \\
& \leq C |\bar{v}_\xi| \left( |\tilde{v}| + |\tilde{\phi}| \right) + C ( \delta_R +\varepsilon_1) \left( |\tilde{v}_\xi| + |\tilde{v}_{\xi\xi}| + |\tilde{\phi}_\xi| + |\tilde{\phi}_{\xi\xi}| + |\tilde{\phi}_{\xi\xi\xi}| \right).
\end{split}
\end{equation}
Combining \eqref{VIxi}--\eqref{VNxi}, we have \eqref{Vxi}.

Note that, by \eqref{ra1} and \eqref{ra3}, $\bar{v}^R$, $\bar{\phi}^R$, $\bar{v}^S$, and $\bar{\phi}^S$ satisfy
\begin{equation} \label{RSt}
\begin{array}{ll}
\bar{v}^R_t = \sigma \bar{v}^R_\xi + \bar{u}^R_\xi, & \bar{\phi}^R_t = (-\ln{\bar{v}^R})_t = - \frac{\bar{v}^R_t}{\bar{v}^R}, \\
\bar{v}^S_t = -\dot{X}(t)\bar{v}^S_\xi, & \bar{\phi}^S_t = -\dot{X}(t)\bar{\phi}^S_\xi.
\end{array}
\end{equation}
Using this, a similar computation in the estimate on $\mathcal{V}^I_\xi$ yields
\begin{equation*}
|\mathcal{V}^I_t| \leq C \left( |\bar{v}^S_\xi(\bar{v}^R-v_m)| + |\bar{v}^R_\xi(\bar{v}^S-v_m)| + |\bar{v}^S_\xi\bar{v}^R_\xi| + |\bar{v}^R_\xi|^2 + |\bar{v}^R_{\xi\xi\xi}|  \right).
\end{equation*}
We also note that, by \eqref{NSP1''} and \eqref{CW1}, the perturbation $\tilde{v}$ satisfies
\begin{equation*}
\tilde{v}_t = \sigma \tilde{v}_\xi + \tilde{u}_\xi + \dot{X}(t) \bar{v}^S_\xi.
\end{equation*}
Using this and \eqref{RSt}, one can find
\begin{equation*}
\begin{split}
|\mathcal{V}^L_t| & \leq C |\bar{v}_\xi| \left( |\tilde{\phi}| + |\tilde{\phi}_\xi| + |\tilde{v}| + |\tilde{v}_\xi| + | \tilde{\phi}_t | + |\tilde{\phi}_{\xi t}| + \lvert \tilde{v}_t | + |\tilde{v}_{\xi t} | \right) \\
& \leq C |\bar{v}_\xi| \left( |\tilde{v}| + |\tilde{\phi}| \right) + C |\bar{v}_\xi| \left( |\tilde{v}_\xi| + |\tilde{v}_{\xi\xi}| + |\tilde{u}_\xi| + |\tilde{u}_{\xi\xi}| + |\tilde{\phi}_\xi| + |\tilde{\phi}_t| + |\dot{X}(t)| |\bar{v}^S_\xi| \right)
\end{split}
\end{equation*}
and
\begin{equation*}
\begin{split}
|\mathcal{V}^N_t| & \leq C |\bar{v}_\xi| \left( |\tilde{v}| + |\tilde{\phi}| \right)  + C \left( \lVert \tilde{v} \rVert_{W^{1,\infty}} + \lVert \tilde{\phi} \rVert_{W^{2,\infty}} \right) \\
& \qquad \times \left( |\tilde{v}_\xi| + |\tilde{\phi}_\xi| + |\tilde{\phi}_{\xi\xi}|+ |\tilde{\phi}_t| + |\tilde{\phi}_{\xi t}| + |\tilde{\phi}_{\xi\xi t}| + |\tilde{v}_t| + |\tilde{v}_{\xi t}| \right) \\
& \leq C |\bar{v}_\xi| \left( |\tilde{v}| + |\tilde{\phi}| \right) + C \left( \delta_R + \varepsilon_1 \right) \\
& \qquad \times \left( |\tilde{v}_\xi| + |\tilde{v}_{\xi\xi}| + |\tilde{u}_\xi| + |\tilde{u}_{\xi\xi}| + |\tilde{\phi}_\xi| + |\tilde{\phi}_{\xi\xi}|+ |\tilde{\phi}_t| + |\tilde{\phi}_{\xi t}| + |\tilde{\phi}_{\xi\xi t}| + |\dot{X}(t)| |\bar{v}^S_\xi| \right).
\end{split}
\end{equation*}
which completes the proof.

\end{proof}

\subsection{Main estimates}

By the definition of $X(t)$, \eqref{XODE}, we find
\begin{equation} \label{XODEY}
\dot{X}(t) = - \frac{M}{\delta_S} (Y_1 + Y_2).
\end{equation}
Combining \eqref{max}, \eqref{quad}, and \eqref{XODEY}, we decompose the inequality for relative functional as
\begin{equation*}
\begin{split}
\frac{d}{dt} \int_\mathbb{R} a^{X} \eta(W|\bar{W}) \, d\xi & \leq \underbrace{ - \frac{\delta_S}{2M} \lvert \dot{X} \rvert^2 + \mathcal{B}_1 + \mathcal{B}_2 - \mathcal{G}_2 - \frac{3}{4} \tilde{\mathcal{D}} }_{=: \mathcal{R}_1} \\
& \quad \underbrace{ - \frac{\delta_S}{2M} \lvert \dot{X} \rvert^2 + \dot{X} \sum_{j=3}^6 Y_j + \sum_{j=3}^5 \mathcal{B}_j + \sum_{j=1}^2 S_j - \mathcal{G}_1 - \mathcal{G}^R - \frac{1}{4} \tilde{\mathcal{D}} }_{=: \mathcal{R}_2} \\
& \quad  \underbrace{ + \dot{X} \sum_{j=7}^9 Y_j + \sum_{j=1}^5 \mathcal{P}_j + \sum_{j=1}^5 \tilde{\mathcal{P}}_j - \sum_{j=3}^6 \mathcal{G}_j }_{=: \mathcal{R}_3}.
\end{split}
\end{equation*}
We now estimate the terms $\mathcal{R}_1$, $\mathcal{R}_2$, and $\mathcal{R}_3$.

\subsubsection{Estimates of $\mathcal{R}_1$ and $\mathcal{R}_2$} \label{Sec_4.6.1}

First, we obtain the bounds on $\mathcal{R}_1 + \mathcal{R}_2$.

\begin{lemma} [\cite{KVW2}] \label{R1R2est}
Under the assumptions of Proposition~\ref{Apriori}, there exist constants $C>0$ and $C_1>0$ such that
\begin{equation*}
\begin{split}
 \mathcal{R}_1 + \mathcal{R}_2 & \leq - \frac{\delta_S}{4M} |\dot{X}|^2 - C_1 \left( G_1 + G^S + G^R + D \right) \\
& \quad + C \varepsilon_1 \delta_S^{4/3} \delta_R^{4/3}e^{-C\delta_S t} + C \varepsilon_1^{2/3} \lVert \bar{v}^R_{\xi\xi} \rVert_{L^1}^{4/3} + C \varepsilon_1 \lVert \bar{v}^R_\xi \rVert_{L^4}^2 \\
& \quad + C (\delta_R + \varepsilon_1) \left( \lVert \bar{v}^S_\xi (\bar{v}^R-v_m) \rVert_{L^2} + \lVert \bar{v}^R_\xi (\bar{v}^S-v_m) \rVert_{L^2} + \lVert \bar{v}^S_\xi \bar{v}^R_\xi \rVert_{L^2} \right)
\end{split}
\end{equation*}
for all $t \in [0,T]$, where $C_1$, $G^S$, $G^R$, and $D$ are as in Lemma~\ref{lem_0th}.
\end{lemma}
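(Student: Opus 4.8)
The plan is to reduce everything to the Navier--Stokes analysis. Observe that every quantity entering $\mathcal{R}_1$ and $\mathcal{R}_2$ is built solely from the perturbations $(\tilde v,\tilde h)$ and the background profiles; the potential $\phi$ enters only the part $\mathcal{R}_3$, which is treated separately. Consequently, apart from the fact that after the quadratization of Lemma~\ref{Quad} only the fraction $-\tilde{\mathcal{D}}=-\tfrac{c_0}{2}\mathcal{D}$ of the diffusion $-\mathcal{D}$ remains available in $\mathcal{R}_1+\mathcal{R}_2$, this bound is exactly the one established for the Navier--Stokes composite wave in \cite{KVW2} (itself resting on the single-shock analysis of \cite{KV3}). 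I would therefore run that argument in three steps, carefully tracking the diffusion deficit.

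\emph{Step 1 (the sharp part).} Collect the $\dot X$-free contributions of $\mathcal{R}_1$ and $\mathcal{R}_2$: the $O(1)$ bad terms $\mathcal{B}_1,\mathcal{B}_2$ together with the lower-order $\mathcal{B}_3,\mathcal{B}_4,\mathcal{B}_5$; the good terms $\mathcal{G}_1,\mathcal{G}_2$ and $\tilde{\mathcal{D}}=\tfrac34\tilde{\mathcal{D}}+\tfrac14\tilde{\mathcal{D}}$; and the dissipative gain $-\tfrac{\delta_S}{M}|\dot X|^2$ produced by absorbing $Y_1+Y_2$ through the shift equation \eqref{XODE}. Using the weight $a^X$ from \eqref{a} (so $a^X_\xi\sim\bar v^S_\xi/\sqrt{\delta_S}$), the smallness of $\delta_S$ to expand $Q(v|\bar v)$ and the relative pressure $\tilde p(v|\bar v)$ to quadratic order in $\tilde p(v)-\tilde p(\bar v)$, and the change of variable turning $\int a^X_\xi(\cdot)\,d\xi$ into an integral over $(0,1)$, this piece reduces to the nonlinear Poincaré-type inequality of \cite{KV3}. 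With the choice $M=\tfrac{5\sqrt{2}\,c_0}{8 v_m^2}$ --- whose proportionality to $c_0$ is precisely what compensates for using $\tilde{\mathcal{D}}=\tfrac{c_0}{2}\mathcal{D}$ in place of the full $\mathcal{D}$ --- one obtains
\[
-\tfrac{\delta_S}{M}|\dot X|^2+\sum_{j=1}^5\mathcal{B}_j-\mathcal{G}_1-\mathcal{G}_2-\tilde{\mathcal{D}}\le-\tfrac{\delta_S}{2M}|\dot X|^2-C_1\bigl(G_1+G^S+D\bigr),
\]
the right-hand side coming from a strictly positive leftover of $\mathcal{G}_1\sim G_1$, $\mathcal{G}_2\sim\tfrac{1}{\sqrt{\delta_S}}G^S$ and $\tilde{\mathcal{D}}\sim D$, with $\delta_0$ chosen small so the $\mathcal{G}_2$-leftover dominates $G^S$. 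The remaining good term $-\mathcal{G}^R$ of $\mathcal{R}_2$ satisfies $-\mathcal{G}^R\le-C_1 G^R$ directly and also absorbs the rarefaction-localized piece of $\mathcal{B}_5$ (of size $C\delta_R G^R$).

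\emph{Steps 2--3 (lower-order terms).} For $j=3,\dots,6$ each $Y_j$ carries a small factor ($|\bar h^S_\xi|,|\bar v^S_\xi|\lesssim\delta_S^2$; $|\tilde p'(\bar v)-\tilde p'(\bar v^S)|\lesssim|\bar v^R-v_m|$; or $|a^X_\xi|\lesssim\delta_S^{3/2}$), so Cauchy--Schwarz against $\bar v^S_\xi\,d\xi$ gives $|\dot X\,Y_j|\le\tfrac{\delta_S}{100M}|\dot X|^2+C\sqrt{\delta_0}\,(G_1+G^S)$, absorbed into the negative terms of Step~1. It remains to control the wave-interaction terms $S_1=\int a^X(\tilde p(v)-\tilde p(\bar v))(\ln\bar v^S-\ln\bar v)_{\xi\xi}\,d\xi$ and $S_2=-\int a^X\tilde h\,F_2\,d\xi$: expanding $(\ln\bar v^S-\ln\bar v)_{\xi\xi}$ and $F_2$ into products of shock and rarefaction derivatives ($\bar v^S_\xi\bar v^R_\xi$, $\bar v^S_{\xi\xi}(\bar v^R-v_m)$, $\bar v^R_\xi(\bar v^S-v_m)$, $(\bar v^R_\xi)^2$, $\bar v^R_{\xi\xi}$, $\bar v^R_{\xi\xi\xi}$), bounding the perturbation factors in $L^\infty$ via Lemma~\ref{lem_inf}, using a Young split for the $L^1$-only quantity $\bar v^R_{\xi\xi}$ (yielding the $\tfrac43$-power) and treating $(\bar v^R_\xi)^2$ through $\|\bar v^R_\xi\|_{L^4}^2$, and invoking Lemma~\ref{waveinter}, one produces exactly the error terms $C\varepsilon_1\delta_S^{4/3}\delta_R^{4/3}e^{-C\delta_S t}$, $C\varepsilon_1^{2/3}\|\bar v^R_{\xi\xi}\|_{L^1}^{4/3}$, $C\varepsilon_1\|\bar v^R_\xi\|_{L^4}^2$ and $C(\delta_R+\varepsilon_1)(\|\bar v^S_\xi(\bar v^R-v_m)\|_{L^2}+\|\bar v^R_\xi(\bar v^S-v_m)\|_{L^2}+\|\bar v^S_\xi\bar v^R_\xi\|_{L^2})$. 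Summing the three steps gives the asserted inequality; note that \eqref{inter} and \eqref{Raest} later collapse these errors, after time integration, to the $\delta_R^{1/3}$ of \eqref{apriori}.

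The main obstacle is Step~1: the nonlinear Poincaré-type inequality controlling $\mathcal{B}_1+\mathcal{B}_2$ (together with the secondary $\mathcal{B}_3,\mathcal{B}_4,\mathcal{B}_5$) by $\mathcal{G}_1+\mathcal{G}_2+\tilde{\mathcal{D}}$ uniformly as $\delta_S\to0$ --- the technical heart inherited from \cite{KV3}. The one genuinely new verification, relative to \cite{KVW2}, is that this inequality survives shrinking the diffusion coefficient to $\tfrac{c_0}{2}$, which is exactly why the shift parameter $M$ must be taken proportional to $c_0$. A secondary, bookkeeping-heavy difficulty is organizing the wave-interaction errors of Steps~2--3 so that, upon time integration against \eqref{inter}--\eqref{Raest}, they contribute only the $\delta_R^{1/3}$ appearing on the right-hand side of the a priori estimate.
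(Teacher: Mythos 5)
Your proposal is correct and follows exactly the route the paper takes: the paper itself defers the detailed estimates for $\mathcal{R}_1$ and $\mathcal{R}_2$ to \cite{KVW2}, stating only that the one substantive change is choosing the shift constant $M$ proportional to $c_0$ so that the Poincar\'e-type argument survives the replacement of $\mathcal{D}$ by $\tilde{\mathcal{D}}=\tfrac{c_0}{2}\mathcal{D}$ --- precisely the observation at the heart of your Step~1 (and you correctly note $M=\tfrac{5\sqrt{2}c_0}{8v_m^2}$, which agrees with the paper's $\tfrac{5c_0\sigma_m^4\alpha_m}{8}$ after substituting $\sigma_m=\sqrt{2}/v_m$ and $\alpha_m=v_m^2/(2\sqrt{2})$). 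Your Steps~2--3, treating $Y_3,\dots,Y_6$ by Cauchy--Schwarz absorption and $S_1,S_2$ by expanding the shock--rarefaction interaction products into the $L^2/L^4/L^1$ norms of Lemma~\ref{waveinter}, are exactly the computations carried out in Section~4.5 of \cite{KVW2}, so your unpacking is a faithful reconstruction of the deferred proof rather than a different argument.
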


The terms $\mathcal{R}_1$ and $\mathcal{R}_2$ come from the Navier--Stokes (NS) part of the NSP system. In particular, the estimate of $\mathcal{R}_1$ requires a delicate analysis, where a Poincar\'e-type inequality, combined with the shift function $X(t)$, plays a crucial role. Since these terms have already been dealt with in \cite{KVW2}, we omit the proof here. The only difference lies in the choice of the constant $M$ in the definition \eqref{XODEY} of the shift $X$, which is given by
\begin{equation*}
M = \frac{5c_0\sigma_m^4 \alpha_m}{8},
\end{equation*}
where $c_0$ is as in Lemma~\ref{Quad}, $\sigma_m := \sqrt{-\tilde{p}'(v_m)}$, and $\alpha_m := (\sigma_m \tilde{p}(v_m))^{-1}$. This choice allows us to directly apply the analysis in the NS case to the estimates of $\mathcal{R}_1$ and $\mathcal{R}_2$. For the detailed computations, we refer to \cite[Section~4.5]{KVW2}.

\subsubsection{Estimate of $\mathcal{R}_3$}

We next estimate the term $\mathcal{R}_3$.

\begin{lemma} \label{R3est}
Under the assumptions of Proposition~\ref{Apriori}, there exist constants $C>0$ and $C_2>0$ such that
\begin{equation} \label{R3ineq}
\begin{split}
\mathcal{R}_3 & \leq - C_2 \left( G_2 + G_3 \right) + C (\sqrt{\delta_0} + \varepsilon_1) \left( \delta_S \lvert \dot{X} \rvert^2 + G_1 + G^S + G^R + D \right) \\
& \quad + C (\sqrt{\delta_0} + \varepsilon_1) \int_\mathbb{R} \left( \tilde{v}_{\xi\xi}^2 + \tilde{u}_\xi^2 + \bar{v}_\xi \tilde{\phi}^2 + \tilde{\phi}_{\xi}^2 + \tilde{\phi}_{\xi t}^2 \right) \, d\xi + C (\delta_R^{1/3} + \varepsilon_1^{1/3}) \lVert \bar{v}^R_{\xi\xi\xi} \rVert_{L^1}^{4/3} \\
& \quad + C ( \delta_R + \varepsilon_1 ) \left( \lVert \bar{v}^S_\xi ( \bar{v}^R - v_m ) \rVert_{L^2} + \lVert  \bar{v}^R_\xi \bar{v}^S_\xi \rVert_{L^2} + \lVert \bar{v}^R_\xi \rVert_{L^4}^2 \right) \\
& \quad + C \delta_R^{-1/2} \left( \lVert \bar{v}^S_\xi(\bar{v}^R-v_m) \rVert_{L^2}^2 + \lVert \bar{v}^R_\xi (\bar{v}^S-v_m)\rVert_{L^2}^2 + \lVert \bar{v}^R_\xi \bar{v}^S_\xi \rVert_{L^2}^2 + \lVert \bar{v}^R_\xi \rVert_{L^4}^4 + \lVert \bar{v}^R_{\xi\xi} \rVert_{L^2}^2 + \lVert \bar{v}^R_{\xi\xi\xi} \rVert_{L^2}^2 \right)
\end{split}
\end{equation}
for all $t \in [0,T]$, where $G_1$, $G_2$, $G_3$, $G^S$, $G^R$, and $D$ are as in Lemma~\ref{lem_0th}.
\end{lemma}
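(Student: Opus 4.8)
The plan is to show that, after the reorganization leading to \eqref{max} and \eqref{quad}, every term collected in $\mathcal{R}_3$ — the contribution $\dot{X}(Y_7+Y_8+Y_9)$, the five terms $\mathcal{P}_j$ of Lemma~\ref{Id}, the five terms $\tilde{\mathcal{P}}_j$ of Lemma~\ref{Quad}, and the good terms $-\mathcal{G}_3-\mathcal{G}_4-\mathcal{G}_5-\mathcal{G}_6$ — can be estimated so that the negative terms $-\mathcal{G}_5-\mathcal{G}_6$ dominate, producing $-C_2(G_2+G_3)$, while all remaining contributions are either $C(\sqrt{\delta_0}+\varepsilon_1)$ times one of the controllable quantities $\delta_S|\dot{X}|^2$, $G_1$, $G_2$, $G_3$, $G^S$, $G^R$, $D$, $\int(\tilde{v}_{\xi\xi}^2+\tilde{u}_\xi^2+\bar{v}_\xi\tilde{\phi}^2+\tilde{\phi}_\xi^2+\tilde{\phi}_{\xi t}^2)\,d\xi$, or purely wave-interaction / rarefaction-error integrals of the type controlled by Lemma~\ref{waveinter} and its Corollary.

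First I would record the coercivity of the retained good terms: since $a^X\geq 1$, $e^{-\bar{\phi}}/\bar{v}\geq c>0$ (as in the proof of Lemma~\ref{Id}), and $v,\bar{v}$ are bounded above under \eqref{AssH2}, one has $\mathcal{G}_5\geq c_* G_2$ and $\mathcal{G}_6\geq c_* G_3$ for some $c_*>0$; I reserve $\tfrac12\mathcal{G}_5+\tfrac12\mathcal{G}_6\geq \tfrac{c_*}{2}(G_2+G_3)$ to absorb all error contributions proportional to $\int\tilde{\phi}_{\xi\xi}^2$ or $\int\tilde{\phi}_{\xi\xi\xi}^2$ (each of which will carry a small coefficient), keep the other half as $-C_2(G_2+G_3)$, and keep the $a^X_\xi$-weighted good terms $-\mathcal{G}_3,-\mathcal{G}_4$ to absorb the $a^X_\xi$-weighted pieces of $\dot{X}Y_8$, $\mathcal{P}_2$, $\mathcal{P}_5$, $\tilde{\mathcal{P}}_2$, $\tilde{\mathcal{P}}_4$. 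Next I would treat $\dot{X}(Y_7+Y_8+Y_9)$ using $|\dot{X}|\leq C\varepsilon_1$ from \eqref{Xbd} and the localization bounds $\|a^X_\xi\|_{L^1}\leq C\sqrt{\delta_S}$, $\|a^X_\xi\|_{L^\infty}\leq C\delta_S^{3/2}$, $\|\bar{v}^S_\xi\|_{L^\infty}\leq C\delta_S^2$: $\dot{X}Y_8$ is absorbed directly into $-\mathcal{G}_3-\mathcal{G}_4$, while for $\dot{X}Y_7$ and $\dot{X}Y_9$ one applies Cauchy--Schwarz against the weight $\bar{v}^S_\xi$, uses \eqref{hL2}, \eqref{vL2} and $\int\bar{v}^S_\xi\tilde{\phi}_{\xi\xi}^2\,d\xi\leq C\delta_S^2 G_2$, and obtains small multiples of $G^S$, $G_2$ and $\delta_S|\dot{X}|^2$.

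For the genuine perturbation--perturbation terms in $\mathcal{P}_1$--$\mathcal{P}_5$ and $\tilde{\mathcal{P}}_1$--$\tilde{\mathcal{P}}_5$ the unifying mechanism is that each summand carries either a profile-derivative factor among $\bar{v}_\xi,\bar{\phi}_\xi,\bar{v}^S_\xi,\bar{v}^R_\xi,a^X_\xi$ — hence smallness (shock: $O(\delta_S^2)$) or time-integrable decay (rarefaction) — or a factor $\|\tilde{v}\|_{W^{1,\infty}}+\|\tilde{h}\|_{L^\infty}+\|\tilde{\phi}\|_{W^{2,\infty}}\leq C(\delta_R+\varepsilon_1)$ from Lemma~\ref{lem_inf}; splitting each product by Cauchy--Schwarz with the natural weight ($\bar{v}_\xi$, $\bar{v}^S_\xi$, or $\bar{v}^R_\xi$) and inserting the preliminary bounds \eqref{vL2}, \eqref{vxiL2}, \eqref{hxiL2} then yields the asserted structure. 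The only summand without such a factor, $\mathcal{P}_3=-\sigma\int a^X\tilde{v}_\xi\tilde{\phi}_{\xi\xi}/\bar{v}^2$ (whose coefficient $\sigma$ is not small), is handled by substituting the differentiated Poisson identity \eqref{v_xi} for $\tilde{v}_\xi$, integrating the resulting top-order products $\tilde{\phi}_{\xi\xi\xi}\tilde{\phi}_{\xi\xi}$ and $\tilde{\phi}_\xi\tilde{\phi}_{\xi\xi}$ by parts into lower order with $O(|\bar{v}_\xi|)$ coefficients, and estimating the remaining $\mathcal{V}_\xi$-contribution by \eqref{Vxi}. Finally, the summands involving $\mathcal{V}_\xi$ (in $\mathcal{P}_5$ and $\tilde{\mathcal{P}}_5$) and the correction terms $F_3,F_4$ (in $\mathcal{P}_1$ and $\mathcal{P}_4$) are closed by \eqref{Vxi}, by the explicit expansion of $F_3,F_4$ as sums of products of $\bar{v}^R$- and $\bar{v}^S$-derivatives plus $\bar{v}^R$ higher derivatives, and by Lemma~\ref{waveinter}; these supply the explicit error terms $C\delta_R^{-1/2}(\|\bar{v}^S_\xi(\bar{v}^R-v_m)\|_{L^2}^2+\cdots+\|\bar{v}^R_{\xi\xi\xi}\|_{L^2}^2)$ and $C(\delta_R^{1/3}+\varepsilon_1^{1/3})\|\bar{v}^R_{\xi\xi\xi}\|_{L^1}^{4/3}$ appearing in \eqref{R3ineq}.

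The main obstacle is bookkeeping: one must guarantee that no estimate leaves a time-independent residual constant (which would ruin the $T$-uniformity of Proposition~\ref{Apriori}) and that every coefficient on a controllable quantity genuinely carries a positive power of $\delta_0$ or $\varepsilon_1$. The two delicate points are (i) the cubic term $\tilde{\mathcal{P}}_1$ and, more generally, any integral of the form $\int a^X(\text{profile}_\xi)\,\tilde{v}\,\tilde{v}_\xi\,d\xi$: here the Young weight must be a negative power of $\delta_0$, e.g. $\sim(\delta_S^2+\delta_R)^{-1/2}$, so that the two resulting pieces — one controlled by $\int\bar{v}_\xi\tilde{v}^2\,d\xi\leq C(G^S+G^R)$ via \eqref{vL2}, the other by $\int\bar{v}_\xi\tilde{v}_\xi^2\,d\xi\leq C(\delta_S^2+\delta_R)(G^S+G^R+D)$ via \eqref{vxiL2} — both acquire coefficient $O(\sqrt{\delta_0})$ with nothing left over; and (ii) the $a^X_\xi$-weighted terms containing $\tilde{h}$, as in $\mathcal{P}_2$: since no good term inside $\mathcal{R}_3$ alone controls $\int a^X_\xi\tilde{h}^2\,d\xi$, one splits by Cauchy--Schwarz and combines \eqref{hL2} with $\|\bar{v}^S_\xi\|_{L^\infty}\leq C\delta_S^2$ to bound $\int a^X_\xi\tilde{h}\,\tilde{\phi}_{\xi\xi}\,d\xi$ by $C\sqrt{\delta_S}(G_1+G^S+G_2)$, the resulting $G_1$ being legitimate on the right-hand side of \eqref{R3ineq} precisely because it is reabsorbed by $-C_1 G_1$ from Lemma~\ref{R1R2est} once $\mathcal{R}_1+\mathcal{R}_2+\mathcal{R}_3$ are combined.
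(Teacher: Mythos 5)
Your proposal tracks the paper's argument closely and is correct in its essential structure: the coercivity $\mathcal{G}_5 \gtrsim G_2$, $\mathcal{G}_6 \gtrsim G_3$ supplies the negative term $-C_2(G_2+G_3)$; the $\dot{X}Y_j$ contributions are controlled via $|\dot{X}|\leq C\varepsilon_1$ and Cauchy--Schwarz against the $\bar{v}^S_\xi$ weight; the $\mathcal{P}_j,\tilde{\mathcal{P}}_j$ terms all carry either a profile-derivative or a small $W^{1,\infty}$ factor from Lemma~\ref{lem_inf}; $\mathcal{P}_3$ is deconstructed through the differentiated Poisson identity and integration by parts; and the $\mathcal{V}_\xi$, $F_3$, $F_4$ contributions collapse to the explicit wave-interaction quantities in the last two lines of \eqref{R3ineq}. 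Your two ``delicate points''---the $(\delta_S^2+\delta_R)^{\pm 1/2}$ Young-weight tuning for cubic $\bar{v}_\xi$-weighted products, and the observation that the $G_1$ produced by $\mathcal{P}_{21}$-type terms is legitimate only because it is reabsorbed by $-C_1G_1$ from Lemma~\ref{R1R2est} after combining $\mathcal{R}_1+\mathcal{R}_2+\mathcal{R}_3$---are both exactly the subtleties the paper navigates.

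The one minor tactical divergence is your use of $-\mathcal{G}_3$, $-\mathcal{G}_4$ as absorption reserves. The paper instead \emph{discards} them---it simply records $|-\mathcal{G}_3-\mathcal{G}_4|\leq C\delta_S^{3/2}\int(\tilde{\phi}_\xi^2+\tilde{\phi}_{\xi\xi}^2)\,d\xi$ and bounds all $a^X_\xi$-weighted terms directly via $|a^X_\xi|\leq C\delta_S^{3/2}$ and $|a^X_\xi|^2\leq C\delta_S\bar{v}^S_\xi$. Note also that $\mathcal{G}_3,\mathcal{G}_4$ only control $a^X_\xi\tilde{\phi}_\xi^2$ and $a^X_\xi\tilde{\phi}_{\xi\xi}^2$; after Cauchy--Schwarz the $a^X_\xi$-weighted pieces of $\mathcal{P}_2,\mathcal{P}_5,\tilde{\mathcal{P}}_2$ also produce $a^X_\xi\tilde{h}^2$, $a^X_\xi\tilde{v}_\xi^2$, $a^X_\xi\tilde{\phi}_{\xi t}^2$, which must be routed to $G_1,G^S,D$ or $\int\tilde{\phi}_{\xi t}^2$ anyway. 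So the paper's direct smallness route is cleaner; your version works but requires more bookkeeping to confirm every such residual piece lands in a legitimate slot. Neither approach is wrong---the conclusion is the same---but you should not present $-\mathcal{G}_3,-\mathcal{G}_4$ as doing the heavy lifting they cannot actually do for the $\tilde{h}$- and $\tilde{\phi}_{\xi t}$-carrying pieces.
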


\begin{proof}

We estimate each term in turn.

\noindent $\bullet$ Estimate of the term $-\mathcal{G}_3 - \mathcal{G}_4$: Using the bound $\lvert a^{X}_\xi \rvert \leq C \delta_S^{3/2} $, we have
\begin{equation*}
\lvert - \mathcal{G}_3 - \mathcal{G}_4 \rvert \leq C \delta_S^{3/2} \int_\mathbb{R} \left( \tilde{\phi}_{\xi}^2 + \tilde{\phi}_{\xi\xi}^2 \right) \, d\xi.
\end{equation*}

\noindent $\bullet$ Estimates of the terms $\dot{X}Y_j$ for $j=7,8,9$: By young's inequality and \eqref{Xbd}, we obtain
\begin{equation*}
\begin{split}
\lvert \dot{X} Y_7 \rvert & \leq C \lvert \dot{X} \rvert \int_\mathbb{R} \lvert a^{X}_\xi \rvert  \lvert \tilde{v} \rvert \lvert \tilde{\phi}_{\xi\xi} \rvert \, d\xi \\
& \leq C \lVert \tilde{v} \rVert_{L^\infty} \left( \int_\mathbb{R} \lvert \tilde{\phi}_{\xi\xi} \rvert^2 \, d\xi + \int_\mathbb{R} \lvert a^{X}_\xi \rvert^2 \lvert \tilde{v} \rvert^2 \, d\xi \right) \\
& \leq C \varepsilon_1 \int_\mathbb{R} \left( \bar{v}^S_\xi \tilde{v}^2 + \tilde{\phi}_{\xi\xi}^2 \right) \, d\xi,
\end{split}
\end{equation*}
where we used
\begin{equation*}
\lvert a^{X}_\xi \rvert^2 \leq C \delta_S^{-1} \lvert \bar{v}^S_\xi \rvert^2 \leq C \delta_S \lvert \bar{v}^S_\xi \rvert.
\end{equation*}
Similarly, we estimate
\begin{equation*}
\begin{split}
\lvert \dot{X} Y_8 \rvert & \leq C \lvert \dot{X} \rvert \int_\mathbb{R} \lvert a^{X}_\xi \rvert \left( \lvert \tilde{\phi}_{\xi\xi} \rvert^2 +  \lvert \tilde{\phi}_{\xi} \rvert^2 \right) \, d\xi \leq C \varepsilon_1 \int_\mathbb{R} \left( \tilde{\phi}_{\xi\xi}^2 + \tilde{\phi}_{\xi}^2 \right) \, d\xi.
\end{split}
\end{equation*}
To estimate $\dot{X} Y_9$, we apply H\"older's inequality and the bound \eqref{shderiv1}:
\begin{equation*}
\begin{split}
\lvert \dot{X} Y_9 \rvert & \leq C \lvert \dot{X} \rvert \int_\mathbb{R} \lvert \bar{v}^S_\xi \rvert \lvert \tilde{\phi}_{\xi\xi} \rvert \, d\xi \\
& \leq C \lvert \dot{X} \rvert \sqrt{\int_\mathbb{R} \bar{v}^S_\xi \, d\xi} \sqrt{\int_\mathbb{R} \bar{v}^S_\xi \tilde{\phi}_{\xi\xi}^2 \, d\xi} \\
& \leq C \delta_S^2 \lvert \dot{X} \rvert^2 + \frac{C}{\delta_S^2} \left( \int_\mathbb{R} \bar{v}^S_\xi \, d\xi \right) \left( \int_\mathbb{R} \bar{v}^S_\xi \tilde{\phi}_{\xi\xi} \, d\xi \right) \\
& \leq C \delta_S \left( \delta_S \lvert \dot{X} \rvert^2 + \int_\mathbb{R} \tilde{\phi}_{\xi\xi}^2 \, d\xi \right).
\end{split}
\end{equation*}

\noindent $\bullet$ Estimate of the term $\mathcal{P}_1$: From the definitions of $F_3$ and $\Phi$ in \eqref{F3_def} and \eqref{Phi}, it follows that
\begin{equation*}
F_3 = - \bigg( \frac{1}{2} \bigg( \frac{\bar{\phi}^S_\xi}{\bar{v}^S} \bigg)^2 - \frac{1}{2} \left( \frac{\bar{\phi}_\xi}{ \bar{v} } \right)^2  \bigg)_\xi + \bigg( \frac{1}{\bar{v}^S} \bigg( \frac{\bar{\phi}^S_\xi}{\bar{v}^S} \bigg)_\xi -  \frac{1}{\bar{v}} \bigg( \frac{\bar{\phi}_\xi}{\bar{v}} \bigg)_\xi  \bigg)_\xi =: F_{31} + F_{32}.
\end{equation*}
In analogy with the treatment of $\mathcal{V}^I$ and its derivatives in the proof of Lemma~\ref{V_est}, we obtain the bounds on $F_{31}$ and $F_{32}$:
\begin{equation*}
\begin{split}
|F_{31}| & = \bigg| \frac{\bar{\phi}^S_\xi \bar{\phi}^S_{\xi\xi}}{(\bar{v}^S)^2} - \frac{\bar{\phi}_\xi \bar{\phi}_{\xi\xi}}{\bar{v}^2} - \bigg( \frac{\bar{v}^S_\xi (\bar{\phi}^S_\xi)^2}{(\bar{v}^S)^3} - \frac{\bar{v}_\xi \bar{\phi}_\xi^2}{\bar{v}^3} \bigg) \bigg| \\
& \leq C \left( |\bar{v}^S_\xi(\bar{v}^R-v_m)| + |\bar{v}^S_\xi\bar{v}^R_\xi| + |\bar{v}^R_\xi|^2 \right)
\end{split}
\end{equation*}
and
\begin{equation*}
\begin{split}
|F_{32}| & = - \frac{\bar{v}^S_\xi \bar{\phi}^S_{\xi\xi}}{(\bar{v}^S)^3} + \frac{(\bar{v}^S_\xi)^2 \bar{\phi}^S_\xi}{(\bar{v}^S)^4} + \frac{\bar{\phi}^S_{\xi\xi\xi}}{(\bar{v}^S)^2} + \frac{\bar{v}^S_{\xi\xi}\bar{\phi}^S_\xi}{(\bar{v}^S)^3} - \frac{2(\bar{v}^S_\xi)^2 \bar{\phi}^S_\xi}{(\bar{v}^S)^4} \\
& \quad - \bigg( - \frac{\bar{v}_\xi \bar{\phi}_{\xi\xi}}{\bar{v}^3} + \frac{\bar{v}_\xi^2 \bar{\phi}_\xi}{\bar{v}^4} + \frac{\bar{\phi}_{\xi\xi\xi}}{\bar{v}^2} + \frac{\bar{v}_{\xi\xi}\bar{\phi}_\xi}{\bar{v}^3} - \frac{2\bar{v}_\xi^2 \bar{\phi}_\xi}{\bar{v}^4} \bigg) \\
& \leq C \left( |\bar{v}^S_\xi(\bar{v}^R-v_m)| + |\bar{v}^S_\xi\bar{v}^R_\xi| + |\bar{v}^R_\xi|^2 + |\bar{v}^R_{\xi\xi\xi}| \right),
\end{split}
\end{equation*}
where we have used the bounds \eqref{shderiv1} and \eqref{raderiv}. Substituting these bounds into $\mathcal{P}_1$, we have
\begin{equation*}
\begin{split}
| \mathcal{P}_1 | & \leq C \left| \int_\mathbb{R} a^{X} \tilde{h} F_3 \, d\xi \right| \\
& \leq C \int_\mathbb{R} |\tilde{h}| \left( \lvert \bar{v}^S_\xi \rvert \lvert \bar{v}^R - v_m \rvert + \lvert \bar{v}^S_\xi  \rvert \lvert \bar{v}^R_\xi \rvert + \lvert \bar{v}^R_\xi \rvert^2 \right) \, d\xi + C \int_\mathbb{R} |\tilde{h}| |\bar{v}^R_{\xi\xi\xi}| \, d\xi =: J_1 + J_2.
\end{split}
\end{equation*}
Applying H\"older's inequality, we obtain
\begin{equation*}
\begin{split}
J_1 & \leq C \lVert \tilde{h} \rVert_{L^2} \lVert \left( \lvert \bar{v}^S_\xi \rvert \lvert \bar{v}^R - v_m \rvert + \lvert \bar{v}^S_\xi  \rvert \left\lvert \bar{v}^R_\xi \rvert + \lvert \bar{v}^R_\xi \rvert^2 \right) \right\rVert_{L^2} \\
& \leq C (\delta_R + \varepsilon_1) \left( \left\lVert \bar{v}^S_\xi ( \bar{v}^R - v_m ) \right\rVert_{L^2} + \left\lVert  \bar{v}^R_\xi \bar{v}^S_\xi \right\rVert_{L^2} + \left\lVert \bar{v}^R_\xi \right\rVert_{L^4}^2 \right).
\end{split}
\end{equation*}
In the second inequality, we have used $\lVert \tilde{h} \rVert_{L^2} \leq C (\delta_R + \varepsilon_1)$ (see the proof of Lemma~\ref{lem_inf}). For $J_2$, we use the interpolation inequality and Young's inequality.
\begin{equation*}
\begin{split}
J_2 & \leq C \lVert \tilde{h} \rVert_{L^\infty} \lVert \bar{v}^R_{\xi\xi\xi} \rVert_{L^1} \leq C \lVert \tilde{h} \rVert_{L^2}^{1/2} \lVert \tilde{h}_\xi \rVert_{L^2}^{1/2} \lVert \bar{v}^R_{\xi\xi\xi} \rVert_{L^1} \\
& \leq C \lVert \tilde{h} \rVert_{L^2} \lVert \tilde{h}_\xi \rVert_{L^2}^2 + C \lVert \tilde{h} \rVert_{L^2}^{1/3} \lVert \bar{v}^R_{\xi\xi\xi} \rVert_{L^1}^{4/3} \\
& \leq C \left( \delta_R + \varepsilon_1 \right) \lVert \tilde{h}_\xi \rVert_{L^2}^2 + C ( \delta_R^{1/3} + \varepsilon_1^{1/3} ) \lVert \bar{v}^R_{\xi\xi\xi} \rVert_{L^1}^{4/3}.
\end{split}
\end{equation*}

\noindent $\bullet$ Estimate of the term $\mathcal{P}_2$: We decompose $\mathcal{P}_2$ as
\begin{equation*}
\mathcal{P}_2 = \sum_{j=1}^5 \mathcal{P}_{2j},
\end{equation*}
where
\begin{equation*}
\begin{split}
\mathcal{P}_{21}(W) & := \int_\mathbb{R} a^{X}_\xi \frac{\tilde{h} \tilde{\phi}_{\xi\xi}}{\bar{v}^2} \, d\xi, \quad \mathcal{P}_{22}(W) := - \int_\mathbb{R} a^{X} \frac{2\bar{\phi}_{\xi\xi} \tilde{h}_\xi \tilde{v}}{v^2\bar{v}} \, d\xi, \\
\mathcal{P}_{23}(W) & := - \int_\mathbb{R}  a^{X}_\xi \tilde{h} \bigg( \frac{2\bar{\phi}_{\xi\xi} \tilde{v}}{v^2\bar{v}} + \frac{\bar{\phi}_\xi \tilde{\phi}_\xi}{v^2} - \frac{(\bar{\phi}_\xi)^2 \tilde{v}}{v^2\bar{v}} + \frac{\bar{v}_\xi \tilde{\phi}_\xi}{v^3} + \frac{\bar{\phi}_\xi \tilde{v}_\xi}{v^3} - \frac{3\bar{v}_\xi \bar{\phi}_\xi \tilde{v}}{v^3 \bar{v}}  \bigg) \, d\xi, \\
\mathcal{P}_{24}(W) & := - \int_\mathbb{R} a^{X} \tilde{h}_\xi \bigg( \frac{\bar{\phi}_\xi \tilde{\phi}_\xi}{v^2} - \frac{(\bar{\phi}_\xi)^2 \tilde{v}}{v^2\bar{v}} + \frac{\bar{v}_\xi \tilde{\phi}_\xi}{v^3} + \frac{\bar{\phi}_\xi \tilde{v}_\xi}{v^3} - \frac{3\bar{v}_\xi \bar{\phi}_\xi \tilde{v}}{v^3 \bar{v}}  \bigg) \, d\xi,
\end{split}
\end{equation*}
and
\begin{equation*}
\begin{split}
\mathcal{P}_{25}(W) & := - \frac{1}{2} \int_\mathbb{R} \left( a^{X}_\xi \tilde{h} + a^{X} \tilde{h}_\xi \right) \bigg( \frac{\tilde{\phi}_\xi^2}{v^2} - \frac{\bar{\phi}_\xi^2 \tilde{v}^2}{v^2 \bar{v}^2} \bigg) \, d\xi \\
& \quad - \int_\mathbb{R} \left( a^{X}_\xi \tilde{h} + a^{X} \tilde{h}_\xi \right) \bigg( \frac{\bar{\phi}_{\xi\xi} \tilde{v}^2}{v^2 \bar{v}^2} + \frac{ \tilde{v}^2 \tilde{\phi}_{\xi\xi}}{v^2 \bar{v}^2} + \frac{2 \tilde{v} \tilde{\phi}_{\xi\xi}}{v^2 \bar{v}} + \frac{\tilde{v}_\xi \tilde{\phi}_\xi}{v^3} - \frac{\bar{v}_\xi \bar{\phi}_\xi \tilde{v}^3}{v^3 \bar{v}^3} - \frac{3\bar{v}_\xi \bar{\phi}_\xi \tilde{v}^2}{v^3 \bar{v}^2} \bigg) \, d\xi.
\end{split}
\end{equation*}
Applying Young's inequality and the bound $a^X_\xi \leq C \delta_S^{-1/2} \bar{v}^S_\xi$, we obtain
\begin{equation*}
\begin{split}
\lvert \mathcal{P}_{21} \rvert & \leq C \left( \int_\mathbb{R} \lvert a^X_\xi \rvert^{1/3} \lvert \tilde{\phi}_{\xi\xi} \rvert^2 \, d\xi + \int_\mathbb{R} \lvert a^X_\xi \rvert^{5/3} \lvert \tilde{h} \rvert^2 \, d\xi \right) \\
& \leq C \delta_S^{1/2} \int_\mathbb{R} \left( \bar{v}^S_\xi \tilde{h}^2 + \tilde{\phi}_{\xi\xi}^2 \right) \, d\xi.
\end{split}
\end{equation*}
Similarly, using \eqref{shderiv1} and \eqref{raderiv}, we obtain the estimates for $\mathcal{P}_{22}$, $\mathcal{P}_{23}$, and $\mathcal{P}_{24}$:
\begin{equation*}
\begin{split}
\lvert \mathcal{P}_{22} \rvert & \leq C \int_\mathbb{R} \lvert \bar{v}_\xi | \lvert \tilde{h}_\xi \rvert \lvert \tilde{v} \rvert \, d\xi \leq C \int_\mathbb{R} |\bar{v}_\xi|^{1/2} |\tilde{h}_\xi|^2 \, d\xi + C \int_{\mathbb{R}} |\bar{v}_\xi|^{3/2} | \tilde{v}|^2 \, d\xi \\
& \leq C ( \delta_S + \delta_R^{1/2} ) \int_\mathbb{R} \left( \bar{v}_\xi \tilde{v}^2 + \tilde{h}_\xi^2 \right) \, d\xi,\\
|\mathcal{P}_{23}| & \leq C \int_\mathbb{R} |a^X_\xi| |\tilde{h}| |\bar{v}_\xi| |\tilde{v}| \, d\xi + C \int_\mathbb{R} |\bar{v}_\xi| |a^X_\xi| |\tilde{h}| \left( |\tilde{v}_\xi| + |\tilde{\phi}_\xi| \right) \, d\xi \\
& \leq C \int_\mathbb{R} |\bar{v}_\xi|^{1/2} |a^X_\xi|^2 |\tilde{h}|^2 \, d\xi + C \int_\mathbb{R} |\bar{v}_\xi|^{3/2}|\tilde{v}|^2 \, d\xi + C \int_\mathbb{R} |\bar{v}_\xi| \left( |a^X_\xi|^2 |\tilde{h}|^2 + |\tilde{v}_\xi|^2 + |\tilde{\phi}_\xi|^2 \right) \, d\xi \\
& \leq C (\delta_S + \delta_R^{1/2}) \int_\mathbb{R} \left( \bar{v}^S_\xi \tilde{h}^2 + \bar{v}_\xi \tilde{v}^2 + \tilde{v}_\xi^2 + \tilde{\phi}_\xi^2 \right) \, d\xi,
\end{split}
\end{equation*}
and
\begin{equation*}
\begin{split}
|\mathcal{P}_{24}| & \leq C \int_\mathbb{R} |\bar{v}_\xi| |\tilde{h}_\xi| \left( |\bar{v}_\xi| |\tilde{v}| + |\tilde{v}_\xi| + |\tilde{\phi}_\xi| \right) \, d\xi  \leq C (\delta_S^2 + \delta_R) \int_\mathbb{R} \left( \bar{v}_\xi \tilde{v}^2 + \tilde{v}_\xi^2 + \tilde{h}_\xi^2 + \tilde{\phi}_\xi^2 \right) \, d\xi.
\end{split}
\end{equation*}
For the nonlinear term $\mathcal{P}_{25}$, we use $|a^X_\xi|^2 \leq C |\bar{v}^S_\xi| \leq C |\bar{v}_\xi|$ and \eqref{infty} to estimate
\begin{equation*}
\begin{split}
| \mathcal{P}_{25}| & \leq \int_\mathbb{R} |\tilde{v}| |\tilde{h}_\xi| \left( |\bar{v}_\xi| |\tilde{v}| + |\bar{v}_\xi| |\tilde{v}|^2 + |\tilde{\phi}_{\xi\xi}| + |\tilde{v}| |\tilde{\phi}_{\xi\xi}| \right) \, d\xi + \int_\mathbb{R} |\tilde{\phi}_{\xi}| |\tilde{h}_\xi| \left( |\tilde{\phi}_\xi| + |\tilde{v}_\xi| \right) \, d\xi \\
& \quad + \int_\mathbb{R} |\tilde{h}| \left( |\bar{v}_\xi| |\tilde{v}|^2 + |\bar{v}_\xi| |\tilde{v}|^3 + |\tilde{\phi}_\xi|^2 + |\tilde{\phi}_\xi| |\tilde{v}_\xi| \right) \, d\xi + \int_\mathbb{R} |a^X_\xi| |\tilde{h}| |\tilde{v}| \left( |\tilde{\phi}_{\xi\xi}| + |\tilde{v}| |\tilde{\phi}_{\xi\xi}| \right) \, d\xi \\
& \leq C \lVert \tilde{v} \rVert_{L^\infty} \int_\mathbb{R} \left( \bar{v}_\xi \tilde{v}^2 + \tilde{h}_\xi^2 + \tilde{\phi}_{\xi\xi}^2 \right) \, d\xi + C \lVert \tilde{\phi}_\xi \rVert_{L^\infty} \int_\mathbb{R} \left( \tilde{v}_\xi^2 + \tilde{h}_\xi^2 + \tilde{\phi}_\xi^2 \right) \, d\xi \\
& \quad + C \lVert (\tilde{v},\tilde{h}) \rVert_{L^\infty} \int_\mathbb{R} \left( \bar{v}_\xi \tilde{v}^2 + \tilde{v}_\xi^2 + \tilde{h}_\xi^2 + \tilde{\phi}_\xi^2 \right) \, d\xi + C \lVert (\tilde{v},\tilde{h}) \rVert_{L^\infty} \int_\mathbb{R} \left( \bar{v}_\xi \tilde{v}^2 + \tilde{\phi}_{\xi\xi}^2 \right) \, d\xi \\
& \leq C (\delta_R + \varepsilon_1) \int_\mathbb{R} \left( \bar{v}_\xi \tilde{v}^2 + \tilde{v}_\xi^2 + \tilde{h}_\xi^2 + \tilde{\phi}_\xi^2 + \tilde{\phi}_{\xi\xi}^2 \right) \, d\xi.
\end{split}
\end{equation*}

\noindent $\bullet$ Estimate of the term $\mathcal{P}_3$: Using \eqref{v}, and applying integration by parts, $\mathcal{P}_3$ can be rewritten as
\begin{equation*}
\mathcal{P}_3 = \sum_{j=1}^3 \mathcal{P}_{3j},
\end{equation*}
where
\begin{equation*}
\begin{split}
\mathcal{P}_{31} & := - \sigma \int_\mathbb{R} a^X \frac{1}{\bar{v}^2} \left( \frac{e^{-\bar{\phi}}}{\bar{v}} \right)_\xi \tilde{\phi}_{\xi\xi}^2 \, d\xi + \sigma \int_\mathbb{R} a^X_\xi \bigg( \frac{e^{-\bar{\phi}}\tilde{\phi}_{\xi\xi}^2}{2\bar{v}^3} - \frac{e^{-\bar{\phi}}\tilde{\phi}_\xi^2}{2\bar{v}^2} \bigg) \, d\xi \\
& \quad + \sigma \int_\mathbb{R} a^X \bigg( \bigg( \frac{e^{-\bar{\phi}}}{2\bar{v}^3} \bigg)_\xi \tilde{\phi}_{\xi\xi}^2 - \bigg( \frac{e^{-\bar{\phi}}}{2\bar{v}^2} \bigg)_\xi \tilde{\phi}_{\xi}^2 \bigg) \, d\xi, \\
\mathcal{P}_{32} & := - \sigma \int_\mathbb{R} a^X \frac{e^{-\bar{\phi}}\bar{\phi}_\xi \tilde{\phi}\tilde{\phi}_{\xi\xi}}{\bar{v}^2} \, d\xi, \quad \mathcal{P}_{33} := - \sigma \int_\mathbb{R} a^X \frac{\mathcal{V}_\xi \tilde{\phi}_{\xi\xi}}{\bar{v}^2} \, d\xi.
\end{split}
\end{equation*}
With the bounds \eqref{shderiv1}, \eqref{raderiv}, and $|a^X_\xi| \leq C\delta_S^{3/2}$, it follows that
\begin{equation*}
|\mathcal{P}_{31}| \leq C \int_\mathbb{R} \left( |\bar{v}_\xi| + |a^X_\xi| \right) \left( | \tilde{\phi}_\xi |^2 + |\tilde{\phi}_{\xi\xi} |^2 \right) \, d\xi \leq C \left( \delta_S^{3/2} + \delta_R \right) \int_\mathbb{R} \left( \tilde{\phi}_{\xi}^2 + \tilde{\phi}_{\xi\xi}^2 \right) \, d\xi.
\end{equation*}
The term $\mathcal{P}_{32}$ is estimated by
\begin{equation*}
|\mathcal{P}_{32}| \leq C \int_\mathbb{R} \left( |\bar{v}_\xi|^{1/2} |\tilde{\phi}_{\xi\xi}|^2 + |\bar{v}_\xi|^{3/2}|\tilde{\phi}|^2 \right) \, d\xi \leq C \left( \delta_S + \delta_R^{1/2} \right) \int_\mathbb{R} \left( \bar{v}_\xi \tilde{\phi}^2 + \tilde{\phi}_{\xi\xi}^2 \right) \, d\xi.
\end{equation*}
Thanks to \eqref{Vxi}, we have
\begin{equation} \label{Vxiphi}
\begin{split}
|\mathcal{P}_{33}| & \leq C \int_\mathbb{R} |\bar{v}_\xi| \left( | \tilde{v}| + |\tilde{\phi}| \right) |\tilde{\phi}_{\xi\xi}| \, d\xi \\
& \quad + C \left( \delta_S^2 + \delta_R + \varepsilon_1 \right) \int_\mathbb{R} \left( |\tilde{v}_\xi|  + |\tilde{v}_{\xi\xi}| + |\tilde{\phi}_\xi| + |\tilde{\phi}_{\xi\xi}| + |\tilde{\phi}_{\xi\xi\xi}| \right) |\tilde{\phi}_{\xi\xi}| \, d\xi \\
& \quad + C \int_\mathbb{R} |\tilde{\phi}_{\xi\xi}| \left( | \bar{v}^S_\xi (\bar{v}^R - v_m) | + |\bar{v}^R_\xi (\bar{v}^S - v_m) | + | \bar{v}^R_\xi \bar{v}^S_\xi | + |\bar{v}^R_\xi|^2 + |\bar{v}^R_{\xi\xi\xi}| \right) \, d\xi.
\end{split}
\end{equation} 
Applying Young's inequality to the right-hand side, we obtain
\begin{equation} \label{Vxiphi1}
\begin{split}
R.H.S & \leq C \int_\mathbb{R} |\bar{v}_\xi|^{1/2} |\tilde{\phi}_{\xi\xi}|^2 \, d\xi + C \int_\mathbb{R} |\bar{v}_\xi|^{3/2} \left( |\tilde{v}|^2 + |\tilde{\phi}|^2 \right) \, d\xi \\
& \quad + C ( \delta_S + \delta_R + \varepsilon_1 ) \int_\mathbb{R} \left( \tilde{\phi}_\xi^2 + \tilde{\phi}_{\xi\xi}^2 + \tilde{\phi}_{\xi\xi\xi}^2 + \tilde{v}_\xi^2  + \tilde{v}_{\xi\xi}^2 \right) \, d\xi + C \delta_R^{1/2} \int_\mathbb{R} \tilde{\phi}_{\xi\xi}^2 \, d\xi \\
& \quad + C \delta_R^{-1/2} \int_\mathbb{R} \left( |\bar{v}^S_\xi ( \bar{v}^R - v_m )|^2 + | \bar{v}^R ( \bar{v}^S - v_m )|^2 + |\bar{v}^R_\xi \bar{v}^S_\xi|^2 + |\bar{v}^R_\xi|^4 + |\bar{v}^R_{\xi\xi\xi}|^2 \right) \, d\xi \\
& \leq C (\delta_S + \delta_R^{1/2} + \varepsilon_1) \int_\mathbb{R} \left( \bar{v}_\xi \tilde{v}^2 + \bar{v}_\xi \tilde{\phi}^2 + \tilde{\phi}_\xi^2 + \tilde{\phi}_{\xi\xi}^2 + \tilde{\phi}_{\xi\xi\xi}^2 + \tilde{v}_\xi^2  + \tilde{v}_{\xi\xi}^2 \right) \, d\xi \\
& \quad + C \delta_R^{-1/2} \left( \lVert \bar{v}^S_\xi (\bar{v}^R - v_m) \rVert_{L^2}^2 + \lVert \bar{v}^R_\xi (\bar{v}^S - v_m) \rVert_{L^2}^2 + \lVert \bar{v}^S_\xi \bar{v}^R_\xi \rVert_{L^2}^2 + \lVert \bar{v}^R_\xi \rVert_{L^4}^4 + \lVert \bar{v}^R_{\xi\xi\xi} \rVert_{L^2}^2 \right).
\end{split}
\end{equation}

\noindent $\bullet$ Estimate of the term $\mathcal{P}_4$: The term $\mathcal{P}_4$ consists of the following four terms:
\begin{equation*}
\begin{array}{ll}
\mathcal{P}_{41} (W) := \int_\mathbb{R} a^{X}_\xi \frac{\tilde{v}\tilde{\phi}_{\xi t}}{\bar{v}^2} \, d\xi, & \mathcal{P}_{42} (W) := \int_\mathbb{R} a^{X} \bigg( \frac{2\sigma \bar{v}^R_\xi \tilde{v} \tilde{\phi}_{\xi\xi}}{\bar{v}^3} - \frac{2 \bar{v}_\xi \tilde{v} \tilde{\phi}_{\xi t}}{\bar{v}^3}  \bigg) \, d\xi, \\
\mathcal{P}_{43}(W) := - \dot{X}(t) \int_\mathbb{R} a^{X} \frac{2 \bar{v}^S_\xi \tilde{v} \tilde{\phi}_{\xi\xi}}{\bar{v}^3} \, d\xi, & \mathcal{P}_{44} (W) := \int_\mathbb{R} a^{X} \frac{\tilde{\phi}_{\xi\xi}}{\bar{v}^2} F_4 \, d\xi.
\end{array}
\end{equation*}
As in the estimates for $\mathcal{P}_{21}$ and $\mathcal{P}_{22}$, we obtain
\begin{equation*}
|\mathcal{P}_{41}| \leq C \int_\mathbb{R} |a^X_\xi|^{1/3} |\tilde{\phi}_{\xi t}|^2 \, d\xi + C \int_\mathbb{R} |a^X_\xi|^{5/3} | \tilde{v}|^2 \, d\xi \leq C \delta_S^{1/2} \int_\mathbb{R} \left( \bar{v}^S_\xi \tilde{v}^2 + \tilde{\phi}_{\xi t}^2 \right) \, d\xi.
\end{equation*}
and
\begin{equation*}
\begin{split}
| \mathcal{P}_{42} | & \leq C \int_\mathbb{R} |\bar{v}_\xi| |\tilde{v}| \left( |\tilde{\phi}_{\xi\xi}| + |\tilde{\phi}_{\xi t}| \right) \, d\xi \\
& \leq C \int |\bar{v}_\xi|^{1/2} \left( |\tilde{\phi}_{\xi\xi}|^2 + |\tilde{\phi}_{\xi t}|^2 \right) \, d\xi + C \int_\mathbb{R} |\bar{v}_\xi|^{3/2} |\tilde{v}|^2 \, d\xi \\
& \leq C (\delta_S + \delta_R^{1/2}) \int_\mathbb{R} \left( \bar{v}_\xi \tilde{v}^2 + \tilde{\phi}_{\xi\xi}^2 + \tilde{\phi}_{\xi t}^2 \right) \, d\xi.
\end{split}
\end{equation*}
By \eqref{Xbd}, $\mathcal{P}_{43}$ is bounded as
\begin{equation*}
|\mathcal{P}_{43}| \leq C \varepsilon_1 \int_\mathbb{R} \left( \bar{v}_\xi \tilde{v}^2 + \tilde{\phi}_{\xi\xi}^2 \right) \, d\xi.
\end{equation*}
Next we estimate $\mathcal{P}_{44}$ as
\begin{equation*}
\begin{split}
|\mathcal{P}_{44}| & \leq C \int_\mathbb{R} \left| \left( \ln{\bar{v}^S} - \ln{\bar{v}} \right)_{\xi\xi} \tilde{\phi}_{\xi\xi} \right| \, d\xi \\
& \leq C \int_\mathbb{R} \left( |\bar{v}^R_{\xi\xi}| + |\bar{v}^R_\xi|^2 + |\bar{v}^R_\xi||\bar{v}^R_\xi| + |\bar{v}^S_\xi(\bar{v}^R-v_m)| \right) |\tilde{\phi}_{\xi\xi}| \, d\xi \\
& \leq C \delta_R^{1/2} \int_\mathbb{R} \tilde{\phi}_{\xi\xi}^2 \, d\xi + C \delta_R^{-1/2} \left( \|\bar{v}^S_\xi (\bar{v}^R-v_m)\|_{L^2}^2 + \|\bar{v}^R_\xi\bar{v}^S_\xi\|_{L^2}^2 + \|\bar{v}^R_\xi\|_{L^4}^4 + \|\bar{v}^R_{\xi\xi}\|_{L^2}^2 \right),
\end{split}
\end{equation*}
where in the second inequality, we have used
\begin{equation*}
\begin{split}
| (\ln{\bar{v}^S} - \ln{\bar{v}})_{\xi\xi} | & = \bigg| \bar{v}^S_{\xi\xi} \left( \frac{1}{\bar{v}^S} - \frac{1}{\bar{v}} \right)- \bar{v}^S_\xi \bigg( \frac{\bar{v}^S_\xi}{(\bar{v}^s)^2} - \frac{\bar{v}_\xi}{\bar{v}^2} \bigg) - \frac{\bar{v}^R_{\xi\xi}}{\bar{v}} + \frac{\bar{v}_\xi\bar{v}^R_\xi}{\bar{v}^2} \bigg| \\
& \leq C \left( |\bar{v}^R_{\xi\xi}| + |\bar{v}^R_\xi|^2 + |\bar{v}^R_\xi||\bar{v}^R_\xi| + |\bar{v}^S_\xi(\bar{v}^R-v_m)| \right).
\end{split}
\end{equation*}

\noindent $\bullet$ Estimate of the term $\mathcal{P}_5$: Note that, by \eqref{CW1} with the bounds \eqref{shderiv1}, \eqref{raderiv}, and \eqref{Xbd},
\begin{equation*}
\bigg| \bigg( \frac{e^{-\bar{\phi}}}{\bar{v}^3} \bigg)_t \bigg| \leq C \left( |\bar{v}_\xi| + |\bar{u}_\xi| + |\dot{X}| |\bar{v}^S_\xi| \right) \leq C \bar{v}_\xi.
\end{equation*}
Thus, we have
\begin{equation*}
\begin{split}
|\mathcal{P}_5| & \leq C \int_\mathbb{R} |\bar{v}_\xi| \left( |\tilde{\phi}_{\xi\xi}|^2 + |\tilde{\phi}_\xi|^2 \right) \, d\xi + C \int_\mathbb{R} \left(|\bar{v}_\xi| + |a^X_\xi| \right) |\tilde{\phi}_{\xi\xi}||\tilde{\phi}_{\xi t}| \, d\xi \\
& \quad + C \int_\mathbb{R} |\bar{v}_\xi| |\tilde{\phi}||\tilde{\phi}_{\xi t}| \, d\xi + C \int_\mathbb{R} |\mathcal{V}_\xi| |\tilde{\phi}_{\xi t}| \, d\xi.
\end{split}
\end{equation*}
Since $|\bar{v}_\xi| \leq C (\delta_S^2 + \delta_R)$ and $|a^X_\xi| \leq C \delta_S^{3/2}$, the first two terms are bounded by
\begin{equation*}
C(\delta_S^{3/2} + \delta_R) \int_\mathbb{R} \left( \tilde{\phi}_\xi^2 + \tilde{\phi}_{\xi\xi}^2 + \tilde{\phi}_{\xi t}^2 \right) \, d\xi.
\end{equation*}
We also have by Young's inequality that
\begin{equation*}
\begin{split}
\int_\mathbb{R} |\bar{v}_\xi| |\tilde{\phi}| |\tilde{\phi}_{\xi t}| \, d\xi & \leq C \int_\mathbb{R} |\bar{v}_\xi|^{3/2} |\tilde{\phi}_{\xi t}|^2 \, d\xi + C \int_\mathbb{R} |\bar{v}_\xi|^{1/2} |\tilde{\phi}|^2 \, d\xi \\
& \leq C \left( \delta_S + \delta_R^{1/2}\right) \int_\mathbb{R} \left(  \bar{v}_\xi \tilde{\phi}^2 + \tilde{\phi}_{\xi t}^2 \right) \, d\xi.
\end{split}
\end{equation*}
Similarly as in \eqref{Vxiphi}--\eqref{Vxiphi1}, the last term is estimates as 
\begin{equation*}
\begin{split}
\int_\mathbb{R} |\mathcal{V}_\xi||\tilde{\phi}_{\xi t}| \, d\xi & \leq C (\delta_S + \delta_R^{1/2} + \varepsilon_1) \int_\mathbb{R} \left( \bar{v}_\xi \tilde{v}^2 + \bar{v}_\xi \tilde{\phi}^2 + \tilde{\phi}_\xi^2 + \tilde{\phi}_{\xi\xi}^2 + \tilde{\phi}_{\xi\xi\xi}^2 + \tilde{\phi}_{\xi t}^2 + \tilde{v}_\xi^2  + \tilde{v}_{\xi\xi}^2 \right) \, d\xi \\
& \quad + C \delta_R^{-1/2} \left( \lVert \bar{v}^S_\xi (\bar{v}^R - v_m) \rVert_{L^2}^2 + \lVert \bar{v}^R_\xi (\bar{v}^S - v_m) \rVert_{L^2}^2 + \lVert \bar{v}^S_\xi \bar{v}^R_\xi \rVert_{L^2}^2 + \lVert \bar{v}^R_\xi \rVert_{L^4}^4 + \lVert \bar{v}^R_{\xi\xi\xi} \rVert_{L^2}^2 \right).
\end{split}
\end{equation*}

\noindent $\bullet$ Estimates of the terms $\tilde{\mathcal{P}}_j$ with $j=1,2,3,4,5$: The terms $\tilde{\mathcal{P}}_1$ through $\tilde{\mathcal{P}}_4$ are estimated in the same manner as before, using Young's inequality together with the bounds \eqref{shderiv1}, \eqref{raderiv}, \eqref{infty}, and $|a^X_\xi| \leq C \delta_S^{3/2}$:
\begin{equation*}
\begin{split}
|\tilde{\mathcal{P}}_1| & \leq C \int_\mathbb{R} |\tilde{v}| \left( |\tilde{v}||\tilde{v}_\xi|^2 + |\tilde{v}_\xi|^2 + |\bar{v}_\xi||\tilde{v}||\tilde{v}_\xi| + |\bar{v}_\xi||\tilde{v}|^2 \right) \, d\xi + C \int_\mathbb{R} |\bar{v}_\xi|^2|\tilde{v}|^2 \, d\xi + C \int_\mathbb{R} |\bar{v}_\xi||\tilde{v}||\tilde{v}_\xi| \, d\xi \\
& \leq C \lVert \tilde{v} \rVert_{L^\infty} \int_\mathbb{R} \left( |\bar{v}_\xi||\tilde{v}|^2 + |\tilde{v}_\xi|^2 \right) \, d\xi + C \int_\mathbb{R} |\bar{v}_\xi|^2|\tilde{v}|^2 \, d\xi + C \int_\mathbb{R} |\bar{v}_\xi|^{1/2} |\tilde{v}_\xi|^2 \, d\xi + C \int_\mathbb{R} |\bar{v}_\xi|^{3/2} |\tilde{v}|^2 \, d\xi \\
& \leq C (\delta_S + \delta_R^{1/2} + \varepsilon_1) \int_\mathbb{R} \left( \bar{v}_\xi \tilde{v}^2 + \tilde{v}_\xi^2 \right) \, d\xi, \\
|\tilde{\mathcal{P}}_2| & \leq C \int_\mathbb{R} |a^X_\xi| |\tilde{\phi}_{\xi\xi}| \left( |\bar{v}_\xi||\tilde{v}| + |\tilde{v}_\xi| + |\tilde{\phi}_\xi| \right) \, d\xi  \leq C \delta_S^{3/2} \int_\mathbb{R} \left( \bar{v}_\xi \tilde{v}^2 + \tilde{v}_\xi^2 + \tilde{\phi}_\xi^2 \right) \, d\xi, \\
|\tilde{\mathcal{P}}_3| & \leq C \int_\mathbb{R} |\bar{v}_\xi| |\tilde{\phi}_{\xi\xi\xi}| \left( |\tilde{v}| + |\tilde{\phi}| \right) \, d\xi \leq C \int_\mathbb{R} |\bar{v}_\xi|^{1/2} |\tilde{\phi}_{\xi\xi\xi}|^2 \, d\xi + C \int_\mathbb{R} |\bar{v}_\xi|^{3/2} \left( |\tilde{v}|^2 + |\tilde{\phi}|^2 \right) \, d\xi \\
& \leq C (\delta_S  +\delta_R^{1/2}) \int_\mathbb{R} \left( \bar{v}_\xi \tilde{v}^2 + \bar{v}_\xi \tilde{\phi}^2 + \tilde{\phi}_{\xi\xi\xi}^2 \right) \, d\xi,
\end{split}
\end{equation*}
and
\begin{equation*}
\begin{split}
|\tilde{\mathcal{P}}_4| & \leq C \int_\mathbb{R} |\bar{v}_\xi| |\tilde{\phi}_{\xi\xi}| \left( |\bar{v}_\xi||\tilde{v}| + |\tilde{v}_\xi| + |\tilde{\phi}_{\xi}| + |\tilde{\phi}_{\xi\xi\xi}|  \right) \, d\xi + C \int_\mathbb{R} |\tilde{v}_\xi||\tilde{\phi}_\xi||\tilde{\phi}_{\xi\xi}| \, d\xi \\
& \leq C \int_\mathbb{R} |\bar{v}_\xi| \left( |\bar{v}_\xi||\tilde{v}|^2 + |\tilde{v}_\xi|^2 + |\tilde{\phi}_\xi|^2 + |\tilde{\phi}_{\xi\xi}|^2 + |\tilde{\phi}_{\xi\xi\xi}|^2 \right) \, d\xi + C \lVert \tilde{v}_\xi \rVert_{L^\infty} \int_\mathbb{R} \left( |\tilde{\phi}_\xi|^2 + |\tilde{\phi}_{\xi\xi}|^2 \right) \, d\xi \\
& \leq C (\delta_S^2 + \delta_R + \varepsilon_1) \int_\mathbb{R} \left( \bar{v}_\xi\tilde{v}^2 + \tilde{v}_\xi^2 + \tilde{\phi}_\xi^2 + \tilde{\phi}_{\xi\xi}^2 + \tilde{\phi}_{\xi\xi\xi}^2 \right).
\end{split}
\end{equation*}
Analogously to \eqref{Vxiphi}--\eqref{Vxiphi1}, we obtain
\begin{equation*}
\begin{split}
|\tilde{\mathcal{P}}_5| & \leq C (\delta_S + \delta_R^{1/2} + \varepsilon_1) \int_\mathbb{R} \left( \bar{v}_\xi \tilde{v}^2 + \bar{v}_\xi \tilde{\phi}^2 + \tilde{\phi}_\xi^2 + \tilde{\phi}_{\xi\xi}^2 + \tilde{\phi}_{\xi\xi\xi}^2 + \tilde{\phi}_{\xi t}^2 + \tilde{v}_\xi^2  + \tilde{v}_{\xi\xi}^2 \right) \, d\xi \\
& \quad + C \delta_R^{-1/2} \left( \lVert \bar{v}^S_\xi (\bar{v}^R - v_m) \rVert_{L^2}^2 + \lVert \bar{v}^R_\xi (\bar{v}^S - v_m) \rVert_{L^2}^2 + \lVert \bar{v}^S_\xi \bar{v}^R_\xi \rVert_{L^2}^2 + \lVert \bar{v}^R_\xi \rVert_{L^4}^4 + \lVert \bar{v}^R_{\xi\xi\xi} \rVert_{L^2}^2 \right).
\end{split}
\end{equation*}

Collecting all the estimates, we have
\begin{equation} \label{R3ineq1}
\begin{split}
|\mathcal{R}_3| & \leq - \mathcal{G}_5 - \mathcal{G}_6 + C \delta_S^2 |\dot{X}|^2 + C ( \delta_S^{1/2} + \delta_R^{1/2} + \varepsilon_1 ) \int_\mathbb{R} \left( \tilde{v}_{\xi\xi}^2 + \bar{v}_\xi \tilde{\phi}^2 + \tilde{\phi}_\xi^2 + \tilde{\phi}_{\xi t}^2 \right) \, d\xi  \\
& \quad + C ( \delta_S^{1/2} + \delta_R^{1/2} + \varepsilon_1 ) \int_\mathbb{R} \left( \bar{v}_\xi \tilde{v}^2 + \tilde{v}_\xi^2 + \bar{v}^S_\xi \tilde{h}^2 + \tilde{h}_\xi^2 + \tilde{\phi}_{\xi\xi}^2 + \tilde{\phi}_{\xi\xi\xi}^2 \right) \, d\xi \\
& \quad + C \left( \delta_R + \varepsilon_1 \right) \left( \lVert \bar{v}^S_\xi(\bar{v}^R-v_m) \rVert_{L^2} + \lVert \bar{v}^R_\xi \bar{v}^S_\xi \rVert_{L^2} + \lVert \bar{v}^R_\xi \rVert_{L^4}^2 \right) + C ( \delta_R^{1/3} + \varepsilon_1^{1/3} ) \lVert \bar{v}^R_{\xi\xi\xi} \rVert_{L^1}^{4/3} \\
& \quad + C \delta_R^{-1/2} \left( \lVert \bar{v}^S_\xi(\bar{v}^R-v_m) \rVert_{L^2}^2 + \lVert \bar{v}^R_\xi (\bar{v}^S-v_m)\rVert_{L^2}^2 + \lVert \bar{v}^R_\xi \bar{v}^S_\xi \rVert_{L^2}^2 + \lVert \bar{v}^R_\xi \rVert_{L^4}^4 + \lVert \bar{v}^R_{\xi\xi} \rVert_{L^2}^2 + \lVert \bar{v}^R_{\xi\xi\xi} \rVert_{L^2}^2 \right) .
\end{split}
\end{equation}
Using the bounds \eqref{hL2}--\eqref{vxiL2}, \eqref{hxiL2}, together with
\begin{equation*}
\mathcal{G}_5(W) \sim G_2(W), \quad \mathcal{G}_6(W) \sim G_3(W),
\end{equation*}
the estimate \eqref{R3ineq1} yields the bound \eqref{R3ineq} for sufficiently small $\delta_S +\delta_R$.

\end{proof}

\subsubsection{Proof of Lemma~\ref{lem_0th}}

Combining the estimates in Lemma~\ref{R1R2est} and Lemma~\ref{R3est}, and integrating in time over $[0,t]$, we have
\begin{equation*}
\begin{split}
& \int_\mathbb{R} \eta(W|\bar{W})(t,\xi) \, d\xi + \int_0^t \left( \delta_S |\dot{X}|^2 + G_1 + G_2 + G_3 + G^S + G^R + D \right) d\tau \\
& \quad \leq C \int_\mathbb{R} \eta(W|\bar{W})(0,\xi) \, d\xi + C (\delta_0^{1/2} + \varepsilon_1) \int_0^t \int_\mathbb{R} \left( \tilde{v}_{\xi\xi}^2 + \tilde{u}_\xi^2 + \bar{v}_\xi \tilde{\phi}^2 + \tilde{\phi}_\xi^2 + \tilde{\phi}_{\xi t}^2 \right) \, d\xi d\tau \\
& \qquad + C \varepsilon_1 \delta_R^{4/3} + C ( \delta_R^{1/3} + \varepsilon_1^{1/3} ) \int_0^t \left( \lVert \bar{v}^R_{\xi\xi} \rVert_{L^1}^{4/3} +  \lVert \bar{v}^R_{\xi\xi\xi} \rVert_{L^1}^{4/3} \right) \, d\tau + C (\delta_R+ \varepsilon_1) \int_0^t \lVert \bar{v}^R_\xi \rVert_{L^4}^2 \, d\tau \\
& \qquad + C (\delta_R + \varepsilon_1 ) \int_0^t \left( \lVert \bar{v}^S_\xi (\bar{v}^R-v_m)\rVert_{L^2} + \lVert \bar{v}^R_\xi (\bar{v}^S-v_m)\rVert_{L^2} + \lVert \bar{v}^R_\xi \bar{v}^S_\xi \rVert_{L^2} \right) \, d\tau \\
& \qquad + C \delta_R^{-1/2} \int_0^t \Big( \|\bar{v}^S_\xi (\bar{v}^R-v_m)\|_{L^2}^2 + \lVert \bar{v}^R_\xi (\bar{v}^S-v_m)\rVert_{L^2}^2 + \|\bar{v}^R_\xi\bar{v}^S_\xi\|_{L^2}^2 + \|\bar{v}^R_\xi\|_{L^4}^4 + \|\bar{v}^R_{\xi\xi}\|_{L^2}^2 + \|\bar{v}^R_{\xi\xi\xi}\|_{L^2}^2 \Big)  \, d\tau.
\end{split}
\end{equation*}
Therefore, using \eqref{inter}--\eqref{Raest}, we obtain
\begin{equation*}
\begin{split}
& \int_\mathbb{R} \eta(W|\bar{W})(t,\xi) \, d\xi + \int_0^t \left( \delta_S |\dot{X}|^2 + G_1 + G_2 + G_3 + G^S + G^R + D \right) d\tau \\
& \quad \leq C \int_\mathbb{R} \eta(W|\bar{W})(0,\xi) \, d\xi + C (\delta_0^{1/2} + \varepsilon_1) \int_0^t \int_\mathbb{R} \left( \tilde{v}_{\xi\xi}^2 + \tilde{u}_\xi^2 + \bar{v}_\xi \tilde{\phi}^2 + \tilde{\phi}_\xi^2 + \tilde{\phi}_{\xi t}^2 \right) \, d\xi d\tau + C \delta_R^{1/3}.
\end{split}
\end{equation*}
This, combined with the equivalence property of $\eta(W | \bar{W})$ in Lemma~\ref{entsim} and the application of \eqref{phiH3}, together with \eqref{inter'}--\eqref{Raest'}, to the initial data, yields the desired inequality \eqref{0th}.

\section{Elliptic estimates} \label{Sec_5}

This section is devoted to providing estimates for $\tilde{\phi}$ and its derivatives appearing on the right-hand side of \eqref{0th}. With the elliptic equation \eqref{v} at hand, we establish the desired bounds in the following two lemmas.

\begin{lemma} \label{E_est}
Under the assumptions of Proposition~\ref{Apriori}, there exists a constant $C>0$ such that
\begin{equation} \label{phi_est}
\int_0^t \int_\mathbb{R} \left( \bar{v}_\xi \tilde{\phi}^2 + \tilde{\phi}_\xi^2 \right) \, d\xi d\tau \leq C \int_0^t \left( G^S + G^R + D + \lVert \tilde{v}_{\xi\xi} \rVert_{L^2}^2 \right) \, d\tau + C \sqrt{\delta_R}
\end{equation}
for all $t \in [0,T]$, where $G^S$, $G^R$, and $D$ are as defined in Proposition~\ref{Apriori}.
\end{lemma}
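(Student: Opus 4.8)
\emph{Proof idea.} The proof will rely on weighted and differentiated energy estimates for the elliptic identity \eqref{v}, i.e. $\tfrac{e^{-\bar\phi}}{\bar v}\tilde\phi_{\xi\xi}-e^{-\bar\phi}\tilde\phi=\tilde v-\mathcal V$. As in the proof of Lemma~\ref{entsim}, the coefficients $\tfrac{e^{-\bar\phi}}{\bar v}$ and $e^{-\bar\phi}$ are bounded below by a positive constant, all $\xi$-derivatives of the background $(\bar v,\bar\phi)$ are of size $O(\delta_S)+O(\delta_R)$ by \eqref{shderiv1}--\eqref{raderiv}, and the nonlocal source $\mathcal V$ is estimated through \eqref{V}--\eqref{Vxi}. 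The two terms on the left of \eqref{phi_est} require different multipliers; they will be closed simultaneously, since each resulting inequality contains the other left-hand side with a small coefficient.

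For $\|\tilde\phi_\xi\|_{L^2}^2$ I would read the differentiated identity \eqref{v_xi} as an elliptic equation for $\tilde\phi_\xi$ with source $\tilde v_\xi-\mathcal V_\xi$ and lower-order remainders $-(\tfrac{e^{-\bar\phi}}{\bar v})_\xi\tilde\phi_{\xi\xi}$ and $-e^{-\bar\phi}\bar\phi_\xi\tilde\phi$. Testing against $\tilde\phi_\xi$ and integrating by parts, the two $(\tfrac{e^{-\bar\phi}}{\bar v})_\xi\tilde\phi_{\xi\xi}\tilde\phi_\xi$ contributions cancel, leaving the coercive quantity $\int\tfrac{e^{-\bar\phi}}{\bar v}\tilde\phi_{\xi\xi}^2+\int e^{-\bar\phi}\tilde\phi_\xi^2$ against $-\int\tilde v_\xi\tilde\phi_\xi+\int\mathcal V_\xi\tilde\phi_\xi+\int e^{-\bar\phi}\bar\phi_\xi\tilde\phi\tilde\phi_\xi$. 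By Young's inequality, $\int\tilde v_\xi\tilde\phi_\xi$ gives an absorbable $\varepsilon\int\tilde\phi_\xi^2$ plus $C\int\tilde v_\xi^2$, controlled by $G^S+G^R+D$ via \eqref{vxiL2}; the term $\int e^{-\bar\phi}\bar\phi_\xi\tilde\phi\tilde\phi_\xi$ is bounded, using $|\bar\phi_\xi|\le C\bar v_\xi$, by $C\int\bar v_\xi\tilde\phi^2+C(\delta_S^2+\delta_R)\int\tilde\phi_\xi^2$; and $\int\mathcal V_\xi\tilde\phi_\xi$ is split term by term according to \eqref{Vxi}, the key being that the $\tilde v$-dependence of $\mathcal V_\xi$ enters only through $|\bar v_\xi|\,|\tilde v|$ (yielding $C\int\bar v_\xi\tilde v^2\le C(G^S+G^R)$ plus an absorbable piece) and through $(\delta_S^2+\delta_R+\varepsilon_1)(|\tilde v_\xi|+|\tilde v_{\xi\xi}|)$ (yielding $C(G^S+G^R+D)+C\|\tilde v_{\xi\xi}\|_{L^2}^2$ with a small constant), while the $|\tilde\phi_{\xi\xi\xi}|$-contribution is integrated by parts onto $\tilde\phi_{\xi\xi}$ so as to be absorbed into the coercive $\int\tilde\phi_{\xi\xi}^2$, and the profile-interaction part of $\mathcal V_\xi$ is treated by an unbalanced split $C\delta_R^{-1/2}(\cdots)^2+C\delta_R^{1/2}\tilde\phi_\xi^2$ whose time integral is $O(\sqrt{\delta_R})$ by \eqref{inter}--\eqref{Raest} (for instance $\delta_R^{-1/2}\int_0^t\|\bar v^R_{\xi\xi\xi}\|_{L^2}^2\,d\tau\le C\sqrt{\delta_R}$).

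For $\int\bar v_\xi\tilde\phi^2$ the weight $\bar v_\xi$ must be transferred from $\tilde\phi$ onto $\tilde v$ through the equation, so I would test \eqref{v} against $\bar v e^{\bar\phi}\bar v^S_\xi\tilde\phi$ and against $\bar v e^{\bar\phi}\bar v^R_\xi\tilde\phi$ separately. After integration by parts the shock multiplier produces $\int\bar v^S_\xi\tilde\phi_\xi^2+\int\bar v\,\bar v^S_\xi\tilde\phi^2$ against $-\int\bar v^S_{\xi\xi}\tilde\phi\tilde\phi_\xi-\int\bar v e^{\bar\phi}\bar v^S_\xi(\tilde v-\mathcal V)\tilde\phi$; the commutator is controlled via the profile-derivative bounds \eqref{shderiv1} (after one more integration by parts, moving the derivative onto $\tilde\phi^2$), the source $\int\bar v^S_\xi\tilde v\tilde\phi$ is Young-split into $\varepsilon\int\bar v^S_\xi\tilde\phi^2$ and $C\int\bar v^S_\xi\tilde v^2\le C\,G^S$ (using $\tilde p'(\bar v)$ bounded away from zero), and the crucial observation is that in $\int\bar v^S_\xi\mathcal V\tilde\phi$ every $\tilde v$, $\tilde v_\xi$, $\tilde v_{\xi\xi}$ contribution from \eqref{V} now carries the good weight $\bar v^S_\xi$, hence is controlled by $G^S+G^R+D+\|\tilde v_{\xi\xi}\|_{L^2}^2$ with a small constant (recall $\bar v^S_\xi\le C\delta_S^2$), the profile-interaction part again integrating to $O(\sqrt{\delta_R})$. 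The rarefaction multiplier is handled analogously, using \eqref{raderiv} in place of \eqref{shderiv1} and $\bar v^R_\xi\le C\delta_R$. Adding these two inequalities to the $\tilde\phi_\xi$ estimate, choosing $\delta_0,\varepsilon_1$ small enough to absorb the coupling terms $C(\delta_S^2+\delta_R)\int\tilde\phi_\xi^2$ and $C\int\bar v_\xi\tilde\phi^2$, and integrating over $[0,t]$ will yield \eqref{phi_est}.

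I expect the main obstacle to be precisely this weight transfer. One cannot afford $\int\tilde v^2$ or $\int\tilde\phi^2$ on the right-hand side, since neither is time-integrable under \eqref{AssH2}, so the naive multiplier $\tilde\phi$ is not enough; the localized multipliers $\bar v^S_\xi\tilde\phi$, $\bar v^R_\xi\tilde\phi$ must be combined with the structural bookkeeping of \eqref{V}--\eqref{Vxi} showing that the $\tilde v$-dependence of $\mathcal V$ always appears either weighted by $\bar v_\xi$ or as a $\xi$-derivative absorbable into $D$ and $\|\tilde v_{\xi\xi}\|_{L^2}^2$, and that the top-order terms of $\mathcal V$ can be integrated by parts so as to be dominated by the coercive dissipation $\int\tilde\phi_{\xi\xi}^2$ generated at the first step.
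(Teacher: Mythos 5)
Your proposal is correct and follows essentially the same route as the paper. The paper tests the elliptic identity \eqref{v} against the single multiplier $-\bar{v}_\xi\tilde{\phi}$ (rather than splitting into $\bar{v}^S_\xi\tilde{\phi}$ and $\bar{v}^R_\xi\tilde{\phi}$ with the extra rescaling factor $\bar v e^{\bar\phi}$, which is only a cosmetic difference since $\bar v_\xi=\bar v^S_\xi+\bar v^R_\xi$), obtaining exactly your weighted estimate $\int\bar v_\xi(\tilde\phi^2+\tilde\phi_\xi^2)\lesssim\int\bar v_\xi\tilde v^2+\sqrt{\delta_0}\int(\tilde\phi_\xi^2+\tilde\phi_{\xi\xi}^2)+\delta_R$. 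It then closes the loop against the unweighted bound $\int\tilde\phi_\xi^2+\tilde\phi_{\xi\xi}^2+\tilde\phi_{\xi\xi\xi}^2\lesssim\int\tilde v_\xi^2+\tilde v_{\xi\xi}^2+\sqrt{\delta_0}\int\bar v_\xi(\tilde v^2+\tilde\phi^2)+\sqrt{\delta_R}$, which is exactly your first step but is quoted in the paper from the appendix Lemma \ref{B_E} rather than re-derived inline; the final step substitutes in \eqref{vL2}--\eqref{vxiL2} to convert $\int\bar v_\xi\tilde v^2$ and $\int\tilde v_\xi^2$ into $G^S+G^R+D$.

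Two small remarks on details. First, for the $\int e^{-\bar\phi}\bar\phi_\xi\tilde\phi\tilde\phi_\xi$ term you write $C\int\bar v_\xi\tilde\phi^2+C(\delta_S^2+\delta_R)\int\tilde\phi_\xi^2$; the paper's sharper split $\int|\bar v_\xi|^{3/2}\tilde\phi^2+\int|\bar v_\xi|^{1/2}\tilde\phi_\xi^2\lesssim(\delta_S+\delta_R^{1/2})\int(\bar v_\xi\tilde\phi^2+\tilde\phi_\xi^2)$ makes the coefficient on $\int\bar v_\xi\tilde\phi^2$ small as well, which simplifies the final absorption (otherwise one needs a weighted linear combination of your two inequalities rather than a direct sum, which is still fine but slightly more care than ``choose $\delta_0,\varepsilon_1$ small''). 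Second, for the top-order $\tilde\phi_{\xi\xi\xi}$ piece of $\mathcal V_\xi$ the paper does not integrate by parts; it instead bounds $|\mathcal V_\xi|$ directly via \eqref{Vxi} (yielding $(\delta_S^2+\delta_R+\varepsilon_1)\int\tilde\phi_{\xi\xi\xi}^2$) and then controls $\int\tilde\phi_{\xi\xi\xi}^2$ by the additional $H^3$ elliptic estimate \eqref{phiH3'} obtained by differentiating \eqref{v} twice. Your IBP idea would also work if carried out on the explicit terms of $\mathcal V_\xi$ before taking absolute values, but the paper's route avoids having to track those terms individually.
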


\begin{proof}

Multiplying \eqref{v} by $-\bar{v}_\xi\tilde{\phi}$ and integrating the resulting equation with respect to $\xi$, we have
\begin{equation} \label{phitemp}
\begin{split}
\int_\mathbb{R} \bigg( \frac{e^{-\bar{\phi}}\bar{v}_\xi \tilde{\phi}_\xi^2}{\bar{v}} + e^{-\bar{\phi}} \bar{v}_\xi \tilde{\phi}^2  \bigg) \, d\xi & = - \int_\mathbb{R} \bigg( \frac{e^{-\bar{\phi}}\bar{v}_\xi}{\bar{v}} \bigg)_\xi \tilde{\phi}_\xi \tilde{\phi} \, d\xi - \int_\mathbb{R} \bar{v}_\xi \tilde{\phi}\tilde{v} \, d\xi + \int_\mathbb{R} \bar{v}_\xi \tilde{\phi} \mathcal{V} \, d\xi \\
& =: \RNum{1} + \RNum{2} + \RNum{3}.
\end{split}
\end{equation}
Using Young's inequality together with \eqref{shderiv1}--\eqref{raderiv}, we obtain
\begin{equation*}
\begin{split}
|\RNum{1}| & \leq C \int_\mathbb{R} |\bar{v}_\xi| |\tilde{\phi}_\xi||\tilde{\phi}| \, d\xi \leq C \int_\mathbb{R} |\bar{v}_\xi|^{1/2} |\tilde{\phi}_\xi|^2 \, d\xi + C \int_\mathbb{R} |\bar{v}_\xi|^{3/2} |\tilde{\phi}|^2 \, d\xi \\
& \leq C (\delta_S + \delta_R^{1/2}) \int_\mathbb{R} \left( \bar{v}_\xi \tilde{\phi}^2 + \tilde{\phi}_\xi^2 \right) \, d\xi, \\
|\RNum{2}| & \leq \theta \int_\mathbb{R} \bar{v}_\xi \tilde{\phi}^2 \, d\xi + C \theta^{-1} \int_\mathbb{R} \bar{v}_\xi \tilde{v}^2 \, d\xi,
\end{split}
\end{equation*}
where $\theta>0$ is an arbitrarily small constant. To handle $\RNum{3}$, we use \eqref{V}:
\begin{equation*}
\begin{split}
|\RNum{3}| & \leq C \left( \delta_S^2 + \delta_R + \varepsilon_1 \right) \int_\mathbb{R} |\bar{v}_\xi| |\tilde{\phi}| \left( |\tilde{v}| + |\tilde{v}_\xi| + |\tilde{\phi}| + |\tilde{\phi}_\xi| + |\tilde{\phi}_{\xi\xi}| \right) \, d\xi \\
& \quad + C \int_\mathbb{R} |\bar{v}_\xi| |\tilde{\phi}| \left( |\bar{v}^S_\xi (\bar{v}^R-v_m)| + |\bar{v}^R_\xi\bar{v}^S_\xi| + |\bar{v}^R_\xi|^2 + |\bar{v}^R_{\xi\xi}| + |\bar{v}^S-v_m| |\bar{v}^R-v_m| \right) \, d\xi.
\end{split}
\end{equation*}
This yields, by Young’s inequality,
\begin{equation*}
\begin{split}
|\RNum{3}| & \leq C \left( \delta_S^2 + \delta_R + \varepsilon_1 \right) \int_\mathbb{R} \left( \bar{v}_\xi \tilde{\phi}^2 + \bar{v}_\xi \tilde{v}^2 + \tilde{\phi}_\xi^2 + \tilde{\phi}_{\xi\xi}^2 \right) \, d\xi \\
& \quad + C \left( \|\bar{v}^S_\xi (\bar{v}^R-v_m)\|_{L^2}^2 + \|\bar{v}^R_\xi\bar{v}^S_\xi\|_{L^2}^2 + \|\bar{v}^R_\xi\|_{L^4}^4 + \|\bar{v}^R_{\xi\xi}\|_{L^2}^2 + \|(\bar{v}^S-v_m )(\bar{v}^R-v_m)\|_{L^2}^2 \right).
\end{split}
\end{equation*}
Collecting the estimates for $\RNum{1}$, $\RNum{2}$, and $\RNum{3}$, we integrate \eqref{phitemp} in time over $[0,t]$ and obtain
\begin{equation} \label{phi0}
\begin{split}
& \int_0^t \int_\mathbb{R} \bar{v}_\xi \left( \tilde{\phi}^2 + \tilde{\phi}_{\xi}^2 \right) \, d\xi d\tau \\
& \quad  \leq C \int_0^t \int_\mathbb{R} \bar{v}_\xi \tilde{v}^2 \, d\xi d\tau  +  C ( \delta_0^{1/2} + \varepsilon_1) \int_0^t \int_\mathbb{R} \left( \tilde{\phi}_\xi^2 + \tilde{\phi}_{\xi\xi}^2 \right) \, d\xi  d\tau + C \delta_R
\end{split}
\end{equation}
for sufficiently small $\delta_0$ and $\varepsilon_1$, where we have used \eqref{inter}--\eqref{Raest}. Finally, applying \eqref{phiH3t} from Lemma~\ref{B_E} together with \eqref{vL2}–\eqref{vxiL2} to \eqref{phi0} yields the desired bound \eqref{phi_est}.

\end{proof}

\begin{lemma}
Under the assumptions of Proposition~\ref{Apriori}, there exists a constant $C>0$ such that
\begin{equation} \label{phixt_est}
\begin{split}
\int_0^t \int_\mathbb{R} \tilde{\phi}_{\xi t}^2 \, d\xi d\tau & \leq C \int_0^t \left( \delta_S |\dot{X}|^2 + G_2 + G_3 + G^S + G^R + D \right) \, d\tau \\
& \quad + C \int_0^t \left( \lVert \tilde{v}_{\xi\xi} \rVert_{L^2}^2 + \lVert \tilde{u}_{\xi} \rVert_{H^1}^2 \right) \,  d\tau + C \sqrt{\delta_R}
\end{split}
\end{equation}
for all $t \in [0,T]$, where $G_2$, $G_3$, $G^S$, $G^R$, and $D$ are as defined in Proposition~\ref{Apriori}.
\end{lemma}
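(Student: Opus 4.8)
The plan is to differentiate the elliptic identity \eqref{v} in $t$ and then run a standard elliptic (in $\xi$) energy estimate on the resulting equation for $\psi:=\tilde{\phi}_t$. Differentiating \eqref{v} gives
\[
\frac{e^{-\bar{\phi}}}{\bar{v}}\tilde{\phi}_{\xi\xi t}-e^{-\bar{\phi}}\tilde{\phi}_t=\tilde{v}_t-\mathcal{V}_t+R,
\]
where $R$ collects the terms produced by the time derivatives of the coefficients $e^{-\bar{\phi}}/\bar{v}$ and $e^{-\bar{\phi}}$. By \eqref{RSt} one has $|\bar{v}_t|,|\bar{\phi}_t|\le C(|\bar{v}^R_\xi|+|\dot{X}|\,|\bar{v}^S_\xi|)$, so $|R|\le C(|\bar{v}^R_\xi|+|\dot{X}|\,|\bar{v}^S_\xi|)(|\tilde{\phi}|+|\tilde{\phi}_{\xi\xi}|)$; squaring, using $|\bar{v}^R_\xi|^2\le C\delta_R\bar{v}_\xi$, the bound \eqref{infty}, and $\|\bar{v}^S_\xi\|_{L^\infty}\le C\delta_S^2$, the contribution of $R$ after time integration is controlled by $C\delta_R\int_0^t\int_\mathbb{R}\bar{v}_\xi(\tilde{\phi}^2+\tilde{\phi}_{\xi\xi}^2)\,d\xi\,d\tau$ (which is absorbed by \eqref{phi_est} and by $G_2\sim\int\tilde{\phi}_{\xi\xi}^2$, $G_3\sim\int\tilde{\phi}_{\xi\xi\xi}^2$) plus a term of size $C\varepsilon_1^2\delta_S^2\int_0^t\delta_S|\dot{X}|^2\,d\tau$.

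Next I would estimate the two genuine source terms. From \eqref{NSP1''} and \eqref{CW1} one has $\tilde{v}_t=\sigma\tilde{v}_\xi+\tilde{u}_\xi+\dot{X}(t)\bar{v}^S_\xi$, whence, by \eqref{vxiL2} and $\|\bar{v}^S_\xi\|_{L^2}^2\le C\delta_S$,
\[
\|\tilde{v}_t\|_{L^2}^2\le C\big(G^S+G^R+D+\|\tilde{u}_\xi\|_{L^2}^2+\delta_S|\dot{X}|^2\big).
\]
For $\mathcal{V}_t$ I would use \eqref{Vt} and split its right-hand side into three groups: (i) the pure wave-interaction and rarefaction-error terms $|\bar{v}^R_\xi(\bar{v}^S-v_m)|+|\bar{v}^R_\xi\bar{v}^S_\xi|+|\bar{v}^R_\xi|^2+|\bar{v}^R_{\xi\xi\xi}|$ together with $|\dot{X}|\,|\bar{v}^S_\xi|$, whose squared $L^2$-norms integrated in time are bounded by $C\delta_R$ via \eqref{inter}--\eqref{Raest} and by $C\int_0^t\delta_S|\dot{X}|^2\,d\tau$ respectively; (ii) the term $|\bar{v}_\xi|(|\tilde{v}|+|\tilde{\phi}|)$, which after squaring and using $|\bar{v}_\xi|\le C(\delta_R+\delta_S^2)$ is dominated by $C\delta_0\int_\mathbb{R}\bar{v}_\xi(\tilde{v}^2+\tilde{\phi}^2)\,d\xi\le C\delta_0\big(G^S+G^R+\int_\mathbb{R}\bar{v}_\xi\tilde{\phi}^2\,d\xi\big)$, the last integral being absorbed after time integration by \eqref{phi_est} (this is where the $C\sqrt{\delta_R}$ on the right of \eqref{phixt_est} enters); and (iii) the remaining terms, all carrying the small prefactor $\delta_S^2+\delta_R+\varepsilon_1\lesssim\sqrt{\delta_0}+\varepsilon_1$.

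The crucial point is group (iii). The pieces of $\mathcal{V}_t$ involving $\tilde{\phi}_t$, $\tilde{\phi}_{\xi t}$ and $\tilde{\phi}_{\xi\xi t}$ — that is, $\psi$, $\psi_\xi$, $\psi_{\xi\xi}$ themselves — are the very unknowns of the elliptic problem; since they come with the small coefficient $\delta_S^2+\delta_R+\varepsilon_1$, I would move them onto the left-hand side (an $O(\sqrt{\delta_0}+\varepsilon_1)$ perturbation of the elliptic operator) and then apply the basic elliptic estimates of Lemma~\ref{B_E} (in the form \eqref{phiH3t}) to the perturbed equation, obtaining, for $\delta_0,\varepsilon_1$ small,
\[
\|\tilde{\phi}_t\|_{L^2}^2+\|\tilde{\phi}_{\xi t}\|_{L^2}^2+\|\tilde{\phi}_{\xi\xi t}\|_{L^2}^2\le C\big(\|\tilde{v}_t\|_{L^2}^2+\|\widehat{\mathcal{V}}_t\|_{L^2}^2+\|R\|_{L^2}^2\big),
\]
where $\widehat{\mathcal{V}}_t$ is $\mathcal{V}_t$ with the $\tilde{\phi}_t,\tilde{\phi}_{\xi t},\tilde{\phi}_{\xi\xi t}$ terms deleted. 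The surviving group-(iii) terms $(\delta_S^2+\delta_R+\varepsilon_1)(|\tilde{v}_\xi|+|\tilde{v}_{\xi\xi}|+|\tilde{u}_\xi|+|\tilde{u}_{\xi\xi}|+|\tilde{\phi}_\xi|+|\tilde{\phi}_{\xi\xi}|)$ then contribute, after squaring and integrating in time, at most $C(\sqrt{\delta_0}+\varepsilon_1)\int_0^t\big(G^S+G^R+D+G_2+G_3+\|\tilde{v}_{\xi\xi}\|_{L^2}^2+\|\tilde{u}_\xi\|_{H^1}^2\big)\,d\tau$ plus $C(\sqrt{\delta_0}+\varepsilon_1)\int_0^t\|\tilde{\phi}_\xi\|_{L^2}^2\,d\tau$, the latter again bounded via \eqref{phi_est}. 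Combining everything, integrating over $[0,t]$, and taking $\delta_0,\varepsilon_1$ sufficiently small yields \eqref{phixt_est}. The main obstacle is exactly the self-referential occurrence of $\tilde{\phi}_t$, $\tilde{\phi}_{\xi t}$ and $\tilde{\phi}_{\xi\xi t}$ on the right-hand side of \eqref{Vt}: this forces the absorption into a perturbed elliptic operator and the use of a version of Lemma~\ref{B_E} that controls the full $H^2$-norm of $\tilde{\phi}_t$ (in particular $\tilde{\phi}_{\xi\xi t}$) by the $L^2$-norm of the source, while one must still keep careful track of which error terms are genuinely time-integrable through the decay estimates of Lemma~\ref{waveinter} and its corollary and which must be charged to the dissipation functionals $G^S,G^R,D,G_2,G_3$.
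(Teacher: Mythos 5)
Your proposal is correct and follows essentially the same route as the paper: differentiate the elliptic identity \eqref{v} in $t$, control the resulting equation for $\psi=\tilde{\phi}_t$ by a two-stage elliptic energy argument (test against $-\tilde\phi_t$ for the $H^1$ level, then differentiate in $\xi$ and test against $-\tilde\phi_{\xi t}$ for the $H^2$ level), using $\tilde{v}_t=\sigma\tilde{v}_\xi+\tilde{u}_\xi+\dot X\bar{v}^S_\xi$ and the pointwise bound \eqref{Vt} on $\mathcal{V}_t$, and absorb the small-coefficient occurrences of $\tilde\phi_t,\tilde\phi_{\xi t},\tilde\phi_{\xi\xi t}$ in $\mathcal{V}_t$ by the left-hand side (this is precisely how the paper combines \eqref{phixt} and \eqref{phixxt}). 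One small caveat: you cannot literally ``apply Lemma~\ref{B_E} (in the form \eqref{phiH3t})'' since that lemma concerns $\tilde\phi$ and its spatial derivatives, not $\tilde\phi_t$; and the $\tilde\phi_t$-derivative terms in $\mathcal{V}_t$ appear only through the absolute-value bound \eqref{Vt} rather than as genuine coefficients of a differential operator, so the ``perturbed elliptic operator'' language must be understood at the level of the integral estimates (Young's inequality plus absorption across the $H^1$ and $H^2$ stages), which is exactly what the paper carries out.
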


\begin{proof}
We rewrite \eqref{v} as
\begin{equation*}
\tilde{\phi}_{\xi\xi} - \bar{v}\tilde{\phi} = \bar{v}e^{\bar{\phi}} \tilde{v} - \bar{v}e^{\bar{\phi}} \mathcal{V}.
\end{equation*}
Differentiating this with respect to $t$ and taking the $L^2$-inner product against $-\tilde{\phi}_t$, we obtain
\begin{equation*}
\begin{split}
\int_\mathbb{R} \left( \tilde{\phi}_{\xi t}^2 + \bar{v} \tilde{\phi}_t^2 \right) \, d\xi &  = - \int_\mathbb{R} \bar{v}_t \tilde{\phi}_t \tilde{\phi} \, d\xi - \int_\mathbb{R} (\bar{v}e^{\bar{\phi}})_t \tilde{\phi}_t \tilde{v} \, d\xi - \int_\mathbb{R} \bar{v}e^{\bar{\phi}} \tilde{\phi}_t \tilde{v}_t \, d\xi \\
& \quad + \int_\mathbb{R} (\bar{v}e^{\bar{\phi}})_t \tilde{\phi}_t \mathcal{V} \, d\xi +  \int_\mathbb{R} \bar{v} e^{\bar{\phi}} \tilde{\phi}_t \mathcal{V}_t \, d\xi.
\end{split}
\end{equation*}
Note that, by \eqref{ra1}, \eqref{shderiv1}, and \eqref{raderiv},
\begin{equation} \label{bar_t}
|\bar{v}_t| = | \bar{v}^R_t + \bar{v}^S_t | = | \sigma \bar{v}^R_\xi + \bar{u}^R_\xi - \dot{X}(t) \bar{v}^S_\xi | \leq C \bar{v}^R_\xi + C \varepsilon_1 \bar{v}^S_\xi \leq C \bar{v}_\xi \leq C (\delta_S^2 + \delta_R).
\end{equation}
Similar estimates hold for $\bar{u}$ and $\bar{\phi}$ as well. Thus, we have
\begin{equation*}
\begin{split}
\int_\mathbb{R} \left( \tilde{\phi}_{\xi t}^2 + \bar{v} \tilde{\phi}_t^2 \right) \, d\xi & \leq C \int_\mathbb{R} |\bar{v}_\xi| | \tilde{\phi}_t|| \tilde{\phi} | \, d\xi + C \int_\mathbb{R} | \bar{v}_\xi| | \tilde{\phi}_t|| \tilde{v} | \, d\xi + C \int_\mathbb{R} | \tilde{\phi}_t || \tilde{v}_t | \, d\xi \\
& \quad + C \int_\mathbb{R} |\bar{v}_\xi| |\tilde{\phi}_t|| \mathcal{V}| \, d\xi + C \int_\mathbb{R} |\tilde{\phi}_t| |\mathcal{V}_t| \, d\xi \\
& =: \RNum{1} + \RNum{2} + \RNum{3} + \RNum{4} + \RNum{5}.
\end{split}
\end{equation*}
By Young's inequality, we estimate the first two terms as
\begin{equation} \label{vxiphit}
\begin{split}
|\RNum{1}| & \leq C \int_\mathbb{R} \left( |\bar{v}_\xi|^{1/2}|\tilde{\phi}_t|^2 + |\bar{v}_\xi|^{3/2}|\tilde{\phi}|^2 \right) \, d\xi \leq C (\delta_S + \delta_R^{1/2}) \int_\mathbb{R} \left( \bar{v}_\xi \tilde{\phi}^2 + \tilde{\phi}_t^2 \right) \, d\xi, \\
|\RNum{2}| & \leq C \int_\mathbb{R} \left( |\bar{v}_\xi|^{1/2}|\tilde{\phi}_t|^2 + |\bar{v}_\xi|^{3/2}|\tilde{v}|^2 \right) \, d\xi \leq C (\delta_S + \delta_R^{1/2}) \int_\mathbb{R} \left( \bar{v}_\xi \tilde{v}^2 + \tilde{\phi}_t^2 \right) \, d\xi.
\end{split}
\end{equation}
The third term $\RNum{3}$ is estimated using \eqref{eq_v}:
\begin{equation} \label{term3}
\begin{split}
|\RNum{3}| & \leq C \int_\mathbb{R} |\tilde{\phi}_t| \left( |\tilde{v}_\xi| + |\tilde{u}_\xi| + |\dot{X}||\bar{v}^S_\xi| \right) \, d\xi \\
& \leq \theta \int_\mathbb{R} \tilde{\phi}_t^2 \, d\xi + C \theta^{-1} \int_\mathbb{R} \left( \tilde{v}_\xi^2 + \tilde{u}_\xi^2 \right) \, d\xi + C \delta_S \left( \delta_S |\dot{X}|^2 +  \int_\mathbb{R} \tilde{\phi}_t^2 \, d\xi \right),
\end{split}
\end{equation}
where $\theta>0$ is arbitrarily small, and H\"older's inequality yields
\begin{equation} \label{Xphit}
\begin{split}
|\dot{X}| \int_\mathbb{R} |\bar{v}^S_\xi||\tilde{\phi}_t| \, d\xi & \leq C\delta_S^2 |\dot{X}|^2 + \frac{C}{\delta_S^2} \left( \int_\mathbb{R} |\bar{v}^S_\xi||\tilde{\phi}_t| \, d\xi \right)^2 \\
& \leq C\delta_S^2 |\dot{X}|^2 + \frac{C}{\delta_S^2} \left( \int_\mathbb{R} \bar{v}^S_\xi \, d\xi \right) \left( \int_\mathbb{R} \bar{v}^S_\xi \tilde{\phi}_t^2 \, d\xi \right) \\
& \leq C \delta_S \left( \delta_S |\dot{X}|^2 + \int_\mathbb{R} \tilde{\phi}_t^2 \, d\xi \right).
\end{split}
\end{equation}
Using \eqref{V}, we obtain
\begin{equation*}
\begin{split}
|\RNum{4}| & \leq C \left( \delta_S^2 + \delta_R + \varepsilon_1 \right) \int_\mathbb{R} |\bar{v}_\xi| |\tilde{\phi}_t| \left( |\tilde{v}| + |\tilde{v}_\xi| + |\tilde{\phi}| + |\tilde{\phi}_\xi| + |\tilde{\phi}_{\xi\xi}| \right) \, d\xi \\
& \quad + C \int_\mathbb{R} |\bar{v}_\xi| |\tilde{\phi}_t| \left( |\bar{v}^S_\xi (\bar{v}^R-v_m)| + |\bar{v}^R_\xi\bar{v}^S_\xi| + |\bar{v}^R_\xi|^2 + |\bar{v}^R_{\xi\xi}| + |\bar{v}^S-v_m| |\bar{v}^R-v_m| \right) \, d\xi,
\end{split}
\end{equation*}
and, by Young’s inequality,
\begin{equation*}
\begin{split}
|\RNum{4}| & \leq C \left( \delta_S^2 + \delta_R + \varepsilon_1 \right) \int_\mathbb{R} \left( \bar{v}_\xi \tilde{\phi}^2 + \bar{v}_\xi \tilde{v}^2 + \tilde{\phi}_\xi^2 + \tilde{\phi}_{\xi\xi}^2 + \tilde{\phi}_t^2 \right) \, d\xi \\
& \quad + C \left( \|\bar{v}^S_\xi (\bar{v}^R-v_m)\|_{L^2}^2 + \|\bar{v}^R_\xi\bar{v}^S_\xi\|_{L^2}^2 + \|\bar{v}^R_\xi\|_{L^4}^4 + \|\bar{v}^R_{\xi\xi}\|_{L^2}^2 + \|(\bar{v}^S-v_m )(\bar{v}^R-v_m)\|_{L^2}^2 \right).
\end{split}
\end{equation*}
Lastly, using \eqref{Vt}, we have
\begin{equation} \label{term5}
\begin{split}
|\RNum{5}| & \leq C \left( \delta_S^2 + \delta_R + \varepsilon_1 \right) \int_\mathbb{R} |\tilde{\phi}_t| \left( |\tilde{v}_\xi| + |\tilde{v}_{\xi\xi}| + |\tilde{u}_\xi| + |\tilde{u}_{\xi\xi}| + |\tilde{\phi}_\xi| + |\tilde{\phi}_{\xi\xi}| + |\tilde{\phi}_t| + |\tilde{\phi}_{\xi t}| + |\tilde{\phi}_{\xi\xi t}| \right) \, d\xi \\
& \quad + C \int_\mathbb{R} |\tilde{\phi}_t|  \left( |\bar{v}^R_\xi(\bar{v}^S-v_m)| + |\bar{v}^R_\xi \bar{v}^S_\xi| + |\bar{v}^R_\xi|^2 + |\bar{v}^R_{\xi\xi\xi}| \right) \, d\xi \\
& \quad +  C |\dot{X}| \int_\mathbb{R} |\bar{v}^S_\xi||\tilde{\phi}_t| \, d\xi + C \int_\mathbb{R} |\bar{v}_\xi| |\tilde{\phi}_t| \left( |\tilde{v}| + |\tilde{\phi}| \right)\, d\xi.
\end{split}
\end{equation}
Notice that the last two terms are already handled in \eqref{vxiphit} and \eqref{Xphit}. Thus, we obtain
\begin{equation} \label{term51}
\begin{split}
|\RNum{5}| & \leq C \left( \delta_S + \delta_R^{1/2} + \varepsilon_1 \right) \\
& \qquad \times \left( \delta_S |\dot{X}|^2 +  \int_\mathbb{R} \left( \bar{v}_\xi \tilde{v}^2 + \bar{v}_\xi \tilde{\phi}^2 + \tilde{v}_\xi^2 + \tilde{v}_{\xi\xi}^2 + \tilde{u}_\xi^2 + \tilde{u}_{\xi\xi}^2 + \tilde{\phi}_\xi^2 + \tilde{\phi}_{\xi\xi}^2 + \tilde{\phi}_t^2 + \tilde{\phi}_{\xi t}^2 + \tilde{\phi}_{\xi\xi t}^2 \right) \, d\xi \right) \\
& \quad + C \delta_R^{1/2} \lVert \tilde{\phi}_t \rVert_{L^2}^2 + C \delta_R^{-1/2} \left( \lVert \bar{v}^R_\xi (\bar{v}^S-v_m)\rVert_{L^2}^2 + \lVert \bar{v}^R_\xi \bar{v}^S_\xi \rVert_{L^2}^2 + \lVert \bar{v}^R_\xi \rVert_{L^4}^4 + \lVert \bar{v}^R_{\xi\xi\xi} \rVert_{L^2}^2 \right).
\end{split}
\end{equation}
Combining all the estimates, we deduce
\begin{equation} \label{phixt}
\begin{split}
& \int_\mathbb{R} \left( \tilde{\phi}_t^2 + \tilde{\phi}_{\xi t}^2 \right) \, d\xi \\
& \quad \leq C (\delta_S + \delta_R^{1/2} + \varepsilon_1) \left( \delta_S |\dot{X}|^2 + \int_\mathbb{R} \left( \bar{v}_\xi \tilde{v}^2 + \bar{v}_\xi \tilde{\phi}^2 + \tilde{v}_{\xi\xi}^2 + \tilde{u}_{\xi\xi}^2 + \tilde{\phi}_\xi^2 + \tilde{\phi}_{\xi\xi}^2 + \tilde{\phi}_{\xi\xi t}^2 \right) \, d\xi  \right) \\
& \qquad  + C \int_\mathbb{R} \left( \tilde{v}_\xi^2 + \tilde{u}_\xi^2 \right) \, d\xi  + C \delta_R^{-1/2} \left( \lVert \bar{v}^R_\xi (\bar{v}^S-v_m)\rVert_{L^2}^2 + \lVert \bar{v}^R_\xi \bar{v}^S_\xi \rVert_{L^2}^2 + \lVert \bar{v}^R_\xi \rVert_{L^4}^4 + \lVert \bar{v}^R_{\xi\xi\xi} \rVert_{L^2}^2 \right)
\end{split}
\end{equation}
for sufficiently small $\delta_0$ and $\varepsilon_1$.

We differentiate \eqref{v} once with respect to $\xi$ and $t$:
\begin{equation*}
\begin{split}
& \bigg( \frac{e^{-\bar{\phi}}}{\bar{v}} \bigg)_{\xi t} \tilde{\phi}_{\xi\xi} + \bigg( \frac{e^{-\bar{\phi}}}{\bar{v}} \bigg)_{\xi} \tilde{\phi}_{\xi\xi t} + \bigg( \frac{e^{-\bar{\phi}}}{\bar{v}} \bigg)_{t} \tilde{\phi}_{\xi\xi\xi} + \bigg( \frac{e^{-\bar{\phi}}}{\bar{v}} \bigg) \tilde{\phi}_{\xi\xi\xi t} \\
&\quad  - (e^{-\bar{\phi}})_{\xi t} \tilde{\phi} - (e^{-\bar{\phi}})_{\xi} \tilde{\phi}_t - (e^{-\bar{\phi}})_{t} \tilde{\phi}_\xi - e^{-\bar{\phi}} \tilde{\phi}_{\xi t} = \tilde{v}_{\xi t} - \mathcal{V}_{\xi t}.
\end{split}
\end{equation*}
Multiplying this by $-\tilde{\phi}_{\xi t}$ and integrating over $\mathbb{R}$, we obtain
\begin{equation*}
\begin{split}
\int_{\mathbb{R}} \bigg( \frac{e^{-\bar{\phi}}}{\bar{v}} \tilde{\phi}_{\xi\xi t}^2 + e^{-\bar{\phi}} \tilde{\phi}_{\xi t}^2 \bigg) \, d\xi & = \int_\mathbb{R} \bigg( \frac{e^{-\bar{\phi}}}{\bar{v}} \bigg)_{\xi t} \tilde{\phi}_{\xi t} \tilde{\phi}_{\xi\xi}  \, d\xi  + \int_\mathbb{R} \bigg( \frac{e^{-\bar{\phi}}}{\bar{v}} \bigg)_{t} \tilde{\phi}_{\xi t} \tilde{\phi}_{\xi\xi\xi} \, d\xi \\
& \quad  - \int_\mathbb{R} (e^{-\bar{\phi}})_{\xi t} \tilde{\phi}_{\xi t} \tilde{\phi} \, d\xi -  \int_\mathbb{R} (e^{-\bar{\phi}})_{\xi} \tilde{\phi}_{\xi t} \tilde{\phi}_t  \, d\xi - \int_\mathbb{R} (e^{-\bar{\phi}})_{t} \tilde{\phi}_{\xi t}  \tilde{\phi}_\xi \, d\xi \\
& \quad + \int_\mathbb{R} \tilde{\phi}_{\xi\xi t} \tilde{v}_{t} \, d\xi - \int_\mathbb{R} \tilde{\phi}_{\xi\xi t} \mathcal{V}_{ t} \, d\xi.
\end{split}
\end{equation*}
Recalling \eqref{bar_t}, and using similar computations as before, we bound the first two lines by
\begin{equation*}
C \left( \delta_S + \delta_R^{1/2} \right) \int_\mathbb{R} \left( \bar{v}_\xi \tilde{\phi}^2 + \tilde{\phi}_\xi^2 + \tilde{\phi}_{\xi\xi}^2 + \tilde{\phi}_{\xi\xi\xi}^2 + \tilde{\phi}_{t}^2 + \tilde{\phi}_{\xi t}^2 \right) \, d\xi.
\end{equation*}
Analogously to \eqref{term3}, we estimate the first term in the last line as
\begin{equation*}
\begin{split}
\left| \int_\mathbb{R} \tilde{\phi}_{\xi\xi t} \tilde{v}_{t} \, d\xi \right| & \leq C \int_\mathbb{R} |\tilde{\phi}_{\xi\xi t}| \left( |\tilde{v}_{\xi}| + |\tilde{u}_{\xi}| + |\dot{X}||\bar{v}^S_\xi| \right) \, d\xi \\
& \leq \theta \int_\mathbb{R} \tilde{\phi}_{\xi\xi t}^2 \, d\xi + C \theta^{-1} \int_\mathbb{R} \left( \tilde{v}_\xi^2 + \tilde{u}_\xi^2 \right) \, d\xi + C \delta_S \left( \delta_S |\dot{X}|^2 + \int_\mathbb{R} \tilde{\phi}_{\xi\xi t}^2 \, d\xi \right).
\end{split}
\end{equation*}
As in \eqref{term5}--\eqref{term51}, we obtain
\begin{equation*}
\begin{split}
& \int_\mathbb{R} |\tilde{\phi}_{\xi\xi t} |\mathcal{V}_t| \, d\xi \\
& \quad \leq C \left( \delta_S + \delta_R^{1/2} + \varepsilon_1 \right) \\
& \qquad \quad \times \left( \delta_S |\dot{X}|^2 +  \int_\mathbb{R} \left( \bar{v}_\xi \tilde{v}^2 + \bar{v}_\xi \tilde{\phi}^2 + \tilde{v}_\xi^2 + \tilde{v}_{\xi\xi}^2 + \tilde{u}_\xi^2 + \tilde{u}_{\xi\xi}^2 + \tilde{\phi}_\xi^2 + \tilde{\phi}_{\xi\xi}^2 + \tilde{\phi}_t^2 + \tilde{\phi}_{\xi t}^2 + \tilde{\phi}_{\xi\xi t}^2 \right) \, d\xi \right) \\
& \qquad + C \delta_R^{1/2} \lVert \tilde{\phi}_{\xi\xi t} \rVert_{L^2}^2 + C \delta_R^{-1/2} \left( \lVert \bar{v}^R_\xi (\bar{v}^S-v_m)\rVert_{L^2}^2 + \lVert \bar{v}^R_\xi \bar{v}^S_\xi \rVert_{L^2}^2 + \lVert \bar{v}^R_\xi \rVert_{L^4}^4 + \lVert \bar{v}^R_{\xi\xi\xi} \rVert_{L^2}^2 \right).
\end{split}
\end{equation*}
Hence, 
\begin{equation} \label{phixxt}
\begin{split}
& \int_\mathbb{R} \left( \tilde{\phi}_{\xi t}^2 + \tilde{\phi}_{\xi\xi t}^2 \right) \, d\xi \\
& \quad \leq C \left( \delta_S + \delta_R^{1/2} + \varepsilon_1 \right) \\
& \qquad \quad \times \left( \delta_S |\dot{X}|^2 + \int_\mathbb{R} \left( \bar{v}_\xi \tilde{v}^2 + \bar{v}_\xi \tilde{\phi}^2 + \tilde{v}_\xi^2 + \tilde{v}_{\xi\xi}^2 + \tilde{u}_\xi^2 + \tilde{u}_{\xi\xi}^2 + \tilde{\phi}_\xi^2 + \tilde{\phi}_{\xi\xi}^2 + \tilde{\phi}_{\xi\xi\xi}^2 + \tilde{\phi}_t^2 \right) \, d\xi \right) \\
& \qquad + C \int_\mathbb{R} \left( \tilde{v}_\xi^2 + \tilde{u}_\xi^2 \right) \, d\xi + C \delta_R^{-1/2} \left( \lVert \bar{v}^R_\xi (\bar{v}^S-v_m)\rVert_{L^2}^2 + \lVert \bar{v}^R_\xi \bar{v}^S_\xi \rVert_{L^2}^2 + \lVert \bar{v}^R_\xi \rVert_{L^4}^4 + \lVert \bar{v}^R_{\xi\xi\xi} \rVert_{L^2}^2 \right).
\end{split}
\end{equation}

Combining \eqref{phixt} with \eqref{phixxt}, and using the smallness of parameters, we conclude that
\begin{equation*}
\begin{split}
& \int_\mathbb{R} \left( \tilde{\phi}_t^2 + \tilde{\phi}_{\xi t}^2 + \tilde{\phi}_{\xi\xi t}^2\right) \, d\xi \\
& \quad  \leq  C (\delta_0^{1/2} + \varepsilon_1) \left(\delta_S |\dot{X}|^2  + \int_\mathbb{R} \left( \bar{v}_\xi \tilde{v}^2 + \bar{v}_\xi \tilde{\phi}^2 + \tilde{v}_\xi^2 + \tilde{v}_{\xi\xi}^2 + \tilde{u}_\xi^2 + \tilde{u}_{\xi\xi}^2 + \tilde{\phi}_\xi^2 + \tilde{\phi}_{\xi\xi}^2 + \tilde{\phi}_{\xi\xi\xi}^2 \right) \, d\xi \right) \\
& \qquad +  C \int_\mathbb{R} \left( \tilde{v}_\xi^2 + \tilde{u}_\xi^2 \right) \, d\xi + C \delta_R^{-1/2} \left( \lVert \bar{v}^R_\xi (\bar{v}^S-v_m)\rVert_{L^2}^2 + \lVert \bar{v}^R_\xi \bar{v}^S_\xi \rVert_{L^2}^2 + \lVert \bar{v}^R_\xi \rVert_{L^4}^4 + \lVert \bar{v}^R_{\xi\xi\xi} \rVert_{L^2}^2 \right).
\end{split}
\end{equation*}
This, together with \eqref{inter}, \eqref{Raest}, \eqref{vL2}, \eqref{vxiL2}, and \eqref{phi_est}, implies the desired estimate \eqref{phixt_est}.

\end{proof}

\section{Proof of the a priori estimate} \label{Sec_6}

We observe that the estimate \eqref{0th}, together with \eqref{phi_est} and \eqref{phixt_est}, yields
\begin{equation} \label{vhphi}
\begin{split}
& \lVert (\tilde{v},\tilde{h})(t,\cdot) \rVert_{L^2}^2 + \lVert \tilde{\phi}_\xi(t,\cdot) \rVert_{H^1}^2 + \int_0^t \left( \delta_S |\dot{X}|^2 + G_2 + G_3 + G^S + G^R + D \right) d\tau \\
& \quad \leq C ( \lVert \tilde{v}_{0} \rVert_{H^2}^2 + \lVert \tilde{h}(0,\cdot) \rVert_{L^2}^2 ) + C ( \sqrt{\delta_0} + \varepsilon_1) \int_0^t \left( \lVert \tilde{v}_{\xi\xi} \rVert_{L^2}^2 + \lVert \tilde{u}_\xi \rVert_{H^1}^2 \right) \,  d\tau + C \delta_R^{1/3}
\end{split}
\end{equation}
for all \( t \in [0,T] \). In this section, we complete the proof of the a priori estimate in Proposition~\ref{Apriori}. To this end, we first rewrite the energy estimate \eqref{vhphi}, expressed in terms of the effective velocity \( h \), in the original \((v,u,\phi)\)-variables. We then proceed to present higher-order estimates for \((\tilde{v},\tilde{u})\) and conclude the proof of Proposition~\ref{Apriori}.

\subsection{
\texorpdfstring{Estimate for $\| u-\bar{u} \|_{L^2(\mathbb{R})} $}{Estimate for || u-u¯||L2  }} \label{Sec_6.1}

The following lemma provides the zeroth-order energy estimate for the system \eqref{NSP''}.

\begin{lemma} \label{vu}
Under the assumptions of Proposition~\ref{Apriori}, there exists a constant $C>0$ such that
\begin{equation} \label{vu_ineq}
\begin{split}
& \lVert \tilde{v}(t,\cdot) \rVert_{H^1}^2 + \lVert \tilde{u} (t,\cdot) \rVert_{L^2}^2 + \lVert \tilde{\phi}_\xi (t,\cdot) \rVert_{H^1}^2 + \int_0^t \left( \delta_S |\dot{X}|^2 + G_2 + G_3 + G^S + G^R + D + \lVert \tilde{u}_\xi \rVert_{L^2}^2 \right) \, d\tau \\
& \quad \leq C \left( \lVert \tilde{v}_0 \rVert_{H^2}^2 + \lVert \tilde{u}_0 \rVert_{L^2}^2 \right) + C ( \sqrt{\delta_0} + \varepsilon_1) \int_0^t  \left( \lVert \tilde{v}_{\xi\xi} \rVert_{L^2}^2 + \lVert \tilde{u}_{\xi\xi} \rVert_{L^2}^2 \right) \, d\tau + C \delta_R^{1/3}
\end{split}
\end{equation}
for all $t \in [0,T]$, where $G_2$, $G_3$, $G^S$, $G^R$, and $D$ are as defined in Proposition~\ref{Apriori}.
\end{lemma}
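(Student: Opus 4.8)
The plan is to recover the $L^2$ bound for the physical velocity perturbation $\tilde u$ from the already-established estimate \eqref{vhphi} in the effective-velocity variable $\tilde h$, together with the elliptic bounds \eqref{phi_est} and \eqref{phixt_est}. The starting point is the relation $\tilde h = \tilde u - (\ln v - \ln\bar v^S)_\xi$, which by the computation already used in the proof of Lemma~\ref{lem_inf} gives
\begin{equation*}
\|\tilde u\|_{L^2}^2 \leq C\|\tilde h\|_{L^2}^2 + C\int_\mathbb{R}\left(\tilde v^2 + \tilde v_\xi^2\right)d\xi + C\left(\|\bar v^R_\xi\|_{L^2}^2 + \|\bar v^S_\xi(\bar v^R-v_m)\|_{L^2}^2\right).
\end{equation*}
Since $\|\tilde h\|_{L^2}^2$ and $\|\tilde v\|_{L^2}^2$ are controlled by \eqref{vhphi}, and $\|\tilde v_\xi\|_{L^2}^2 \leq C(G^S + G^R + D)$ by \eqref{vxiL2}, whose time integral is controlled by \eqref{vhphi}, the only genuinely new ingredient needed is a dissipation term for $\tilde u_\xi$ and a way to close the pointwise-in-time bound for $\|\tilde u\|_{L^2}^2$ rather than just its time integral. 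For the latter I would instead perform a direct energy estimate on the momentum equation \eqref{NSP2''} for $\tilde u$.

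Concretely, I would subtract \eqref{CW2} from \eqref{NSP2''} to obtain the equation for $\tilde u$, multiply by $\tilde u$, and integrate over $\mathbb{R}$. The viscous term $\left(\frac{u_\xi}{v} - \frac{\bar u_\xi}{\bar v}\right)_\xi$ produces, after integration by parts, the good dissipation $-\int_\mathbb{R}\frac{\tilde u_\xi^2}{v}d\xi$ up to lower-order terms of the form $\int|\bar u_\xi||\tilde v||\tilde u_\xi|$ and $\int|\bar v_\xi||\tilde u_\xi||\tilde u|$, which are absorbed using $|\bar u_\xi|,|\bar v_\xi|\leq C(\delta_S^2+\delta_R)$ and Young's inequality at the cost of $\int\bar v_\xi\tilde v^2$, $\int\bar v_\xi\tilde u^2$ and a small multiple of $\int\tilde u_\xi^2$. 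The pressure term $(\tilde p(v)-\tilde p(\bar v))_\xi$ gives $\int\frac{(\tilde p(v)-\tilde p(\bar v))_\xi\text{-type}}{}$ contributions controlled by $D$ and $G^S+G^R$ after using the relative-quantity structure; alternatively one integrates by parts to move the derivative onto $\tilde u$. The electric force term $\Phi(v,\phi)_\xi - \Phi(\bar v,\bar\phi)_\xi$ is handled exactly as in $\mathcal P_2$ of Lemma~\ref{Id}: after integration by parts it reduces to integrals of products of $\tilde u_\xi$ (or $\tilde u$) with $\tilde\phi_{\xi\xi}$, $\tilde\phi_\xi$, $\tilde v_\xi$ and their $\bar\phi$-weighted variants, all bounded via Young's inequality by small constants times $\int(\tilde u_\xi^2 + \tilde\phi_\xi^2 + \tilde\phi_{\xi\xi}^2 + \tilde v_\xi^2)$ plus $G_2$, $G_3$. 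Finally the correction terms $F_1,F_2,F_3$ and $\dot X\bar u^S_\xi$ are estimated using \eqref{shderiv1}, \eqref{raderiv}, the wave-interaction bounds \eqref{inter}--\eqref{Raest}, and \eqref{Xbd}; in particular $\int\dot X\bar u^S_\xi\tilde u\,d\xi$ is bounded by $C\delta_S|\dot X|^2 + C\int\bar v^S_\xi\tilde u^2\,d\xi$, whose time integral is acceptable.

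Integrating the resulting differential inequality in time, adding a small multiple of it to \eqref{vhphi} so that the borrowed $\int\tilde u_\xi^2$ and the harmless-on-the-right $\bar v_\xi\tilde v^2$, $\bar v_\xi\tilde u^2$ terms are absorbed (the latter by $G^S+G^R$ via \eqref{vL2} and an analogous bound $\int\bar v_\xi\tilde u^2\leq C\int\bar v^S_\xi\tilde h^2 + \dots$, which is already inside \eqref{vhphi}), and using $\|\tilde v\|_{H^1}^2 \leq C(\|\tilde v\|_{L^2}^2 + G^S + G^R + D)$ to upgrade the $L^2$ norm of $\tilde v$ to $H^1$ on the left, gives \eqref{vu_ineq}. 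The main obstacle I anticipate is bookkeeping rather than anything conceptual: one must check that every lower-order term generated by the energy estimate on $\tilde u$ — especially those coming from the electric force and from $F_1$ — is genuinely of size $(\sqrt{\delta_0}+\varepsilon_1)$ times either $\int(\tilde v_{\xi\xi}^2 + \tilde u_{\xi\xi}^2)$ (which is allowed on the right-hand side of \eqref{vu_ineq}) or one of the controlled quantities $\delta_S|\dot X|^2, G_2, G_3, G^S, G^R, D, \int\tilde u_\xi^2$, with no residual $\int\tilde u_{\xi}^2$ or $\int\tilde\phi_{\xi\xi}^2$ left uncontrolled; the term $\int|\bar u_\xi||\tilde v||\tilde u_\xi|$ in particular must be split so that the $\tilde v$-part lands on $\int\bar v_\xi\tilde v^2$ rather than on $\int\tilde v_\xi^2$, and one relies on $|\bar u^R_{\xi\xi}|\leq C|\bar u^R_\xi|$ from Lemma~\ref{rarefaction} to keep the rarefaction errors integrable. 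Once this accounting is done, the smallness of $\delta_0$ and $\varepsilon_1$ closes the estimate and the proof is complete.
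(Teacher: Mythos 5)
Your Step 2 (``direct energy estimate on the momentum equation for $\tilde u$'') is the right class of idea, but it does not close as described because of the pressure term, and the paper handles exactly this point differently. Multiplying the $\tilde u$-equation by $\tilde u$ and integrating produces $\int_\mathbb{R}\tilde u\,(\tilde p(v)-\tilde p(\bar v))_\xi\,d\xi$, which after integration by parts becomes $-\int_\mathbb{R}\tilde u_\xi\,(\tilde p(v)-\tilde p(\bar v))\,d\xi$. By Young this costs $\theta\int\tilde u_\xi^2 + C_\theta\int|\tilde p(v)-\tilde p(\bar v)|^2$, and the second piece is $\sim\int\tilde v^2$, which is \emph{not} any of $D$, $G^S$, or $G^R$: those are all weighted by $\bar v^S_\xi$, $\bar v^R_\xi$, or carry an extra derivative, whereas $\int\tilde v^2$ has no decay mechanism and is only known to be bounded, not time-integrable. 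Your phrase ``controlled by $D$ and $G^S+G^R$ after using the relative-quantity structure'' is where the argument breaks -- the ``relative-quantity structure'' you need here is precisely the entropy $Q(v|\bar v)$ paired with the mass equation, not something extractable from the $\tilde u$-equation alone. The paper resolves this by running the relative-entropy identity for the \emph{pair} $U=(v,u)$ with $\tilde\eta(U|\bar U)=\tfrac{1}{2}|u-\bar u|^2+Q(v|\bar v)$: the offending pressure contribution is then absorbed into $\frac{d}{dt}\int Q(v|\bar v)$ and regenerates as the signed term $\int\bar u^R_\xi\,\tilde p(v|\bar v)$ on the good side (plus a $\bar u^S_\xi$-weighted term of size $O(\delta_S^2)$), which is exactly $G^R$-type. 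Getting $\|\tilde v\|_{H^1}$ pointwise in time on the left also relies on this: the paper first obtains $\|\tilde v\|_{L^2}+\|\tilde u\|_{L^2}+\|\tilde h\|_{L^2}$ pointwise from the combined relative-entropy bound \eqref{tempu1}, then recovers $\|\tilde v_\xi\|_{L^2}$ pointwise from the algebraic identity $\tilde v_\xi = v\tilde u - v\tilde h + \tfrac{\bar v^S_\xi(\tilde v+\bar v^R-v_m)}{\bar v^S}-\bar v^R_\xi$; your first route (bounding $\|\tilde u\|_{L^2}$ via $\|\tilde h\|_{L^2}+\|\tilde v_\xi\|_{L^2}$) is circular at the pointwise-in-time level for the same reason, since $\|\tilde v_\xi\|_{L^2}$ and $\|\tilde u\|_{L^2}$ are mutually dependent and only decouple once $\|\tilde u\|_{L^2}$ is supplied independently by the $(v,u)$-relative-entropy estimate. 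Your handling of $\mathcal J_1$ (the electric force) and the scaling trick of adding a small multiple to \eqref{vhphi} are in line with the paper -- note only that the $\tilde\phi_{\xi\xi}$ contribution from $\mathcal J_{12}$ comes with a \emph{large} constant $C\theta^{-1}$, so closing really does depend on the rescaling, not on smallness of the coefficient.
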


\begin{proof}
We rewrite \eqref{NSP1''}--\eqref{NSP2''} and \eqref{CW1}--\eqref{CW2} in the form
\begin{equation*}
\partial_t U + \partial_\xi A(U) = \begin{pmatrix}
0 \\ \partial_\xi \left( \frac{\partial_\xi u}{v} \right) + \partial_\xi \Phi(v,\phi)
\end{pmatrix}, \quad U:= \begin{pmatrix}
v \\ u
\end{pmatrix}, \quad A(U) := \begin{pmatrix}
-\sigma v - u \\
-\sigma u + \tilde{p}(v)
\end{pmatrix}
\end{equation*}
and
\begin{equation*}
\partial_t \bar{U} + \partial_\xi A (\bar{U}) = \begin{pmatrix}
0 \\ \partial_\xi \left( \frac{\partial_\xi \bar{u}}{\bar{v}} \right) + \partial_\xi \Phi(\bar{v},\bar{\phi}) \end{pmatrix} - \dot{X}(t) \partial_\xi \bar{U}^S + \begin{pmatrix}
0 \\ F_1 + F_2 + F_3
\end{pmatrix},
\end{equation*}
respectively. Then we have the identity for the relative entropy $\tilde{\eta}(U|\bar{U})$:
\begin{equation*}
\begin{split}
\frac{d}{dt} \int_\mathbb{R} \tilde{\eta}\left(U(t,\xi)|\bar{U}(t,\xi) \right) \, d\xi = \dot{X}(t) \mathcal{Y}(U) + \sum_{j=1}^4 \mathcal{I}_j(U) + \sum_{j=1}^2 \mathcal{J}_j(U),
\end{split}
\end{equation*}
where
\begin{align*}
\mathcal{Y}(U) & := - \int_\mathbb{R} \tilde{p}'(\bar{v})\bar{v}^S_\xi \tilde{v} \, d\xi + \int_\mathbb{R} \bar{v}^S_\xi \tilde{u} \, d\xi, &
\mathcal{I}_1(U) & := - \int_\mathbb{R} \bar{u}_\xi  \tilde{p}(v|\bar{v}) \, d\xi, \\
\mathcal{I}_2(U) & := \int_\mathbb{R} \tilde{u} \bigg( \frac{\tilde{u}_\xi}{v} \bigg)_\xi \, d\xi, &
\mathcal{I}_3(U) & := \int_\mathbb{R} \tilde{u} \bigg( \bigg( \frac{1}{v} - \frac{1}{\bar{v}} \bigg) \bar{u}_\xi \bigg)_\xi \, d\xi, \\
\mathcal{I}_4(U) & := - \int_\mathbb{R} \tilde{u} \left( F_1 + F_2 \right) \, d\xi,
\end{align*}
and
\begin{equation*}
\mathcal{J}_1(U) := \int_\mathbb{R} \tilde{u} \left( \Phi(v,\phi) - \Phi(\bar{v},\bar{\phi}) \right)_\xi \, d\xi, \quad \mathcal{J}_2(U) := \int_\mathbb{R} \tilde{u} F_3 \, d\xi.
\end{equation*}
Note that the terms $\mathcal{Y}$ and $\mathcal{I}_j$ (for $j=1,2,3,4$) correspond to the Navier-Stokes part of the NSP system. The contributions from these terms are estimated as in \cite[Lemma~5.1]{KVW2} (see also the proof of Lemma~\ref{R3est} for a similar treatment), leading to the following inequality:
\begin{equation} \label{tempu0th}
\begin{split}
& \int_\mathbb{R} \bigg( \frac{|u-\bar{u}|^2}{2} + Q(v|\bar{v}) \bigg)(t,\xi) \, d\xi + \frac{1}{2} \int_0^t \int_\mathbb{R} \left( \bar{u}^R_\xi \tilde{p}(v|\bar{v}) + |(u-\bar{u})_\xi|^2 \right)  \, d\xi d\tau \\
& \quad \leq \int_\mathbb{R} \bigg( \frac{|u_0-\bar{u}(0,\xi)|^2}{2} + Q(v_0|\bar{v}(0,\xi)) \bigg) \, d\xi + \int_0^t \bigg( \frac{\delta_S}{2}|\dot{X}|^2 + C\delta_S G_1 + c_1 G^S + C \delta_S D \bigg) \, d\tau \\
& \qquad + C \delta_R^{1/3} + \int_0^t \left( \left| \mathcal{J}_1| + |\mathcal{J}_2 \right| \right) \, d\tau,
\end{split}
\end{equation}
for a positive constant $c_1>0$. Thus, it remains to estimate $\mathcal{J}_1$ and $\mathcal{J}_2$.

We first decompose $\mathcal{J}_1$ by
\begin{equation*}
\begin{split}
\mathcal{J}_1 
& = - \frac{1}{2} \int_\mathbb{R} \tilde{u}_\xi \bigg( \frac{\tilde{\phi}_\xi^2}{v^2} + \frac{2\bar{\phi}_\xi \tilde{\phi}_\xi}{v^2} + \bigg(\frac{1}{v^2} - \frac{1}{\bar{v}^2} \bigg)\bar{\phi}_\xi^2 \bigg) \, d\xi \\
& \quad + \int_\mathbb{R} \tilde{u}_\xi \bigg( \frac{\tilde{\phi}_{\xi\xi}}{v^2} + \bigg( \frac{1}{v^2} - \frac{1}{\bar{v}^2} \bigg) \bar{\phi}_{\xi\xi} \bigg) \, d\xi \\
& \quad - \int_\mathbb{R} \tilde{u}_\xi \bigg( \frac{\tilde{v}_\xi \tilde{\phi}_{\xi}}{v^3} + \frac{\bar{v}_\xi \tilde{\phi}_\xi}{v^3} + \frac{\bar{\phi}_\xi \tilde{v}_\xi}{v^3} + \bigg( \frac{1}{v^3} - \frac{1}{\bar{v}^3} \bigg) \bar{v}_\xi \bar{\phi}_\xi \bigg) \, d\xi \\
& =: \mathcal{J}_{11} + \mathcal{J}_{12} + \mathcal{J}_{13}.
\end{split}
\end{equation*}
Applying Young's inequality, together with \eqref{infty}, \eqref{shderiv1}, we estimate each term in the decomposition of $\mathcal{J}_1$ as follows:
\begin{equation*}
\begin{split}
|\mathcal{J}_{11}| & \leq C \int_\mathbb{R} (|\tilde{\phi}_\xi| + |\bar{\phi}_\xi| ) |\tilde{u}_\xi| |\tilde{\phi}_\xi| \, d\xi + C \int_\mathbb{R} (|\tilde{v}| + |\bar{\phi}_\xi|) |\tilde{u}_\xi| |\bar{\phi}_\xi||\tilde{v}| \, d\xi \\
& \leq C \left( \delta_S^2 + \delta_R + \varepsilon_1 \right) \int_\mathbb{R} \left( \bar{v}_\xi \tilde{v}^2 + \tilde{u}_\xi^2 + \tilde{\phi}_\xi^2 \right) \, d\xi,\\
|\mathcal{J}_{12}| & \leq C \int_\mathbb{R} |\tilde{u}_\xi| |\tilde{\phi}_{\xi\xi}| \, d\xi + C \int_\mathbb{R} |\tilde{u}_\xi| |\bar{\phi}_{\xi\xi}| |\tilde{v}| \, d\xi \\
& \leq \theta \int_\mathbb{R} \tilde{u}_{\xi}^2 \, d\xi + C \theta^{-1} \int_\mathbb{R} \tilde{\phi}_{\xi\xi}^2 \, d\xi + C \left( \delta_S + \delta_R^{1/2} \right) \int_\mathbb{R} \left( \bar{v}_\xi \tilde{v}^2 + \tilde{u}_\xi^2 \right) \, d\xi,
\end{split}
\end{equation*}
with an arbitrary constant $\theta$ in $(0,1)$, and
\begin{equation*}
\begin{split}
|\mathcal{J}_{13}| & \leq C \int_\mathbb{R} |\tilde{u}_\xi| \left( |\tilde{v}_\xi||\tilde{\phi}_\xi| + |\bar{v}_\xi||\tilde{v}|^3 + |\bar{v}_\xi||\tilde{v}|^2 \right) \, d\xi + C \int_\mathbb{R} |\bar{v}_\xi| |\tilde{u}_\xi| \left( |\tilde{\phi}_\xi| + |\tilde{v}_\xi| + |\bar{v}_\xi||\tilde{v}| \right) \, d\xi \\
& \leq C \lVert \tilde{v} \rVert_{W^{1,\infty}} \int_\mathbb{R} \left( \bar{v}_\xi^2 + \tilde{u}_\xi^2 + \tilde{\phi}_\xi^2 \right) \, d\xi + C \int_\mathbb{R} |\bar{v}_\xi| \left( \bar{v}_\xi \tilde{v}^2 + \tilde{v}_\xi^2 + \tilde{u}_\xi^2 + \tilde{\phi}_\xi^2 \right) \, d\xi \\
& \leq C (\delta_S^2 + \delta_R + \varepsilon_1) \int_\mathbb{R} \left( \bar{v}_\xi \tilde{v}^2 + \tilde{v}_\xi^2 + \tilde{u}_\xi^2 + \tilde{\phi}_\xi^2 \right) \, d\xi.
\end{split}
\end{equation*}
Notice that the remaining term $\mathcal{J}_2$ has the same structure as the term $\mathcal{P}_1$ appearing in Lemma~\ref{Id}. Hence, by applying an analogous computation to that used in the proof of Lemma~\ref{R3est}, where $\mathcal{P}_1$ was estimated, we deduce that
\begin{equation*}
\begin{split}
|\mathcal{J}_2| & \leq C (\delta_R + \varepsilon_1) \lVert \tilde{u}_\xi \rVert_{L^2}^2 \\
& \quad + (\delta_R^{1/3} + \varepsilon_1^{1/3}) \left( \lVert \bar{v}^S_\xi (\bar{v}^R-v_m)\rVert_{L^2} + \lVert \bar{v}^R_\xi\bar{v}^S_\xi \rVert_{L^2} + \lVert \bar{v}^R_\xi \rVert_{L^4}^2 + \lVert \bar{v}^R_{\xi\xi} \rVert_{L^1}^{4/3} \right).
\end{split}
\end{equation*}
From the above estimates and \eqref{inter}--\eqref{Raest}, it follows that
\begin{equation*}
\begin{split}
\int_0^t \left( |\mathcal{J}_1|+|\mathcal{J}_2| \right) \, d\tau & \leq \theta \int_0^t \int_\mathbb{R} \tilde{u}_{\xi}^2 \, d\xi d\tau + C \theta^{-1} \int_0^t \int_\mathbb{R} \tilde{\phi}_{\xi\xi}^2 \, d\xi d\tau \\
& \quad  + C (\delta_S + \delta_R^{1/2} + \varepsilon_1) \int_0^t \int_\mathbb{R} \left( \bar{v}_\xi \tilde{v}^2 + \tilde{v}_\xi^2 + \tilde{u}_\xi^2 + \tilde{\phi}_\xi^2 \right) \, d\xi d\tau + C \delta_R^{1/3}.
\end{split}
\end{equation*}
Substituting the estimate into \eqref{tempu0th}, along with the application of \eqref{vL2}, \eqref{vxiL2}, \eqref{phi_est}, and
\begin{equation*}
\int_\mathbb{R} \bar{v}^R_\xi \tilde{p}(v|\bar{v}) \, d\xi \sim G^R,
\end{equation*}
we obtain
\begin{equation} \label{tempu}
\begin{split}
& \int_\mathbb{R} \left( \frac{|u-\bar{u}|^2}{2} + Q(v|\bar{v}) \right)(t,\xi) \, d\xi + \frac{1}{4} \int_0^t \int_\mathbb{R} \left( \bar{u}^R_\xi \tilde{p}(v|\bar{v}) + |(u-\bar{u})_\xi|^2 \right)  \, d\xi d\tau \\
& \quad \leq \int_\mathbb{R} \left( \frac{|u_0-\bar{u}(0,\xi)|^2}{2} + Q(v_0|\bar{v}(0,\xi)) \right) \, d\xi + \int_0^t \left( \frac{\delta_S}{2}|\dot{X}|^2 + c_1 G^S + c_2 G_2 \right) \, d\tau \\
& \qquad + C \delta_R^{1/3}  + C (\delta_S + \delta_R^{1/2} + \varepsilon_1) \int_0^t \left( G_1 + D \right) d\tau
\end{split}
\end{equation}
for a constant $c_2>0$, and for sufficiently small $\delta_S +\delta_R$ and $\varepsilon_1$.

Multiplying \eqref{tempu} by the constant $\textstyle \frac{1}{2\max{\{ 1,c_1,c_2 \}}}$, and then adding the result to \eqref{vhphi}, we have
\begin{equation} \label{tempu1}
\begin{split}
& \lVert (\tilde{v},\tilde{h},\tilde{u})(t,\cdot) \rVert_{L^2}^2 + \lVert \tilde{\phi}_\xi (t,\cdot) \rVert_{H^1}^2 + \int_0^t \left( \delta_S |\dot{X}|^2 + G_1 + G_2 + G_3 + G^S + G^R + D + D_1 \right) d\tau \\
& \quad \leq C ( \lVert \tilde{v}_{0} \rVert_{H^1}^2  +\lVert \tilde{h}(0,\cdot) \rVert_{L^2}^2 + \lVert \tilde{u}_0 \rVert_{L^2}^2 ) + C (\delta_0^{1/2} + \varepsilon_1) \int_0^t \int_\mathbb{R} \left( \tilde{v}_{\xi\xi}^2 + \tilde{u}_{\xi\xi}^2 \right) \, d\xi d\tau + C \delta_R^{1/3}.
\end{split}
\end{equation}

To complete the proof, we now rewrite the inequality in terms of $(\tilde{v},\tilde{u},\tilde{\phi})$. From the definitions of $h$ and $\tilde{h}$, we observe that
\begin{equation*}
\tilde{u} - \tilde{h} = \left(\ln{v} - \ln{\bar{v}^S} \right)_\xi = \frac{(v-\bar{v}^S)_\xi}{v} + \frac{\bar{v}^S_\xi (\bar{v}^S - v)}{v\bar{v}^S},
\end{equation*}
which yields
\begin{equation*}
\tilde{v}_\xi = v\tilde{u} - v\tilde{h} + \frac{\bar{v}^S_\xi \left( \tilde{v} + (\bar{v}^R-v_m) \right)}{\bar{v}^S} - \bar{v}^R_\xi.
\end{equation*}
Taking the $L^2$-norm, we obtain
\begin{equation*}
\lVert \tilde{v}_\xi \rVert_{L^2}^2 \leq C \left( \lVert \tilde{h} \rVert_{L^2}^2 + \lVert \tilde{u} \rVert_{L^2}^2 + \lVert \tilde{v} \rVert_{L^2}^2 + \delta_R^2 \right),
\end{equation*}
and similarly, using the initial data,
\begin{equation*}
\lVert \tilde{h}(0,\cdot) \rVert_{L^2}^2 \leq C \left( \lVert \tilde{v}_0 \rVert_{H^1}^2 + \lVert \tilde{u}_0 \rVert_{L^2}^2 + \delta_R^2 \right).
\end{equation*}
Substituting these bounds into \eqref{tempu1}, we conclude the proof.

\end{proof}

\subsection{Higher-order estimates and proof of Proposition~\ref{Apriori}}

The higher-order estimates are obtained by closely following the computations in \cite[Sections~6.1--6.2]{KKSh}, which were carried out for a single shock profile, and are adapted here to the present setting with the approximate composite wave. The main difference from the single shock case is the presence of remainder terms associated with the approximate rarefaction wave. For clarity, we state the corresponding bounds below and defer the detailed proofs to Appendix~\ref{High}.

\begin{lemma} \label{uxi}
Under the assumptions of Proposition~\ref{Apriori}, there exists a constant $C>0$ such that
\begin{equation} \label{uxi_ineq}
\begin{split}
\lVert \tilde{u}_\xi (t,\cdot) \rVert_{L^2}^2 + \int_0^t \lVert \tilde{u}_{\xi\xi} \rVert_{L^2}^2 \, d\tau & \leq C \lVert \tilde{u}_{0\xi} \rVert_{L^2}^2 + C (\sqrt{\delta_0} + \varepsilon_1) \int_0^t \left( \delta_S |\dot{X}|^2 + \lVert \tilde{u}_\xi \rVert_{L^2}^2 \right) \, d \tau \\
& \quad + C \int_0^t \left( G^S + G^R + D \right) \, d\tau + C \sqrt{\delta_R}
\end{split}
\end{equation}
for all $t \in [0,T]$, where $G^S$, $G^R$, and $D$ are as defined in Proposition~\ref{Apriori}.
\end{lemma}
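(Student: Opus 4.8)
The plan is to run a first-order energy estimate on the momentum equation, testing the equation for $\tilde u = u-\bar u$ against $-\tilde u_{\xi\xi}$, following the scheme of \cite[Sections~6.1--6.2]{KKSh} for a single shock while tracking the extra terms produced by the approximate rarefaction wave. Subtracting \eqref{CW2} from \eqref{NSP2''} and using \eqref{F3_def} to combine $\Phi(\bar v,\bar\phi)_\xi+F_3=\Phi(\bar v^S,\bar\phi^S)_\xi$, the perturbation satisfies
\begin{equation*}
\tilde u_t - \sigma\tilde u_\xi + \big(\tilde p(v)-\tilde p(\bar v)\big)_\xi - \dot X(t)\bar u^S_\xi = \Big(\frac{u_\xi}{v}-\frac{\bar u_\xi}{\bar v}\Big)_\xi + \big(\Phi(v,\phi)-\Phi(\bar v^S,\bar\phi^S)\big)_\xi - F_1 - F_2 .
\end{equation*}
Multiplying by $-\tilde u_{\xi\xi}$ and integrating over $\mathbb{R}$ yields $\tfrac{d}{dt}\tfrac12\|\tilde u_\xi\|_{L^2}^2$ on the left together with the good dissipation $\int_\mathbb{R}\tfrac{\tilde u_{\xi\xi}^2}{v}\,d\xi\ge c\|\tilde u_{\xi\xi}\|_{L^2}^2$ from the principal part of the viscous term; everything else is moved to the right.

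For the hyperbolic, shift and viscous parts I would argue as follows. Young's inequality absorbs the pressure term into $\theta\|\tilde u_{\xi\xi}\|_{L^2}^2+C_\theta D$; after integration by parts the shift term is bounded by $C\delta_S^2|\dot X|^2+C\delta_S\|\tilde u_\xi\|_{L^2}^2$ using $|\bar u^S_{\xi\xi}|\le C\bar v^S_\xi$, $\|\bar v^S_\xi\|_{L^1}\sim\delta_S$ and $\|\bar v^S_\xi\|_{L^\infty}\le C\delta_S^2$; writing $\tfrac{u_\xi}{v}-\tfrac{\bar u_\xi}{\bar v}=\tfrac{\tilde u_\xi}{v}-\tfrac{\bar u_\xi\tilde v}{v\bar v}$ and expanding, the lower-order viscous pieces reduce to $\theta\|\tilde u_{\xi\xi}\|_{L^2}^2$, to $C(\delta_R+\varepsilon_1)\|\tilde u_\xi\|_{L^2}^2$ (via $\|v_\xi\|_{L^\infty}+\|\bar u_{\xi\xi}\|_{L^\infty}\le C(\delta_R+\varepsilon_1)$ from \eqref{infty}, \eqref{shderiv1}, \eqref{raderiv} and Lemma~\ref{rarefaction}), and to $C\delta_0\int_\mathbb{R}(\bar v_\xi\tilde v^2+\tilde v_\xi^2)\,d\xi$, which is absorbed into $C\delta_0(G^S+G^R+D)$ through \eqref{vL2}--\eqref{vxiL2}; the correction terms $F_1,F_2$ and the pieces carrying $\bar u^R$-derivatives contribute only wave-interaction and rarefaction-error quantities.

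The heart of the estimate is the electric-force term. Using the Poisson equations \eqref{NSPh3} and \eqref{sh3'}, $\Phi(v,\phi)=\tfrac12(\phi_\xi/v)^2-e^{\phi}+\tfrac12\tilde p(v)$ and the analogous identity for $(\bar v^S,\bar\phi^S)$, so after differentiating the difference the exponential terms cancel and $\big(\Phi(v,\phi)-\Phi(\bar v^S,\bar\phi^S)\big)_\xi=-(\phi_\xi/v-\bar\phi^S_\xi/\bar v^S)+\tfrac12(\tilde p(v)-\tilde p(\bar v^S))_\xi$. The second term is treated like the pressure term after the splitting $\tilde p(v)-\tilde p(\bar v^S)=(\tilde p(v)-\tilde p(\bar v))+\tilde p(\bar v^R)+(\text{interaction})$, an integration by parts transferring the non-time-integrable $\tilde p(\bar v^R)_\xi$ onto $\bar v^R_{\xi\xi}$ and $|\bar v^R_\xi|^2$, whose squared $L^2$-norms are time-integrable by \eqref{Raest}. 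For the first term I would integrate by parts and apply the Poisson equations once more, so that $-(\phi_\xi/v-\bar\phi^S_\xi/\bar v^S)_\xi=\bar v^S e^{\bar\phi^S}-v e^{\phi}$, which expands into $O(|\tilde v|+|\tilde\phi|)$ plus the quadratic interaction $O(|\bar v^S-v_m|\,|\bar v^R-v_m|)$ (time-integrable by \eqref{inter}); the $\tilde v,\tilde\phi$ and $\mathcal V$-contributions are then controlled by the elliptic estimate \eqref{phi_est} and Lemma~\ref{V_est} and absorbed into $G^S+G^R$ (via \eqref{vL2}) and the $O(\delta_0+\varepsilon_1)$-coefficient of $\|\tilde u_\xi\|_{L^2}^2$, if necessary after adding an exact time derivative of a $\tilde\phi$-functional, as in the proof of Lemma~\ref{Id}, to cancel the worst pieces. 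Integrating over $[0,t]$, choosing $\delta_0,\varepsilon_1$ small to absorb the $\theta\|\tilde u_{\xi\xi}\|_{L^2}^2$ terms into the dissipation, and invoking \eqref{inter}--\eqref{Raest} for the wave-interaction and rarefaction-error quantities (which collapse into $C\sqrt{\delta_R}$) then gives \eqref{uxi_ineq}.

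I expect the electric-force term to be the main obstacle: one must exploit the Poisson structure to keep higher derivatives of $\tilde\phi$ and any unweighted $\|\tilde v\|_{L^2}^2$ out of the estimate — precisely where the cancellation mechanism of the modulated relative functional (Lemma~\ref{Id}) has to be reproduced at the first-order level — and one must distinguish carefully the rarefaction-derivative combinations that are genuinely time-integrable (feeding into $C\sqrt{\delta_R}$) from those that can only be matched against the good terms $G^S,G^R,D$ coming from the $a$-contraction estimate of Section~\ref{Sec_4}.
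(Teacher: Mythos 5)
Your setup is essentially the same as the paper's: subtracting \eqref{CW2} from \eqref{NSP2''} and testing against $-\tilde u_{\xi\xi}$ is equivalent, up to one integration by parts, to differentiating the perturbed momentum equation and pairing against $\tilde u_\xi$, which is exactly what the proof in Appendix~\ref{High} does (via \eqref{eq_u}, the same equation after using $\Phi_\xi = -\phi_\xi/v + \tfrac12\tilde p(v)_\xi$). Your handling of the shift term, the viscous remainders, the interaction quantities $F_1,F_2$, and the rarefaction error $\tilde p(\bar v^R)_\xi$ (IBP onto $\bar v^R_{\xi\xi}$ and $|\bar v^R_\xi|^2$, then \eqref{Raest}) all track the paper's $\mathcal U^{(1)}_2$ through $\mathcal U^{(1)}_7$.

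The problem is your treatment of the electric-force term. You propose to integrate by parts and apply the Poisson equation a second time, turning $\int\tilde u_{\xi\xi}\bigl(\phi_\xi/v-\bar\phi^S_\xi/\bar v^S\bigr)$ into $\int\tilde u_\xi\bigl(\bar v^S e^{\bar\phi^S}-v e^{\phi}\bigr)$. After expansion, the latter contains the genuinely linear pieces $\tilde v\,e^{\phi}$ and $\bar v\,e^{\bar\phi}\tilde\phi$, which are $O(1)\cdot|\tilde v|$ and $O(1)\cdot|\tilde\phi|$ without any small spatial weight. Young then produces an \emph{unweighted} $\|\tilde v\|_{L^2}^2$ (and $\|\tilde\phi\|_{L^2}^2$), and this cannot be absorbed: \eqref{vL2} only controls the \emph{weighted} quantity $\int\bar v_\xi\tilde v^2\,d\xi\le C(G^S+G^R)$, and neither $G^S,G^R,D$ nor the small-coefficient $\|\tilde u_\xi\|_{L^2}^2$ term in \eqref{uxi_ineq} dominates $\int_0^t\|\tilde v\|_{L^2}^2\,d\tau$, which is not controlled anywhere in Proposition~\ref{Apriori}. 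Your fallback (``adding an exact time derivative of a $\tilde\phi$-functional'') is exactly the modulated relative functional device, but reproducing it here would couple back into the zeroth-order machinery of Section~\ref{Sec_4} and is neither set up nor shown to cancel these pieces.

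The fix — and the paper's actual route — is to \emph{not} apply the Poisson equation at this stage. Keep the electric-force term in the form $\phi_\xi/v-\bar\phi_\xi/\bar v$ (together with $F_5=\bar\phi_\xi/\bar v-\bar\phi^S_\xi/\bar v^S$, which is a pure interaction error), so that after one integration by parts the pairing is $\tilde u_{\xi\xi}$ against $\tilde\phi_\xi/v$ and against $\bar\phi_\xi$-weighted lower-order pieces: the first gives, by Young, $\theta\|\tilde u_{\xi\xi}\|_{L^2}^2+C\theta^{-1}\|\tilde\phi_\xi\|_{L^2}^2$, and $\int_0^t\|\tilde\phi_\xi\|_{L^2}^2\,d\tau$ is then controlled directly by the elliptic estimate \eqref{phi_est} of Lemma~\ref{E_est}; every $\tilde v$- or $\tilde\phi$-contribution that is not differentiated carries a factor $\bar\phi_\xi$ or $\bar v_\xi$, which is what makes \eqref{vL2} and the $O(\delta_0+\varepsilon_1)$-coefficients applicable. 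In other words, the cancellation mechanism of Lemma~\ref{Id} is only needed at zeroth order; at first order the cruder Young-plus-elliptic estimate closes, precisely because the electric force enters with one derivative of $\phi$, not a density.

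One further inaccuracy in the bookkeeping: you fold $-F_5$ into $\Phi(\bar v^S,\bar\phi^S)_\xi$ and then decompose there, but in your pressure piece you arrive at $\tfrac12(\tilde p(v)-\tilde p(\bar v^S))_\xi$ alongside $(\tilde p(v)-\tilde p(\bar v))_\xi$; recall $\tilde p=2p$ here, so these combine to $(p(v)-p(\bar v))_\xi + (p(\bar v^S)-p(\bar v))_\xi$, reproducing the paper's hyperbolic term plus an interaction/rarefaction remainder. Your description (``treated like the pressure term after the splitting...'') is consistent with this, but it is worth writing out to avoid double-counting the $\bar v^R_\xi/(\bar v^R)^2$ piece.
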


\begin{lemma} \label{uxixi}
Under the assumptions of Proposition~\ref{Apriori}, there exists a constant $C>0$ such that
\begin{equation} \label{uxixi_ineq}
\begin{split}
\lVert \tilde{u}_{\xi\xi} (t,\cdot) \rVert_{L^2}^2 + \int_0^t \lVert \tilde{u}_{\xi\xi\xi} \rVert_{L^2}^2 \, d\tau & \leq C \lVert \tilde{u}_{0\xi\xi} \rVert_{L^2}^2 + C (\sqrt{\delta_0} + \varepsilon_1) \int_0^t \left( G^S + G^R + D + \lVert \tilde{u}_\xi \rVert_{H^1}^2 \right) \, d\tau \\
& \quad + C \int_0^t \left( G_2 + \lVert \tilde{v}_{\xi\xi} \rVert_{L^2}^2 \right) \, d\xi + C \sqrt{\delta_R}
\end{split}
\end{equation}
for all $t \in [0,T]$, where $G_2$, $G^S$, $G^R$, and $D$ are as defined in Proposition~\ref{Apriori}.
\end{lemma}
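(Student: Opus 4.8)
\textbf{Proof proposal for Lemma~\ref{uxixi}.}

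The plan is to derive a differential inequality for $\frac{d}{dt}\int_\mathbb{R} \tilde{u}_{\xi\xi}^2\,d\xi$ by differentiating the momentum equation twice in $\xi$. Starting from the equation for $\tilde{u}$ obtained by subtracting \eqref{CW2} from \eqref{NSP2''}, namely
\begin{equation*}
\tilde{u}_t - \sigma \tilde{u}_\xi + \left( \tilde{p}(v) - \tilde{p}(\bar{v}) \right)_\xi = \left( \frac{\tilde{u}_\xi}{v} \right)_\xi + \left( \left( \tfrac{1}{v} - \tfrac{1}{\bar{v}} \right) \bar{u}_\xi \right)_\xi + \left( \Phi(v,\phi) - \Phi(\bar{v},\bar{\phi}) \right)_\xi + \dot{X}(t) \bar{u}^S_\xi - F_1 - F_2 - F_3,
\end{equation*}
I would apply $\partial_\xi^2$, multiply by $\tilde{u}_{\xi\xi}$, and integrate over $\mathbb{R}$. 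The principal term $\int a^{-1}\tilde{u}_{\xi\xi}\partial_\xi^2(\tfrac{\tilde{u}_\xi}{v})_\xi\,d\xi$ produces, after integration by parts, the good dissipation term $-c\int \tilde{u}_{\xi\xi\xi}^2\,d\xi$ up to lower-order commutator terms controlled by $\|\tilde v\|_{W^{1,\infty}}$ and the bounds \eqref{infty}; this mirrors exactly the computation in \cite[Section~6.2]{KKSh} for the single shock, so I would cite that and only track the new contributions.

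The new contributions relative to \cite{KKSh} come from two sources: the rarefaction remainder terms $F_1, F_2, F_3$ (and the wave-interaction structure buried in $\bar v = \bar v^R + \bar v^S - v_m$), and the shift term $\dot X(t)\bar u^S_\xi$. For the remainders I would use the pointwise bounds \eqref{shderiv1}, \eqref{raderiv} together with the $F_3$ decomposition already established in the proof of Lemma~\ref{R3est} (giving $|F_3|\le C(|\bar v^S_\xi(\bar v^R-v_m)|+|\bar v^R_\xi\bar v^S_\xi|+|\bar v^R_\xi|^2+|\bar v^R_{\xi\xi\xi}|)$, and analogous bounds for $\partial_\xi^2 F_3$ with one more derivative on $\bar v^R$), and similarly for $F_1, F_2$; after integration by parts to move derivatives off $\tilde u_{\xi\xi}$ onto the smooth profiles and applying Young's inequality, these are absorbed into $C\sqrt{\delta_R}$ after time integration using \eqref{inter}--\eqref{Raest} — here the $\delta_R^{-1/2}$ weights force us to pair the quadratic wave-interaction $L^2$-bounds from Corollary (the ones feeding $\int_0^\infty \|\cdot\|_{L^2}^2\,d\tau \le C\delta_R$) against the $\sqrt{\delta_R}$ prefactor. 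The shift term is handled exactly as $\dot X Y_9$ and $\mathcal P_{43}$ were in Lemma~\ref{R3est}: integration by parts, $|a^X_\xi|, |\bar v^S_\xi| \le C\delta_S^2$, Hölder, and \eqref{Xbd}, yielding a contribution bounded by $C(\sqrt{\delta_0}+\varepsilon_1)\delta_S|\dot X|^2$. The pressure-difference term $\int \tilde u_{\xi\xi}\partial_\xi^3(\tilde p(v)-\tilde p(\bar v))\,d\xi$ is integrated by parts once to $-\int \tilde u_{\xi\xi\xi}\partial_\xi^2(\tilde p(v)-\tilde p(\bar v))\,d\xi$, then absorbed: half into the $\tilde u_{\xi\xi\xi}$ dissipation and the rest controlled by $\|\tilde v_{\xi\xi}\|_{L^2}^2$ plus $D$-type and $G^S,G^R$ terms after expanding $\partial_\xi^2(\tilde p(v)-\tilde p(\bar v))$ in terms of $\tilde v, \tilde v_\xi, \tilde v_{\xi\xi}$ and smooth-profile factors. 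Finally, the electric-force term $\int \tilde u_{\xi\xi}\partial_\xi^3(\Phi(v,\phi)-\Phi(\bar v,\bar\phi))\,d\xi$ generates, after the same manipulations used for $\mathcal P_2$ in Lemma~\ref{Id}, a leading term involving $\tilde\phi_{\xi\xi\xi}$ (contributing via $G_3$, which is already on the right side through $G_2$ in the statement — note the stated bound only keeps $G_2$, so $\tilde\phi_{\xi\xi\xi}^2$ contributions must be closed using the $\mathcal G_6 \sim G_3$ dissipation from Lemma~\ref{Quad}, handled at the assembly stage) plus lower-order terms bounded by $\|\tilde v_{\xi\xi}\|_{L^2}^2$, $\|\tilde u_\xi\|_{H^1}^2$, $G_2$, and the smallness-weighted quantities.

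The main obstacle I anticipate is bookkeeping the term $\int \tilde u_{\xi\xi}\,\partial_\xi^2\big(\Phi(v,\phi)-\Phi(\bar v,\bar\phi)\big)_\xi\,d\xi$: $\Phi$ contains $(\phi_\xi/v)_\xi$, so its third $\xi$-derivative involves $\tilde\phi_{\xi\xi\xi\xi}$, which is \emph{not} among the controlled quantities. The resolution is to integrate by parts to transfer one derivative onto $\tilde u_{\xi\xi}$ (producing $\tilde u_{\xi\xi\xi}$, which \emph{is} controlled by the dissipation we are generating) and then use the elliptic identity \eqref{v} differentiated twice — i.e. express $\tilde\phi_{\xi\xi\xi\xi}$ in terms of $\tilde v_{\xi\xi}$, lower-order $\tilde\phi$-derivatives, and $\mathcal V_{\xi\xi}$ — exactly the "quadratization" trick of Section~\ref{Quadratiz}, so that the genuinely highest derivative appearing is $\tilde v_{\xi\xi}$ and $\tilde\phi_{\xi\xi\xi}$. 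One must be careful that the elliptic substitution does not reintroduce a term with the wrong sign or a non-small coefficient in front of $\|\tilde u_{\xi\xi\xi}\|_{L^2}^2$; choosing the multiplier as $a^{-X}\tilde u_{\xi\xi}$ (rather than plain $\tilde u_{\xi\xi}$), as was done for the lower-order estimates, should keep the sign structure intact. Collecting everything, integrating in time, and invoking \eqref{Xbd}, \eqref{infty}, \eqref{inter}--\eqref{Raest} together with $\mathcal G_5\sim G_2$, $\mathcal G_6\sim G_3$ yields \eqref{uxixi_ineq} for $\delta_0+\varepsilon_1$ small enough.
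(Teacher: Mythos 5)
Your proposal identifies a real obstacle — the appearance of $\tilde{\phi}_{\xi\xi\xi\xi}$ after applying $\partial_\xi^2$ to $\big(\Phi(v,\phi)-\Phi(\bar v,\bar\phi)\big)_\xi$ — but this obstacle is an artifact of the equation form you chose, and the paper's proof avoids it entirely. Instead of working with the reformulated momentum equation \eqref{NSP2''} (which carries the modified pressure $\tilde p$ and the higher-order $\Phi_\xi$ in divergence form), the paper reverts for the higher-order estimates to the \emph{original} form of the NSP momentum equation, $u_t-\sigma u_\xi + p(v)_\xi = (u_\xi/v)_\xi - \phi_\xi/v$, and the analogous form of \eqref{sh2'}. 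The resulting perturbation equation \eqref{eq_u} has the electric force entering as $-\big(\phi_\xi/v - \bar\phi_\xi/\bar v\big)$, which is \emph{undifferentiated} (not in divergence form) and thus only first-order in $\phi$. Applying $\partial_\xi^2$, multiplying by $\tilde u_{\xi\xi}$, and integrating by parts once then yields $\tilde u_{\xi\xi\xi}\tilde\phi_{\xi\xi}$, controlled by $\theta\|\tilde u_{\xi\xi\xi}\|_{L^2}^2 + C_\theta G_2$. No $\tilde\phi_{\xi\xi\xi}$ — let alone $\tilde\phi_{\xi\xi\xi\xi}$ — ever appears, and no elliptic substitution is needed. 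The price is that the remainder $F_5 := \bar\phi_\xi/\bar v - \bar\phi^S_\xi/\bar v^S$ and the extra profile term $\bar v^R_\xi/(\bar v^R)^2$ (from the discrepancy between $p$ and $\tilde p$ in the rarefaction equation) must be added to the list of wave-interaction sources, but these obey the same $\delta_R^{-1/2}$-weighted bounds as $F_1,F_2$ and are absorbed into $C\sqrt{\delta_R}$ exactly as you anticipate for the others.

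As written, your route has a residual gap even if the elliptic substitution for $\tilde\phi_{\xi\xi\xi\xi}$ is pushed through. Differentiating \eqref{v} twice gives $\tilde\phi_{\xi\xi\xi\xi}$ in terms of $\tilde v_{\xi\xi}$ and $\mathcal V_{\xi\xi}$, but $\mathcal V_{\xi\xi}$ itself contains $\tilde v_{\xi\xi\xi}$ (from the $\bar\phi_\xi\tilde v_\xi/\bar v^2$ contribution in $\mathcal V^L$) and $\tilde\phi_{\xi\xi\xi\xi}$ (from $\mathcal V^N$), so the substitution is not a clean termination of the hierarchy without further manipulation. More concretely, your remark that a $G_3$ term "must be closed using the $\mathcal G_6\sim G_3$ dissipation \dots at the assembly stage" concedes that your argument would establish an inequality whose right-hand side contains $\int_0^t G_3\,d\tau$ with a non-small coefficient, which is not the statement of Lemma~\ref{uxixi}: its right-hand side admits $G_2$ but not $G_3$. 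You cannot borrow $\mathcal G_6$-dissipation from Lemma~\ref{Quad} inside this Lemma, because Lemma~\ref{Quad} feeds into Lemma~\ref{lem_0th}, which is a separate piece of the final assembly in Section~\ref{Sec_6}. So the stated inequality would not be proved. Reverting to the original momentum equation form, as the paper does, removes the need for any of this.
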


\begin{lemma} \label{vxixi}
Under the assumptions of Proposition~\ref{Apriori}, there exists a constant $C>0$ such that
\begin{equation} \label{vxixi_ineq}
\begin{split}
& \lVert \tilde{v}_{\xi\xi}(t,\cdot) \rVert_{L^2}^2 + \int_0^t \lVert \tilde{v}_{\xi\xi} \rVert_{L^2}^2 \, d\tau \\
&\quad \leq C \left( \lVert \tilde{v}_{0\xi\xi} \rVert_{L^2}^2 + \lVert \tilde{u}_{0\xi} \rVert_{L^2}^2 \right) + C \lVert \tilde{u}_\xi (t,\cdot) \rVert_{L^2}^2 + C \int_0^t \left( G_2 + \lVert \tilde{u}_{\xi\xi} \rVert_{L^2}^2 \right) \, d\tau \\
& \qquad + C (\sqrt{\delta_0} + \varepsilon_1) \int_0^t \left( \delta_S |\dot{X}|^2 + G^S + G^R + D \right) \, d\tau + C\sqrt{\delta_R}
\end{split}
\end{equation}
for all $t \in [0,T]$, where $G_2$, $G^S$, $G^R$, and $D$ are as defined in Proposition~\ref{Apriori}.
\end{lemma}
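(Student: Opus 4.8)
The plan is to derive an equation for $\tilde v_{\xi\xi}$ by differentiating the mass equation twice, then perform a weighted energy estimate that couples it with the momentum equation, exactly as in \cite[Section~6.2]{KKSh}, while tracking the extra terms generated by the approximate rarefaction wave. First I would use \eqref{pert1} written as
\begin{equation*}
\tilde v_t - \sigma \tilde v_\xi - \tilde h_\xi - \dot X(t)\bar v^S_\xi = (\ln v - \ln \bar v)_{\xi\xi} - F_4,
\end{equation*}
and, recalling $h = u - (\ln v)_\xi$, rewrite the right-hand side so that the principal part is $\left(\tfrac{\tilde v_\xi}{v}\right)_\xi$ plus lower-order terms involving $\bar v_\xi$, $\bar v^R$-remainders, and $F_4$. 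Equivalently, as in \cite{KKSh}, one works with the quantity $\tilde u - \left(\ln v - \ln \bar v^S\right)_\xi = \tilde h$ to express $\tilde v_\xi$ in terms of $\tilde u$, $\tilde h$, $\tilde v$ and rarefaction errors; this is already recorded at the end of the proof of Lemma~\ref{vu}. Differentiating once more in $\xi$ gives a transport-type equation for $\tilde v_{\xi\xi}$ with a damping term coming from the viscous structure.

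Next I would multiply that equation by $\tilde v_{\xi\xi}$ (possibly with a bounded weight, e.g.\ $1/v$ or a constant, as in \cite{KKSh}) and integrate over $\mathbb{R}$. The time-derivative term produces $\tfrac{d}{dt}\|\tilde v_{\xi\xi}\|_{L^2}^2$ up to commutator terms controlled by $|\bar v_\xi|\lesssim \delta_S^2 + \delta_R$; the viscous term produces the good term $\int \tilde v_{\xi\xi}^2\,d\xi$ on the left. The terms containing $\tilde h_{\xi\xi\xi}$ (or equivalently $\tilde u_{\xi\xi\xi}$ after using $\tilde h = \tilde u - (\ln v - \ln\bar v^S)_\xi$) are integrated by parts to move a derivative onto $\tilde v_{\xi\xi}$, generating $\|\tilde u_{\xi\xi}\|_{L^2}^2$ and $\|\tilde v_{\xi\xi}\|_{L^2}^2$ with small constants, plus the elliptic contribution $G_2$ through the identity \eqref{v} and its derivatives (which convert $\tilde\phi$-terms arising from the electric force into $\tilde\phi_{\xi\xi}$, $\tilde\phi_{\xi\xi\xi}$). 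I would then invoke \eqref{Xbd} for the $\dot X(t)\bar v^S_\xi$ term, the bound $|a^X_\xi|,|\bar v^S_\xi|\lesssim\delta_S^2$ and $|\bar v^R_\xi|\lesssim\delta_R$ from \eqref{shderiv1}--\eqref{raderiv}, the preliminary bounds \eqref{vL2}--\eqref{hxiL2} to absorb $\int\bar v_\xi\tilde v^2$, $\int\tilde v_\xi^2$, $\int\tilde h_\xi^2$ into $G^S+G^R+D$ plus higher-order pieces, and the time-integrated rarefaction estimates \eqref{inter}--\eqref{Raest} to turn all the wave-interaction and rarefaction-error $L^2$-norms into a clean $C\sqrt{\delta_R}$. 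The $F_4$ and $F_2$ corrections are handled as in the estimate of $\mathcal{P}_{44}$ in Lemma~\ref{R3est}, using $|(\ln\bar v^S - \ln\bar v)_{\xi\xi}|\lesssim |\bar v^R_{\xi\xi}| + |\bar v^R_\xi|^2 + |\bar v^S_\xi(\bar v^R - v_m)|$.

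The presence of $\|\tilde u_\xi(t,\cdot)\|_{L^2}^2$ on the right-hand side of \eqref{vxixi_ineq} (rather than inside the time integral) is expected: it appears from the boundary-in-time term when one integrates by parts in $t$ in the cross term $\int \tilde u_{\xi\xi}\tilde v_{\xi\xi}\,d\xi$ or equivalently from a term of the form $\tfrac{d}{dt}\int(\text{something})\tilde u_\xi\tilde v_\xi$, which is estimated pointwise in time by Young's inequality; this is the mechanism used in \cite[Section~6.2]{KKSh}. The main obstacle, as in the single-shock analysis, is the term $\int a^X\frac{\tilde v_\xi \tilde\phi_{\xi\xi\xi}}{v\bar v^2}$-type contribution (the quadratization of $\partial_\xi(v-\bar v)\partial_\xi^3(\phi-\bar\phi)$): controlling the highest $\tilde v_{\xi\xi}$ derivative against the highest $\tilde\phi$ derivative requires the coercivity \eqref{Dtpineq} already established in Lemma~\ref{Quad}, so here it reappears as the need to balance $\|\tilde v_{\xi\xi}\|_{L^2}^2$ against $G_2$ with the right sign; I would borrow a small fraction of the good $\|\tilde v_{\xi\xi}\|_{L^2}^2$ from the viscous damping and a small fraction of $G_2$ (legitimate since $G_2$ is retained on the left of the final a~priori estimate \eqref{apriori}), keeping the coupling constants small via $\delta_S+\delta_R$ and $\varepsilon_1$. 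Finally, combining with Lemmas~\ref{uxi} and \ref{uxixi} closes all higher-order terms, and inserting everything into \eqref{vhphi} together with Lemma~\ref{vu} yields Proposition~\ref{Apriori} after choosing $\delta_0,\varepsilon_1$ small enough that the $(\sqrt{\delta_0}+\varepsilon_1)$-weighted integrals are absorbed into the left-hand side.
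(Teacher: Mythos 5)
There is a genuine gap in where the damping $\int_0^t\lVert\tilde v_{\xi\xi}\rVert_{L^2}^2$ is supposed to come from. You claim that after rewriting \eqref{pert1} so that its right-hand side has principal part $(\tilde v_\xi/v)_\xi$, differentiating once more gives ``a transport-type equation for $\tilde v_{\xi\xi}$ with a damping term coming from the viscous structure.'' But once you substitute $\tilde h = \tilde u - (\ln v - \ln\bar v^S)_\xi$ into \eqref{pert1}, the two $(\ln v - \ln\bar v^S)_{\xi\xi}$ contributions cancel and you are left precisely with the pure transport equation \eqref{eq_v}, namely $\tilde v_t - \sigma\tilde v_\xi - \tilde u_\xi - \dot X\bar v^S_\xi = 0$, which has no viscous structure at all. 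If instead you refrain from cancelling and test the twice-differentiated \eqref{pert1} against $\tilde v_{\xi\xi}$, the diffusion produces $\int\tilde v_{\xi\xi\xi}^2$, which is one order beyond the a priori regularity $\tilde v\in H^2$ and moreover requires making sense of $(\ln v)_{\xi\xi\xi\xi}$; the cross term $\int\tilde v_{\xi\xi}\tilde h_{\xi\xi\xi}$ then also pulls in $\tilde v_{\xi\xi\xi\xi}$ through $\tilde h_{\xi\xi}$, and neither is controlled. Neither version of your plan produces the needed $\int\tilde v_{\xi\xi}^2$ on the left.

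What the paper actually does (and what you need to do) is work entirely in the $(v,u)$-variables \eqref{eq_vu} and obtain the damping from the \emph{pressure} term in the momentum equation: one differentiates \eqref{eq_v} twice and tests against $\tilde v_{\xi\xi}$, then differentiates \eqref{eq_u} once and tests against $-v\tilde v_{\xi\xi}$, and adds. The key cancellation is $\tilde v_{\xi\xi}\tilde u_{\xi\xi\xi}$ between the two, and the good term $\int\tilde v_{\xi\xi}^2/v$ comes from $-v\tilde v_{\xi\xi}\bigl(\tfrac1v-\tfrac1{\bar v}\bigr)_{\xi\xi}$, i.e.\ from $\partial_\xi^2 p(v|\bar v)$. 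The term $\lVert\tilde u_\xi(t,\cdot)\rVert_{L^2}^2$ on the right of \eqref{vxixi_ineq} arises because the energy functional is $\int\bigl(\tfrac{\tilde v_{\xi\xi}^2}{2} - v\tilde v_{\xi\xi}\tilde u_\xi\bigr)d\xi$, and the cross term $-\int v\tilde v_{\xi\xi}\tilde u_\xi$ is Young'd against $\lVert\tilde v_{\xi\xi}\rVert^2 + \lVert\tilde u_\xi\rVert^2$ at time $t$ --- not from integrating by parts in $t$ a term $\int\tilde u_{\xi\xi}\tilde v_{\xi\xi}$ or $\tfrac{d}{dt}\int(\cdot)\tilde u_\xi\tilde v_\xi$ as you suggest. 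Finally, the quadratization of Lemma~\ref{Quad} is not needed here: the electric force contributes simply $-\tilde v_{\xi\xi}\tilde\phi_{\xi\xi}$ plus lower-order terms, which is controlled directly by Young's inequality against $G_2$.
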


We now complete the proof of the a priori estimate.

\begin{proof}[Proof of Proposition~\ref{Apriori}]

To prove Proposition~\ref{Apriori}, we combine the results of Lemmas~\ref{vu}--\ref{vxixi}. Summing \eqref{vu_ineq}, \eqref{uxi_ineq}, \eqref{uxixi_ineq}, \eqref{vxixi_ineq}, and \eqref{phiH3}, we obtain
\begin{equation*}
\begin{split}
& \lVert (\tilde{v},\tilde{u},\tilde{\phi})(t,\cdot) \rVert_{H^2}^2 + \int_0^t \left( \delta_S |\dot{X}|^2 + G_2 + G_3 + G^S + G^R + D + \lVert \tilde{u}_\xi \rVert_{H^2}^2 + \lVert \tilde{v}_{\xi\xi} \rVert_{L^2}^2 \right) \, d\tau \\
& \quad \leq C \lVert ( \tilde{v}_0, \tilde{u}_0 ) \rVert_{H^2}^2 + C \delta_R^{1/3} \\
& \qquad  + C \lVert \tilde{v}(t,\cdot) \rVert_{H^2}^2 + C \lVert \tilde{u}_\xi (t,\cdot) \rVert_{L^2}^2 + C \int_0^t \left( G_2 + G^S + G^R + D + \lVert \tilde{v}_{\xi\xi} \rVert_{L^2}^2 + \lVert \tilde{u}_{\xi\xi} \rVert_{L^2}^2 \right) \, d\tau
\end{split}
\end{equation*}
for sufficiently small $\delta_S+\delta_R$ and $\varepsilon_1$. To control the last three terms on the right-hand side, we proceed step by step. First, we apply \eqref{vxixi_ineq} to bound \( \textstyle \lVert \tilde{v}_{\xi\xi} \rVert_{L^2}^2 + \int_0^t \lVert \tilde{v}_{\xi\xi} \rVert_{L^2}^2 \). Next, we use \eqref{uxi_ineq} to control \(  \textstyle \lVert \tilde{u}_\xi \rVert_{L^2}^2 + \int_0^t \lVert \tilde{u}_{\xi\xi} \rVert_{L^2}^2 \), and finally, \eqref{vu_ineq} yields a bound that closes the estimate. This completes the proof of Proposition~\ref{Apriori}.

\end{proof}

\appendix

\section{\texorpdfstring{Basic elliptic estimates for $\phi-\bar{\phi}$}{Basic elliptic estimates for (ϕ - ϕ̄)}}  

This appendix presents an elliptic estimate for $\tilde{\phi}$, as well as a time-integrated bound used in the proof of Lemma~\ref{E_est}.

\begin{lemma} \label{B_E}
Under the assumptions of Proposition~\ref{Apriori}, there exists a constant $C>0$ such that
\begin{equation} \label{phiH3}
\begin{split}
\lVert \tilde{\phi}(t,\cdot) \rVert_{H^3}^2 
&\leq C \lVert \tilde{v}(t,\cdot) \rVert_{H^2}^2 
+ C \delta_R^{-1/2} \Big( 
    \lVert \bar{v}^S_\xi(\bar{v}^R - v_m) \rVert_{L^2}^2 + \lVert \bar{v}^R_\xi(\bar{v}^S - v_m) \rVert_{L^2}^2
    + \lVert \bar{v}^R_\xi \bar{v}^S_\xi \rVert_{L^2}^2 \\
&\qquad 
    + \lVert \bar{v}^R_\xi \rVert_{L^4}^4 
    + \lVert \bar{v}^R_{\xi\xi} \rVert_{L^2}^2 + \lVert \bar{v}^R_{\xi\xi\xi} \rVert_{L^2}^2 
    + \lVert (\bar{v}^S - v_m)(\bar{v}^R - v_m) \rVert_{L^2}^2 
\Big)
\end{split}
\end{equation}
and
\begin{equation} \label{phiH3t}
\begin{split}
\int_0^t \lVert \tilde{\phi}_\xi (\tau,\cdot) \rVert_{H^2}^2 \, d\tau & \leq C \int_0^t \lVert \tilde{v}_\xi (\tau,\cdot) \rVert_{H^1}^2 \, d\tau + C ( \sqrt{\delta_0} + \varepsilon_1) \int_0^t \int_\mathbb{R} \left( \bar{v}_\xi \tilde{v}^2 + \bar{v}_\xi \tilde{\phi}^2 \right) \, d\xi d\tau + C \sqrt{\delta_R}
\end{split}
\end{equation}
for all $t \in [0,T]$.
\end{lemma}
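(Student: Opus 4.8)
The plan is to obtain both estimates directly from the elliptic identity \eqref{v}, using that its coefficient $e^{-\bar{\phi}}/\bar{v}$ is bounded below by a positive constant — precisely the bound $|e^{-\bar{\phi}}/\bar{v}|\geq c$ established inside the proof of Lemma~\ref{entsim} — so that \eqref{v} is uniformly elliptic. Throughout I will use $|\bar{v}_\xi|\sim|\bar{\phi}_\xi|\leq C(\delta_S^2+\delta_R)$ from \eqref{shderiv1}--\eqref{raderiv}, the pointwise bounds \eqref{V} and \eqref{Vxi} for $\mathcal{V}$ and $\mathcal{V}_\xi$, and, for the time-integrated statement, the wave-interaction and rarefaction bounds \eqref{inter}--\eqref{Raest}.

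For \eqref{phiH3}, I multiply \eqref{v} by $-\tilde{\phi}$ and integrate; moving the $\xi$-derivative off the coefficient by integration by parts, the left side controls $\int(\tilde{\phi}_\xi^2+\tilde{\phi}^2)\,d\xi$, while the right side is $-\int(\tilde{v}-\mathcal{V})\tilde{\phi}\,d\xi$ plus commutators carrying a factor $|\bar{v}_\xi|$. Applying Young's inequality — with a $\delta_R$-weighted split on the pure wave-interaction and rarefaction-error contributions inside $\mathcal{V}$, which is what produces the $\delta_R^{-1/2}$ weight, and with a fixed small split on the $(\delta_S^2+\delta_R+\varepsilon_1)$-small perturbative part of $\mathcal{V}$ — I absorb all $\tilde{\phi}$-terms of small coefficient into the left side and bound the $\tilde{v}$-terms by $\|\tilde{v}\|_{H^1}^2$, obtaining $\|\tilde{\phi}\|_{H^1}^2\lesssim\|\tilde{v}\|_{H^1}^2+(\text{errors})$. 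Next I rewrite \eqref{v} algebraically as $\tilde{\phi}_{\xi\xi}=\bar{v}\tilde{\phi}+\bar{v}e^{\bar{\phi}}(\tilde{v}-\mathcal{V})$ to bound $\lVert\tilde{\phi}_{\xi\xi}\rVert_{L^2}$, and differentiate this identity once in $\xi$ to bound $\lVert\tilde{\phi}_{\xi\xi\xi}\rVert_{L^2}$; here one invokes \eqref{Vxi}, whose only top-order term $|\tilde{\phi}_{\xi\xi\xi}|$ carries the small coefficient $\delta_S^2+\delta_R+\varepsilon_1$ and is absorbed into the left side. Collecting the three orders yields \eqref{phiH3}.

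For \eqref{phiH3t}, I differentiate \eqref{v} once in $\xi$, test against $-\tilde{\phi}_\xi$, and integrate by parts so that the left side controls $\int(\tilde{\phi}_{\xi\xi}^2+\tilde{\phi}_\xi^2)\,d\xi$; the right side is $-\int(\tilde{v}_\xi-\mathcal{V}_\xi)\tilde{\phi}_\xi\,d\xi$ together with commutators weighted by $|\bar{v}_\xi|$. The key structural point is that, by \eqref{Vxi}, $\tilde{v}$ itself enters $\mathcal{V}_\xi$ only through the product $|\bar{v}_\xi|(|\tilde{v}|+|\tilde{\phi}|)$, whose square is controlled by $\lVert\bar{v}_\xi\rVert_{L^\infty}\int(\bar{v}_\xi\tilde{v}^2+\bar{v}_\xi\tilde{\phi}^2)\,d\xi$, producing exactly the $(\sqrt{\delta_0}+\varepsilon_1)$-small term in \eqref{phiH3t}; the remaining $\tilde{\phi}$-derivative terms of $\mathcal{V}_\xi$ have small coefficient and are absorbed, while $\tilde{v}_\xi,\tilde{v}_{\xi\xi}$ feed into $\lVert\tilde{v}_\xi\rVert_{H^1}^2$, and the pure wave errors are set aside. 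I then recover $\lVert\tilde{\phi}_{\xi\xi\xi}\rVert_{L^2}^2$ by differentiating $\tilde{\phi}_{\xi\xi}=\bar{v}\tilde{\phi}+\bar{v}e^{\bar{\phi}}(\tilde{v}-\mathcal{V})$ once more in $\xi$, and finally integrate the resulting pointwise-in-$t$ inequality over $[0,t]$, using \eqref{inter}--\eqref{Raest} to bound the accumulated wave-interaction and rarefaction-error norms by $C\sqrt{\delta_R}$ (a bound that is far from tight but suffices).

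The main obstacle is the careful bookkeeping of the highest-order quantity $\tilde{\phi}_{\xi\xi\xi}$: it reappears on the right-hand side through $\mathcal{V}_\xi$, so one must confirm that its prefactor is genuinely of order $\delta_S^2+\delta_R+\varepsilon_1$ and that the absorption into the left side leaves no spurious large constant. A secondary difficulty is arranging the Young splittings so that the various wave errors emerge with precisely the weights required downstream — the $\delta_R^{-1/2}$ in \eqref{phiH3} and the $\sqrt{\delta_R}$ after time integration in \eqref{phiH3t} — rather than with ad hoc constants that would fail to close the a priori estimate in Section~\ref{Sec_6}.
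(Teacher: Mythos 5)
Your proof is correct and follows essentially the same route as the paper: both start from the elliptic identity \eqref{v} with the uniform ellipticity observation $e^{-\bar{\phi}}/\bar{v}\geq c$, bootstrap through the derivative orders of $\tilde{\phi}$, and absorb the small-coefficient $\tilde{\phi}$-terms coming from \eqref{V} and \eqref{Vxi}. The only genuine (though minor) divergence is at the higher orders: the paper performs three successive weighted energy identities, testing the $k$-times differentiated equation \eqref{v} against $-\tilde{\phi}$, $-\tilde{\phi}_\xi$, and $-\tilde{\phi}_{\xi\xi}$ respectively, whereas you run the energy identity only at one level and then read off $\tilde{\phi}_{\xi\xi}$ and $\tilde{\phi}_{\xi\xi\xi}$ directly from the algebraic rearrangement $\tilde{\phi}_{\xi\xi}=\bar{v}\tilde{\phi}+\bar{v}e^{\bar{\phi}}(\tilde{v}-\mathcal{V})$ and its $\xi$-derivative. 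The algebraic route is a bit more direct and works because, in \eqref{Vxi}, the top-order term $|\tilde{\phi}_{\xi\xi\xi}|$ carries the small prefactor $\delta_S^2+\delta_R+\varepsilon_1$, so it can be absorbed exactly as you claim; the only loss is that your level-three estimate picks up $\lVert\tilde{\phi}_\xi\rVert_{L^2}^2$ with an $O(1)$ rather than small coefficient, so it is essential (as you implicitly do) to establish the level-two energy estimate first and feed it in before closing. Your bookkeeping of the $\delta_R^{-1/2}$-weighted Young splittings and the $\sqrt{\delta_R}$ emerging after time integration via \eqref{inter}--\eqref{Raest} matches the paper's.
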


\begin{proof}

Multiplying \eqref{v} by $- \tilde{\phi}$ and integrating the resulting equation with respect to $\xi$, we obtain after integration by parts
\begin{equation*}
\int_\mathbb{R} \bigg( \frac{e^{-\bar{\phi}} \tilde{\phi}_\xi^2}{\bar{v}} + e^{-\bar{\phi}} \tilde{\phi}^2 \bigg) \, d\xi  =  - \int_\mathbb{R} \bigg( \frac{e^{-\bar{\phi}}}{\bar{v}} \bigg)_\xi \tilde{\phi}_\xi \tilde{\phi} \, d\xi - \int_\mathbb{R} \tilde{\phi} \tilde{v} \, d\xi + \int_\mathbb{R} \tilde{\phi} \mathcal{V} \, d\xi.
\end{equation*}
Using the bounds \eqref{shderiv1}--\eqref{raderiv} and Young's inequality, we obtain
\begin{equation} \label{tempphi111}
\begin{split}
& \int_\mathbb{R} \bigg( \frac{e^{-\bar{\phi}} \tilde{\phi}_\xi^2}{\bar{v}} + e^{-\bar{\phi}} \tilde{\phi}^2 \bigg) \, d\xi \\
& \quad \leq C \int_\mathbb{R} |\bar{v}_\xi| | \tilde{\phi}_\xi|  |\tilde{\phi}| \, d\xi + C \int_\mathbb{R} |\tilde{\phi}| |\tilde{v}| \, d\xi + \int_\mathbb{R} |\tilde{\phi}| |\mathcal{V}| \, d\xi \\
& \quad \leq C (\delta_S^2 + \delta_R) \int_\mathbb{R} \left( \tilde{\phi}^2 + \tilde{\phi}_\xi^2 \right) \, d\xi + \theta \int_\mathbb{R} \tilde{\phi}^2 \, d\xi + C\theta^{-1} \int_\mathbb{R} \tilde{v}^2 \, d\xi + \int_\mathbb{R} |\tilde{\phi}| |\mathcal{V}| \, d\xi
\end{split}
\end{equation}
for any constant $0<\theta<1$. For the last term, we use \eqref{V} to obtain
\begin{equation} \label{tempphi222}
\begin{split}
\int_\mathbb{R} |\tilde{\phi}| |\mathcal{V}| \, d\xi & \leq C \left( \delta_S^2 + \delta_R + \varepsilon_1 \right) \int_\mathbb{R} |\tilde{\phi}| \left( |\tilde{v}| + |\tilde{v}_\xi| + |\tilde{\phi}| + |\tilde{\phi}_\xi| + |\tilde{\phi}_{\xi\xi}| \right) \, d\xi \\
& \quad + C \int_\mathbb{R} |\tilde{\phi}| \left( |\bar{v}^S_\xi (\bar{v}^R-v_m)| + |\bar{v}^R_\xi\bar{v}^S_\xi| + |\bar{v}^R_\xi|^2 + |\bar{v}^R_{\xi\xi}| + |\bar{v}^S-v_m| |\bar{v}^R-v_m| \right) \, d\xi \\
& \leq C \left( \delta_S^2 + \delta_R + \varepsilon_1 \right) \int_\mathbb{R} \left( \tilde{v}^2 +\tilde{v}_\xi^2 + \tilde{\phi}^2 + \tilde{\phi}_\xi^2 + \tilde{\phi}_{\xi\xi}^2 \right) \, d\xi + C \delta_R^{1/2} \lVert \tilde{\phi} \rVert_{L^2}^2 \\
& \quad + C \delta_R^{-1/2} \big( \lVert \bar{v}^S_\xi(\bar{v}^R-v_m)\rVert_{L^2}^2 + \lVert \bar{v}^R_\xi \bar{v}^S_\xi \rVert_{L^2}^2 \\
& \qquad \qquad \qquad + \lVert \bar{v}^R_\xi \rVert_{L^4}^4 + \lVert \bar{v}^R_{\xi\xi} \rVert_{L^2}^2 + \lVert (\bar{v}^S-v_m)(\bar{v}^R-v_m) \rVert_{L^2}^2 \big).
\end{split}
\end{equation}
Combining these estimates and using smallness of the parameters, we deduce
\begin{equation} \label{phiH1'}
\begin{split}
\int_\mathbb{R} \left( \tilde{\phi}^2 + \tilde{\phi}_\xi^2 \right) \, d\xi & \leq C \int_\mathbb{R} \tilde{v}^2 \, d\xi + C (\delta_S^2 + \delta_R + \varepsilon_1) \int_\mathbb{R} \left( \tilde{v}_\xi^2 + \tilde{\phi}_{\xi\xi}^2 \right) \, d\xi \\
& \quad + C \delta_R^{-1/2} ( \lVert \bar{v}^S_\xi(\bar{v}^R-v_m)\rVert_{L^2}^2 + \lVert \bar{v}^R_\xi \bar{v}^S_\xi \rVert_{L^2}^2 \\
& \qquad \qquad \qquad + \lVert \bar{v}^R_\xi \rVert_{L^4}^4 + \lVert \bar{v}^R_{\xi\xi} \rVert_{L^2}^2 + \lVert (\bar{v}^S-v_m)(\bar{v}^R-v_m) \rVert_{L^2}^2 ).
\end{split}
\end{equation}

Differentiating \eqref{v} with respect to $\xi$ and taking the $L^2$-inner product against $-\tilde{\phi}_\xi$, we obtain
\begin{equation*}
\begin{split}
\int_\mathbb{R} \bigg( \frac{e^{-\bar{\phi}}\tilde{\phi}_{\xi\xi}^2}{\bar{v}} + e^{-\bar{\phi}}\tilde{\phi}_\xi^2 \bigg) \, d\xi & = - \int_\mathbb{R} ( e^{-\bar{\phi}})_\xi \tilde{\phi} \tilde{\phi}_\xi \, d\xi  -  \int_\mathbb{R} \tilde{\phi}_\xi \tilde{v}_\xi \, d\xi + \int_\mathbb{R} \tilde{\phi}_\xi \mathcal{V}_\xi \, d\xi.
\end{split}
\end{equation*}
By \eqref{shderiv1}, \eqref{raderiv}, and \eqref{Vxi}, the right-hand side is bounded as
\begin{equation*}
\begin{split}
|R.H.S.| & \leq C \int_\mathbb{R} |\bar{v}_\xi| |\tilde{\phi}_\xi| \left( |\tilde{v}| + |\tilde{\phi}| \right) \, d\xi + C \int_\mathbb{R} |\tilde{\phi}_\xi||\tilde{v}_\xi| \, d\xi \\
& \quad + C (\delta_S^2 + \delta_R + \varepsilon_1) \int_\mathbb{R} |\tilde{\phi}_\xi| \left( |\tilde{v}_\xi|  + |\tilde{v}_{\xi\xi}| + |\tilde{\phi}_\xi| + |\tilde{\phi}_{\xi\xi}| + |\tilde{\phi}_{\xi\xi\xi}| \right) \, d\xi \\
& \quad + C \int_\mathbb{R} |\tilde{\phi}_{\xi\xi}| \left( | \bar{v}^S_\xi (\bar{v}^R - v_m) | + |\bar{v}^R_\xi (\bar{v}^S - v_m) | + | \bar{v}^R_\xi \bar{v}^S_\xi | + |\bar{v}^R_\xi|^2 + |\bar{v}^R_{\xi\xi\xi}| \right) \, d\xi \\
& =: I_1 + I_2 + I_3 + I_4.
\end{split}
\end{equation*}
We apply Young's inequality to estimate each term as follows:
\begin{equation} \label{phix11}
\begin{split}
|I_1| & \leq C \int_\mathbb{R} |\bar{v}_\xi|^{1/2} |\tilde{\phi}_\xi|^2 \, d\xi + C \int_\mathbb{R} |\bar{v}_\xi|^{3/2} \left( |\tilde{v}|^2 +  |\tilde{\phi}|^2 \right) \, d\xi \\
& \leq C (\delta_S + \delta_R^{1/2}) \int_\mathbb{R} \left( \bar{v}_\xi \tilde{v}^2 + \bar{v}_\xi \tilde{\phi}^2 + \tilde{\phi}_\xi^2 \right) \, d\xi, \\
|I_2| & \leq \theta \int_\mathbb{R} \tilde{\phi}_\xi^2 \, d\xi + C \theta^{-1} \int_\mathbb{R} \tilde{v}_\xi^2 \, d\xi,
\end{split}
\end{equation}
\begin{equation} \label{phix22}
|I_3| \leq C (\delta_S^2 + \delta_R + \varepsilon_1) \int_\mathbb{R} \left( \tilde{v}_\xi^2 + \tilde{v}_{\xi\xi}^2 + \tilde{\phi}_\xi^2 + \tilde{\phi}_{\xi\xi}^2 + \tilde{\phi}_{\xi\xi\xi}^2 \right) \, d\xi,
\end{equation}
and
\begin{equation} \label{phix33}
\begin{split}
|I_4| & \leq C \delta_R^{1/2} \lVert \tilde{\phi}_{\xi\xi} \rVert_{L^2}^2 + C \delta_R^{-1/2} \big( \| \bar{v}^S_\xi (\bar{v}^R - v_m) \|_{L^2}^2 + \|\bar{v}^R_\xi (\bar{v}^S - v_m) \|_{L^2}^2 \\
& \qquad  + \| \bar{v}^R_\xi \bar{v}^S_\xi \|_{L^2}^2 + \|\bar{v}^R_\xi\|_{L^4}^4 + \|\bar{v}^R_{\xi\xi\xi}\|_{L^2}^2 \big).
\end{split}
\end{equation}
These estimates gives
\begin{equation} \label{phiH2'}
\begin{split}
\int_\mathbb{R} \left( \tilde{\phi}_\xi^2 + \tilde{\phi}_{\xi\xi}^2 \right) \, d\xi & \leq C \int_\mathbb{R} \tilde{v}_\xi^2 \, d\xi + C (\delta_S + \delta_R^{1/2} + \varepsilon_1) \int_\mathbb{R} \left( \bar{v}_\xi \tilde{v}^2 + \tilde{v}_{\xi\xi}^2 + \bar{v}_\xi \tilde{\phi}^2 + \tilde{\phi}_{\xi\xi\xi}^2 \right) \, d\xi \\
& \quad + C \delta_R^{-1/2} \big( \| \bar{v}^S_\xi (\bar{v}^R - v_m) \|_{L^2}^2 + \|\bar{v}^R_\xi (\bar{v}^S - v_m) \|_{L^2}^2 \\
& \qquad \qquad \qquad + \| \bar{v}^R_\xi \bar{v}^S_\xi \|_{L^2}^2 + \|\bar{v}^R_\xi\|_{L^4}^4 + \|\bar{v}^R_{\xi\xi\xi}\|_{L^2}^2 \big)
\end{split}
\end{equation}
for sufficiently small $\delta_S+\delta_R$ and $\varepsilon_1$.

We differentiate \eqref{v} twice and then taking the $L^2$-inner product against $-\tilde{\phi}_{\xi\xi}$ to obtain
\begin{equation*}
\begin{split}
\int_\mathbb{R} \bigg( \frac{e^{-\bar{\phi}}\tilde{\phi}_{\xi\xi\xi}^2}{\bar{v}} + e^{-\bar{\phi}}\tilde{\phi}_{\xi\xi}^2 \bigg) \, d\xi & = \int_\mathbb{R} \bigg( \frac{e^{-\bar{\phi}}}{\bar{v}} \bigg)_{\xi\xi} \tilde{\phi}_{\xi\xi}^2 \, d\xi + \int_\mathbb{R} \bigg( \frac{e^{-\bar{\phi}}}{\bar{v}} \bigg)_\xi \tilde{\phi}_{\xi\xi\xi} \tilde{\phi}_{\xi\xi} \, d\xi - \int_\mathbb{R} (e^{-\bar{\phi}})_{\xi\xi} \tilde{\phi}\tilde{\phi}_{\xi\xi} \, d\xi  \\
& \quad - \int_\mathbb{R} 2(e^{-\bar{\phi}})_\xi \tilde{\phi}_\xi \tilde{\phi}_{\xi\xi} \, d\xi - \int_\mathbb{R} \tilde{\phi}_{\xi\xi} \tilde{v}_{\xi\xi} \, d\xi - \int_\mathbb{R} \tilde{\phi}_{\xi\xi\xi} \mathcal{V}_{\xi} \, d\xi.
\end{split}
\end{equation*}
We then obtain the following estimate by similar computations in \eqref{phix11}--\eqref{phix33}:
\begin{equation} \label{phiH3'}
\begin{split}
\int_\mathbb{R} \left( \tilde{\phi}_{\xi\xi}^2 + \tilde{\phi}_{\xi\xi\xi}^2 \right) \, d\xi & \leq C \int_\mathbb{R} \tilde{v}_{\xi\xi}^2 \, d\xi + C (\delta_S + \delta_R^{1/2} + \varepsilon_1) \int_\mathbb{R} \left( \bar{v}_\xi \tilde{v}^2 + \tilde{v}_{\xi}^2 + \bar{v}_\xi \tilde{\phi}^2 + \tilde{\phi}_{\xi}^2 \right) \, d\xi \\
& \quad + C \delta_R^{-1/2} \big( \| \bar{v}^S_\xi (\bar{v}^R - v_m) \|_{L^2}^2 + \|\bar{v}^R_\xi (\bar{v}^S - v_m) \|_{L^2}^2 \\
& \qquad \qquad \qquad + \| \bar{v}^R_\xi \bar{v}^S_\xi \|_{L^2}^2 + \|\bar{v}^R_\xi\|_{L^4}^4 + \|\bar{v}^R_{\xi\xi\xi}\|_{L^2}^2 \big).
\end{split}
\end{equation}

Combining \eqref{phiH1'}, \eqref{phiH2'}, \eqref{phiH3'}, and using the bound $|\bar{v}_\xi| \leq C \delta_0$, we obtain the bound \eqref{phiH3}. Furthermore, the time-integrated estimate \eqref{phiH3t} also follows from \eqref{phiH2'} and \eqref{phiH3'}, along with  \eqref{inter}--\eqref{Raest}.

\end{proof}

\section{Proof of Lemmas~\ref{uxi}--\ref{vxixi}} \label{High}

Note that the momentum equations \eqref{NSP2''} and \eqref{sh2'} can be rewritten in the original form (see \eqref{NSP}):
\begin{equation*}
u_t - \sigma u_\xi + p(v)_\xi = \bigg( \frac{u_\xi}{v} \bigg)_\xi - \frac{\phi_\xi}{v}
\end{equation*}
and
\begin{equation*}
\bar{u}^S_t - \sigma \bar{u}^S_\xi + p(\bar{v}^S)_\xi + \dot{X}(t) \bar{u}^S_\xi = \bigg( \frac{\bar{u}^S_\xi}{\bar{v}^S} \bigg)_\xi - \frac{\bar{\phi}^S_\xi}{\bar{v}^S}.
\end{equation*}
On the other hand, \eqref{ra2} is equivalent to
\begin{equation*}
\bar{u}^R_t - \sigma \bar{u}^R_\xi + p(\bar{v}^R)_\xi =  \frac{\bar{v}^R_\xi}{(\bar{v}^R)^2},
\end{equation*}
where the term on the right-hand side comes from the difference between $p(\cdot)$ and $\tilde{p}(\cdot)$. Thus, the perturbation equations are given by
\begin{subequations} \label{eq_vu}
\begin{align}
& \label{eq_v} \tilde{v}_t - \sigma \tilde{v}_\xi - \tilde{u}_\xi - \dot{X}(t) \bar{v}^S_\xi = 0, \\
& \label{eq_u} \tilde{u}_t - \sigma \tilde{u}_\xi + \left( p(v) - p(\bar{v}) \right)_\xi - \dot{X}(t) \bar{u}^S_\xi = \left( \frac{u_\xi}{v} - \frac{\bar{u}_\xi}{\bar{v}} \right)_\xi - \left( \frac{\phi_\xi}{v} - \frac{\bar{\phi}_\xi}{\bar{v}} \right) - \frac{\bar{v}^R_\xi}{(\bar{v}^R)^2} - F_1 - F_2 - F_5,
\end{align}
\end{subequations}
where $F_1$ and $F_2$ are as in \eqref{F1F2_def}, and
\begin{equation*}
F_5 := \frac{\bar{\phi}_\xi}{\bar{v}} - \frac{\bar{\phi}^S_\xi}{\bar{v}^S}.
\end{equation*}
The proofs of Lemmas~\ref{uxi}--\ref{vxixi} are provided below, using the perturbation system \eqref{eq_vu}.

\begin{proof}[Proof of Lemma~\ref{uxi}]
Differentiating \eqref{eq_u} with respect to $\xi$ and multiplying the resulting equation by $\tilde{u}_\xi$, we have after rearrangement
\begin{equation*}
\begin{split}
\bigg( \frac{\tilde{u}_\xi^2}{2} \bigg)_t + \frac{\tilde{u}_{\xi\xi}^2}{v} & = (\cdots)_\xi - \tilde{u}_{\xi\xi} \left( \frac{\tilde{v}}{v\bar{v}} \right)_\xi + \frac{\tilde{v}_\xi \tilde{u}_{\xi\xi}\tilde{u}_\xi}{v^2} + \frac{\bar{v}_\xi \tilde{u}_{\xi\xi}\tilde{u}_\xi}{v^2} + \tilde{u}_{\xi\xi} \left( \frac{( \bar{u}_\xi+ \bar{\phi}_\xi)\tilde{v}}{v \bar{v}} \right)_\xi \\
& \quad + \frac{\tilde{u}_{\xi\xi}\tilde{\phi}_\xi}{v} + \dot{X}(t) \bar{u}^S_{\xi\xi} \tilde{u}_\xi + \tilde{u}_{\xi\xi} \left( F_1 + F_2 \right) - \tilde{u}_\xi (F_5)_\xi - \tilde{u}_\xi \bigg( \frac{\bar{v}^R_\xi}{(\bar{v}^R)^2} \bigg)_\xi.
\end{split}
\end{equation*}
Integrating this over $\mathbb{R}$, we have 
\begin{equation*}
\frac{d}{dt} \int_\mathbb{R} \frac{\tilde{u}_\xi^2}{2} \, d\xi + \int_\mathbb{R} \frac{\tilde{u}_{\xi\xi}^2}{v} \, d\xi = \sum_{j=1}^7 \mathcal{U}^{(1)}_j,
\end{equation*}
where
\begin{equation*}
\begin{split}
\mathcal{U}^{(1)}_1 & := \int_\mathbb{R} \tilde{u}_{\xi\xi} \bigg( \frac{\tilde{\phi}_\xi}{v} - \frac{\tilde{v}_\xi}{v\bar{v}} \bigg) \, d\xi, \quad \mathcal{U}^{(1)}_2 := \dot{X}(t) \int_\mathbb{R} \bar{u}^S_{\xi\xi} \tilde{u}_\xi \, d\xi, \\
\mathcal{U}^{(1)}_3 & := \int_\mathbb{R} \tilde{u}_{\xi\xi} \left( \frac{\tilde{v}_\xi\tilde{v}}{v^2\bar{v}} + \frac{\tilde{v}_\xi \tilde{u}_\xi}{v^2}  \right) \, d\xi - \int_\mathbb{R} \tilde{u}_{\xi\xi} \left( \frac{(\bar{u}_\xi + \bar{\phi}_\xi)\tilde{v}_\xi \tilde{v}}{v^2 \bar{v}} \right) \, d\xi, \\
\mathcal{U}^{(1)}_4 & = \int_\mathbb{R} \tilde{u}_{\xi\xi} \left( \frac{\bar{v}_\xi \tilde{v}}{v^2 \bar{v}} + \frac{\bar{v}_\xi \tilde{v}}{v\bar{v}^2} + \frac{\bar{v}_\xi \tilde{u}_\xi}{v^2} \right) \, d\xi + \int_\mathbb{R} \tilde{u}_{\xi\xi} \bigg( \bigg( \frac{\bar{u}_\xi + \bar{\phi}_\xi}{\bar{v}} \bigg)_\xi \frac{\tilde{v}}{v} + \frac{( \bar{u}_\xi+ \bar{\phi}_\xi)\tilde{v}_\xi}{v \bar{v}} - \frac{\bar{v}_\xi(\bar{u}_\xi + \bar{\phi}_\xi)\tilde{v}}{v^2\bar{v}} \bigg) \, d\xi \\
\mathcal{U}^{(1)}_5 &:= \int_\mathbb{R} \tilde{u}_{\xi\xi} \left( F_1 + F_2 \right) \, d\xi, \quad \mathcal{U}^{(1)}_6 := - \int_\mathbb{R} \tilde{u}_{\xi} (  F_5 )_\xi \, d\xi, \quad \mathcal{U}^{(1)}_7 := - \int_\mathbb{R} \tilde{u}_\xi \bigg( \frac{\bar{v}^R_\xi}{(\bar{v}^R)^2} \bigg)_\xi \, d\xi.
\end{split}
\end{equation*}
By applying Young's inequality, it holds that
\begin{equation*}
\begin{split}
|\mathcal{U}^{(1)}_1| & \leq C \int_\mathbb{R} |\tilde{u}_{\xi\xi}| \left( |\tilde{\phi}_\xi| + |\tilde{v}_\xi| \right) \, d\xi \leq \theta \int_\mathbb{R} \tilde{u}_{\xi\xi}^2 \, d\xi + C \theta^{-1} \int_\mathbb{R} \left( \tilde{\phi}_{\xi}^2 + \tilde{v}_\xi^2 \right) \, d\xi
\end{split}
\end{equation*}
for any constant $0 < \theta <1$. We use the bound \eqref{shderiv1}, and apply H\"older's and Young's inequalities to obtain
\begin{equation*}
\begin{split}
|\mathcal{U}^{(1)}_2| & \leq C |\dot{X}| \sqrt{\int_\mathbb{R} \bar{v}^S_\xi \, d\xi} \sqrt{\int_\mathbb{R} \bar{v}^S_\xi \tilde{u}_\xi^2 \, d\xi} \leq C \delta_S^2 |\dot{X}|^2 + \frac{C}{\delta_S^2} \left( \int_\mathbb{R} \bar{v}^S_\xi \, d\xi \right) \left( \int_\mathbb{R} \bar{v}^S_\xi \tilde{u}_\xi^2 \, d\xi \right)\\
& \leq C \delta_S \left( \delta_S |\dot{X}|^2 + \int_\mathbb{R} \tilde{u}_\xi^2 \, d\xi \right).
\end{split}
\end{equation*}
The nonlinear term $\mathcal{U}^{(1)}_3$ is estimated as
\begin{equation*}
\begin{split}
|\mathcal{U}^{(1)}_3| & \leq C \int_\mathbb{R} \left( |\tilde{v}| |\tilde{v}_\xi| |\tilde{u}_{\xi\xi}| + |\tilde{v}_\xi||\tilde{u}_{\xi\xi}||\tilde{u}_\xi| \right) \, d\xi \leq C (\delta_R + \varepsilon_1) \int_\mathbb{R} \left( \tilde{v}_\xi^2 + \tilde{u}_\xi^2 + \tilde{u}_{\xi\xi}^2 \right) \, d\xi,
\end{split}
\end{equation*}
where we have used \eqref{infty}. Using \eqref{shderiv1}--\eqref{raderiv}, we estimate
\begin{equation*}
\begin{split}
|\mathcal{U}^{(1)}_4| & \leq C \int_\mathbb{R} |\bar{v}_\xi| |\tilde{u}_{\xi\xi}| |\tilde{v}| \, d\xi + C \int_\mathbb{R} |\bar{v}_\xi| |\tilde{u}_{\xi\xi}| \left( |\tilde{v}_\xi| + |\tilde{u}_\xi| \right) \, d\xi \\
& \leq C \int_\mathbb{R} |\bar{v}_\xi|^{1/2} |\tilde{u}_{\xi\xi}|^2 \, d\xi + C \int_\mathbb{R} |\bar{v}_\xi|^{3/2} |\tilde{v}|^2 \, d\xi + C \int_\mathbb{R} |\bar{v}_\xi| \left( |\tilde{v}_\xi |^2 + | \tilde{u}_\xi |^2 + |\tilde{u}_{\xi\xi}|^2 \right) \, d\xi \\
& \leq C (\delta_S + \delta_R^{1/2}) \int_\mathbb{R} \left( \bar{v}_\xi \tilde{v}^2 + \tilde{v}_\xi^2 + \tilde{u}_\xi^2 + \tilde{u}_{\xi\xi}^2 \right) \, d\xi.
\end{split}
\end{equation*}
By the definitions of $F_1$ and $F_2$, \eqref{F1F2_def}, it can be checked that
\begin{equation*}
\begin{split}
|F_1| & \leq C \left( |\bar{v}^R_{\xi\xi}| + |\bar{v}^R_\xi|^2 + |\bar{v}^S_\xi(\bar{v}^R-v_m)| + |\bar{v}^R_\xi\bar{v}^S_\xi| \right), \\
|F_2| & \leq C \left( \lvert \bar{v}^R_\xi (\bar{v}^S-v_m) \rvert + |\bar{v}^S_\xi(\bar{v}^R-v_m) | \right).
\end{split}
\end{equation*}
It then follows
\begin{equation*}
\begin{split}
|\mathcal{U}^{(1)}_5| & \leq C \delta_R^{1/2} \lVert \tilde{u}_{\xi\xi} \rVert_{L^2}^2 + \frac{C}{\delta_R^{1/2}} \big( \lVert \bar{v}^R_\xi (\bar{v}^S-v_m) \rVert_{L^2}^2 + \|\bar{v}^S_\xi(\bar{v}^R-v_m) \|_{L^2}^2 \\
& \qquad + \lVert \bar{v}^R_\xi\bar{v}^S_\xi \rVert_{L^2}^2 + \lVert \bar{v}^R_\xi \rVert_{L^4}^4 + \lVert \bar{v}^R_{\xi\xi} \rVert_{L^2}^2 \big).
\end{split}
\end{equation*}
Similarly, since
\begin{equation*}
\begin{split}
|(F_5)_\xi| & = \bigg| \bigg( \frac{ \bar{\phi}_{\xi\xi}}{\bar{v}} - \frac{ \bar{\phi}^S_{\xi\xi}}{\bar{v}^S} - \frac{\bar{v}_\xi \bar{\phi}_\xi}{\bar{v}^2} + \frac{\bar{v}^S_\xi \bar{\phi}^S_\xi}{(\bar{v}^S)^2}  \bigg) \bigg| \leq \bigg| \frac{\bar{v}^S\bar{\phi}_{\xi\xi} - \bar{\phi}^S_{\xi\xi}\bar{v}}{\bar{v}\bar{v}^S} \bigg| + \bigg| \frac{(\bar{v}^S)^2 \bar{v}_\xi \bar{\phi}_\xi - \bar{v}^S_\xi \bar{\phi}^S_\xi \bar{v}^2}{\bar{v}^2 (\bar{v}^S)^2} \bigg| \\
& \leq C \left( |\bar{v}^R_{\xi\xi}| + |\bar{v}^S_\xi (\bar{v}^R-v_m)| + |\bar{v}^R_\xi (\bar{v}^S-v_m)| + |\bar{v}^R_\xi|^2 + |\bar{v}^R_\xi\bar{v}^S_\xi| \right),
\end{split}
\end{equation*}
we obtain
\begin{equation*}
\begin{split}
|\mathcal{U}^{(1)}_6| & \leq C \delta_R^{1/2} \lVert \tilde{u}_{\xi} \rVert_{L^2}^2 + \frac{C}{\delta_R^{1/2}} \left( \lVert \bar{v}^R_\xi (\bar{v}^S-v_m) \rVert_{L^2}^2 + \|\bar{v}^S_\xi(\bar{v}^R-v_m) \|_{L^2}^2 + \lVert \bar{v}^R_\xi\bar{v}^S_\xi \rVert_{L^2}^2 + \lVert \bar{v}^R_\xi \rVert_{L^4}^4 + \lVert \bar{v}^R_{\xi\xi} \rVert_{L^2}^2 \right).
\end{split}
\end{equation*}
Lastly, by Young's inequality,
\begin{equation*}
|\mathcal{U}^{(1)}_7| \leq C \int_\mathbb{R} |\tilde{u}_\xi| \left( |\bar{v}^R_{\xi\xi}| + |\bar{v}^R_\xi|^2 \right) \, d\xi \leq C \delta_R^{1/2} \lVert \tilde{u}_\xi \rVert_{L^2}^2 + \frac{C}{\delta_R^{1/2}} \left( \lVert \bar{v}^R_{\xi\xi} \rVert_{L^2}^2 + \lVert \bar{v}^R_\xi \rVert_{L^4}^4 \right)
\end{equation*}

Collecting all the estimates and using smallness of the parameters $\delta_S$, $\delta_R$, and $\varepsilon_1$, we have
\begin{equation*}
\begin{split}
& \frac{d}{dt} \int_\mathbb{R} \frac{\tilde{u}_\xi^2}{2} \, d\xi + \int_\mathbb{R} \frac{\tilde{u}_{\xi\xi}^2}{v} \, d\xi \\
& \quad \leq C (\delta_S + \delta_R^{1/2} + \varepsilon_1) \left( \delta_S |\dot{X}|^2 + \int_\mathbb{R} \left( \bar{v}_\xi \tilde{v}^2 + \tilde{v}_\xi^2 + \tilde{u}_\xi^2 \right) \, d\xi \right) + C \int_\mathbb{R} \left( \tilde{v}_\xi^2 + \tilde{\phi}_\xi^2 \right) \, d\xi \\
& \qquad + C \delta_R^{-1/2} \left( \lVert \bar{v}^R_\xi (\bar{v}^S-v_m) \rVert_{L^2}^2 + \|\bar{v}^S_\xi(\bar{v}^R-v_m) \|_{L^2}^2 + \lVert \bar{v}^R_\xi\bar{v}^S_\xi \rVert_{L^2}^2 + \lVert \bar{v}^R_\xi \rVert_{L^4}^4 + \lVert \bar{v}^R_{\xi\xi} \rVert_{L^2}^2 \right). 
\end{split}
\end{equation*}
Integrating this with respect in time in over $[0,t]$, and using \eqref{inter}--\eqref{Raest}, we obtain the result.
\end{proof}

\begin{proof}[Proof of Lemma~\ref{uxixi}]

Differentiating \eqref{eq_u} twice with respect to $\xi$, and multiplying the resulting equation by $\tilde{u}_{\xi\xi}$, we have
\begin{equation*}
\begin{split}
\bigg( \frac{\tilde{u}_{\xi\xi}^2}{2} \bigg)_t + \frac{\tilde{u}_{\xi\xi\xi}^2}{v} & = \left( \cdots \right)_\xi - \tilde{u}_{\xi\xi\xi} \left( \frac{\tilde{v}}{v\bar{v}} \right)_{\xi\xi} + \tilde{u}_{\xi\xi\xi} \left( \frac{\tilde{v}_\xi \tilde{u}_{\xi\xi}}{v^2} + \frac{\bar{v}_\xi \tilde{u}_{\xi\xi}}{v^2} \right) \\
& \quad + \tilde{u}_{\xi\xi\xi} \left( \frac{\tilde{v}_\xi \tilde{u}_\xi}{v^2} + \frac{\bar{v}_\xi \tilde{u}_\xi}{v^2} \right)_\xi + \tilde{u}_{\xi\xi\xi} \left( \frac{\bar{u}_\xi \tilde{v}}{v\bar{v}} \right)_{\xi\xi} + \tilde{u}_{\xi\xi\xi} \bigg( \frac{\tilde{\phi}_\xi}{v} - \frac{\bar{\phi}_\xi \tilde{v}}{v\bar{v}} \bigg)_{\xi} \\
& \quad + \dot{X}(t) \bar{u}^S_{\xi\xi\xi} \tilde{u}_{\xi\xi} + \tilde{u}_{\xi\xi\xi} \left( F_1 + F_2 + F_5 \right)_{\xi} + \tilde{u}_{\xi\xi\xi} \bigg( \frac{\bar{v}^R_\xi}{(\bar{v}^R)^2} \bigg)_\xi.
\end{split}
\end{equation*}
Integrating this with respect to $\xi$, we have
\begin{equation*}
\frac{d}{dt} \int_\mathbb{R} \frac{\tilde{u}_{\xi\xi}^2}{2} \, d\xi + \int_\mathbb{R} \frac{\tilde{u}_{\xi\xi\xi}^2}{v} \, d\xi = \sum_{j=1}^6 \mathcal{U}^{(2)}_j,
\end{equation*}
where
\begin{equation*}
\begin{split}
\mathcal{U}^{(2)}_1 & := \int_\mathbb{R} \bigg( - \frac{\tilde{u}_{\xi\xi\xi}\tilde{v}_{\xi\xi}}{v\bar{v}} + \frac{\tilde{u}_{\xi\xi\xi}\tilde{\phi}_{\xi\xi}}{v} \bigg) \, d\xi , \quad \mathcal{U}^{(2)}_2 := \dot{X}(t) \int_\mathbb{R} \bar{u}^S_{\xi\xi\xi} \tilde{u}_{\xi\xi} \, d\xi, \\
\mathcal{U}^{(2)}_3 & := \int_\mathbb{R} \tilde{u}_{\xi\xi\xi} \bigg( \frac{2\tilde{v}_\xi^2}{v^2 \bar{v}} + \frac{\tilde{v}_{\xi\xi}\tilde{v}}{v^2 \bar{v}} - \frac{2\tilde{v}_\xi^2 \tilde{v}}{v^3 \bar{v}} - \frac{4\bar{v}_\xi \tilde{v}_\xi \tilde{v}}{v^3 \bar{v}} - \frac{2\bar{v}_\xi \tilde{v}_\xi \tilde{v}}{v^2(\bar{v})^2} \bigg) \, d\xi \\
& \quad + \int_\mathbb{R} \tilde{u}_{\xi\xi\xi} \bigg( \frac{2\tilde{v}_\xi \tilde{u}_{\xi\xi}}{v^2} + \frac{\tilde{v}_{\xi\xi}\tilde{u}_\xi}{v^2} - \frac{2\tilde{v}_\xi^2 \tilde{u}_\xi}{v^3} - \frac{4\bar{v}_\xi \tilde{v}_\xi\tilde{u}_\xi}{v^3}  \bigg) \, d\xi \\
& \quad + \int_\mathbb{R} \tilde{u}_{\xi\xi\xi} \bigg[ \frac{\bar{u}_\xi}{\bar{v}} \bigg( \frac{2\tilde{v}_\xi^2 \tilde{v}}{v^3} + \frac{4\bar{v}_\xi \tilde{v}_\xi\tilde{v}}{v^3} - \frac{2\tilde{v}_\xi^2}{v^2} - \frac{\tilde{v}_{\xi\xi}\tilde{v}}{v^2} \bigg) - \left( \frac{\bar{u}_\xi}{\bar{v}} \right)_\xi \frac{\tilde{v}_\xi\tilde{v}}{v^2} \bigg] \, d\xi \\
& \quad + \int_\mathbb{R} \tilde{u}_{\xi\xi\xi} \bigg( \frac{\bar{\phi}_\xi \tilde{v}_\xi \tilde{v}}{v^2\bar{v}} - \frac{\tilde{v}_\xi \tilde{\phi}_\xi}{v^2} \bigg) \, d\xi, \\
\mathcal{U}^{(2)}_4 & := \int_\mathbb{R} \tilde{u}_{\xi\xi\xi} \bigg( \frac{2\bar{v}_\xi \tilde{v}_\xi}{v^2\bar{v}} + \frac{2\bar{v}_\xi\tilde{v}_\xi}{v\bar{v}^2} - \frac{2\bar{v}_\xi^2 \tilde{v}}{v^3\bar{v}} - \frac{\bar{v}_{\xi\xi}\tilde{v}}{v^2 \bar{v}} + \frac{2\bar{v}_\xi^2 \tilde{v}}{v^2\bar{v}^2} - \left( \frac{\bar{v}_\xi}{\bar{v}^2} \right)_\xi \frac{\tilde{v}}{v} \bigg) \, d\xi \\
& \quad + \int_\mathbb{R} \tilde{u}_{\xi\xi\xi} \bigg( \frac{\bar{v}_\xi \tilde{u}_{\xi\xi}}{v^2} + \frac{\bar{v}_\xi \tilde{u}_{\xi\xi}}{v^2} - \frac{2\bar{v}_\xi^2 \tilde{u}_\xi}{v^3} + \frac{\bar{v}_{\xi\xi}\tilde{u}_\xi}{v^2} \bigg) \, d\xi \\
& \quad + \int_\mathbb{R} \tilde{u}_{\xi\xi\xi} \bigg[ \frac{\bar{u}_\xi}{\bar{v}} \bigg( \frac{\tilde{v}_{\xi\xi}}{v} - \frac{2\bar{v}_\xi\tilde{v}_\xi}{v^2} + \frac{2\bar{v}_\xi^2 \tilde{v}}{v^3} - \frac{\bar{v}_{\xi\xi}\tilde{v}}{v^2} \bigg) - \left( \frac{\bar{u}_\xi}{\bar{v}} \right)_\xi \frac{2\bar{v}_\xi \tilde{v}}{v^2} + \left( \frac{\bar{u}_\xi}{\bar{v}} \right)_{\xi\xi} \frac{\tilde{v}}{v} \bigg] \, d\xi \\
& \quad + \int_\mathbb{R} \tilde{u}_{\xi\xi\xi} \bigg( - \frac{\bar{v}_\xi \tilde{\phi}_\xi}{v^2}- \frac{\bar{\phi}_\xi\tilde{v}_\xi}{v\bar{v}} + \frac{\bar{v}_\xi \bar{\phi}_\xi \tilde{v}}{v^2\bar{v}} - \left( \frac{\bar{\phi}_\xi}{\bar{v}} \right)_\xi \frac{\tilde{v}}{v} \bigg) \, d\xi,
\end{split}
\end{equation*}
and
\begin{equation*}
\mathcal{U}^{(2)}_5 := \int_\mathbb{R} \tilde{u}_{\xi\xi\xi} \left( F_1 + F_2 + F_5 \right)_{\xi} \, d\xi, \quad \mathcal{U}^{(2)}_6 := \int_\mathbb{R} \tilde{u}_{\xi\xi\xi} \bigg( \frac{\bar{v}^R_\xi}{(\bar{v}^R)^2} \bigg)_{\xi} \, d\xi.
\end{equation*}
The term $\mathcal{U}^{(2)}_1$ is estimated by applying Young's inequality as
\begin{equation*}
|\mathcal{U}^{(2)}_1| \leq C \int_\mathbb{R} |\tilde{u}_{\xi\xi\xi}| \left( |\tilde{v}_{\xi\xi}| + |\tilde{\phi}_{\xi\xi}| \right) \, d\xi \leq \theta \int_\mathbb{R} \tilde{u}_{\xi\xi\xi}^2 \, d\xi + C \theta^{-1} \int_\mathbb{R} \left( \tilde{v}_{\xi\xi}^2 + \tilde{\phi}_{\xi\xi}^2 \right) \, d\xi
\end{equation*}
for arbitrarily small $0 < \eta < 1$. We obtain the bound on $\mathcal{U}^{(2)}_2$ by using H\"older's and Young's inequality:
\begin{equation*}
\begin{split}
|\mathcal{U}^{(2)}_2| & \leq C |\dot{X}| \sqrt{\int_\mathbb{R} \bar{v}^S_\xi \, d\xi} \sqrt{\int_\mathbb{R} \bar{v}^S_\xi \tilde{u}_{\xi\xi} \, d\xi }  \leq C \delta_S^2 |\dot{X}|^2 + \frac{C}{\delta_S^2} \left( \int_\mathbb{R} \bar{v}^S_\xi \, d\xi \right) \left( \int_\mathbb{R} \bar{v}^S_\xi \tilde{u}_{\xi\xi}^2 \, d\xi \right) \\
& \leq C \delta_S \left( \delta_S |\dot{X}|^2 + \int_\mathbb{R} \tilde{u}_{\xi\xi}^2 \, d\xi \right).
\end{split}
\end{equation*}
The nonlinear term $\mathcal{U}^{(2)}_3$ is estimated as
\begin{equation*}
\begin{split}
|\mathcal{U}^{(2)}_3| & \leq C \int_\mathbb{R} |\tilde{v}| |\tilde{u}_{\xi\xi\xi}| \left( |\tilde{v}_{\xi\xi}| + |\tilde{v}_\xi| + |\tilde{v}_\xi|^2 \right) \, d\xi + C \int_\mathbb{R} |\tilde{u}_\xi| |\tilde{u}_{\xi\xi\xi} || \tilde{v}_{\xi\xi}| \, d\xi \\
& \quad + C \int_\mathbb{R} |\tilde{v}_{\xi} ||\tilde{u}_{\xi\xi\xi}| \left( |\tilde{v}_\xi| + |\tilde{u}_{\xi\xi}| + |\tilde{u}_\xi| + |\tilde{v}_\xi||\tilde{u}_\xi| + |\tilde{\phi}_\xi| \right) \, d\xi \\
& \leq C \left( \delta_R + \varepsilon_1 \right) \int_\mathbb{R} \left( \tilde{v}_\xi^2 + \tilde{v}_{\xi\xi}^2 + \tilde{u}_\xi^2 + \tilde{u}_{\xi\xi}^2 + \tilde{u}_{\xi\xi\xi}^2 + \tilde{\phi}_\xi^2 \right) \, d\xi.
\end{split}
\end{equation*}
For $\mathcal{U}^{(2)}_4$, we use \eqref{shderiv1}--\eqref{raderiv} to obtain
\begin{equation*}
\begin{split}
|\mathcal{U}^{(2)}_4| & \leq C \int_\mathbb{R} |\bar{v}_\xi| |\tilde{u}_{\xi\xi\xi}||\tilde{v}| \, d\xi + C \int_\mathbb{R} | \bar{v}_\xi| |\tilde{u}_{\xi\xi\xi}| \left( |\tilde{v}_{\xi\xi}| + |\tilde{v}_\xi| + |\tilde{u}_{\xi\xi}| + |\tilde{u}_\xi| + |\tilde{\phi}_\xi| \right) \, d\xi \\
& \leq C \int_\mathbb{R} |\bar{v}_\xi|^{1/2} |\tilde{u}_{\xi\xi\xi}|^2 \, d\xi + C \int_\mathbb{R} |\bar{v}_\xi|^{3/2} |\tilde{v}|^2 \, d\xi \\
& \quad + C \int_\mathbb{R} |\bar{v}_\xi| \left( |\tilde{u}_{\xi\xi\xi}|^2 + |\tilde{u}_{\xi\xi}|^2 + |\tilde{u}_\xi|^2 + |\tilde{v}_{\xi\xi}|^2 + |\tilde{v}_\xi|^2 + |\tilde{\phi}_\xi|^2 \right) \, d\xi \\
& \leq C (\delta_S + \delta_R^{1/2}) \int_\mathbb{R} \left( \bar{v}_\xi \tilde{v}^2 + \tilde{v}_\xi^2 + \tilde{v}_{\xi\xi}^2 + \tilde{u}_\xi^2 + \tilde{u}_{\xi\xi}^2 + \tilde{u}_{\xi\xi\xi}^2 + \tilde{\phi}_\xi^2 \right) \, d\xi.
\end{split}
\end{equation*}
We observe that, by \eqref{shderiv1}--\eqref{raderiv},
\begin{equation*}
\begin{split}
& |(F_1)_\xi| \leq C \left( |\bar{v}^R_{\xi\xi}| + |\bar{v}^R_\xi|^2 + |\bar{v}^S_\xi(\bar{v}^R-v_m)| + |\bar{v}^R_\xi\bar{v}^S_\xi| \right), \\
& |(F_2)_\xi| \leq C \left( |\bar{v}^R_\xi(\bar{v}^S-v_m)| + |\bar{v}^S_\xi(\bar{v}^R-v_m)| + |\bar{v}^R_\xi\bar{v}^S_\xi| \right),
\end{split}
\end{equation*}
and, as shown in the proof of Lemma~\ref{uxi},
\begin{equation*}
\begin{split}
|(F_5)_\xi| & \leq C \left( |\bar{v}^R_{\xi\xi}| + |\bar{v}^S_\xi (\bar{v}^R-v_m)| + |\bar{v}^R_\xi (\bar{v}^S-v_m)| + |\bar{v}^R_\xi|^2 + |\bar{v}^R_\xi\bar{v}^S_\xi| \right).
\end{split}
\end{equation*}
Thus, applying Young's inequality, we obtain
\begin{equation*}
\begin{split}
|\mathcal{U}^{(2)}_5| & \leq C \delta_R^{1/2} \lVert \tilde{u}_{\xi\xi\xi} \rVert_{L^2}^2 + \frac{C}{\delta_R^{1/2}} \big( \lVert \bar{v}^R_\xi (\bar{v}^S-v_m) \rVert_{L^2}^2 + \|\bar{v}^S_\xi(\bar{v}^R-v_m) \|_{L^2}^2 \\
& \qquad + \lVert \bar{v}^R_\xi\bar{v}^S_\xi \rVert_{L^2}^2 + \lVert \bar{v}^R_\xi \rVert_{L^4}^4 + \lVert \bar{v}^R_{\xi\xi} \rVert_{L^2}^2 \big).
\end{split}
\end{equation*}
Similarly, we have
\begin{equation*}
|\mathcal{U}^{(2)}_6| \leq  C \delta_R^{1/2} \lVert \tilde{u}_{\xi\xi\xi} \rVert_{L^2}^2 + C \delta_R^{-1/2} \left( \lVert \bar{v}^R_\xi \rVert_{L^4}^4 + \lVert \bar{v}^R_{\xi\xi} \rVert_{L^2}^2 \right).
\end{equation*}
Collecting all the estimates and using smallness of the parameters $\delta_0$ and $\varepsilon_1$, we have
\begin{equation*}
\begin{split}
& \frac{d}{dt} \int_\mathbb{R} \frac{\tilde{u}_{\xi\xi}^2}{2} \, d\xi + \int_\mathbb{R} \frac{\tilde{u}_{\xi\xi\xi}^2}{v} \, d\xi \\
& \quad \leq C (\delta_S + \delta_R^{1/2} + \varepsilon_1) \left( \delta_S |\dot{X}|^2 + \int_\mathbb{R} \left( \bar{v}_\xi \tilde{v}^2 + \tilde{v}_\xi^2 + \tilde{v}_{\xi\xi}^2 + \tilde{u}_\xi^2 + \tilde{u}_{\xi\xi}^2 + \tilde{\phi}_\xi^2 \right) \, d\xi \right) + C \int_\mathbb{R} \left( \tilde{v}_{\xi\xi}^2 + \tilde{\phi}_{\xi\xi}^2 \right) \, d\xi \\
& \qquad + C \delta_R^{-1/2} \left( \lVert \bar{v}^R_\xi (\bar{v}^S-v_m) \rVert_{L^2}^2 + \|\bar{v}^S_\xi(\bar{v}^R-v_m) \|_{L^2}^2 + \lVert \bar{v}^R_\xi\bar{v}^S_\xi \rVert_{L^2}^2 + \lVert \bar{v}^R_\xi \rVert_{L^4}^4 + \lVert \bar{v}^R_{\xi\xi} \rVert_{L^2}^2 \right). 
\end{split}
\end{equation*}
Integrating this in time over $[0,t]$, and using \eqref{inter}--\eqref{Raest}, we obtain the desired bound.

\end{proof}

\begin{proof}[Proof of Lemma~\ref{vxixi}]

Differentiating \eqref{eq_v} twice with respect to $\xi$ and multiplying the resulting equation by $\tilde{v}_{\xi\xi}$, we have
\begin{equation} \label{vxxvxx}
\bigg( \frac{\tilde{v}_{\xi\xi}^2}{2} \bigg)_t - \tilde{v}_{\xi\xi} \tilde{u}_{\xi\xi\xi} = (\cdots)_{\xi} + \dot{X}(t) \bar{v}^S_{\xi\xi\xi} \tilde{v}_{\xi\xi}.
\end{equation} 
Differentiating \eqref{eq_u} with respect to $\xi$ and multiplying the resulting equation by $-v\tilde{v}_{\xi\xi}$, we have
\begin{equation} \label{vxxux}
\begin{split}
& -v\tilde{v}_{\xi\xi}\tilde{u}_{\xi t} - v\tilde{v}_{\xi\xi}\bigg( \frac{1}{v}-\frac{1}{\bar{v}} \bigg)_{\xi\xi} + v \tilde{v}_{\xi\xi} \left( \frac{u_\xi}{v} - \frac{\bar{u}_\xi}{v} \right)_{\xi\xi} - v\tilde{v}_{\xi\xi}\bigg( \frac{\phi_\xi}{v} - \frac{\bar{\phi}_\xi}{\bar{v}} \bigg)_\xi\\
& \quad = - \dot{X}(t) v\bar{u}^S_{\xi\xi} \tilde{v}_{\xi\xi} + v \tilde{v}_{\xi\xi} \left( F_1 + F_2 + F_5 \right)_\xi + v \tilde{v}_{\xi\xi} \bigg( \frac{\bar{v}^R_\xi}{(\bar{v}^R)^2} \bigg)_\xi.
\end{split}
\end{equation}
Notice that each term on the left-hand side can be rewritten as follows:
\begin{equation*}
\begin{split}
-v\tilde{v}_{\xi\xi}\tilde{u}_{\xi t} & = (-v\tilde{v}_{\xi\xi}\tilde{u}_\xi)_t + v_t \tilde{v}_{\xi\xi}\tilde{u}_\xi +  v\tilde{v}_{\xi\xi t} \tilde{u}_\xi \\
& = (-v\tilde{v}_{\xi\xi}\tilde{u}_\xi)_t + \sigma v_\xi \tilde{v}_{\xi\xi} \tilde{u}_\xi + u_\xi \tilde{v}_{\xi\xi}\tilde{u}_\xi + \sigma v \tilde{v}_{\xi\xi\xi}\tilde{u}_\xi + v \tilde{u}_{\xi\xi\xi} \tilde{u}_\xi + \dot{X}(t) v\bar{v}^S_{\xi\xi\xi} \tilde{u}_\xi \\
& = (-v\tilde{v}_{\xi\xi}\tilde{u}_\xi)_t  + (\cdots)_\xi + u_\xi \tilde{v}_{\xi\xi}\tilde{u}_\xi - \sigma v \tilde{v}_{\xi\xi}\tilde{u}_{\xi\xi} - v_\xi \tilde{u}_{\xi\xi}\tilde{u}_\xi - v \tilde{u}_{\xi\xi}^2 + \dot{X}(t) v\bar{v}^S_{\xi\xi\xi} \tilde{u}_\xi, \\
- v \tilde{v}_{\xi\xi} \left( \frac{1}{v}-\frac{1}{\bar{v}} \right) & = \frac{\tilde{v}_{\xi\xi}^2}{v} - \frac{2\tilde{v}_{\xi}^2 \tilde{v}_{\xi\xi}}{v^2} - v \tilde{v}_{\xi\xi} \left( \frac{\bar{v}_\xi\tilde{v}^2}{v^2\bar{v}^2} + \frac{2\bar{v}_\xi\tilde{v}}{v^2\bar{v}} \right)_\xi, \\
v \tilde{v}_{\xi\xi} \left( \frac{u_\xi}{v} - \frac{\bar{u}_\xi}{v} \right)_{\xi\xi} & = \tilde{v}_{\xi\xi} \tilde{u}_{\xi\xi\xi} - \frac{v_\xi \tilde{v}_{\xi\xi}\tilde{u}_{\xi\xi}}{v} - v \tilde{v}_{\xi\xi} \left( \frac{\bar{u}_\xi \tilde{v}}{v\bar{v}} - \left( \frac{v_\xi u_\xi}{v^2} - \frac{\bar{v}_\xi \bar{u}_\xi}{\bar{v}^2} \right) \right)_\xi,
\end{split}
\end{equation*}
and
\begin{equation*}
- v\tilde{v}_{\xi\xi}\left( \frac{\phi_\xi}{v} - \frac{\bar{\phi}_\xi}{\bar{v}} \right)_\xi  = - \tilde{v}_{\xi\xi}\tilde{\phi}_{\xi\xi} + \frac{\bar{\phi}_{\xi\xi}\tilde{v}\tilde{v}_{\xi\xi}}{\bar{v}} + v\tilde{v}_{\xi\xi} \left( \frac{v_\xi \phi_\xi}{v^2} - \frac{\bar{v}_\xi \bar{\phi}_\xi}{\bar{v}^2} \right).
\end{equation*}
With this, summing \eqref{vxxvxx}--\eqref{vxxux} and integrating the resulting equation with respect to $\xi$, we have
\begin{equation*}
\frac{d}{dt} \int_\mathbb{R} \bigg( \frac{\tilde{v}_{\xi\xi}^2}{2} - v\tilde{v}_{\xi\xi} \tilde{u}_\xi \bigg) \, d\xi + \int_\mathbb{R} \frac{\tilde{v}_{\xi\xi}^2}{v} \, d\xi = \sum_{j=1}^6 \mathcal{V}^{(2)}_j,
\end{equation*}
where
\begin{equation*}
\begin{split}
\mathcal{V}^{(2)}_1 & := \int_\mathbb{R} \left( \sigma v \tilde{v}_{\xi\xi}\tilde{u}_{\xi\xi} + v \tilde{u}_{\xi\xi}^2 + \tilde{v}_{\xi\xi}\tilde{\phi}_{\xi\xi} \right) \, d\xi, \quad \mathcal{V}^{(2)}_2 := \dot{X}(t) \int_\mathbb{R} \left( \bar{v}^S_{\xi\xi\xi} \tilde{v}_{\xi\xi} - v\bar{u}^S_{\xi\xi}\tilde{v}_{\xi\xi} - v\bar{v}^S_{\xi\xi\xi}\tilde{u}_\xi \right) \, d\xi, \\
\mathcal{V}^{(2)}_3 & := \int_\mathbb{R} \tilde{u}_\xi \left( - \tilde{v}_{\xi\xi} \tilde{u}_\xi + \tilde{v}_\xi \tilde{u}_{\xi\xi} \right) \, d\xi + \int_\mathbb{R} \tilde{v}_{\xi\xi} \bigg( \frac{2\tilde{v}_\xi^2 }{v^2} + \left( \frac{\bar{v}_\xi}{\bar{v}^2} \right)_\xi \frac{\tilde{v}^2}{v} + \frac{2\bar{v}_\xi \tilde{v}\tilde{v}_\xi}{v\bar{v}^2} - \frac{2\bar{v}_\xi v_\xi \tilde{v}^2}{v^2\bar{v}^2} - \frac{2\bar{v}_\xi\tilde{v}_\xi\tilde{v}}{v^2\bar{v}} \bigg) \, d\xi \\
& \quad + \int_\mathbb{R} \tilde{v}_{\xi\xi} \left( - \frac{\tilde{v}_{\xi\xi}\tilde{u}_\xi}{v} + \frac{2v_\xi \tilde{v}_\xi\tilde{u}_\xi}{v^2} \right) \, d\xi  + \int_\mathbb{R} \tilde{v}_{\xi\xi} \bigg( - \frac{\bar{\phi}_{\xi\xi}\tilde{v}\tilde{v}_{\xi\xi}}{\bar{v}} - \frac{\tilde{v}_\xi\tilde{\phi}_\xi}{v} + \frac{\bar{v}_\xi\bar{\phi}_\xi \tilde{v}^2}{v\bar{v}^2} \bigg) \, d\xi, \\
\mathcal{V}^{(2)}_4 & := \int_\mathbb{R} \tilde{u}_\xi \left( - \bar{u}_\xi \tilde{v}_{\xi\xi} + \bar{v}_\xi \tilde{u}_{\xi\xi} \right) \, d\xi +  \int_\mathbb{R} \tilde{v}_{\xi\xi} \bigg( - \bar{u}_\xi \tilde{u}_\xi + \left( \frac{2\bar{v}_\xi}{\bar{v}} \right)_\xi \frac{\tilde{v}}{v} + \frac{2\bar{v}_\xi \tilde{v}_\xi}{v\bar{v}} - \frac{2\bar{v}_\xi^2\tilde{v}}{v^2\bar{v}} \bigg) \, d\xi \\
& \quad + \int_\mathbb{R} v \tilde{v}_{\xi\xi} \bigg(  \frac{\bar{v}_{\xi}\tilde{u}_{\xi\xi}}{v^2} + \left( \frac{\bar{u}_\xi \tilde{v}}{v\bar{v}} - \frac{ \bar{v}_{\xi}\tilde{u}_{\xi} + \bar{u}_\xi \tilde{v}_{\xi}}{v^2} +  \frac{\bar{v}_\xi\bar{u}_\xi (\tilde{v}^2 + 2\bar{v}\tilde{v})}{v^2\bar{v}^2} \right)_\xi \bigg) \, d\xi \\
& \quad + \int_\mathbb{R} \tilde{v}_{\xi\xi} \bigg( \frac{\bar{v}_\xi \tilde{\phi}_\xi + \bar{\phi}_\xi \tilde{v}_\xi}{v} + \frac{2\bar{v}_\xi \bar{\phi}_\xi \tilde{v}}{v\bar{v}} \bigg) \, d\xi, \\
\mathcal{V}^{(2)}_5 & := \int_\mathbb{R} v \tilde{v}_{\xi\xi} \left( F_1 + F_2 + F_5 \right)_\xi \, d\xi, \quad \mathcal{V}^{(2)}_6 := \int_\mathbb{R} v \tilde{v}_{\xi\xi} \bigg( \frac{\bar{v}^R_\xi}{(\bar{v}^R)^2} \bigg)_\xi \, d\xi.
\end{split}
\end{equation*}
Applying Young's inequality, we have
\begin{equation*}
\begin{split}
|\mathcal{V}^{(2)}_1| & \leq C \int_\mathbb{R} |\tilde{v}_{\xi\xi}| \left( |\tilde{u}_{\xi\xi}| + |\tilde{\phi}_{\xi\xi}| \right) \, d\xi + C \int_\mathbb{R} |\tilde{u}_{\xi\xi}|^2 \, d\xi \\
& \leq \theta \int_\mathbb{R} \tilde{v}_{\xi\xi}^2 \, d\xi + C \theta^{-1} \int_\mathbb{R} \left( \tilde{u}_{\xi\xi}^2 + \tilde{\phi}_{\xi\xi}^2 \right) \, d\xi + C \int_\mathbb{R} \tilde{u}_{\xi\xi}^2 \, d\xi
\end{split}
\end{equation*}
for any constant $0 < \theta <1$. Next we use H\"older's inequality together with the bound $|\bar{v}^S_{\xi\xi\xi}|,|\bar{u}^S_{\xi\xi}| \leq C |\bar{v}^S_\xi|$ to obtain
\begin{equation*}
\begin{split}
|\mathcal{V}^{(2)}_2| & \leq C |\dot{X}| \sqrt{\int_\mathbb{R} \bar{v}^S_\xi \, d\xi} \left( \sqrt{\int_\mathbb{R} \bar{v}^S_\xi \tilde{v}_{\xi\xi}^2 \, d\xi} + \sqrt{\int_\mathbb{R} \bar{v}^S_\xi \tilde{u}_\xi^2 \, d\xi} \right) \\
& \leq C \delta_S^2 |\dot{X}|^2 + \frac{C}{\delta_S} \left( \int_\mathbb{R} \bar{v}^S_\xi \, d\xi \right) \left( \int_\mathbb{R} \bar{v}^S_\xi \tilde{v}_{\xi\xi}^2 \, d\xi + \int_\mathbb{R} \bar{v}^S_\xi \tilde{u}_\xi^2 \, d\xi \right) \\
& \leq C \delta_S \left( \delta_S |\dot{X}|^2 + \int_\mathbb{R} \left( \tilde{v}_{\xi\xi}^2 + \tilde{u}_\xi^2 \right) \, d\xi \right).
\end{split}
\end{equation*}
The nonlinear term $\mathcal{V}^{(2)}_3$ is estimated as
\begin{equation*}
\begin{split}
|\mathcal{V}^{(2)}_3| & \leq C \int_\mathbb{R} |\tilde{v}_{\xi\xi}| \left( |\tilde{u}_\xi|^2 + |\tilde{v}_\xi|^2 + |\bar{v}_\xi||\tilde{v}|^2 + |\tilde{v}||\tilde{v}_\xi| + |\tilde{v}_\xi||\tilde{u}_\xi| \right) \, d\xi \\
& \quad + C \int_\mathbb{R} |\tilde{v}_{\xi\xi}| \left( |\tilde{v}_{\xi\xi}||\tilde{u}_\xi| + |\tilde{v}||\tilde{v}_{\xi\xi}| + |\tilde{v}_\xi||\tilde{\phi}_\xi| \right) \, d\xi \\
& \leq C (\delta_R + \varepsilon_1) \int_\mathbb{R} \left( \bar{v}_\xi\tilde{v}^2 + \tilde{v}_\xi^2 + \tilde{v}_{\xi\xi}^2 + \tilde{u}_\xi^2 + \tilde{\phi}_\xi^2 \right) \, d\xi.
\end{split}
\end{equation*}
Using \eqref{shderiv1}--\eqref{raderiv}, we estimate
\begin{equation*}
\begin{split}
|\mathcal{V}^{(2)}_4| & \leq C \int_\mathbb{R} |\bar{v}_\xi| |\tilde{v}_{\xi\xi}| \left(  |\tilde{v}| + |\tilde{v}_\xi| + |\tilde{v}_{\xi\xi}| +  |\tilde{u}_\xi| + |\tilde{u}_{\xi\xi}| + |\tilde{u}_{\xi\xi\xi}| + |\tilde{\phi}_\xi| \right)  \, d\xi \\
& \leq C \int_\mathbb{R} |\bar{v}_\xi|^{1/2} |\tilde{v}_{\xi\xi}|^2 \, d\xi + C \int_\mathbb{R} |\bar{v}_\xi|^{3/2} |\tilde{v}|^2 \, d\xi \\
& \quad + C \int_\mathbb{R}|\bar{v}_\xi| \left( |\tilde{v}_\xi|^2 + |\tilde{v}_{\xi\xi}|^2 + |\tilde{u}_\xi|^2 + |\tilde{u}_{\xi\xi}|^2 + |\tilde{u}_{\xi\xi\xi}|^2 + |\tilde{\phi}_\xi|^2 \right) \, d\xi \\
& \leq C (\delta_S + \delta_R^{1/2} ) \int_\mathbb{R} \left( \bar{v}_\xi \tilde{v}^2 + \tilde{v}_\xi^2 + \tilde{v}_{\xi\xi}^2 + \tilde{u}_\xi^2 + \tilde{u}_{\xi\xi}^2 + \tilde{u}_{\xi\xi\xi}^2 + \tilde{\phi}_\xi^2  \right) \, d\xi.
\end{split}
\end{equation*}
Finally, as in the proof of Lemma~\ref{uxixi}, we obtain
\begin{equation*}
\begin{split}
|\mathcal{V}^{(2)}_5| & \leq C \int_\mathbb{R} |\tilde{v}_{\xi\xi}| \left( |(F_1)_\xi| + |(F_2)_\xi| +|(F_5)_\xi| \right) \, d\xi \\
& \leq C \delta_R^{1/2} \lVert \tilde{v}_{\xi\xi} \rVert_{L^2}^2 + C \delta_R^{-1/2} \left( \lVert \bar{v}^R_\xi (\bar{v}^S-v_m) \rVert_{L^2}^2 + \|\bar{v}^S_\xi(\bar{v}^R-v_m) \|_{L^2}^2 + \lVert \bar{v}^R_\xi\bar{v}^S_\xi \rVert_{L^2}^2 + \lVert \bar{v}^R_\xi \rVert_{L^4}^4 + \lVert \bar{v}^R_{\xi\xi} \rVert_{L^2}^2 \right), \\
|\mathcal{V}^{(2)}_6| & \leq  C \delta_R^{1/2} \lVert \tilde{v}_{\xi\xi} \rVert_{L^2}^2 + C \delta_R^{-1/2} \left( \lVert \bar{v}^R_\xi \rVert_{L^4}^4 + \lVert \bar{v}^R_{\xi\xi} \rVert_{L^2}^2 \right).
\end{split}
\end{equation*}

Combining the above estimates and taking $\theta$ sufficiently small, we obtain
\begin{equation*}
\begin{split}
& \frac{d}{dt} \int_\mathbb{R} \bigg( \frac{\tilde{v}_{\xi\xi}^2}{2} - v\tilde{v}_{\xi\xi}\tilde{u}_\xi \bigg) \, d\xi + \int_\mathbb{R} \tilde{v}_{\xi\xi}^2 \, d\xi \\
& \quad \leq C \int_\mathbb{R} \left( \tilde{u}_{\xi\xi}^2 + \tilde{\phi}_{\xi\xi}^2 \right) \, d\xi + C ( \delta_0^{1/2} + \varepsilon_1 ) \int_\mathbb{R} \left( \bar{v}_\xi \tilde{v}^2 + \tilde{v}_\xi^2 + \tilde{\phi}_\xi^2 \right) \, d\xi \\
& \qquad + C \delta_R^{-1/2} \left( \lVert \bar{v}^R_\xi (\bar{v}^S-v_m) \rVert_{L^2}^2 + \|\bar{v}^S_\xi(\bar{v}^R-v_m) \|_{L^2}^2 + \lVert \bar{v}^R_\xi\bar{v}^S_\xi \rVert_{L^2}^2 + \lVert \bar{v}^R_\xi \rVert_{L^4}^4 + \lVert \bar{v}^R_{\xi\xi} \rVert_{L^2}^2 \right)
\end{split}
\end{equation*}
for sufficiently small $\delta_S+\delta_R$ and $\varepsilon_1$. Integrating this in time over $[0,t]$, and applying \eqref{inter}--\eqref{Raest}, we have
\begin{equation*}
\begin{split}
& \lVert \tilde{v}_{\xi\xi}(t,\cdot) \rVert_{L^2}^2 + \int_0^t \lVert \tilde{v}_{\xi\xi} \rVert_{L^2}^2 \, d\tau \\
& \quad \leq C \lVert \tilde{v}_{0\xi\xi} \rVert_{L^2}^2 + \bigg[ \int_\mathbb{R} v\tilde{v}_{\xi\xi}\tilde{u}_\xi \, d\xi \bigg]_{t=0}^{t=t} + C \int_0^t \left( \lVert \tilde{u}_{\xi\xi} \rVert_{L^2}^2 + \lVert \tilde{\phi}_{\xi\xi} \rVert_{L^2}^2 \right) \, d\tau \\
& \qquad + C(\delta_0^{1/2} + \varepsilon_1) \int_0^t \left( \bar{v}_\xi \tilde{v}^2 + \tilde{v}_\xi^2 + \tilde{\phi}_\xi^2 \right) \, d\tau + C \delta_R.
\end{split}
\end{equation*}
Therefore, the application of Young's inequality to the second term on the right-hand side, together with \eqref{vL2}, \eqref{vxiL2}, and \eqref{phi_est}, yields \eqref{vxixi_ineq}.

\end{proof}

\section{Global-in-time existence of perturbations} \label{Sec_Ge}

We choose smooth functions $\underline{v}$, $\underline{u}$, and $\underline{\phi}$ satisfying
\begin{equation*}
\begin{split}
\sum_{\pm} & \lVert (\underline{v} - v_\pm, \underline{u} - u_\pm, \underline{\phi} - \phi_\pm )\rVert_{L^2(\mathbb{R}_\pm)} \\
& + \lVert \underline{v}_\xi \rVert_{H^1(\mathbb{R})} + \lVert \underline{u}_\xi \rVert_{H^1(\mathbb{R})} + \lVert \underline{\phi}_\xi \rVert_{H^2(\mathbb{R})} < \delta_0.
\end{split}
\end{equation*}
Then, by Lemma~\ref{rarefaction} and Lemma~\ref{Prop.1.1}, there exists a constant $C_*>0$ such that
\begin{equation*}
\begin{split}
& \lVert \underline{v} - \bar{v}(0,\cdot) \rVert_{H^2(\mathbb{R})} + \lVert \underline{u} - \bar{u}(0,\cdot) \rVert_{H^2(\mathbb{R})} + \lVert \underline{\phi} - \bar{\phi}(0,\cdot) \rVert_{H^2(\mathbb{R})} \\
& \quad \leq \sum_{\pm} \lVert (\underline{v} - v_\pm, \underline{u} - u_\pm, \underline{\phi} - \phi_\pm )\rVert_{L^2(\mathbb{R}_\pm)} + \lVert \underline{v}_\xi \rVert_{H^1(\mathbb{R})} + \lVert \underline{u}_\xi \rVert_{H^1(\mathbb{R})} + \lVert \underline{\phi}_\xi \rVert_{H^2(\mathbb{R})} \\
& \qquad + \lVert (\bar{v}^R-v_m, \bar{u}^R-u_m, \bar{\phi}^R - \phi_m) \rVert_{L^2(\mathbb{R}_+)} + \lVert (\bar{v}^S-v_+, \bar{u}^S-u_+, \bar{\phi}^S - \phi_+) \rVert_{L^2(\mathbb{R}_+)} \\
& \qquad + \lVert (\bar{v}^R-v_-, \bar{u}^R-u_-, \bar{\phi}^R - \phi_-) \rVert_{L^2(\mathbb{R}_-)} + \lVert (\bar{v}^S-v_m, \bar{u}^S-u_m, \bar{\phi}^S - \phi_m) \rVert_{L^2(\mathbb{R}_-)} \\
& \qquad + \lVert \bar{v}_\xi \rVert_{H^1(\mathbb{R})} + \lVert \bar{u}_\xi \rVert_{H^1(\mathbb{R})} + \lVert \bar{\phi}_\xi \rVert_{H^2(\mathbb{R})} \\
& \quad \leq \delta_0 + C (\delta_R + \sqrt{\delta_S}) \leq C_* \sqrt{\delta_0}
\end{split}
\end{equation*}
at $t=0$. Given any $\varepsilon_1>0$ and sufficiently small $\delta_0>0$, we choose $\varepsilon_0 > 0$ as
\begin{equation*}
\varepsilon_0 < \frac{\varepsilon_1}{3} - \delta_0 - C_* \sqrt{\delta_0}.
\end{equation*}
Consider the initial data $(v_0,u_0)$ satisfying
\begin{equation} \label{initcond}
\sum_{\pm} \lVert (v_0 - v_\pm, u_0 - u_\pm) \rVert_{L^2(\mathbb{R}_\pm)}  + \lVert (v_{0\xi},u_{0\xi}) \rVert_{H^1(\mathbb{R})} < \varepsilon_0,
\end{equation}
where we write the initial data in terms of \(\xi\), noting that \(x = \xi\) at \(t = 0\). Then we obtain
\begin{equation*}
\begin{split}
& \lVert v_0 - \underline{v} \rVert_{H^2(\mathbb{R})} + \lVert u_0 - \underline{u} \rVert_{H^2(\mathbb{R})}, \\
& \quad \leq \sum_{\pm} \left( \lVert (v_0-v_\pm, u_0-u_\pm) \rVert_{L^2(\mathbb{R}_\pm)} + \lVert (\underline{v} - v_\pm, \underline{u} - u_\pm) \rVert_{L^2(\mathbb{R}_\pm)} \right) \\
& \qquad + \lVert \underline{v}_\xi \rVert_{H^1(\mathbb{R})} + \lVert \underline{u}_\xi \rVert_{H^1(\mathbb{R})}  + \lVert v_{0\xi} \rVert_{H^1(\mathbb{R})}  + \lVert u_{0\xi}\rVert_{H^1(\mathbb{R})} \\
& \quad \leq \varepsilon_0 + \delta_0 < \frac{\varepsilon_1}{3}.
\end{split}
\end{equation*}
By the result of Proposition~\ref{LocalE}, there exists $T_0>0$ such that
\begin{equation} \label{Dref}
\lVert (v - \underline{v}, u - \underline{u} ) \rVert_{L^\infty(0,T_0;H^2(\mathbb{R}))} + \lVert \phi - \underline{\phi} \rVert_{L^\infty(0,T_0;H^3(\mathbb{R}))} \leq \frac{\varepsilon_1}{2},
\end{equation}
and so, $\tfrac{v_-}{3} < v < 3v_+$ for sufficiently small $\varepsilon_1$. For $t \in (0,T_0]$, we use the results of Lemmas~\ref{rarefaction} and \ref{Prop.1.1} to compute
\begin{equation*}
\begin{split}
& \lVert \underline{v} - \bar{v}(t,\cdot) \rVert_{H^2(\mathbb{R})} + \lVert \underline{u} - \bar{u}(t,\cdot)  \rVert_{H^2(\mathbb{R})} + \lVert \underline{\phi} - \bar{\phi}(t,\cdot)  \rVert_{H^3(\mathbb{R})} \\
& \quad \leq \sum_{\pm} \left( \lVert \underline{v} - v_\pm \rVert_{L^2(\mathbb{R}_\pm)} \lVert \underline{u} - u_\pm \rVert_{L^2(\mathbb{R}_\pm)} + \lVert \underline{\phi} - \phi_\pm \rVert_{L^2(\mathbb{R}_\pm)} \right) \\
& \qquad + \lVert (\bar{v}^R-v_m, \bar{u}^R-u_m, \bar{\phi}^R - \phi_m)(t,\cdot + \sigma t) \rVert_{L^2(\mathbb{R}_+)} \\
& \qquad + \lVert (\bar{v}^S-v_+, \bar{u}^S-u_+, \bar{\phi}^S - \phi_+)(\cdot - X(t)) \rVert_{L^2(\mathbb{R}_+)} \\
& \qquad + \lVert (\bar{v}^R-v_-, \bar{u}^R-u_-, \bar{\phi}^R - \phi_-)(t,\cdot + \sigma t) \rVert_{L^2(\mathbb{R}_-)} \\
& \qquad + \lVert (\bar{v}^S-v_m, \bar{u}^S-u_m, \bar{\phi}^S - \phi_m)(\cdot - X(t)) \rVert_{L^2(\mathbb{R}_-)} \\
& \qquad + \lVert \underline{v}_\xi \rVert_{H^1(\mathbb{R})} + \lVert \underline{u}_\xi \rVert_{H^1(\mathbb{R})} + \lVert \underline{\phi}_\xi \rVert_{H^2(\mathbb{R})} + \lVert \bar{v}_\xi \rVert_{H^1(\mathbb{R})} + \lVert \bar{u}_\xi \rVert_{H^1(\mathbb{R})} + \lVert \bar{\phi}_\xi \rVert_{H^2(\mathbb{R})} \\
& \quad \leq C \delta_R \sqrt{1 + (\sigma - \lambda_1(v_-))t} + C \sqrt{\delta_S} \sqrt{1 + |X(t)| }  \leq C \sqrt{\delta_0}(1+ \sqrt{t}).
\end{split}
\end{equation*}
For related computations, we refer to \cite[Section~3.5]{KVW2}. Taking $T \in (0,T_0)$ small enough so that $\textstyle C \sqrt{\delta_0}(1+\sqrt{T}) \leq \frac{\varepsilon_1}{2}$, we have
\begin{equation} \label{refC}
\lVert \underline{v} - \bar{v} \rVert_{L^\infty(0,T;H^2(\mathbb{R}))} + \lVert \underline{u} - \bar{u} \rVert_{L^\infty(0,T;H^2(\mathbb{R}))} + \lVert \underline{\phi} - \bar{\phi} \rVert_{L^\infty(0,T;H^3(\mathbb{R}))} \leq \frac{\varepsilon_1}{2}.
\end{equation}
Combining \eqref{Dref} and \eqref{refC}, we have
\begin{equation*}
\lVert v-\bar{v} \rVert_{L^\infty(0,T;H^2(\mathbb{R}))} + \lVert u-\bar{u} \rVert_{L^\infty(0,T;H^2(\mathbb{R}))} + \lVert \phi-\bar{\phi} \rVert_{L^\infty(0,T;H^3(\mathbb{R}))} \leq \varepsilon_1
\end{equation*}
for the initial data satisfying \eqref{initcond}. The a priori estimate~\eqref{apriori} implies that \( T \) can be extended to \( +\infty \), yielding global existence of solutions near the approximate composite wave in the \( \xi \)-coordinate. The local solution $(v,u,\phi)(t,\xi)$ in this moving coordinate corresponds, via the change of variables $x=\xi+\sigma t$, to the solution in the original $(t,x)$ coordinate. Together with the property \textit{(5)} of approximate rarefaction in Lemma~\ref{rarefaction}, this establishes the first assertion of Theorem~\ref{Main}.

\section{Time-asymptotic behavior}

Now we investigate the time-asymptotic behavior of the global solution obtained in Appendix~\ref{Sec_Ge}. For this purpose, we define the function $g(t)$ as
\begin{equation*}
g(t) := \lVert (\tilde{v}_\xi,\tilde{u}_\xi,\tilde{\phi}_\xi) \rVert_{L^2}^2,
\end{equation*}
where
\begin{equation*}
(\tilde{v},\tilde{u},\tilde{\phi}) (t,\xi) := (v,u,\phi)(t,\xi) - (\bar{v},\bar{u},\bar{\phi})(t,\xi).
\end{equation*}
Our goal is to verify that $g(t)$ goes to zero as $t \to +\infty$, by showing $g \in W^{1,1}(\mathbb{R}_+)$. We recall from Appendix~\ref{Sec_Ge} that \eqref{apriori} holds for all $t>0$. Thanks to \eqref{apriori} and \eqref{vxiL2}, extended to $T \to +\infty$, we have
\begin{equation} \label{gvu}
\int_0^\infty \lVert (\tilde{v}_\xi,\tilde{u}_\xi) \rVert_{L^2}^2 \, dt \leq C \lVert (v-\bar{v},u-\bar{u})(0,\cdot) \rVert_{H^2}^2 + C \delta_R^{1/3}.
\end{equation}
By \eqref{phi_est}, together with \eqref{apriori}, we also have
\begin{equation} \label{gphi}
\int_0^\infty \lVert \tilde{\phi}_\xi (t,\cdot) \rVert_{L^2}^2 \, dt \leq C \lVert (v-\bar{v},u-\bar{u})(0,\cdot) \rVert_{H^2}^2 + C \delta_R^{1/3}.
\end{equation}
Thus, we obtain that $g(t) \in L^1$.

Next we consider the first derivative $g'(t)$. First, using \eqref{eq_v}, we have
\begin{equation} \label{gvt}
\left| \frac{d}{dt} \int_\mathbb{R} \tilde{v}_\xi \, d\xi \right| = \left| 2\int_\mathbb{R} \tilde{v}_\xi\tilde{v}_{\xi t} \, d\xi \right| \leq \left| 2 \int_\mathbb{R} \tilde{v}_\xi \tilde{u}_{\xi\xi} \, d\xi \right| + \left| 2 \dot{X}(t) \int_\mathbb{R} \bar{v}^S_{\xi\xi}\tilde{v}_\xi \, d\xi \right|.
\end{equation}
We apply Young's and H\"olders inequalities on the right-hand side to obtain
\begin{equation} \label{gvt1}
\begin{split}
RHS & \leq C \int_\mathbb{R} \left( \tilde{v}_\xi^2 + \tilde{u}_{\xi\xi}^2 \right) \, d\xi + C \delta_S |\dot{X}|^2 + \frac{C}{\delta_S} \left( \int_\mathbb{R} |\bar{v}^S_{\xi\xi}| |\tilde{v}_\xi| \, d\xi \right)^2 \\
& \leq C \int_\mathbb{R} \left( \tilde{v}_\xi^2 + \tilde{u}_{\xi\xi}^2 \right) \, d\xi + C \delta_S |\dot{X}|^2 + \frac{C}{\delta_S} \left( \int_\mathbb{R} |\bar{v}^S_{\xi\xi}| \, d\xi \right) \left( \int_\mathbb{R} |\bar{v}^S_{\xi\xi}| |\tilde{v}_\xi|^2 \, d\xi \right) \\
& \leq C \int_\mathbb{R} \left( \tilde{v}_\xi^2 + \tilde{u}_{\xi\xi}^2 \right) \, d\xi + C \delta_S |\dot{X}|^2,
\end{split}
\end{equation}
where in the last inequality, we used the bound $|\bar{v}^S_{\xi\xi}| \leq C|\bar{v}^S_\xi|$. Also, we obtain
\begin{equation} \label{gphit}
\begin{split}
\left| \frac{d}{dt} \int_\mathbb{R} \tilde{\phi}_{\xi}^2 \, d\xi \right| & = \left| 2 \int_\mathbb{R} \tilde{\phi}_\xi \tilde{\phi}_{\xi t} \, d\xi \right|  \leq C \left( \int_\mathbb{R} \tilde{\phi}_\xi^2 \, d\xi + \int_\mathbb{R} \tilde{\phi}_{\xi t}^2 \, d\xi \right).
\end{split}
\end{equation}
To obtain the estimate for $\tilde{u}_\xi$, we borrow the result of the proof of Lemma~\ref{uxi}
\begin{equation} \label{gut}
\left| \frac{d}{dt}\int_\mathbb{R} \tilde{u}_\xi^2 \, d\xi \right| \leq \left|2 \int_\mathbb{R} \tilde{u}_\xi \tilde{u}_{\xi t} \, d\xi \right| \leq C \int_\mathbb{R} \left( \bar{v}_\xi \tilde{v}^2 + \tilde{v}_\xi^2 + \tilde{u}_\xi^2 + \tilde{u}_{\xi\xi}^2 + \tilde{\phi}_{\xi}^2 \right) \, d\xi + C \delta_R^{1/2}.
\end{equation}
Hence, applying \eqref{vL2}, \eqref{vxiL2}, \eqref{phi_est}, and \eqref{phixt_est} to \eqref{gvt}--\eqref{gut}, we have
\begin{equation} \label{gt}
\int_0^t |g'(y)| \, dt = \int_0^\infty \left| \frac{d}{dt} \int_\mathbb{R} \left( \tilde{v}_\xi^2 + \tilde{u}_\xi^2 + \tilde{\phi}_\xi^2 \right) \, d\xi \right| \, dt \leq C \lVert (v-\bar{v},u-\bar{u})(0,\cdot) \rVert_{H^2}^2 + \delta_R^{1/3}.
\end{equation}
by \eqref{apriori}. Collecting \eqref{gvu}, \eqref{gphi}, and \eqref{gt}, we have that $g \in W^{1,1(\mathbb{R}_+)}$. Therefore, by the interpolation inequality, we obtain
\begin{equation*}
\begin{split}
& \lVert \tilde{v} (t,\cdot) \rVert_{L^\infty} + \lVert \tilde{u} (t,\cdot) \rVert_{L^\infty} + \lVert \tilde{\phi} (t,\cdot) \rVert_{L^\infty} \\
& \quad \leq C \left( \lVert \tilde{v} \rVert_{L^2}^{1/2}\lVert \tilde{v}_\xi \rVert_{L^2}^{1/2} + \lVert \tilde{u} \rVert_{L^2}^{1/2}\lVert \tilde{u}_\xi \rVert_{L^2}^{1/2} + \lVert \tilde{\phi} \rVert_{L^2}^{1/2}\lVert \tilde{\phi}_\xi \rVert_{L^2}^{1/2} \right) \to 0 \quad \text{as } t \to +\infty,
\end{split}
\end{equation*}
which together with Lemma~\ref{rarefaction} implies \eqref{asympt}. Furthermore, by the estimate \eqref{Xbd}, we have
\begin{equation*}
\lim_{t \to +\infty}{|\dot{X}(t)|} \leq C \lim_{ t \to + \infty}{\lVert \tilde{v}(t,\cdot) \rVert_{L^\infty}} =0,
\end{equation*}
which completes the proof of Theorem~\ref{Main}.

\bigskip

\end{document}